\theoremstyle{definition}
\theoremstyle{plain}
\date{}
\newtheorem{Thm}{Theorem}[section]
\newtheorem{Prop}[Thm]{Proposition}
\newtheorem{Lemma}[Thm]{Lemma}
\newtheorem{Cor}[Thm]{Corollary}
\newcommand{\graph}{\mbox{\rm graph}}
\newcommand{\dt}{\tau}
\newcommand{\dx}{h}
\newcommand{\dis}{\displaystyle}
\newcommand{\norm}{\parallel}
\newcommand{\Z}{{\mathbb Z}}
\newcommand{\T}{{\mathbb T}}
\newcommand{\N}{{\mathbb N}}
\newcommand{\R}{{\mathbb R}}
\newcommand{\G}{ \mathcal{G}}
\newcommand{\tG}{ \tilde{\mathcal{G}}}
\newcommand{\1}{{\bf 1} }
\newcommand{\ep}{\varepsilon }
\newcommand{\bH}{ \bar{H} }
\newcommand{\pr}{ {\rm pr.} }
\def\text#1{\mbox{#1 }}
\title{\bf Weak KAM theory \\for action minimizing random walks}
\author{Kohei Soga
\footnote{Department of Mathematics, Faculty of Science and Technology, Keio University, 3-14-1 Hiyoshi, Kohoku-ku, Yokohama, 223-8522, Japan. E-mail:  soga@math.keio.ac.jp 
}}
\begin{document}
\maketitle
\begin{abstract} 
\noindent We introduce a class of controlled random walks on a grid in $\T^d$ and investigate global properties of action minimizing random walks for a certain action functional together with Hamilton-Jacobi equations on the grid. This yields an analogue of weak KAM theory, which recovers a part of  original weak KAM theory through the hyperbolic scaling limit. 
\noindent 
 
\medskip\medskip\medskip\medskip

\noindent{\bf Keywords:} random walk; action minimizing; weak KAM theory; Aubry-Mather theory; Hamilton-Jacobi equation; finite difference method; hyperbolic scaling limit
  \medskip

\noindent{\bf AMS subject classifications:} 37J50; 49L25; 60G50; 65M06
\end{abstract}
%
\tableofcontents

\setcounter{section}{0}
\setcounter{equation}{0}
\section{Introduction}

We introduce a class of controlled random walks and investigate action minimizing problems in the class based on the framework of weak KAM theory and Aubry-Mather theory. 

\subsection{Action minimizing random walk}

Let $G_h$ be the grid with a small unit length $0<h\ll1$ in $\T^d:=\R^d/\Z^d$, i.e.,  $G_h=h\Z^d\cap[0,1]^d$, $h^{-1}\in\N$ (even number) with identification of $0$ and $1$. 
Let $\{e_i\}$ be the standard basis of $\R^d$ and set $B=\{\pm e_i\}_{i=1,\ldots,d}$.  Consider the jump process from $x\in G_h$ to one of the points $\{x+h\omega\}_{\omega\in B}$ with a transition probability $\rho(\omega)$. 
Iteration of the jump process yields a random walk whose paths are given as  
$$\gamma^0=x\in G_h,\quad \gamma^{k+1}=\gamma^k+h\omega,$$     
where $k$ indicates the number of iteration. We associate $k$ with time in such a way that each jump process takes place within a small unit time $0<\tau\ll1$, $\tau^{-1}\in\N$ (even number). 
We introduce time-$1$-periodic inhomogeneous transition probabilities: for a given function $\xi:G_h\times\{\tau k\,|\,k\in\Z\} \to K\subset\R^d$ with $\xi(\cdot,t+1)=\xi(\cdot,t)$, define 
$$\rho^\pm(x,t;\omega):=\frac{1}{2d}\pm\frac{\tau}{2h}\xi(x,t)\cdot\omega,$$    
where $\rho^+(x,t;\omega)$ (resp. $\rho^-(x,t;\omega)$) stands for the transition probability of the jump from $(x,t)$ to $(x+h\omega,t+\tau)$ (resp.   from $(x,t)$ to $(x+h\omega,t-\tau)$). The paths $\gamma^k$ for $\rho^-$ are re-indexed as $\gamma^{-k}$ with $k\ge0$, since it is a time-backward process.   
We see that such a random walk is diffusive discretization of an ODE 
$$\mbox{$x'(s)=f(x(s),s)$, $x(0)=x$ \,\,\,with $f:\T^d\times\T\to\T^d$, Lipschitz.}$$ In fact, one can prove that, if $\xi=f|_{G_h\times\{\tau k\,|\,k\in\Z\}}$ and $\rho=\rho^+$ (resp.  $\rho=\rho^-$), the random walk tends to a solution $x(t):[0,T]\to\T^d $ (resp. $x(t):[-T,0]\to\T^d $) of the ODE as a consequence of the hyperbolic scaling limit, i.e., $h,\tau\to0$ with $0<\lambda_0\le \tau/h\le\lambda_1$ (the law of large numbers: see Soga \cite{Soga1}). 

 Let us relate the random walks with optimal control. We  regard each $\xi$ as a control and look for controls that maximize/minimize the cost functionals
\begin{eqnarray}\label{00001}
&&\mathcal{L}^{l}_+(\xi;v^+):=E\Big[\sum_{0\le k< l} -L^{(c)}(\gamma^k,\tau k+\tau, \xi(\gamma^k,\tau k))\tau +v^+(\gamma^{l}, \tau l)  \Big]\mbox{ \,\,\,for $\rho=\rho^+$},\\\label{00002}
&&\mathcal{L}^{l}_-(\xi;v^-):=E\Big[\sum_{-l<k\le 0} L^{(c)}(\gamma^k, \tau k-\tau, \xi(\gamma^k,\tau k))\tau +v^-(\gamma^{-l},-\tau l)  \Big]\mbox{\,\,\, for $\rho=\rho^-$},
\end{eqnarray} 
where $E$ stands for the average with respect to the probability measure of the random walk generated by $\xi$ in the above manner;  $L^{(c)}:=L(x,t,\zeta)-c\cdot\zeta$ with a constant $c\in\R^d$ and a Tonelli Lagrangian  $L:\T^d\times\T\times\R^d\to\R$;   $v^\pm:G_h\times\{\tau k\,|\,k\in\Z\} \to \R$ with $v^\pm(\cdot,t+1)=v^\pm(\cdot,t)$ are given functions. Later, we will set up the problem more precisely.    

In this paper, {\it we find appropriate families of functions $v^\pm$ via Hamilton-Jacobi type equations on the grid and investigate properties of the maximizers $\xi^{\ast+}$ of $\sup_{\xi}\mathcal{L}^{+l}(\xi;v^+)$ and the minimizers  $\xi^{\ast-}$ of $\inf_{\xi}\mathcal{L}^{l}_-(\xi;v^-)$ including the asymptotics of the maximizing/minimizing random walks as $l\to\infty$ in the  framework of weak KAM theory and Aubry-Mather theory; we also investigate the hyperbolic scaling limit of the issue to recover (a part of) exact weak KAM theory and Aubry-Mather theory.} 
The essential tool of our investigation is the value function of a time depending Hamilton-Jacobi equation on the grid, which is investigated in Soga \cite{Soga5} from a viewpoint of numerical analysis of viscosity solutions to initial value problems, i.e.,   we associate   $\mathcal{L}^{l}_+(\xi;v^+)$ with the value function of a discrete equation corresponding to     
\begin{eqnarray}\label{+}
v_t+H(x,t,c+v_x)=0,\quad x\in\T^d,\,\,\,t<0
\end{eqnarray}
and $\mathcal{L}^{l}_-(\xi;v^-)$ with that of  the discrete  equation corresponding to 
\begin{eqnarray}\label{-}
v_t+H(x,t,c+v_x)=0,\quad x\in\T^d,\,\,\, t>0,
\end{eqnarray}    
where $H$ is the Legendre transform of $L$ in the sense of
$$H(x,t,p)=\sup_{\zeta\in\R^d}(p\cdot\zeta-L(x,t,\zeta));$$ then, we investigate our action maximizing/minimizing problem with $v^\pm$ being time-$1$-periodic solutions of the discrete Hamilton-Jacobi equations. Hence, the first half of this paper is devoted to prove existence of time-$1$-periodic discrete solutions. 
We will see that our investigation of $\mathcal{L}^{l}_\pm(\xi;v^\pm)$ corresponds to that of the Lax-Oleinik semigroup $\mathcal{T}^t_\pm$ and weak KAM solutions $u_\pm$ (see the notation below). Unlike the standard weak KAM theory, our ``dynamical system'' has a diffusion effect that makes time-forward evolution and time-backward evolution distinct. Therefore, we treat the problem with $\mathcal{L}^{l}_+(\xi;v^+)$  and that with $\mathcal{L}^{l}_-(\xi;v^-)$ independently.  
Since the investigation with $\mathcal{L}^{l}_+(\xi;v^+)$ is quite parallel to that of  $\mathcal{L}^{l}_-(\xi;v^-)$,  we omit detailed discussion in this paper.   

In Section 2, we set up a class of controlled random walks and the corresponding Hamilton-Jacobi equation on a grid. Then, we investigate initial value problems of the Hamilton-Jacobi equation within the time interval $[0,1]$ through stochastic calculus of variations, to obtain necessary a priori estimates and convergence properties. In Section 3 and 4, we find time-$1$-periodic solutions of the Hamilton-Jacobi equation and investigate action minimizing random walks, to obtain an analogue of weak KAM theory and Aubry-Mather theory.           
\subsection{Weak KAM theory and related literature}

Before going into details, we give a short summary of  weak KAM theory established by Albert Fathi \cite{Fathi97-1}, \cite{Fathi97-2}, \cite{Fathi98-1}, \cite{Fathi98-2}, \cite{Fathi-book} and its analogues  recently developed by many reserachers, so that readers can get a clear view of the current paper as one of such analogous theories.   
Briefly speaking, weak KAM theory investigates global properties of the action maximizing/minimizing curves for 
\begin{eqnarray*}
&&\mathcal{L}^{t}_+(\gamma;v^+):= -\int_0^t L^{(c)}(\gamma(s),\gamma'(s))ds  +v^+(\gamma(t)),\,\,\,\gamma\in AC([0,t];\T^d),\,\,\,\gamma(0)=x,\\
&&\mathcal{L}^{t}_-(\gamma;v^-):= \int^0_{-t} L^{(c)}(\gamma(s),\gamma'(s))ds +v^-(\gamma(-t)),\,\,\,\gamma\in AC([-t,0];\T^d),\,\,\,\gamma(0)=x
\end{eqnarray*} 
with $v^\pm$ being weak solutions of the stationary Hamilton-Jacobi equation
\begin{eqnarray}\label{HJ1}
H(x,c+v_x(x))=h\mbox{\quad in $\T^d$},
\end{eqnarray}      
where $AC(I;\T^d)$ stands for the family of all absolutely continuous curves $\gamma:I\to\T^d$ and  $h\in\R$ is some constant. This problem is closely related to one of the central issues of Hamiltonian dynamics, i.e., the issue to investigate structures of the phase space and to understand time global properties of motions. 
Kolmogorov-Arnold-Moser (KAM) theory provides an excellent framework to show existence of invariant manifolds called KAM tori for nearly integrable Hamiltonian systems, i.e., Hamiltonian systems  
\begin{eqnarray}\label{HS}
x'(s)=H_p(x(s),p(s)),\qquad p'(s)=-H_x(x(s),p(s))
\end{eqnarray}
generated by Hamiltonians $H(x,p)=H_0(p)+H_1(x,p):\T^d\times G\to\R$ with $G\subset\R^d$ and $\norm H_1\norm_{C^0} \ll1$ (KAM theory is available also for  nearly integrable twist maps). After KAM theory was established, lots of efforts have been made to understand the situation where $H_1$ gets larger or Hamiltonian systems far from being integrable.  A rigorous answer to this kind of questions is given by Aubry-Le Daeron \cite{Aubry}  and Mather \cite{Mather1} for twist maps on the annulus  (Aubry-Mather theory), where they find invariant sets called Aubry-Mather sets which can be seen as a generalization of smooth invariant circles obtained in KAM theory. Then, Moser \cite{Moser1} shows that for each smooth twist map there exists a certain time-dependent Hamiltonian system generated by  $H(x,t,p):\T\times\T\times\R\to\R$, strictly convex in the $p$-variable such that its time-$1$ map coincides with the twist map. Due to this result, Aubry-Mather theory can be extended to continuum Hamiltonian dynamics. Mather \cite{Mather2} generalizes his previous theory to the Lagrangian dynamics with an arbitrary space dimension,  generated by $C^2$-Lagrangians
$$L(x,\zeta):\T^d\times\R^d\to\R,\quad\mbox{strictly convex and superlinear with respect to  $\zeta$},$$
which is nowadays called Tonelli Lagrangians. Here, the Euler-Lagrange system is given as    
\begin{eqnarray}\label{EL}
\frac{d}{ds}L_\zeta(x(s),x'(s))=L_x(x(s),x'(s)), 
\end{eqnarray}
which is equivalent to the Hamiltonian system (\ref{HS}) generated by  the Legendre transform $H$ of $L$. 
The main tool of Mather's investigation is the family of action minimizing invariant measures for 
\medskip 

\noindent {\bf Mather's minimizing problem}: {\it For each constant $c\in\R^d$, consider 
\begin{eqnarray}\label{p-mather}
\inf_\mu \int_{\T^d\times\R^d} L^{(c)}(x,\zeta)\,d\mu, 
\end{eqnarray}
where the infimum is taken over all probability measures on $\T^d\times\R^d$ that are invariant under the Euler-Lagrange flow generated by $L$.}
\medskip

\noindent A minimizing measure of \eqref{p-mather} is called a Mather measure, and the union of the supports of all Mather measures for each fixed $c$ is called the {\it Mather set}.  Each Mather set provides an invariant set analogous to  a KAM torus in the corresponding Hamiltonian system. 

Fathi relates KAM theory and Aubry-Mather theory to weak solutions of Hamilton-Jacobi equations of \eqref{HJ1}. 
This seems to be quite natural, since existence of a KAM torus implies the existence of $c\in\R^n$, $h\in\R$ and $v\in C^2(\T^d;\R)$ that satisfy (\ref{HJ1}) (each KAM torus is a Lagrangian submanifold and it is given as the graph of the  derivative of $v$). More generally with an arbitrary $C^2$-Hamiltonian $H$,  if $c\in\R^d$ and $v\in C^2(\T^d;\R)$ have the property that 
$$\graph(c+v_x):=\{(x,c+v_x(x))\,|\,x\in\T^d\}$$
is invariant under the Hamiltonian flow $\phi^s_H$ of \eqref{HS}, then, $v$ satisfies \eqref{HJ1}  with some constant $h\in\R$; conversely, if there exists a $C^2$-solution $v$ of \eqref{HJ1}, then, $\graph(c+v_x)$ is invariant under $\phi^s_H$. Unfortunately, one cannot always expect a smooth solution of \eqref{HJ1}. Lions-Papanicolaou-Varadhan \cite{LPV}  analyze \eqref{HJ1} in the class of viscosity solutions, where they prove that for each $c\in\R^d$ there exists a unique number $\bar{H}(c)\in\R$ such that \eqref{HJ1} admits at least one viscosity solution only when $h=\bar{H}(c)$. This result is based on methods of viscosity solutions without any motivation from  dynamical systems. Fathi investigates \eqref{HJ1} by means of the Lax-Oleinik semigroup $\{\mathcal{T}^t_\pm\}_{t\ge0}$ defined in $C^0(\T^n;\R)$ to itself as  
\begin{eqnarray}\label{LO+}
&&\mathcal{T}^t_+u(x):=\sup_{\gamma\in AC([0,t];\T^d),\gamma(0)=x}\mathcal{L}^{t}_+(\gamma;u),\\\label{LO-}
&&\mathcal{T}^t_-u(x):=\inf_{\gamma\in AC([-t,0];\T^d),\gamma(0)=x}\mathcal{L}^{t}_-(\gamma;u).
\end{eqnarray}       
 Tonelli's calculus of variations guarantees that maximizing/minimizing  curves for the right hand side of \eqref{LO+} and   \eqref{LO-} exist, and both of which are  $C^2$-solutions of the Euler-Lagrange system \eqref{EL} for any $c\in\R^d$. Hence, the results from   $\mathcal{T}^t_\pm$ are closely related to each other. The key fact of weak KAM theory is the existence of a constant $\bar{H}(c)\in\R$ and $u_\pm\in C^0(\T^n;\R)$ such that 
 $$\mbox{$\mathcal{T}^t_+ u_+ -\bar{H}(c)t=u_+$,\,\,\, $\mathcal{T}^t_- u_- + \bar{H}(c)t=u_-$\,\,\,\,  for all $t\ge0$,}$$ where $u_\pm$ are called weak KAM solutions and $\bar{H}(c)$ the effective Hamiltonian (there are other names of $\bar{H}(c)$). 
Here are some important  consequences  \cite{Fathi-book}: 
\begin{itemize}
\item $u_+$ is semiconvex, and an a.e. solution of \eqref{HJ1} with $h=\bar{H}(c)$. 
\item $u_-$ is semiconcave, and a viscosity solution of \eqref{HJ1} with $h=\bar{H}(c)$. 
\item $\graph(c+\frac{\partial }{\partial x}u_+)$ is invariant under $\phi^s_H$ for $s\ge0$. 
\item $\graph(c+\frac{\partial }{\partial x}u_-)$ is invariant under $\phi^s_H$ for $s\le0$.
\item There exists $\mathcal{M}^\ast(u_\pm)\subset\graph(c+\frac{\partial }{\partial x}u_\pm)$ that is invariant under $\phi^s_H$ for $s\in\R$ and contains the Mather set for $c$.
\item $-\bar{H}(c)$ coincides with the infimum \eqref{p-mather}. 
\item  The ``recurrence rate'' of a minimizing curve $\gamma^\ast$ of $\mathcal{T}^t_-u_-$ extended to $-(\infty,0]$ yields a Mather measure, i.e., roughly speaking, the probability measure of $\T^d\times\R^d$ defined as 
$${\rm meas}[A]:=\lim_{t\to\infty} \frac{\mbox{lenth} \{s\in[-t,0]\,|\,(\gamma^\ast(s),\gamma^\ast{}'(s))\in A\}}{t},\quad A\subset\T^d\times\R^d$$
is a Mather measure (the same holds for an extended maximizing curve of $\mathcal{T}^t_+u_+$). 
\end{itemize} 
Weak KAM theory for twist maps is given by E \cite{WE}. We refer also to the works by Evans-Gomes \cite{Evans-Gomes1}, \cite{Evans-Gomes2} and Fathi-Siconolfi \cite{Fathi-Siconolfi} for further  ideas and methods in weak KAM theory. 
      
As an analogous problem, consider the action maximizing/minimizing curves for
\begin{eqnarray*}
&&\mathcal{L}^{t}_+(\gamma;v^+):= -\int_0^t e^{-\ep s}L^{(c)}(\gamma(s),\gamma'(s))ds  +e^{-\ep t}v^+(\gamma(t)),\\
&&\qquad\qquad\qquad\qquad\qquad\qquad\qquad \gamma\in AC([0,t];\T^d),\,\,\,\gamma(0)=x,\\
&&\mathcal{L}^{t}_-(\gamma;v^-):= \int^0_{-t} e^{\ep s}L^{(c)}(\gamma(s),\gamma'(s))ds +e^{-\ep t}v^-(\gamma(-t)),\\
&&\qquad\qquad\qquad\qquad\qquad\qquad\qquad  \gamma\in AC([-t,0];\T^d),\,\,\,\gamma(0)=x,
\end{eqnarray*}  
where $\ep>0$, the discount factor,  gives a dissipation effect. 
Then, we have an analogue of weak KAM theory with $v^\pm$ being weak solutions of the discounted Hamilton-Jacobi equation  
\begin{eqnarray}\label{dHJ}   
\varepsilon v^\ep(x)+H(x,c+v_x^\ep(x))=0\mbox{\qquad in $\T^d$}. 
\end{eqnarray} 
The corresponding dynamical system is the discounted Euler-Lagrange system
\begin{eqnarray} \label{dEL}
\frac{d}{ds}\left( L_\zeta(x(s),x'(s)) \right)=L_x(x(s),x'(s))-\ep L_\zeta(x(s),x'(s))+\ep c, 
\end{eqnarray}
which is equivalent to the discounted Hamiltonian system
\begin{eqnarray}\label{dHS}
\left\{
\begin{array}{l}
x'(s)=H_p(x(s),p(s)),\\
p'(s)=-H_x(x(s),p(s))+\ep c- \ep p(s),
\end{array}
\right. 
\end{eqnarray}
where both of maximizing/minimizing curves for $\mathcal{L}^{t}_\pm$ are $C^2$-solutions of \eqref{dEL}.  
 We refer to Mar\`o-Sorrentino \cite{MS} and Mitake-Soga \cite{Mitake-Soga} for  Aubry-Mather theory and weak KAM theory for the discounted problems; Gomes \cite{Gomes2}, Iturriaga-Sanchez Morgado \cite{ISM}, Davini-Fathi-Iturriaga-Zavidovique \cite{Davini}, \cite{Davini 2} for analysis of the selection problem in the vanishing discount process of \eqref{dHJ} based on weak KAM theory, i.e., the problem whether or not the whole sequence $\{v^\ep\}_{\ep>0}$ converges to some weak KAM solution as $\ep\to0+$ (convergence up to subsequence is well-known  and is already used in \cite{LPV});  Mitake-Tran \cite{MT} for a  result similar to \cite{Davini} on the selection problem for degenerate viscous Hamilton-Jacobi equations based on a PDE approach called the nonlinear adjoint method introduced by Evans \cite{Evans0}.  Recently, Wang-Wang-Yan \cite{WWY1}, \cite{WWY2}, \cite{WWY3} develop weak KAM theory for contact Lagrangian/Hamiltonian dynamics and contact Hamilton-Jacobi equations, which include discounted problems as a particular case. The contact problem arises from the action maximizing/minimizing curves for some implicitly given action functionals. The selection problem in the vanishing contact process is studied in Chen-Cheng-Ishii-Zhao \cite{CCI}.  
 
Related to smooth approximation, an analogue of weak KAM theory has been developed for viscous Hamilton-Jacobi equations.  The first result in such a direction is provided by Moser \cite{Moser-5} (though it is not about viscous Hamilton-Jacobi equations), where he shows smooth approximation of Aubry-Mather sets by a regularization technique. 
After  weak KAM theory is announced, Jauslin-Kreiss-Moser \cite{JKM} demonstrate smooth approximation of $\graph(c+\frac{\partial}{\partial x}u_-)$ in the context of weak KAM theory for twist maps through the vanishing viscosity method for the forced Burgers equations (they are equivalent to Hamilton-Jacobi equations in $1$-dimensional space), via a PDE approach. Furthermore, they make the first attempt to solve the  selection problem in the vanishing viscosity process. 
The regularized problems with artificial viscosities can be treated also in terms of stochastic optimal control, based on the pioneering works by Fleming \cite{Fleming}: 
consider the action maximizing/minimizing controls for
\begin{eqnarray*}
&&\mathcal{L}^{t}_+(\xi;v^+):= E\Big[-\int^t_{0} L^{(c)}(\gamma(s),s,\xi(\gamma(s),s))ds+ v^+(\gamma(t),t)\Big],\\
&&d\gamma(s)=\xi(\gamma(s),s)ds+\sqrt{2\nu}dW(s),\quad \gamma(0)=x,\quad 0\le s\le t
\end{eqnarray*}
and 
\begin{eqnarray*}
&&\mathcal{L}^{t}_-(\xi;v^-):= E\Big[\int^0_{-t} L^{(c)}(\gamma(s),s,\xi(\gamma(s),s))ds +v^-(\gamma(-t),-t)\Big],\\
&&d\gamma(s)=\xi(\gamma(s),s)ds+\sqrt{2\nu}dW(|s|),\quad \gamma(0)=x,\quad -t\le s\le 0\\
&&(\mbox{backward sense}),
\end{eqnarray*}   
where $\xi\in C^1(\T^d\times\T;\R^d)$ are controls, $W$ is the standard Brownian motion and $E[\cdot]$ stands for the expectation with respect to the Wiener measure; 
$\mathcal{L}^{t}_+ $, $\mathcal{L}^{t}_-$ involve the viscous Hamilton-Jacobi equations
\begin{eqnarray*}
&&v_t+H(x,t,c+v_x)=-\nu \Delta v,\,\,\,x\in\T^d,\,\,\,t<0,\\
&&v_t+H(x,t,c+v_x)=\nu \Delta v,\,\,\,x\in\T^d,\,\,\,t>0,
\end{eqnarray*}
respectively, with $\nu>0$. There are many results based on the above $\mathcal{L}^{t}_\pm$ and ideas of weak KAM theory:  we refer to  Gomes \cite{Gomes1} and Iturriaga-Sanchez Morgado \cite{ISM0} for analysis of $H(x,t,c+v_x)=h+\nu \Delta v$ in $\T^d$ and a stochastic analogue of Mather measures; Bessi \cite{B} and Anantharaman-Iturriaga-Padilla-S\'anchez Morgado \cite{A} for partial answers to the selection problem in the vanishing viscosity process as a generalization of Jauslin-Kreiss-Moser \cite{JKM}.

The problem of action maximizing/minimizing random walks for \eqref{00001} and \eqref{00002} with $d=1$ is partially studied in Soga \cite{Soga2}, \cite{Soga3}. Then,  Soga \cite{Soga4} applies the results in  \cite{Soga2} and  \cite{Soga3} to  an investigation of the selection problem in the limit process of finite difference approximation, where the result is apparently similar to that of Bessi \cite{B}, but there is a crucial difference due to the finite propagation speed of random walks under the hyperbolic scaling.      

In addition to the above mentioned analogues of weak KAM theory,  we refer to Bernard-Buffoni \cite{Bernard} and Zavidovique \cite{Zavidovique} for weak KAM like formulation of  abstract functional equations; Evans \cite{Evans} for a quantum analogue of weak KAM theory; Bessi \cite{B2} for  an Aubry-Mather theory approach to the Vlasov equation.   

The results of this paper can be seen also as numerical methods of weak KAM theory and Hamilton-Jacobi equations. In fact, we construct analogous objects of exact weak KAM solutions, calibrated curves, effective Hamiltonians, Mather measures, etc., which tend to the exact ones at the hyperbolic scaling limit. 
Let us relate such results to the literature of numerical analysis of Hamilton-Jacobi equations and weak KAM theory.  Crandall-Lions \cite{Crandall-Lions} show an abstract result that monotone finite difference schemes for Hamilton-Jacobi equations yield viscosity solutions of initial value problems, which is generalized by Souganidis \cite{Souganidis}.  Verification of the monotonicity of a scheme under consideration is highly non-trivial, also for Tonelli Hamiltonians.  Soga \cite{Soga2} develops mathematical analysis of the Lax-Friedrichs finite difference scheme applied to hyperbolic scalar conservation laws and the corresponding  Hamilton-Jacobi equations in the $1$-dimensional setting through techniques of optimal control theory, and extends the classical results by Oleinik \cite{Oleinik} to obtain monotonicity and convergence within an arbitrary time interval. Then, Soga \cite{Soga3} shows existence of time-$1$-periodic discrete solutions, which corresponds to weak KAM solutions $u_-$. Soga \cite{Soga5} generalizes  the work \cite{Soga2}  to problems with a multi space dimension. The current paper discusses existence of time-$1$-periodic discrete solutions  based on \cite{Soga5}. 
We mention that there is a big literature on numerical analysis or computational observation of weak KAM theory based on other techniques to approximate a specific object such as viscosity solutions, effective Hamiltonians, e.g., Gomes-Oberman \cite{GO}, Rorro \cite{Rorro}, Nishida-Soga \cite{Nishida-Soga}, Bouillard-Faou-Zavidovique \cite{BFZ}.  In contrast to these works,  the main  feature of our current investigation is that  {\it we give a framework that produces analogues of viscosity solutions, their derivatives,  their characteristic curves, etc., all at once; there are explicit equations for these objects, for which one can find a  structure similar to exact weak KAM theory; these objects and the structure are rigorously convergent.}

\setcounter{section}{1}
\setcounter{equation}{0}
\section{Random walk and Hamilton-Jacobi equation on grid}

We set up a class of controlled random walks and  Hamilton-Jacobi equations on a grid.   
 Then, based on the stochastic and variational approach \cite{Soga5}, we analyze the initial value problems of the equations within the time interval $[0,1]$ to obtain the solution maps, i.e., the time-$1$ maps with good a priori estimates and convergence properties.

The function $L$ is assumed to satisfy the following (L1)--(L4):
\begin{enumerate}
\item[(L1)] $L(x,t,\zeta):\T^d\times\T\times\R^d\to\R$, $C^2$,
\item[(L2)] $L_{\zeta\zeta}(x,t,\zeta)$ is positive definite in $\T^d\times\T\times\R^d$,
\item[(L3)] $L$ is  superlinear with respect to $\zeta$, i.e., for each $a\ge0$ there exists $b_1(a)\in\R$ such that $L(x,t,\zeta)\ge a  |\zeta|+b_1(a)$  in $\T^d\times\T\times\R^d$,
\item[(L4)] There exists $\alpha>0$ such that $| L_{x^j}|\le \alpha (1+|L|)$ in $\T^d\times\T\times\R^d$ for $j=1,\ldots,d$.
\end{enumerate} 
Here,  $x\cdot y:=\sum_{1\le j\le q} x^jy^j$, $| x|:=\sqrt{\sum_{1\le j\le q}(x^j)^2}$, $|x|_\infty:=\max_{1\le j\le q}|x^j|$ for $x,y\in\R^q$. Note that, due to (L1)--(L3), the function the Legendre transform $H(x,t,p):\R^d\times\R\times\R^d\to\R$ of $L$ with respect to $\zeta$ is well-defined and given by
$$H(x,t,p)=\sup_{\zeta\in\R^d}\{p\cdot\zeta-L(x,t,\zeta)\}.$$ 
with the properties: 
\begin{enumerate}
\item[(H1)] $H(x,t,p):\T^d\times\T\times\R^d\to\R$, $C^2$,
\item[(H2)] $H_{pp}(x,t,p)$ is positive definite in $\T^d\times\T\times\R^d$,
\item[(H3)] $H$ is uniformly superlinear with respect to $p$, i.e.,  for each $a\ge0$ there exists $b_2(a)\in\R$ such that $H(x,t,p)\ge a  |p|+b_2(a)$  in $\T^d\times\T\times\R^d$,
\end{enumerate} 
(L4) is a sufficient condition for the Euler-Lagrange flow to be complete (global in time). For each $c\in\R^d$, let $L^{(c)}$ be the Legendre transform of $H(x,t,c+p)$ with respect to $p$, i.e.,  $L^{(c)}(x,t,\zeta)=L(x,t,\zeta)-c\cdot\zeta$.   
Throughout this paper, the dependence on a variable in $\T$ is regarded as the dependence on a variable in $\R$ with $1$-periodicity.  
\subsection{Controlled  random walk}
 
Let $\delta=(h,\tau)$ be a pair of a unit small length of space and a unit small time. We choose $h=(2N)^{-1}$, $\tau=(2K)^{-1}$ with $N,K\in\N$, so that our grid can always contain $1$.   Introduce the following notation:
\begin{eqnarray*}
&&x_m:=(hm^1,\ldots,hm^d), \,\,\,t_k:=\tau k\mbox{\quad for $m=(m^1,\ldots,m^d)\in\Z^d$, $k\in\Z$},\\
&&G_{\rm even}:=\{x_m \,|\,m\in\Z^d,\,\,\,m^1+\cdots+ m^d={\rm even}\},\\
&& G_{\rm odd}:=\{x_m \,|\,m\in\Z^d\,\,\,m^1+\cdots+ m^d={\rm odd}\},\\
&&\mathcal{G}_\delta:=\bigcup_{k\in\Z}\Big\{(G_{\rm even}\times\{t_{2k}\})\cup(G_{\rm odd}\times\{t_{2k+1}\})\Big\}\\
&&(\mbox{summation of the indexes of each point is even}),\\
&&\tG_\delta:=\bigcup_{k\in\Z}\Big\{(G_{\rm odd}\times\{t_{2k}\})\cup(G_{\rm even}\times\{t_{2k+1}\})\Big\}\\
&&(\mbox{summation of the indexes of each point is odd}),\\
&&\mbox{$\{e_1,\ldots,e_d\}$ is the standard basis of $\R^d$},\\
&&B:=\{\pm e_1,\ldots,\pm e_d\}.
\end{eqnarray*}
Note that $x_{m\pm2Ne_i}=x_m\pm e_i,\quad t_{k\pm2K}=t_k\pm1$ and 
\begin{eqnarray*}
&&\pr G_{\rm even}:=\{x_m\in G_{\rm even}\,|\,0\le m_i\le2N-1  \},\\
&&\pr G_{\rm odd}:=\{x_m\in G_{\rm odd}\,|\,0\le m_i\le2N-1 \},\\
&&\pr \G_\delta:=\{(x_m,t_k)\in\G_\delta\,|\,0\le m_i\le2N-1,\,\,\,0\le k\le 2K-1    \},\\
&&\pr \tG_\delta:=\{(x_m,t_k)\in\tG_\delta\,|\,0\le m_i\le2N-1,\,\,\,0\le k\le 2K-1    \},
\end{eqnarray*}
where $\pr \G_\delta$, $\pr \tG_\delta$ are seen as discretization of  $\T^d\times\T$ with the mesh size $\delta=(h,\tau)$. 

\vspace{3mm}
\begin{center}
\includegraphics[bb=0 0 561 446, width=9cm]{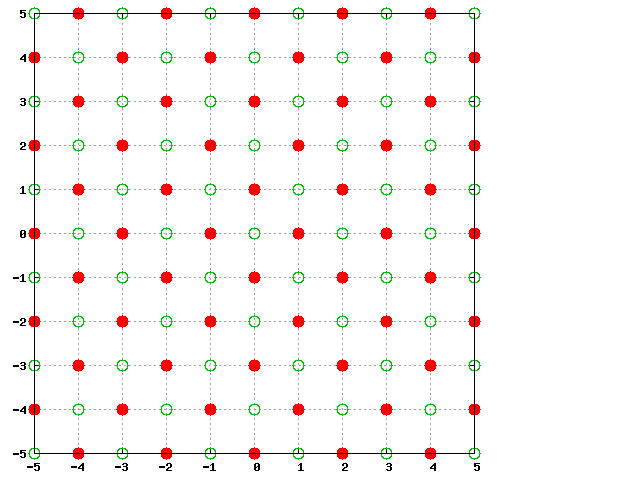}
\center{Figure 1.}
\end{center}
Figure 1 shows the two-dimensional $G_{\rm even}$ by the symbol {\color{green}$\circ$} and  $G_{\rm odd}$ by {\color{red}\textbullet}.  
We sometimes use the notation $(x_{m},t_k),(x_{m+\1},t_{k+1})$ to indicate points of $\G_\delta$ and $(x_{m+\1},t_k),(x_{m},t_{k+1})$ to indicate points of $\tG_\delta$ with $\1:=(1,0,\ldots,0)\in\Z^d$.  For $(x,t)\in \G_\delta\cup \tG_\delta$, the notation $m(x)$, $k(t)$ denotes the index of $x$, $t$, respectively.

For each point  $(x_n,t_{l+1})\in\tG_\delta$, we consider the backward random walks $\gamma$ within $[t_{l'},t_{l+1}]$ which start from $x_n$ at $t_{l+1}$ and move by $\omega\dx$, $\omega\in B$ in each backward time step $\dt$: 
$$\gamma=\{\gamma^k\}_{k=l',\cdots,l+1},\quad\gamma^{l+1}=x_{n},\quad \gamma^{k}=\gamma^{k+1}+\omega\dx.$$
We use the following notation to describe such random walks:  
\begin{eqnarray*}
&&X_n^{l+1,k}:=\Big\{ x_{m+\1} \,\Big|\, \mbox{ $(x_{m+\1},t_{k})\in\tG_\delta$, $\dis \max_{1\le j\le d}|x^j_{m+\1}-x^j_{n}|\le(l+1-k)\dx$}\Big\}\\
 &&\mbox{(the set of all reachable points of the random walk at time $k$)},\\
&&G_n^{l+1,l'}:=\bigcup_{l'\le k\le l+1}\big(X_n^{l+1,k}\times\{t_{k}\}\big)\subset\tG_\delta \\
&&\mbox{(the set of all reachable space-time points of the random walk within $l'\le k\le l+1$)},\\
&&\xi:G_n^{l+1,l'+1}\ni(x_m,t_{k+1})\mapsto\xi^{k+1}_m\in[-(d\lambda)^{-1},(d\lambda)^{-1}]^d,\quad \lambda:=\dt/\dx, \\
&&\rho: G_n^{l+1,l'+1}\times B \ni(x_m,t_{k+1},\omega)\mapsto\rho^{k+1}_m(\omega):=\frac{1}{2d}-\frac{\lambda}{2}(\omega\cdot\xi^{k+1}_m)\in[0,1],\\
&&\gamma:\{ l',l'+1,\ldots,l+1\}\ni k\mapsto \gamma^k\in X_n^{l+1,k},\mbox{ $\gamma^{l+1}=x_n$, $\gamma^{k}=\gamma^{k+1}+\omega\dx$, $\omega\in B$},\\
&&\Omega_n^{l+1,l'}:\mbox{ the family of the above $\gamma$}, 
\end{eqnarray*}
where $\xi$ and $\rho$ are not defined at $l'$. We see that $\rho^{k+1}_m(\omega)$, $\omega\in B$ are the transition probability from $(x_m,t_{k+1})$ to $(x_m+\omega\dx,t_k)$, because  
$$\sum_{\omega\in B}\rho^{k+1}_m(\omega)=\sum_{i=1}^d(\rho^{k+1}_m(e_i)+\rho^{k+1}_m(-e_i))=1.$$
Transition of random walks is controlled  by $\xi$.     
We define the probability density of each path $\gamma\in\Omega_n^{l+1,l'}$ as 
$$\mu_n^{l+1,l'}(\gamma):=\prod_{l'\le k\le l}\rho^{k+1}_{m(\gamma^{k+1})}(\omega^{k+1}),\quad \omega^{k+1}:=\frac{\gamma^k-\gamma^{k+1}}{\dx}.$$    
For each control $\xi$, the probability density $\mu_n^{l+1,l'}(\cdot)=\mu_n^{l+1,l'}(\cdot;\xi)$ yields a probability measure of $\Omega_n^{l+1,l'}$, i.e., 
$${\rm Prob}(A)=\sum_{\gamma\in A}\mu_n^{l+1,l'}(\gamma;\xi)\mbox{\quad for $A\subset\Omega_n^{l+1,l'}$}. $$
The expectation with respect to this probability measure is denoted by $E_{\mu_n^{l+1,l'}(\cdot;\xi)}[\cdot]$, i.e., for a function $f:\Omega_n^{l+1,l'}\to\R$,
$$E_{\mu_n^{l+1,l'}(\cdot;\xi)}[f(\gamma)]:=\sum_{\gamma\in\Omega_n^{l+1,l'}}\mu_n^{l+1,l'}(\gamma;\xi)f(\gamma).$$
In particular,  the average of sample paths $\gamma\in \Omega_n^{l+1,l'}$ with a control $\xi$ is denoted by $\bar{\gamma}^k$, i.e.,
$$\bar{\gamma}^k:=\sum_{\gamma\in\Omega^{l+1,l'}_n}\mu^{l+1,l'}_n(\gamma;\xi)\gamma^k,\quad l'\le k\le l+1.$$ 
As shown in \cite{Soga5}, we have  
\begin{eqnarray}\label{averaged-path}
\bar{\gamma}^{l+1}=x_n,\,\,\,\bar{\gamma}^k=\bar{\gamma}^{k+1}-\bar{\xi}^{k+1}\dt\,\,\,
{\rm \quad with \quad}\bar{\xi}^k:=\sum_{\gamma\in\Omega^{l+1,l'}_n}\mu^{l+1,l'}_n(\gamma;\xi)\xi^k_{m(\gamma^k)},
\end{eqnarray}
\indent  There is  another formulation of the probability measure of the random walk in terms of the configuration space, not the path space $\Omega^{l+1,l'}_n$, i.e., the distribution on  $X^{l+1,k}_n$ for each $l'\le k\le l+1$.  Define  $p(\xi):G^{l+1,l'}_n\ni(x_{m+\1},t_k)\mapsto p^{k}_{m+\1}(\xi)\in[0,1]$ as 
\begin{eqnarray}\label{prob-on-X} 
 p^{k}_{m+\1}(\xi):={\rm Prob}(\{\gamma\in \Omega^{l+1,l'}_{n}\,|\,\gamma^{k}=x_{m+\1}\}).
\end{eqnarray}
It follows from the definition of random walks that $p^{k}_{m+\1}$ is independent from the choice of $l'$  and 
\begin{eqnarray*}
 \sum_{\{m\,|\,x_{m+\1}\in X^{l+1,k}_{n} \}} p^k_{m+\1} (\xi) =1\quad \mbox{ for each $k$}.
\end{eqnarray*}
Furthermore, it holds that 
\begin{eqnarray}\label{p-evolution}
p^{k}_{m+\1}(\xi)=\sum_{\omega\in B}p^{k+1}_{m+\1+\omega}(\xi)\rho^{k+1}_{m+\1+\omega}(-\omega),
\end{eqnarray} 
where $p^{k+1}_{m+\1+\omega}(\xi)=\rho^{k+1}_{m+\1+\omega}(-\omega)=0$ if $x_{m+\1+\omega}\not\in X^{l+1,k+1}_n$.  We will see  in Section 3 that  $p(\xi)$ plays an important role to derive an analogue of Mather's minimizing problem and the construction of Mather measures.   

Since our transition probabilities are space-time inhomogeneous, the well-known law of large numbers does not always hold in hyperbolic scaling limit, i.e., $\delta=(h,\tau)\to0$ under $0<\lambda_0\le \lambda:=\tau/h$ with a constant $\lambda_0$. The author investigated the asymptotics  of the probability measure of $\Omega_n^{l+1,l'}$ as $\delta\to0$ in  \cite{Soga1},  \cite{Soga5} as follows:   Let $\eta(\gamma):\{l',l'+1,\ldots,l+1\}\to\R^d$ be a function defined for each $\gamma\in\Omega^{l+1,l'}_n$ as
$$\eta^k(\gamma)=\eta^{k+1}(\gamma)-\xi^{k+1}_{m(\gamma^{k+1})}\dt,\,\,\,\eta^{l+1}(\gamma)=x_n.$$
Define  $\tilde{\sigma}^{l+1,k}_i$ and $\hat{\sigma}^{l+1,k}_i$ for $i=1,\ldots,d$ as 
$$\tilde{\sigma}^{l+1,k}_i:=E_{\mu^{l+1,l'}_n(\cdot;\xi)}[|(\eta^k(\gamma)-\gamma^k)^i|^2],\quad \hat{\sigma}^{l+1,k}_i:=E_{\mu^{l+1,l'}_n(\cdot;\xi)}[|(\eta^k(\gamma)-\gamma^k)^i|],$$
where $(\eta^k(\gamma)-\gamma^k)^i$ denotes the $i$-th component of $\eta^k(\gamma)-\gamma^k$. 
\begin{Lemma}[ \cite{Soga1}, \cite{Soga5}]\label{limit-theorem} 
For any control $\xi$, we have 
$$(\hat{\sigma}^{l+1,k}_i)^2\le \tilde{\sigma}^{l+1,k}_i\le (t_{l+1}-t_k)\frac{\dx}{\lambda}\quad{\rm for}\quad  0\le k\le l+1.$$
\end{Lemma}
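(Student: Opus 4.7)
The plan is to handle the two inequalities separately. The first, $(\hat{\sigma}^{l+1,k}_i)^2 \le \tilde{\sigma}^{l+1,k}_i$, is just Jensen's (equivalently Cauchy-Schwarz's) inequality applied to the real random variable $(\eta^k(\gamma) - \gamma^k)^i$ under the probability measure $\mu_n^{l+1,l'}(\cdot;\xi)$, so no further work is needed there.

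The main work is the second inequality. The idea is to recognize $M^k := \eta^k(\gamma) - \gamma^k$ as a martingale with respect to the backward filtration $\mathcal{F}_k := \sigma(\gamma^{l+1},\gamma^l,\ldots,\gamma^k)$, starting from $M^{l+1}=0$. The backward increment is
$$M^k - M^{k+1} = (\eta^k - \eta^{k+1}) - (\gamma^k - \gamma^{k+1}) = -\xi^{k+1}_{m(\gamma^{k+1})}\dt - \omega^{k+1}\dx,$$
and the direct computation
$$E\bigl[\omega^{k+1} \,\big|\, \mathcal{F}_{k+1}\bigr] = \sum_{\omega \in B}\omega\,\rho^{k+1}_{m(\gamma^{k+1})}(\omega) = -\lambda\,\xi^{k+1}_{m(\gamma^{k+1})}$$
(using the symmetry of $B$ and $\lambda = \dt/\dx$) shows $E[M^k - M^{k+1} \mid \mathcal{F}_{k+1}] = 0$, since $\xi^{k+1}_{m(\gamma^{k+1})}$ is $\mathcal{F}_{k+1}$-measurable.

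Next I will exploit orthogonality of martingale differences and bound each conditional second moment. Telescoping gives
$$\tilde{\sigma}^{l+1,k}_i = E[(M^{k,i})^2] = \sum_{j=k}^{l} E\bigl[(M^{j,i} - M^{j+1,i})^2\bigr],$$
and using $\sum_{\omega \in B}(\omega^i)^2 \rho^{j+1}_m(\omega) = \rho^{j+1}_m(e_i) + \rho^{j+1}_m(-e_i) = 1/d$, a short expansion of the square yields
$$E\bigl[(M^{j,i} - M^{j+1,i})^2 \,\big|\, \mathcal{F}_{j+1}\bigr] = \frac{\dx^2}{d} - \dt^2\bigl(\xi^{j+1,i}_{m(\gamma^{j+1})}\bigr)^2 \le \dx^2.$$
Summing over $j$ from $k$ to $l$ and using $(l+1-k)\dx^2 = (t_{l+1}-t_k)\dx/\lambda$ (because $\lambda\dx = \dt$) gives the desired bound. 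There is no deep obstacle; the only care needed is the adaptedness of $\xi^{k+1}_{m(\gamma^{k+1})}$ to $\mathcal{F}_{k+1}$, which is automatic from the definition of the control on $G_n^{l+1,l'+1}$, together with correct bookkeeping of $\lambda = \dt/\dx$ to convert the final estimate into the stated form.
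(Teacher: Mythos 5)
Your proof is correct. Note that the paper itself gives no argument for this lemma --- it is quoted from \cite{Soga1}, \cite{Soga5} --- so there is nothing internal to compare against; what you wrote is a complete, self-contained justification in the same spirit as the variance estimates in those references. The two key computations check out: $E[\omega^{k+1}\mid\mathcal{F}_{k+1}]=-\lambda\xi^{k+1}_{m(\gamma^{k+1})}$ makes $M^k=\eta^k(\gamma)-\gamma^k$ a martingale for the backward filtration (adaptedness of the control is indeed automatic), and the conditional increment moment is $\frac{h^2}{d}-\tau^2\bigl(\xi^{j+1,i}_{m(\gamma^{j+1})}\bigr)^2$, so orthogonality of martingale differences and telescoping from $M^{l+1}=0$ give $\tilde{\sigma}^{l+1,k}_i\le (l+1-k)h^2=(t_{l+1}-t_k)\frac{h}{\lambda}$; in fact your computation yields the sharper bound $(t_{l+1}-t_k)\frac{h}{d\lambda}$, of which the stated estimate is a weakening. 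The first inequality is, as you say, just Jensen/Cauchy--Schwarz, and the endpoint $k=l+1$ is trivial since both sides vanish.
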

\noindent Note that $\tilde{\sigma}^{l+1,k}_i$ can be seen as a generalization of the standard variance; the standard variance is of $O(1)$ as $\delta\to0$ under hyperbolic scaling in general for space-time  inhomogeneous random walks; however, $\tilde{\sigma}^{l+1,k}_i$ and $\hat{\sigma}^{l+1,k}_i$ always tend to $0$ for any control $\xi$; in the space-homogeneous case, i.e., $\xi$ is constant for each $k$, $\tilde{\sigma}^{l+1,k}_i$ is equal to the standard variance. 
The above hyperbolic scaling limit of the random walks plays an important role to investigate convergence of our theory. 

Forward random walks are defined in the same manner with $\gamma^{l+1}=x_n$, $\gamma^{k+1}=\gamma^k+\omega h$ for $k=l+1,\ldots,l'-1$ and $\rho^{k}_{m+\1}(\omega):=\frac{1}{2d}+\frac{\lambda}{2}(\omega\cdot\xi^{k}_{m+\1})$. 
\subsection{Hamilton-Jacobi equation on grid}

Let $v$ denote a function: $\tG_\delta\ni(x_{m+\1},t_k)\mapsto v(x_{m+\1},t_k)=v^k_{m+\1}\in\R$. Introduce the spatial discrete derivatives of $v$ that are defined at each point $(x_m,t_k)\in\G_\delta$ as 
\begin{eqnarray*}
&&(D_{x^j}v)(x_m,t_k)=(D_{x^j}v)^k_{m}:=\frac{v^k_{m+e_j}-v^k_{m-e_j}}{2h},\\
&&(D_{x}v)(x_m,t_k)=(D_{x}v)^k_{m}:=\big((D_{x^1}v)^k_{m},\ldots,(D_{x^d}v)^k_{m}\big).
\end{eqnarray*}
Introduce the temporal discrete  derivative of $v$ that is defined at each point $(x_m,t_{k+1})\in\tG_\delta$ as 
$$(D_t v)(x_m,t_{k+1})=(D_t v)^{k+1}_m:=\left( v^{k+1}_{m}-\frac{1}{2d}\sum_{\omega\in B}v^k_{m+\omega} \right)\frac{1}{\tau}.$$
\indent Let $P\subset\R^d$ be an arbitrary convex and compact set. Fix any $r>0$. For each fixed $c\in P$, consider the initial value problems of the Hamilton-Jacobi equation on the grid  
\begin{eqnarray}\label{HJ-delta}
\left\{
\begin{array}{lll}
&v:\tG_\delta|_{0\le k\le 2K}\ni(x_{m+\1},t_k)\mapsto v^k_{m+\1}\in\R,\medskip \\
&v^k_{m+\1\pm2Ne_i}=v^k_{m+\1}\quad (i=1,\ldots,d),\medskip\\
&v(\cdot,0)=v^0:G_{\rm odd}\ni x_{m+\1}\mapsto v^0_{m+\1}\in\R\mbox{ is given so that}\medskip \\
&v^0_{m+\1\pm2Ne_i}=v^0_{m+\1}\mbox{\, and\, } |D_{x^i}v^0|\le r\,\,\,\,\,\, (i=1,\ldots,d),\medskip\\
&(D_tv)^{k+1}_m+H(x_m,t_k,c+(D_xv)^k_m)=0,
\end{array}
\right.
\end{eqnarray}
which corresponds to the initial value problems of the exact Hamilton-Jacobi equations \eqref{-} with initial data from Lip$_r(\T^d;\R)$. In (\ref{HJ-delta}), the quantity $v^{k+1}_{m}$ is unknown to be determined by $\{v^k_{m+\omega}\}_{\omega\in B}$ as a recursion. As explained in \cite{Soga5}, evolution of (\ref{HJ-delta}) can be intuitively seen as Figure 2: The value $v^{k+1}_{m}$ is determined by the values of the grid points {\color{red}\textbullet} contained in the pyramid  in the figure, where the pyramid grows up to $k=0$ keeping the aspect ratio determined by $\lambda:=\dt/\dx$ ({\it a finite speed of propagation}). 
We call the pyramid ``a pyramid of dependence''.  
The key point is that the contribution of the value at each grid point {\color{red}\textbullet} within the pyramid of dependence to the value $v^{k+1}_{m}$ can be characterized by the probability measure of a controlled backward random walk starting at $(x_m,t_{k+1})$. 
\vspace{-20mm}
\begin{center}
\includegraphics[bb=0 0 670 600,width=9cm]{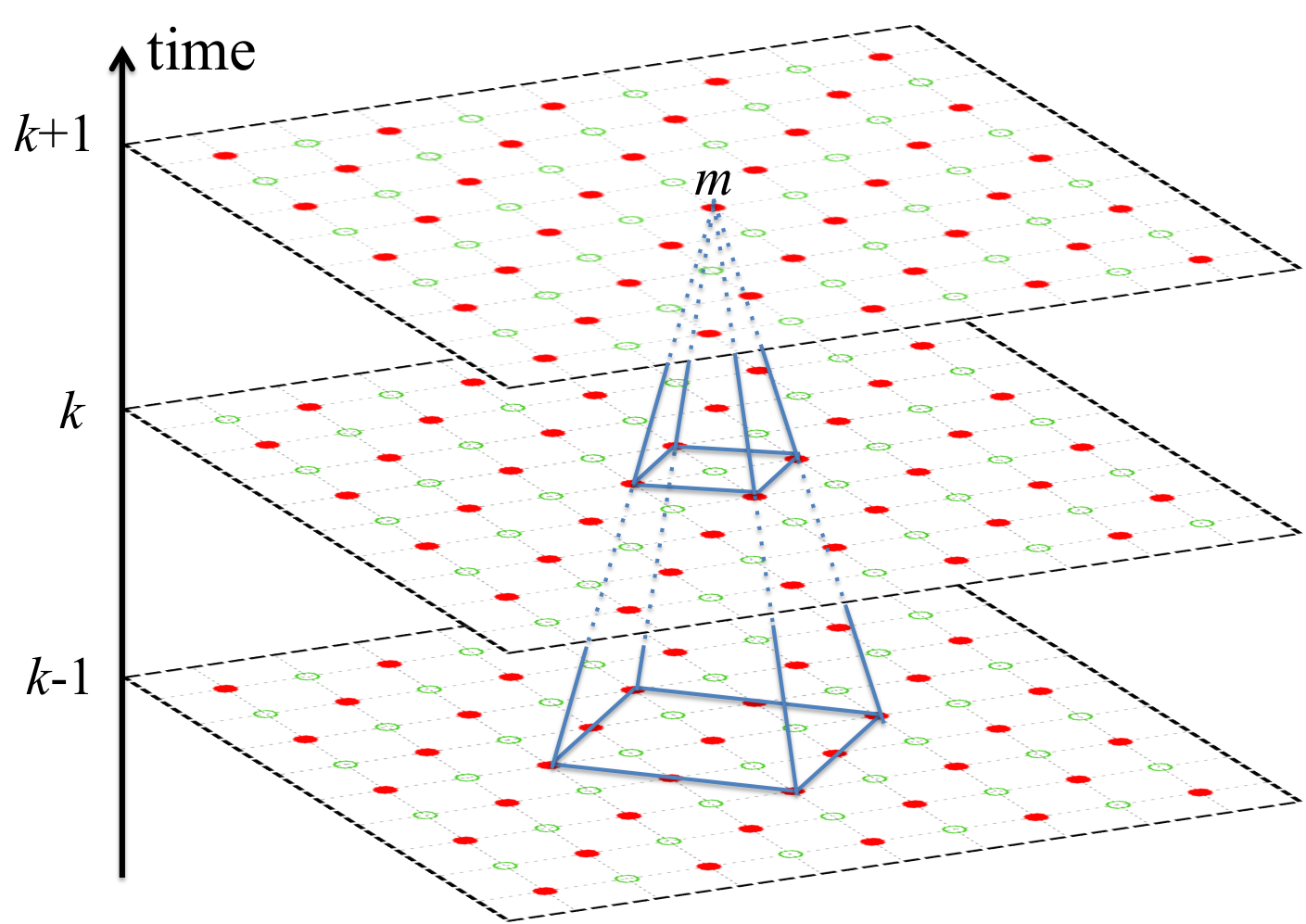}
\center{Figure 2.}
\end{center}
The Hamilton-Jacobi equation on the grid corresponding to \eqref{+} is given as 
\begin{eqnarray}\label{HJ-delta+}
\left\{
\begin{array}{lll}
&v:\tG_\delta|_{-2K\le k\le 0}\ni(x_{m+\1},t_k)\mapsto v^k_{m+\1}\in\R,\medskip \\
&v^k_{m+\1\pm2Ne_i}=v^k_{m+\1}\quad (i=1,\ldots,d),\medskip\\
&v(\cdot,0)=v^0:G_{\rm odd}\ni x_{m+\1}\mapsto v^0_{m+\1}\in\R\mbox{ is given so that}\medskip \\
&v^0_{m+\1\pm2Ne_i}=v^0_{m+\1},\,\,\, |D_{x^i}v^0|\le r\,\,\,\,\,\, (i=1,\ldots,d),\medskip\\
&(\tilde{D}_tv)^{k-1}_m+H(x_m,t_k,c+(D_xv)^k_m)=0
\end{array}
\right.
\end{eqnarray}
with 
$$(\tilde{D}_tv)^{k-1}_m:=   \left( v^{k-1}_{m}-\frac{1}{2d}\sum_{\omega\in B}v^k_{m+\omega} \right)\frac{1}{-\tau}.$$
\subsection{Lax-Oleinik type solution map}
 
We recall the results \cite{Soga5} on solvability of (\ref{HJ-delta}) in terms of the Lax-Oleinik type representation formula with the action functional \eqref{00002}. To be more precise than $\mathcal{L}^l_-$, we use the following notation: define the action functional for each $v^0:G_{\rm odd}\to\R$ as 
$$E^{l+1}_n(\xi;v^0,c):=E_{\mu_n^{l+1,0}(\cdot;\xi)}\Big[ \sum_{0<k\le l+1}L^{(c)}(\gamma^k,t_{k-1},\xi^k_{m(\gamma^k)})\dt+v_{m(\gamma^0)}^0 \Big].$$
\begin{Thm}[\cite{Soga5}]\label{main1}
For each  $r>0$ and the set $P$ (the set of $c$), there exists $\lambda_1>0$ for which the following statements hold for any small $\delta=(\dx,\dt)$ with $\lambda:=\dt/\dx<\lambda_1$, any $c\in P$ and any  initial data $v(\cdot,0)=v^0$ of \eqref{HJ-delta}:
\begin{enumerate} 
\item For each $n$ and $l$ with $0<l+1\le 2K$ such that $(x_n,t_{l+1})\in\tG_\delta$, the action functional $E^{l+1}_n(\xi;v^0,c)$ has the infimum within all controls $\xi:G_n^{l+1,1}\to[-(d\lambda)^{-1},(d\lambda^{-1})]^d$. There exists the unique minimizing control $\xi^\ast$ to attain the infimum, which satisfies 
$$\mbox{$| \xi^\ast{}^j|_\infty\le (d\lambda_1)^{-1}<(d\lambda)^{-1}$ on $G_n^{l+1,1}$ for all $1\le j\le d$.}$$
\item Define the function $v:\tG_\delta|_{0\le k\le 2K}\to\R$ as 
\begin{eqnarray}\label{nikoinko}
v(x_m,t_{k+1}):=\inf_\xi E^{k+1}_m(\xi;v^0,c),\quad v(x_{m+\1},0):=v_{m+\1}^0.
\end{eqnarray}
 Then, the minimizing control $\xi^\ast$ for $\inf_\xi E^{l+1}_n(\xi;v^0,c)$ satisfies
$$\xi^\ast{}^{k+1}_m=H_p(x_m,t_k,c+(D_xv)^k_m)\,\,\,\,(\Leftrightarrow\,\,(D_xv)^k_m=L_\zeta(x_m,t_k,\xi^\ast{}^{k+1}_m)-c).$$
In particular, $(D_xv)^k_m$ is uniformly bounded on $\tG_\delta|_{0\le k\le 2K}$ independently from $\delta$ (this is a CFL-type condition).
\item The function $v$ defined in the claim 2 is the unique solution of (\ref{HJ-delta}).
\end{enumerate}
\end{Thm}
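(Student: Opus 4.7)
The plan is to prove all three claims simultaneously by a dynamic programming argument. First I would define $v^{k+1}_m := \inf_\xi E^{k+1}_m(\xi; v^0, c)$ and, by conditioning on the first backward jump from $(x_m, t_{k+1})$ to $(x_m + \omega h, t_k)$ and invoking the principle of optimality, derive the one-step Bellman equation
\begin{equation*}
v^{k+1}_m = \min_{\xi^{k+1}_m \in [-(d\lambda)^{-1}, (d\lambda)^{-1}]^d} \left\{ L^{(c)}(x_m, t_k, \xi^{k+1}_m)\tau + \sum_{\omega \in B} \rho^{k+1}_m(\omega)\, v^k_{m+\omega} \right\}.
\end{equation*}
Expanding $\rho^{k+1}_m(\omega) = \tfrac{1}{2d} - \tfrac{\lambda}{2}(\omega \cdot \xi^{k+1}_m)$ and using the identity $\sum_{\omega \in B} \omega\, v^k_{m+\omega} = 2h\, (D_x v)^k_m$, I would rewrite this as
\begin{equation*}
v^{k+1}_m = \frac{1}{2d}\sum_{\omega \in B} v^k_{m+\omega} + \tau \cdot \min_{\xi} \bigl\{ L(x_m, t_k, \xi) - (c + (D_x v)^k_m) \cdot \xi \bigr\}.
\end{equation*}

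Strict convexity of $L$ in $\zeta$ from (L2), together with superlinearity (L3), will produce a unique unconstrained minimizer $\xi^* = H_p(x_m, t_k, c + (D_x v)^k_m)$ with minimum value $-H(x_m, t_k, c + (D_x v)^k_m)$. If this $\xi^*$ lies strictly inside the admissible cube, it is automatically the constrained minimizer, the Bellman equation becomes exactly the discrete Hamilton-Jacobi equation in (\ref{HJ-delta}), and claims 1 and 2 follow at once. Claim 3 then follows because (\ref{HJ-delta}) is an explicit forward recursion $v^{k+1}_m = \tfrac{1}{2d}\sum_\omega v^k_{m+\omega} - \tau H(x_m, t_k, c + (D_x v)^k_m)$, which determines $v^{k+1}$ uniquely from $v^k$. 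The whole argument thus reduces to establishing a $\delta$-independent bound $R^k := \max_{m,i}|(D_{x^i}v)^k_m| \le R$ for all $0 \le k \le 2K$; given such an $R$, continuity of $H_p$ from (H1) lets one fix $\lambda_1$ small enough that $|H_p(x,t,c+p)|_\infty < (d\lambda_1)^{-1}$ whenever $|p|_\infty \le R$ and $c \in P$, forcing $\xi^*$ to be strictly admissible.

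The hard part will be this uniform a priori bound on $(D_x v)$, since a naive difference of the Bellman equation in $x^i$ produces a term $\tau H_p \cdot D_{x^i}(D_x v)^k_m$ involving second-order spatial differences of $v$, which are not a priori controlled. My plan, following \cite{Soga5}, is to establish a discrete Oleinik-type semiconcavity estimate: exploiting uniform convexity of $H$ in $p$ from (H2), the growth control (L4) on $L_x$, and the nonexpansive (convex-combination) action of $\tfrac{1}{2d}\sum_\omega$ on first-order differences, one derives a one-sided second-order difference bound that controls the problematic term and yields a discrete Gronwall-type inequality $R^{k+1} \le (1 + C\tau)R^k + C\tau$ with $C$ depending only on $r$, $P$, $L$, $H$. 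Iterating over the at most $\tau^{-1}$ time steps in $[0,1]$ then gives $R^k \le e^C(r + 1)$ independently of $\delta$. Equivalently, one may work on the control side: show that the optimal control depends Lipschitz-continuously on the starting space point and transfer this bound back to $(D_x v)$ via the Legendre identity $(D_x v)^k_m = L_\zeta(x_m, t_k, \xi^{*,k+1}_m) - c$. With the uniform bound in hand, the Bellman derivation above closes the proof.
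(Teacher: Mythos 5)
You should first note that the paper itself contains no proof of Theorem \ref{main1}: it is imported from \cite{Soga5}, so the comparison is with that source's stochastic--variational argument. Your skeleton --- conditioning on the first backward jump to get the one-step Bellman equation, rewriting it via $\sum_{\omega\in B}\omega\,v^k_{m+\omega}=2h\,(D_xv)^k_m$, identifying the unconstrained minimizer with $H_p(x_m,t_k,c+(D_xv)^k_m)$ by (L2)--(L3), and reducing everything to a $\delta$-independent bound on $D_xv$ together with a choice of $\lambda_1$ that keeps this minimizer strictly inside the control cube --- is exactly the mechanism of \cite{Soga5}, and claims 1--3 do follow from it (for claim 1 you should add that $|\xi^\ast|_\infty\le(d\lambda_1)^{-1}<(d\lambda)^{-1}$ makes all transition probabilities strictly positive, so every node of $G_n^{l+1,1}$ is charged and pointwise uniqueness of the Bellman minimizer really gives uniqueness of the control on all of $G_n^{l+1,1}$).

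Two points in your ``hard part'' need repair, and they are where the actual work lies. (i) The Oleinik/semiconcavity detour is the wrong tool: a one-sided bound on second differences cannot control the coupling term $\bar H_p\cdot\big[(D_xv)^k_{m+e_i}-(D_xv)^k_{m-e_i}\big]$ in a sup-norm estimate (it has no sign), and in this paper semiconcavity is Theorem \ref{semi}, proved \emph{after} Theorem \ref{main1} and using its CFL bound, so building the gradient bound on it would be circular. What closes the estimate is the other device you half-mention, made precise by the symmetry of mixed second differences $(D_{x^j}v)^k_{m+e_i}-(D_{x^j}v)^k_{m-e_i}=(D_{x^i}v)^k_{m+e_j}-(D_{x^i}v)^k_{m-e_j}$, which decouples the components: differencing the explicit scheme and applying the mean value theorem gives
\begin{equation*}
(D_{x^i}v)^{k+1}_{m}=\sum_{j=1}^d\Big\{\Big(\tfrac{1}{2d}-\tfrac{\lambda}{2}\bar H_{p^j}\Big)(D_{x^i}v)^k_{m+e_j}+\Big(\tfrac{1}{2d}+\tfrac{\lambda}{2}\bar H_{p^j}\Big)(D_{x^i}v)^k_{m-e_j}\Big\}-\tau\,\bar H_{x^i},
\end{equation*}
so under the CFL condition the bracket is a convex combination of values of the \emph{same} component and $R^{k+1}\le R^k+\tau\sup|H_x|$, with the sup controlled through (L4) over the unit time interval; without this identity the ``nonexpansive convex-combination'' claim for the coupled system $u=D_xv$ is not immediate. (ii) Your closing step hides a circularity: the CFL threshold, the Gronwall constant, and the admissible cube $[-(d\lambda)^{-1},(d\lambda)^{-1}]^d$ all depend on the bound being proved. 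You must first fix the Gronwall majorant $\bar R$ on $[0,1]$ (depending only on $r$, $P$, $L$), then choose $\lambda_1$ so that $|H_p(x,t,c+p)|_\infty\le(d\lambda_1)^{-1}$ whenever $|p|_\infty\le\bar R$ and $c\in P$, and then run a single induction in $k$ establishing simultaneously that the value function satisfies the explicit recursion, that $|(D_xv)^k|_\infty\le\bar R$, and that the Bellman minimizer stays interior. With these two repairs your plan is a faithful reconstruction of the cited proof.
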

\noindent Throughout this paper, $\lambda_1$ stands for the constant mentioned in Theorem \ref{main1}. 

The following families of the maps are well-defined as the Lax-Oleinik type solution maps for \eqref{HJ-delta}: for any $v^0$ given in \eqref{HJ-delta},  
$$\{\varphi^k_\delta(\cdot;c)\}_{k\in\N\cup\{0\}},\quad \varphi^k_\delta(\cdot;c): v^0\mapsto v(\cdot,t_k) \quad (\mbox{$v$ is given as \eqref{nikoinko}}).$$
In addition,  we set 
$$\{\psi^k_\delta(\cdot;c)\}_{k\in\N\cup\{0\}},\quad  \psi^k_\delta(\cdot;c): u^0=D_xv^0\mapsto u^k=D_xv(\cdot,t_k)\quad (\mbox{$v$ is given as \eqref{nikoinko}}),$$
where $\psi^k_\delta$ is indeed the solution map of the system of discrete conservation laws with restricted initial data derived from  \eqref{HJ-delta}: 
$u^k_m=(u^1{}^k_m,u^2{}^k_m,\ldots,u^d{}^k_m):=D_xv(\cdot,t_k):\G\to\R^d$ satisfies for  $i=1,2,\ldots,d$,
$$\frac{1}{\tau}\Big(u^i{}^{k+1}_{m+e_i} -\frac{1}{2d}\sum_{\omega\in B} u^i{}^{k}_{m+e_i+\omega} \Big)+\frac{1}{2h}\{H(x_{m+2e_i},t_k,c+u^k_{m+2e_i})-H(x_{m},t_k,c+u^k_m)\}=0.$$
\indent In Section 3, we will seek for a pair of constant $\bar{H}_\delta(c)$ and a function $\bar{v}^0$ such that $\varphi^{2K}_\delta(\bar{v}^0;c)+\bar{H}_\delta(c)=\bar{v}^0$. The pair yields a time-$1$-periodic solution $\bar{v}$ of 
$$(D_tv)^{k+1}_m+H(x_m,t_k,c+(D_xv)^k_m)=\bar{H}_\delta(c).$$
 Then, with $1$-periodic extension of $\bar{v}$ to the whole $\tG_\delta$, we complete the set up of the action minimizing problem for $\mathcal{L}^{l}_-(\xi;\bar{v})$. For this purpose,  we need more preliminary investigations. 

As for \eqref{HJ-delta+}, we have 
\begin{eqnarray*}
E^{-l-1}_n(\xi;c)&:=&E_{\mu_n^{-l-1,0}(\cdot;\xi)}\Big[ \sum_{-l-1\le k<0}-L^{(c)}(\gamma^k,t_{k+1},\xi^k_{m(\gamma^k)})\dt+v_{m(\gamma^0)}^0 \Big],\\
v(x_m,t_{-k-1})&=&\sup_{\xi}E^{-k-1}_m(\xi;c),
\end{eqnarray*}
where the solution map is denoted by $\tilde{\varphi}^k_\delta(\cdot;c)$ and $\tilde{\psi}^k_\delta(\cdot;c)$.
\subsection{Semiconcavity of Lax-Oleinik type solution map}

Due to the variational structure of the solution $v^k_{m+\1}=\varphi^k_{\delta}(v^0;c)(x_{m+\1})$ to (\ref{HJ-delta}), we have a kind of semiconcavity property, i.e.,  
$$(D^2_jv)^{k}_{m+\1}:=(v^k_{m+\1+2e_j}+v^k_{m+\1-2e_j}-2v^k_{m+\1})\frac{1}{4h^2}=\frac{(D_{x^j}v)^k_{m+\1+e_j}-(D_{x^j}v)^k_{m+\1-e_j}}{2h}$$
is bounded from the above.  An instant observation shows that the upper bound is given by ``$\sup_m (D^2_iv^0)_{m+\1}+$ [some positive constant independent from $\delta$ and $v^0$]''. Our aim is to obtain a sharper estimate independent from the initial data $v^0$. Note that our ``semiconcavity'' estimate is restricted to the directions of $e_1,\ldots,e_d$, not every direction of $\R^d$.  The sharper semiconcavity estimate implies a lot in regards to the behaviors of the derivative of discrete solutions.        

 Introduce the following notation: 
\begin{eqnarray*}
&&M^k_\delta:=\sup_{m,j} (D^2_jv)^{k}_{m+\1},\\
&&u^\ast:=\max_{x\in\T^d,t\in\T, |\zeta|_\infty\le(d\lambda_1)^{-1},c\in P}| L^{(c)}_\zeta(x,t,\zeta)|_\infty \\&&\mbox{(note that $| (D_xv)^k_{m+1}|_\infty\le u^\ast$ for any solution of \eqref{HJ-delta})},\\
&&H^\ast_p:=\max_{x\in\T^d,t\in\T, |u|_\infty\le u^\ast,c\in P}|H_{p}(x,t,c+u)|_\infty,\\
&& H_{xx}^\ast:=\max_{x\in\T^d,t\in\T, |u|_\infty\le u^\ast,c\in P,i,j}|H_{x^ix^j}(x,t,c+u)|,\\
&& H_{xp}^\ast:=\max_{x\in\T^d,t\in\T,|u|_\infty\le u^\ast,c\in P,i,j}|H_{x^ip^j}(x,t,c+u)|,\\
&& H_{pp}^\ast:=\inf_{x\in\T^d,t\in\T, |u|_\infty\le u^\ast,c\in P,y\in\R^d}\frac{H_{pp}(x,t,c+u)y\cdot y}{|y|^2},\quad\mbox{where $H_{pp}^\ast>0$ due to (H2)},\\
&&M^\ast_\pm:=\frac{H^\ast_{xp}\pm\sqrt{(1+d)(H^\ast_{xp})^2+H^\ast_{pp}H^\ast_{xx}}}{H^\ast_{pp}},\\
&&\eta^\ast:=M^\ast_+-M^\ast_-,\\
&&M(t):=M^\ast_++\frac{\eta^\ast e^{-\eta^\ast H^\ast_{pp}t} }{1-e^{-\eta^\ast H^\ast_{pp}t}},\quad t>0,\quad\mbox{where } M(t)\to M^\ast_+\mbox{ as $t\to\infty$}.
\end{eqnarray*}
\begin{Thm}\label{semi}
Suppose that $\delta=(h,\tau)$ with $\lambda:=\tau/h<\lambda_1$ is such that 
\begin{eqnarray*}
&&\lambda\le \min\Big\{
\frac{1-2d H^\ast_{xp}\tau}{2drH^\ast_{pp}+dH^\ast_p},\,\,\,\frac{1}{10rH^\ast_{pp}} \Big\},\\
&&\tau<\min\Big\{ 
\frac{1}{2dH^\ast_{xp}},\,\,\,  \frac{1-d\lambda H^\ast_p}{2d (H^\ast_{pp}M^\ast_++H^\ast_{xp})}, \,\,\,
\frac{1}{H^\ast_{pp}(M^\ast_+-M^\ast_-)},\,\,\,
\frac{\log2}{\eta^\ast H^\ast_{pp}},\,\,\,\\
&&\qquad\qquad\qquad\qquad\qquad\qquad\qquad\qquad\qquad\qquad \frac{1}{4\sqrt{(1+d)(H_{xp}^\ast)^2+H_{pp}^\ast H_{xx}^\ast}}
\Big\}.
\end{eqnarray*}
Then, we have $M^k_\delta\le M(t_k)$ for all $k=1,\ldots,2K$. In particular,  if $M^0_\delta\le M^\ast_+$, we have $M^k_\delta\le M^\ast_+$ for all $0\le k\le 2K$;  if $v^k$ is extended to $k\to\infty$ keeping the boundedness $|(D_xv)^k_{m}|_\infty\le u^\ast$, we have $M^k_\delta\le M(t_k)$ for all $k>0$. 
\end{Thm}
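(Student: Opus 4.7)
The plan is to derive a discrete Riccati-type inequality for $w^k_m := (D^2_j v)^k_m$ and compare it, via a barrier argument, with the continuous Riccati ODE $M'(t) = -F(M(t))$, where $F(M) := H^\ast_{pp}M^2 - 2H^\ast_{xp}M - H^\ast_{xx} - d(H^\ast_{xp})^2/H^\ast_{pp}$ has positive root $M^\ast_+$. The function $M(t)$ in the statement is precisely the super-solution of this ODE emanating from $+\infty$ at $t=0^+$ and decaying monotonically to $M^\ast_+$. The argument adapts the Bernstein strategy familiar from semiconcavity proofs for continuous Hamilton--Jacobi equations.

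First, I apply the second-difference operator in direction $e_j$ to both sides of the HJ recursion $v^{k+1}_m = \frac{1}{2d}\sum_\omega v^k_{m+\omega} - \tau H(x_m, t_k, c+(D_xv)^k_m)$ to obtain
\begin{align*}
w^{k+1}_m = \tfrac{1}{2d}\sum_\omega w^k_{m+\omega} - \tfrac{\tau}{4h^2}\bigl[H^+ + H^- - 2H^0\bigr],
\end{align*}
where $H^\pm, H^0$ denote $H$ at $(x_{m\pm 2e_j}, t_k, c+(D_xv)^k_{m\pm 2e_j})$ and $(x_m, t_k, c+(D_xv)^k_m)$ respectively. Taylor-expanding $H^\pm$ around $H^0$ to second order in $\Delta x^\pm = \pm 2he_j$ and $\Delta p^\pm = (D_xv)^k_{m\pm 2e_j}-(D_xv)^k_m$, the $\Delta x$-linear terms cancel by the symmetry $\Delta x^+ = -\Delta x^-$. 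Two identities are crucial: $(\Delta p^++\Delta p^-)/(4h^2) = (D_xw)^k_m$, so the $H_p$-piece yields a discrete transport term which combines with the average $\tfrac{1}{2d}\sum_\omega w^k_{m+\omega}$ into a convex combination $\sum_\omega \rho^{k+1}_m(-\omega) w^k_{m+\omega} \le M^k_\delta$ (using the stated CFL-smallness of $\lambda$); and $(\Delta p^\pm)^j = \pm 2h\,w^k_{m\pm e_j}$, so the $H_{pp}$-quadratic piece delivers the Riccati dissipation $\ge \tfrac{H^\ast_{pp}}{2}[(w^k_{m+e_j})^2+(w^k_{m-e_j})^2]$ via positive-definiteness of $H_{pp}$.

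Next, I handle the remaining terms. The $H_{xx}$ contribution is bounded by $H^\ast_{xx}$. The $H_{xp}$ cross term splits: the diagonal $l=j$ part is linear in $w^k_{m\pm e_j}$ and absorbed into the $w^2$-dissipation by completing the square, while the off-diagonal $l\ne j$ parts are balanced against the off-diagonal portion of $\langle H_{pp}\Delta p^\pm, \Delta p^\pm\rangle$ via Young's inequality with weight $H^\ast_{pp}$. Careful bookkeeping produces the specific constant $d(H^\ast_{xp})^2/H^\ast_{pp}$, equivalent to the factor $(1+d)$ appearing in the radicand of $M^\ast_\pm$. Collecting everything yields a discrete Riccati estimate of schematic form $w^{k+1}_m \le M^k_\delta - \tau F(\cdot) + O(\tau h)$, with the argument of $F$ determined by the neighbor values $w^k_{m\pm e_j}$.

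Finally, I prove $M^k_\delta \le M(t_k)$ by induction on $k$. Taylor-expanding the ODE gives $M(t_k)-M(t_{k+1}) = \tau F(M(t_k)) + O(\tau^2)$. At the point $(m_0, j_0)$ realizing $M^{k+1}_\delta$, combining the discrete estimate with the induction hypothesis $M^k_\delta \le M(t_k)$ produces the desired bound; the stated smallness of $\tau$, in particular the bounds involving $\eta^\ast H^\ast_{pp}$ and $\sqrt{(1+d)(H^\ast_{xp})^2+H^\ast_{pp}H^\ast_{xx}}$, ensures that both the discretization error and the gap between the relevant $F$-values are controlled in the regime $M(t_k)\ge M^\ast_+$, where $F$ is nonnegative and increasing. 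The special case $M^0_\delta \le M^\ast_+$ follows immediately since $F(M^\ast_+) = 0$ makes $M^\ast_+$ an invariant barrier. The main obstacle is the gap control: the neighbors $w^k_{m_0\pm e_{j_0}}$ are at most $M^k_\delta$ by induction but need not be close to it, so one must exploit the tightness of the convex-combination inequality (which forces the neighbors to be near the max exactly when the max is propagated) together with the optimally tuned Young's-inequality weights to preserve the sharp Riccati constant throughout the induction.
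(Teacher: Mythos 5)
Your first half reproduces the paper's derivation essentially verbatim: second differences of the scheme, Taylor expansion, the identity $(\Delta p^\pm)^j=\pm 2h\,(D^2_jv)^k_{m\pm e_j}$, completion of the square against $H_{xp}$ with weight $H^\ast_{pp}$ producing the constant $d(H^\ast_{xp})^2/H^\ast_{pp}$, and the convex combination over neighbors under the CFL condition. The genuine gaps are in the comparison/induction part. First, the obstacle you flag at the end is real but your proposed mechanism (``tightness of the convex-combination inequality'') is not what resolves it, and you leave it unresolved. In the paper the $e_j$-neighbors enter only through the per-neighbor function
$g_\pm(y)=\frac{1}{2d}\bigl(1\pm d\lambda H_{p^j}\bigr)y-\frac{\tau H^\ast_{pp}}{2}\bigl(y+\frac{H_{x^jp^j}}{H^\ast_{pp}}\bigr)^2$,
which is nondecreasing for $y\le \frac{1-d\lambda H^\ast_p}{2d\tau H^\ast_{pp}}-\frac{H^\ast_{xp}}{H^\ast_{pp}}$; hence each neighbor value may simply be replaced by $M^k_\delta$, with no tightness argument, \emph{provided} the threshold bound \eqref{semi3} is maintained as a separate induction invariant. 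Maintaining \eqref{semi3} is itself nontrivial: it uses $M^0_\delta\le r/h$ together with the first $\lambda$-condition, and the dichotomy $M^{k+1}_\delta\le\max\{M^\ast_+,M^k_\delta\}$. Your plan never uses the initial Lipschitz bound $r$ at all, so this invariant is absent.

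Second, comparing directly with $M(t)$ does not work near $t=0$. Since $M(\tau)\approx M^\ast_++\frac{1}{H^\ast_{pp}\tau}$, the induction hypothesis $M^k_\delta\le M(t_k)$ at small $k$ is weaker than the threshold needed both for the monotonicity of $g_\pm$ (validity of the next discrete Riccati step) and for the monotonicity of $y\mapsto y+\tau G(y)$, whose vertex is at $\frac{1+2H^\ast_{xp}\tau}{2H^\ast_{pp}\tau}<M(\tau)$; so you cannot substitute $M(t_k)$ for $M^k_\delta$ in the discrete inequality. The paper instead compares with the auxiliary solution $w$ of the same ODE with the \emph{finite} initial value $w(0)=\frac{1+2H^\ast_{xp}\tau}{2H^\ast_{pp}\tau}\ (\le$ the vertex$)$, shows $w\le M$, and verifies the base case $M^1_\delta\le w(\tau)$ using $M^0_\delta\le r/h\le\frac{1}{10H^\ast_{pp}\tau}$ (this is exactly where $\lambda\le(10rH^\ast_{pp})^{-1}$ and the conditions $\tau\le\log 2/(\eta^\ast H^\ast_{pp})$, $\tau\le[4\sqrt{(1+d)(H^\ast_{xp})^2+H^\ast_{pp}H^\ast_{xx}}]^{-1}$ are consumed); none of this appears in your plan. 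Finally, your claim that the $O(\tau^2)$ Taylor error of the ODE is ``controlled by the smallness of $\tau$'' is not the right mechanism: near $t=0$ one has $M''(t)\sim t^{-3}$, so that error is not small there; the comparison closes only because the error has the favorable sign, $w(t_{k+1})\ge w(t_k)+\tau w'(t_k)$, by convexity of $w$, combined with the monotonicity of $y+\tau G(y)$ up to $w(0)$.
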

\begin{proof}
We insert 
\begin{eqnarray*}
&&v^{k+1}_m=\frac{1}{2d}\sum_{\omega\in B} v^k_{m+\omega}-H(x_m,t_k,c+(Dv)^k_m)\tau
\end{eqnarray*}
into $(D_j^2v)^{k+1}_m$ and apply Taylor's formula with short notation $H_{pp}$, $H_{xx}$, $H_{xp}$, etc., for the remainder terms, to get   
\begin{eqnarray*}
&&(D_j^2v)^{k+1}_m\cdot4h^2=\frac{1}{2d}\sum_{\omega\in B}\Big(v^k_{m+2e_j+\omega}+v^k_{m-2e_j+\omega}-2v^k_{m+\omega}\Big)\\
&&\quad- \Big\{H(x_{m+2e_j},t_k,c+(D_xv)^k_{m+2e_j})+H(x_{m-2e_j},t_k,c+(D_xv)^k_{m-2e_j})\\
&&\quad-2H(x_{m},t_k,c+(D_xv)^k_{m}) \Big\}\tau\\
&&=\frac{1}{2d}\sum_{\omega\in B} (D_j^2v)^k_{m+\omega}\cdot4h^2 \\
&&\quad -\Big\{   
H(x_{m+2e_j},t_k,c+(D_xv)^k_{m+2e_j})-H(x_{m},t_k,c+(D_xv)^k_{m+2e_j})\\
&&\quad +H(x_{m},t_k,c+(D_xv)^k_{m+2e_j})-H(x_{m},t_k,c+(D_xv)^k_{m})\\
&&\quad+H(x_{m-2e_j},t_k,c+(D_xv)^k_{m-2e_j})-H(x_{m},t_k,c+(D_xv)^k_{m-2e_j})\\
&&\quad +H(x_{m},t_k,c+(D_xv)^k_{m-2e_j})-H(x_{m},t_k,c+(D_xv)^k_{m})  \Big\}\tau\\
&&=\frac{1}{2d}\sum_{i=1}^d \Big( (D_{x^j}^2v)^k_{m+e_i}+(D_{x^j}^2v)^k_{m-e_i} \Big)\cdot4h^2 \\
&&\quad -\Big\{   
H_x(x_{m},t_k,c+(D_xv)^k_{m+2e_j})\cdot(2he_j)+\frac{1}{2}H_{xx}\times(2he_j)\cdot(2he_j)\\
&&\quad +H_p(x_{m},t_k,c+(D_xv)^k_{m})\cdot((D_xv)^k_{m+2e_j}-(D_xv)^k_{m})\\
&&\quad+\frac{1}{2}H_{pp}\times((D_xv)^k_{m+2e_j}-(D_xv)^k_{m})\cdot((D_xv)^k_{m+2e_j}-(D_xv)^k_{m}) \\
&&\quad +H_x(x_{m},t_k,c+(D_xv)^k_{m-2e_j})\cdot(-2he_j)+\frac{1}{2}H_{xx}\times(-2he_j)\cdot(-2he_j)\\
&&\quad +H_p(x_{m},t_k,c+(D_xv)^k_{m})\cdot((D_xv)^k_{m-2e_j}-(D_xv)^k_{m})\\
&&\quad+\frac{1}{2}H_{pp}\times((D_xv)^k_{m-2e_j}-(D_xv)^k_{m})\cdot((D_xv)^k_{m-2e_j}-(D_xv)^k_{m}) \Big\}\tau\\
&&=\frac{2h^2}{d}\sum_{i=1}^d \Big( (D_{x^j}^2v)^k_{m+e_i}+(D_{x^j}^2v)^k_{m-e_i} \Big) \\
&&\quad -\Big\{  
\sum_{i=1}^d H_{p^i}(x_{m},t_k,c+(D_xv)^k_{m}) 
\Big( \frac{v^k_{m+2e_j+e_i}-v^k_{m+2e_j-e_i}}{2h}
+\frac{v^k_{m-2e_j+e_i}-v^k_{m-2e_j-e_i}}{2h}\\
&&\quad -2\frac{v^k_{m+e_i}-v^k_{m-e_i}}{2h}\Big)
+ H_{x^jp}\cdot\Big( 
(D_xv)^k_{m+2e_j}-(D_xv)^k_{m}+(D_xv)^k_{m}-(D_xv)^k_{m-2e_j}
\Big)2h \\
&&\quad+\frac{1}{2}H_{pp}\times((D_xv)^k_{m+2e_j}-(D_xv)^k_{m})\cdot((D_xv)^k_{m+2e_j}-(D_xv)^k_{m}) \\
&&\quad+\frac{1}{2}H_{pp}\times((D_xv)^k_{m-2e_j}-(D_xv)^k_{m})\cdot((D_xv)^k_{m-2e_j}-(D_xv)^k_{m})\\
&&\quad +\frac{1}{2}H_{x^jx^j}\times4h^2+\frac{1}{2}H_{x^jx^j}\times4h^2\Big\}\tau.
\end{eqnarray*}
Since
\begin{eqnarray*}
&&\frac{v^k_{m+2e_j+e_i}-v^k_{m+2e_j-e_i}}{2h}
+\frac{v^k_{m-2e_j+e_i}-v^k_{m-2e_j-e_i}}{2h} -2\frac{v^k_{m+e_i}-v^k_{m-e_i}}{2h}\\
&&\qquad=\frac{2h^2}{d}\Big((D^2_jv)^k_{m+e_i}\cdot\frac{d}{h}-(D^2_jv)^k_{m-e_i}\cdot\frac{d}{h} \Big),
\end{eqnarray*}
we have with $\lambda=\tau/h$, 
\begin{eqnarray*}
&&(D_{x^j}^2v)^{k+1}_m\cdot4h^2=\frac{2h^2}{d}\sum_{i=1}^d\Big\{ 
(D^2_jv)^k_{m+e_i}\Big( 1-d\lambda H_{p^j}(x_m,t_k,c+(D_xv)^k_m)  \Big) \\
&&\quad + (D^2_jv)^k_{m-e_i}\Big( 1+d\lambda H_{p^j}(x_m,t_k,c+(D_xv)^k_m)  \Big)
\Big\}\\
&&\quad -\Big\{ H_{x^jp}\cdot\Big( 
(D_xv)^k_{m+2e_j}-(D_xv)^k_{m}+(D_xv)^k_{m}-(D_xv)^k_{m-2e_j}
\Big)2h \\
&&\quad+\frac{1}{2}H_{pp}\times((D_xv)^k_{m+2e_j}-(D_xv)^k_{m})\cdot((D_xv)^k_{m+2e_j}-(D_xv)^k_{m}) \\
&&\quad+\frac{1}{2}H_{pp}\times((D_xv)^k_{m-2e_j}-(D_xv)^k_{m})\cdot((D_xv)^k_{m-2e_j}-(D_xv)^k_{m})\\
&&\quad +\frac{1}{2}H_{x^jx^j}\times4h^2+\frac{1}{2}H_{x^jx^j}\times4h^2\Big\}\tau\\
&&\le \frac{2h^2}{d}\sum_{i=1}^d\Big\{ 
(D^2_jv)^k_{m+e_i}\Big( 1-d\lambda H_{p^j}(x_m,t_k,c+(D_xv)^k_m)  \Big) \\
&&\quad + (D^2_jv)^k_{m-e_i}\Big( 1+d\lambda H_{p^j}(x_m,t_k,c+(D_xv)^k_m)  \Big)
\Big\}+ 4h^2\tau H^\ast_{xx} \\
&&\quad -\Big\{ 
H_{x^jp}\cdot\Big( 
(D_xv)^k_{m+2e_j}-(D_xv)^k_{m}+(D_xv)^k_{m}-(D_xv)^k_{m-2e_j}
\Big)2h \\
&&\quad+\frac{H^\ast_{pp}}{2}((D_xv)^k_{m+2e_j}-(D_xv)^k_{m})\cdot((D_xv)^k_{m+2e_j}-(D_xv)^k_{m}) \\
&&\quad+\frac{H^\ast_{pp}}{2}((D_xv)^k_{m-2e_j}-(D_xv)^k_{m})\cdot((D_xv)^k_{m-2e_j}-(D_xv)^k_{m})
\Big\}\tau\\
&&=\frac{2h^2}{d}\sum_{i=1}^d\Big\{ 
(D^2_jv)^k_{m+e_i}\Big( 1-d\lambda H_{p^j}(x_m,t_k,c+(D_xv)^k_m)  \Big) \\
&&\quad + (D^2_jv)^k_{m-e_i}\Big( 1+d\lambda H_{p^j}(x_m,t_k,c+(D_xv)^k_m)  \Big)
\Big\}+ 4h^2\tau H^\ast_{xx} \\
&&\quad -\Big\{ 
\frac{H^\ast_{pp}}{2}\Big|\Big((D_xv)^k_{m+2e_j}-(D_xv)^k_{m}\Big)+\frac{2h}{H^\ast_{pp}}H_{x^jp}\Big|^2-\frac{2h^2}{H^\ast_{pp}}|H_{x^jp}|^2 \\
&&\quad+\frac{H^\ast_{pp}}{2}\Big|\Big((D_xv)^k_{m}-(D_xv)^k_{m-2e_j}\Big)+\frac{2h}{H^\ast_{pp}}H_{x^jp}\Big|^2-\frac{2h^2}{H^\ast_{pp}}|H_{x^jp}|^2
\Big\}\tau\\
&&\!\!\!\!\!\!\!\mbox{(since $|((D_xv)^k_{m+2e_j}-(D_xv)^k_{m})+\frac{2h}{H^\ast_{pp}}H_{x^jp}|^2\ge 
\{((D_{x^j}v)^k_{m+2e_j}-(D_{x^j}v)^k_{m})+\frac{2h}{H^\ast_{pp}}H_{x^jp^j}\}^2$)}\\
&&\le \frac{2h^2}{d}\sum_{i=1}^d\Big\{ 
(D^2_jv)^k_{m+e_i}\Big( 1-d\lambda H_{p^j}(x_m,t_k,c+(D_xv)^k_m)  \Big) \\
&&\quad + (D^2_jv)^k_{m-e_i}\Big( 1+d\lambda H_{p^j}(x_m,t_k,c+(D_xv)^k_m)  \Big)
\Big\}+ 4h^2\tau H^\ast_{xx} +\frac{4h^2\tau dH^\ast_{xp}{}^2}{H_{pp}^\ast} \\
&&\quad - \frac{\tau H^\ast_{pp}}{2}\Big\{ 
\Big((D_{x^j}v)^k_{m+2e_j}-(D_{x^j}v)^k_{m}+\frac{2h}{H^\ast_{pp}}H_{x^jp^j}\Big)^2\\
&&\quad + \Big((D_{x^j}v)^k_{m}-(D_{x^j}v)^k_{m-2e_j}+\frac{2h}{H^\ast_{pp}}H_{x^jp^j}\Big)^2
\Big\}\\
&&=\frac{4h^2}{2d}\sum_{i=1}^d\Big\{ 
(D^2_jv)^k_{m+e_i}\Big( 1-d\lambda H_{p^j}(x_m,t_k,c+(D_xv)^k_m)  \Big) \\
&&\quad + (D^2_jv)^k_{m-e_i}\Big( 1+d\lambda H_{p^j}(x_m,t_k,c+(D_xv)^k_m)  \Big)
\Big\}+ 4h^2\tau H^\ast_{xx} +\frac{4h^2\tau dH^\ast_{xp}{}^2}{H_{pp}^\ast} \\
&&\quad - \frac{4h^2\tau}{2} H^\ast_{pp}\Big\{ 
\Big((D_{x^j}^2v)^k_{m+e_j}+\frac{H_{x^jp^j}}{H^\ast_{pp}}\Big)^2 + \Big((D_{x^j}^2v)^k_{m-e_j}+\frac{H_{x^jp^j}}{H^\ast_{pp}}\Big)^2
\Big\}.
\end{eqnarray*}
Set $g_\pm(y):\R\to\R$ as  
\begin{eqnarray*}
g_\pm(y):=\frac{1}{2d}\Big(1\pm d\lambda H_{p^j}(x_m,t_k,c+(D_xv)^k_m)\Big)y-\frac{\tau H^\ast_{pp}}{2}\Big(y+\frac{H_{x^jp^j}}{H^\ast_{pp}}\Big)^2.
\end{eqnarray*}
We see that $g'_\pm(y)\ge0$, if 
\begin{eqnarray*}
y\le  \frac{1-d\lambda H^\ast_p}{2d\tau H^\ast_{pp}}-\frac{H^\ast_{xp}}{H^\ast_{pp}}\quad\Big(  \le \frac{1\pm d\lambda H_{p^j}(x_m,t_k,c+(D_xv)^k_m)}{2d\tau H^\ast_{pp}}-\frac{H_{x^jp^j}}{H^\ast_{pp}} \Big).
\end{eqnarray*}
Since $\lambda\le(1-2d H^\ast_{xp}\tau)/(2drH^\ast_{pp}+dH^\ast_p)$, we have for all initial data $v^0$ in \eqref{HJ-delta},
\begin{eqnarray*}
M^0_\delta =\sup_{m,j}(D^2_jv^0)_{m+\1}\le \frac{r}{h}=\frac{r\lambda}{ \tau}
\le \frac{1-3d H^\ast_{xp}\tau}{2d\tau H^\ast_{pp}}    
= \frac{1-d\lambda H^\ast_p}{2d\tau H^\ast_{pp}}-\frac{H^\ast_{xp}}{H^\ast_{pp}}.
\end{eqnarray*}
Suppose that for some $k\ge0$,
\begin{eqnarray}\label{semi3}
M^k_\delta=\sup_{m,j} (D^2_jv)^k_{m+\1}\le \frac{1-d\lambda H^\ast_p}{2d\tau H^\ast_{pp}}-\frac{H^\ast_{xp}}{H^\ast_{pp}}.
\end{eqnarray}
 Then, we have 
\begin{eqnarray*}
&&(D_j^2v)^{k+1}_m\le \frac{1}{2d}\sum_{i\in\{1,\ldots,d\}\setminus\{j\}}
\Big\{ 
(D^2_jv)^k_{m+e_i}\Big( 1-d\lambda H_{p^j}(x_m,t_k,c+(D_xv)^k_m)  \Big) \\
&&\quad + (D^2_jv)^k_{m-e_i}\Big( 1+d\lambda H_{p^j}(x_m,t_k,c+(D_xv)^k_m)  \Big)
\Big\}+ \tau H^\ast_{xx} +\frac{\tau dH^\ast_{xp}{}^2}{H_{pp}^\ast} \\
&&\quad +g_-(M^k_\delta)+g_+(M^k_\delta). 
\end{eqnarray*}
Since $1\pm d\lambda H_{p^j}(x_m,t_k,c+(D_xv)^k_m>0$ due to the CFL-type condition given in Theorem \ref{main1}, we have 
\begin{eqnarray*}
&&(D_j^2v)^{k+1}_m\le M^k_\delta+\tau H^\ast_{xx} +\frac{\tau dH^\ast_{xp}{}^2}{H_{pp}^\ast}
-\tau H^\ast_{pp} (M^k_\delta)^2+2\tau H^\ast_{xp} M^k_\delta, 
\end{eqnarray*}
and hence,
\begin{eqnarray}\label{semi4}
&&M^{k+1}_\delta\le M^k_\delta+ \tau G(M^k_\delta),\quad G(y):=-\Big(H^\ast_{pp}y^2-2H^\ast_{xp}y-H^\ast_{xx}-\frac{d H^\ast_{xp}{}^2}{H^\ast_{pp}}\Big), 
\end{eqnarray}
where $G(y)=-H^\ast_{pp}(y-M^\ast_+)(y-M^\ast_-)$, $G(y)>0$ for $0\le y<M^\ast_+$ and $G(y)<0$ for $y>M^\ast_+$. Since $\tau\le  (1-d\lambda H^\ast_p)/\{2d (H^\ast_{pp}M^\ast_++H^\ast_{xp})\}$ and $\tau\le1/\{H_{pp}^\ast(M_+^\ast - M^\ast_-)\}$, we have 
\begin{eqnarray*}
M^\ast_+< \frac{1-d\lambda H^\ast_p}{2d\tau H^\ast_{pp}}-\frac{H^\ast_{xp}}{H^\ast_{pp}},
\quad \tau G(y)\le M^\ast_+-y \mbox{\quad for all $0\le y\le M^\ast_+$.}
\end{eqnarray*}
Therefore, the following two cases happen:
\begin{itemize}
\item[(i)] If $M^k_\delta\le M^\ast$, we may have $M^{k+1}_\delta\ge M^k_\delta$, but we certainly have $M^{k+1}_\delta\le M^\ast_+$. 
\item[(ii)] If $M^k_\delta>M^\ast_+$, we have $M^{k+1}_\delta<M^k_\delta$.   
\end{itemize}
In  both cases, we have $M^{k+1}_\delta\le  \frac{1-d\lambda H^\ast_p}{2d\tau H^\ast_{pp}}-\frac{H^\ast_{xp}}{H^\ast_{pp}}$. By induction, we see that \eqref{semi3} holds for all $0\le k\le 2K$, and thus, \eqref{semi4} holds for all $0\le k<2K$. Now, it is clear that, if $M^0_\delta\le M^\ast_+$, we have $M^k_\delta\le M^\ast_+$ for all $0\le k\le 2K$. Note that these statements are true beyond $k=2K$ as long as $|(D_xv)^{k}_{m}|_\infty\le u^\ast$ holds.

We estimate the decay in the case (ii). Consider the initial value problem 
\begin{eqnarray*}
 w'(t)&=&G(w(\tau)),\quad w(0)=M^\ast_++\alpha, \\\alpha&:=&\frac{1-2\tau\{(1+d)(H_{xp}^\ast)^2+H_{pp}^\ast H_{xx}^\ast\}^{\frac{1}{2}}}{2H^\ast_{pp}\tau}=\frac{1+2H_{xp}^\ast\tau}{2H^\ast_{pp}\tau}-M_+^\ast. 
 \end{eqnarray*}
 The solution satisfies 
\begin{eqnarray*}
w(t)=M^\ast_+ +\frac{\eta^\ast e^{-\eta^\ast H^\ast_{pp}t} }{1-e^{-\eta^\ast H^\ast_{pp}t}+\frac{\eta^\ast}{\alpha}}=M^\ast_+ +\frac{\eta^\ast  }{e^{\eta^\ast H^\ast_{pp}t}-1+\frac{\eta^\ast}{\alpha}e^{\eta^\ast H^\ast_{pp}t}}\le M(t). 
\end{eqnarray*}
Since $\tau\le \log2/(H^\ast_{pp}\eta^\ast)$ ($\Rightarrow$ $e^{\eta^\ast H^\ast_{pp}\tau} -1\le 2\eta^\ast H^\ast_{pp}\tau$), $\tau\le [4\{(1+d)(H_{xp}^\ast)^2+H_{pp}^\ast H_{xx}^\ast\}^{\frac{1}{2}}]^{-1}$ ($\Rightarrow$ $\alpha^{-1}\le 4H_{pp}^\ast\tau$) and $\lambda\le 1/(10rH^\ast_{pp})$, we have 
\begin{eqnarray*}
&&w(\tau)\ge M^\ast_+ +\frac{\eta^\ast}{2\eta^\ast H^\ast_{pp}\tau+2\frac{\eta^\ast}{\alpha}}\ge M^\ast_++\frac{1}{10H^\ast_{pp}\tau},\\
&&M^1_\delta< M^0_\delta\le\frac{r}{h}=\frac{r\lambda}{\tau}\le \frac{1}{10H^\ast_{pp}\tau}\le M^\ast_++\frac{1}{10H^\ast_{pp}\tau}\le w(t_1)=w(\tau).
\end{eqnarray*}
Suppose that $M^k_\delta \le w(t_k)$ for some $k\ge1$. Note that $y+\tau G(y)$ is increasing for $y\le(1+2 H^\ast_{xp}\tau)/(2 H^\ast_{pp}\tau)$ and that 
\begin{eqnarray*}
w(t_k)\le w(0)= M_+^\ast+\alpha=\frac{1+2 H^\ast_{xp}\tau}{2H^\ast_{pp}\tau}.
\end{eqnarray*}  
Therefore, we see that 
\begin{eqnarray*}
M^{k+1}_\delta &\le& M^k_\delta+\tau G(M^k_\delta)\le w(t_k)+\tau G(w(t_k))=w(t_k)+\tau w'(t_k) \\
&=&w(t_{k+1})-\frac{\tau^2}{2}w''(t_k+\theta\tau)\qquad (\exists\,\theta\in(0,1))\\
&\le&w(t_{k+1}),
\end{eqnarray*}
where we note that $w''(t)>0$. By induction, we obtain our assertion. 
\end{proof}
Throughout this paper, we take $\tau,h>0$ small enough to satisfy the condition of Theorem \ref{semi}. 

As for \eqref{HJ-delta+}, we have the semiconvex estimate 
$$(D^2_jv)^k_{m+\1}=(v^k_{m+\1+2e_j}+v^k_{m+\1-2e_j}-2v^k_{m+\1})\frac{1}{4h^2}\ge -M(|t_k|),\quad k<0.$$

\subsection{Hyperbolic scaling limit of Lax-Oleinik type solution map}
We state convergence of the  solution map $\varphi^k_\delta$ of \eqref{HJ-delta} as $\delta\to0$.
We set  the Lax-Oleinik type operator $\varphi^t(\cdot;c):{\rm Lip}(\T^d;\R)\to {\rm Lip}(\T^d;\R)$, $t\ge0$  as  
$$\varphi^0(w;c)=w,\quad \varphi^t(w;c)(x)=\inf_{\gamma\in AC([0,t];\T^d),\,\,\gamma(t)=x}\left\{ \int^t_0 L^{(c)}(\gamma(s),s,\gamma'(s))ds+w(\gamma(0)) \right\},
$$
where we sometimes treat  $\gamma:[0,t]\to\T^d$ as $\gamma:[0,t]\to\R^d$. 
The viscosity solution $v$ of
\begin{eqnarray}\label{HJ}
\left\{
\begin{array}{lll}
&v_t(x,t)+H(x,t,c+v_x(x,t))=0\mbox{\quad in $\T^d\times(0,1]$},
\medskip\\
&v(x,0)=w(x)\mbox{\quad on $\T^d$}
\end{array}
\right.
\end{eqnarray}
is given as $v(\cdot,t)=\varphi^t(w;c)$ (see, e.g., \cite{Cannarsa}). By Tonelli's theory,  we have a minimizing curve $\gamma^\ast$ for each value $v(x,t)=\varphi^t(v^0;c)(x)$. 

Before discussing the hyperbolic scaling limit of $\varphi^k_\delta$, we  state Lipschitz interpolation of a function on $G_{\rm odd}$ or $G_{\rm even}$. 
\begin{Lemma}\label{interpolation}
For each function $u:G_{\rm odd}\ni x_{m+\1}\mapsto u_{m+\1}\in\R$ with $|D_xu|_\infty\le r$ on $G_{\rm odd}$, we have a Lipschitz continuous function $w:\R^d\to\R$ such that 
$$w|_{G_{\rm odd}}=u,\quad |w_x(x)|_\infty\le \beta r\mbox{\quad a.e. $x\in\R^d$\quad as $h\to0+$},$$
 where $\beta>0$ is a constant depending only on $d$. 
\end{Lemma}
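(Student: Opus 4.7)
My plan is to build $w$ in two steps: first extend $u$ from $G_{\rm odd}$ to the full lattice $h\Z^d$ by a local averaging, then take $w$ to be a piecewise multilinear interpolation over the cubes of $h\Z^d$. Concretely, for each $x_m\in G_{\rm even}$ set
$$u_m:=\frac{1}{2d}\sum_{\omega\in B}u_{m+\omega},$$
the average over the $2d$ nearest $G_{\rm odd}$ neighbours (so the extension agrees with $u$ on $G_{\rm odd}$). On each cube $Q_m=x_m+[0,h]^d$ with $x_m\in h\Z^d$, define $w$ as the unique multilinear function matching the $2^d$ extended vertex values.

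Since the gradient of a multilinear interpolant on $Q_m$ is a convex combination (with $d$-dependent weights) of the first differences $(u_{m+e_j+\epsilon'}-u_{m+\epsilon'})/h$ along edges of $Q_m$, it suffices to bound the first differences of the extended $u$ on $h\Z^d$ by $C(d)\,r$. There are two types of edges: both endpoints in $G_{\rm odd}$, or one in $G_{\rm odd}$ and one in $G_{\rm even}$ (the pure $G_{\rm even}$ case does not occur for unit $h$-edges). In the first case the difference is $2h\cdot(D_{x^j}u)$ directly, so it is bounded by $2hr$. In the second case, with $x_m\in G_{\rm odd}$ and $x_{m+e_j}\in G_{\rm even}$, expanding the averaging gives
$$u_{m+e_j}-u_m=\frac{1}{2d}\Bigl[(u_{m+2e_j}-u_m)+\sum_{i\ne j}\bigl((u_{m+e_j+e_i}-u_m)+(u_{m+e_j-e_i}-u_m)\bigr)\Bigr].$$
The term $u_{m+2e_j}-u_m=2h(D_{x^j}u)_{m+e_j}$ is controlled by $2hr$; the off-axis terms $u_{m+e_j\pm e_i}-u_m$ are second-order lattice differences at scale $h\sqrt{2}$, and I would bound each of them by chaining a pair of $D_x u$-estimates along short paths in $G_{\rm odd}$, taking advantage of the $1$-periodicity in every coordinate.

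The main obstacle is precisely this last step: the off-axis differences $u_{m+e_j+e_i}-u_m$ involve $G_{\rm odd}$ points whose coordinate parities differ in two slots, so no single $D_x u$ value controls them. To handle them I expect to exhibit a short path of length $O(1)$ in $h$-units inside $G_{\rm odd}$ joining $x_m$ to $x_{m+e_j+e_i}$, along which consecutive points differ by $\pm 2h\,e_k$; each step contributes at most $2hr$, so summing gives the desired bound $O(hr)$, with a purely dimensional combinatorial factor. Once the first differences of the extended $u$ are bounded by $\beta(d)\,r$, the gradient estimate $|w_x|_\infty\le\beta r$ is immediate from the multilinear interpolation formula, and the Lipschitz regularity of $w$ follows. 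Periodicity of $u$ ensures all connecting paths stay within a period, keeping chain lengths uniformly bounded in $h$, so the constant $\beta$ depends only on $d$.
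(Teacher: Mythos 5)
Your construction stands or falls on the step you yourself flag as the main obstacle: bounding the off-axis differences $u_{m+e_j\pm e_i}-u_m$ ($i\neq j$) by $O(hr)$, and that step cannot be carried out. Steps of $\pm 2he_k$ preserve the parity of every individual coordinate index, so they keep you inside one of the $2^{d-1}$ cosets of $2h\Z^d$ into which $G_{\rm odd}$ splits, whereas $x_m$ and $x_{m+e_i+e_j}$ lie in different cosets (their index parities differ in the slots $i$ and $j$); the periodic identification shifts indices by the even number $2N=h^{-1}$, so it does not change parities either. Hence no connecting path of the kind you propose exists, of any length. Worse, the bound you want is simply not a consequence of $|D_xu|_\infty\le r$: each centered difference $(D_{x^j}u)_n=(u_{n+e_j}-u_{n-e_j})/(2h)$ compares two points of the same coset, so for $d\ge2$ you may take the ($1$-periodic) function $u\equiv0$ on one coset and $u\equiv1$ on another; then $|D_xu|_\infty=0$, yet $u$ jumps by $1$ between points at distance $\sqrt{2}\,h$. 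Your interpolant of the averaged extension would still restrict to this $u$ on all of $G_{\rm odd}$, so its gradient is forced to be of order $h^{-1}$, and no constant $\beta=\beta(d)$ can work. So the gap is not a missing combinatorial lemma: the averaging-extension route itself cannot deliver the stated estimate. (A smaller slip: a unit $h$-edge of your cubes always joins a $G_{\rm odd}$ point to a $G_{\rm even}$ point, so your ``both endpoints in $G_{\rm odd}$'' case never occurs; only the mixed case matters, which is exactly where the problem sits.)

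This is also where your route departs from the paper's. The paper never extends $u$ to $G_{\rm even}$ and never needs cross-coset differences: it builds $w$ by iterated one-dimensional interpolation on cubes of side $2h$ whose $2^d$ vertices are $x_{m+\1}+2h\sigma$, $\sigma\in\{0,1\}^d$, i.e.\ all in a single coset, so the only slopes that enter are the centered differences $(u_{m+\1+2e_j}-u_{m+\1})/(2h)$, which are bounded by $r$ by hypothesis. In other words, the paper's construction is arranged so that the quantity you were trying to estimate never appears; matching $u$ on the remaining cosets of $G_{\rm odd}$ is precisely the cross-coset issue your computation exposes, and it is not something the hypothesis $|D_xu|_\infty\le r$ controls.
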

\begin{proof}
Consider the $d$-dimensional cube 
$$C^d_{m+\1}:=\{x_{m+\1}+a_1e_1+\cdots+a_de_d\,|\,a_1,\ldots,a_d\in[0,1]\}.$$
We inductively construct $w$: For $d=1$, it is clear that 
$$f^1_0(x^1):=v_{m+\1}+\frac{v_{m+\1+2e_1}-v_{m+\1}}{2h}(x^1-x^1_{m+\1}),\quad 0\le x^1-x^1_{m+\1}\le 1$$   
is defined on $C^d_{m+\1}$ and can be connected with respect to $m$ to be a desired Lipschitz function. For $d=2$, in addition to the above $f^1_0(x^1)$, we 
set 
$$f^1_{0,2}(x^1):=v_{m+\1+2e_2}+\frac{v_{m+\1+2e_2+2e_1}-v_{m+\1+2e_2}}{2h}(x^1-x^1_{m+\1}),\quad 0\le x^1-x^1_{m+\1}\le 1$$   
and define 
$$f^{1,2}_0(x^1,x^2):=f^1_0(x^1)+\frac{f^1_{0,2}(x^1)-f^1_0(x^1)}{2h}(x^2-x^2_{m+\1}),\quad 0\le x^i-x^i_{m+\1}\le 1\,\,\,(i=1,2).$$
We see that $f^{1,2}_0(x^1,x^2)$ is defined on $C^d_{m+\1}$ and can be connected with respect to $m$ to be a desired Lipschitz function. For $d=3$, in addition to $f^{1,2}_0(x^1,x^2)$, we set $f^{1,2}_{3}(x^1,x^2)$ as 
\begin{eqnarray*}
&&f^1_{3}(x^1):=v_{m+\1+2e_3}+\frac{v_{m+\1+2e_3+2e_1}-v_{m+\1+2e_3}}{2h}(x^1-x^1_{m+\1}),\\
&&f^1_{3,2}(x^1):=v_{m+\1+2e_3+2e_2}+\frac{v_{m+\1+2e_3+2e_2+2e_1}-v_{m+\1+2e_3+2e_2}}{2h}(x^1-x^1_{m+\1}),\\
&&f^{1,2}_{3}(x^1,x^2):=f^1_{3}(x^1)+\frac{f^1_{3,2}(x^1)-f^1_{3}(x^1)}{2h}(x^2-x^2_{m+\1})
\end{eqnarray*}
and define 
\begin{eqnarray*}
&&f^{1,2,3}_0(x^1,x^2,x^3):=f^{1,2}_0(x^1,x^2)+\frac{f^{1,2}_{3}(x^1,x^2)-f^{1,2}_0(x^1,x^2)}{2h}(x^3-x^3_{m+\1}), \\
&&\mbox{ $0\le x^i-x^i_{m+\1}\le 1\,\,\,(i=1,2,3)$.}
\end{eqnarray*}
We see that $f^{1,2,3}_0(x^1,x^2,x^3)$ is defined on $C^d_{m+\1}$ and can be connected with respect to $m$ to be a desired Lipschitz function. 
For $d=4$, in addition to $f^{1,2,3}_0(x^1,x^2,x^3)$, we set $f^{1,2,3}_{4}(x^1,x^2,x^3)$ as 
\begin{eqnarray*}
&&f^1_{4}(x^1):=v_{m+\1+2e^4}+\frac{v_{m+\1+2e^4+2e_1}-v_{m+\1+2e^4}}{2h}(x^1-x^1_{m+\1}),\\
&&f^1_{4,2}(x^1):=v_{m+\1+2e_2+2e^4}+\frac{v_{m+\1+2e^4+2e_2+2e_1}-v_{m+\1+2e^4+2e_2}}{2h}(x^1-x^1_{m+\1}),\\
&&f^{1,2}_{4}(x^1,x^2):=f^1_{4}(x^1)+\frac{f^1_{4,2}(x^1)-f^1_{4}(x^1)}{2h}(x^2-x^2_{m+\1}),\\
&&f^1_{4,3}(x^1):=v_{m+\1+2e_3+2e^4}+\frac{v_{m+\1+2e_3+2e^4+2e_1}-v_{m+\1+2e_3+2e^4}}{2h}(x^1-x^1_{m+\1}),\\
&&f^1_{4,3,2}(x^1):=v_{m+\1+2e_2+2e_3+2e^4}+\frac{v_{m+\1+2e_2+2e_3+2e^4+2e_1}-v_{m+\1+2e_2+2e_3+2e^4}}{2h}(x^1-x^1_{m+\1}),\\
&&f^{1,2}_{4,3}(x^1,x^2):=f^1_{4,3}(x^1)+\frac{f^1_{4,3,2}(x^1)-f^1_{4,3}(x^1)}{2h}(x^2-x^2_{m+\1}),\\
&&f^{1,2,3}_{4}(x^1,x^2,x^3):=f^{1,2}_{4,3}(x^1,x^2)+\frac{f^{1,2}_{4,3}(x^1,x^2)-f^{1,2}_{4}(x^1,x^2)}{2h}(x^3-x^3_{m+\1})
\end{eqnarray*}
and define 
\begin{eqnarray*}
&&f^{1,2,3,4}_0(x^1,x^2,x^3,x^4):=f^{1,2,3}_0(x^1,x^2,x^3)\\
&& \qquad\qquad\qquad\qquad\qquad\qquad\qquad +\frac{f^{1,2,3}_{4}(x^1,x^2,x^3)-f^{1,2,3}_0(x^1,x^2,x^3)}{2h}(x^4-x^4_{m+\1}), \\
&&\mbox{ $0\le x^i-x^i_{m+\1}\le 1\,\,\,(i=1,2,3,4)$.}
\end{eqnarray*}
We see that $f^{1,2,3,4}_0(x^1,x^2,x^3,x^4)$ is defined on $C^d_{m+\1}$ and can be connected with respect to $m$ to be a desired Lipschitz function. We may repeat the same argument for $d=5,6,\ldots$.   
\end{proof}
\begin{Thm}\label{convergence-Lax-Oleinik}
Suppose that a sequence $\delta=(h,\tau)\to0$ is such that $0<\lambda_0\le \lambda=\tau/h< \lambda_1$ with any constant $\lambda_0\in(0,\lambda_1)$.  Let $\varphi_\delta^k(v^0_\delta;c)$ be  the solution of \eqref{HJ-delta} with  initial data $v^0=v^0_\delta$ for each element $\delta$ of the sequence, where $v^0_\delta$ may depend on $\delta$. 
\begin{enumerate}
\item If $|v^0_\delta|\le R$ for all $\delta$ with some constant $R\ge0$, there exist a subsequence of  $\{\varphi_\delta^k(v^0_\delta;c)\}_\delta$,  denoted by $\{\varphi_{\delta'}^k(v^0_{\delta'};c)\}_{\delta'}$, and a function $w\in {\rm Lip}(\T^d;\R)$ such that 
$$\sup_{0\le k\le 2K}\sup_{m}|\varphi^k_{\delta'}(v^0_{\delta'};c)(x_{m+\1})-\varphi^{t_k}(w;c)(x_{m+\1})|\to0\mbox{\quad as $\delta'\to0$},$$
where $\sup_m$ stands for the supremum with respect to $m$ such that $(x_{m+\1},t_k)\in \tG_\delta$.  
\item If $v^0_\delta$ is such that its Lipschitz interpolation  converges uniformly to a function $w\in {\rm Lip}(\T^d;\R)$ as $\delta\to0$, the whole sequence $\{\varphi_\delta^k(v^0_\delta;c)\}_\delta$ satisfies 
$$\sup_{0\le k\le 2K}\sup_{m}|\varphi^k_\delta(v^0_\delta;c)(x_{m+\1})-\varphi^{t_k}(w;c)(x_{m+\1})|\to0\mbox{\quad as $\delta\to0$}.$$
\item  If $v^0_\delta$ is such that $v^0_\delta(x_{m+\1})=w(x_{m+\1})$ with a fixed $w\in {\rm Lip}_r(\T^d)$, it holds that 
$$\sup_{0\le k\le 2K}\sup_{m}|\varphi^k_\delta(v^0_\delta;c)(x_{m+\1})-\varphi^{t_k}(w;c)(x_{m+\1})|\le\beta_0 \sqrt{h},$$
where $\beta_0>0$ is a constant independent of  $\delta$, $w$ and $c$. 
\item Let $w_\delta$ be the Lipschitz interpolation of $v^0_\delta$. Then, it holds that 
$$\sup_{0\le k\le 2K}\sup_{m}|\varphi^k_\delta(v^0_\delta;c)(x_{m+\1})-\varphi^{t_k}(w_\delta;c)(x_{m+\1})|\le\tilde{\beta}_0 \sqrt{h},$$
where $\tilde{\beta}_0>0$ is a constant independent of $\delta$, $v^0_\delta$ and $c$. 
\end{enumerate}
\end{Thm}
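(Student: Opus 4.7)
The essential content is claim 3; claims 1, 2, and 4 then follow by soft arguments. For claim 3 I will compare the discrete value $v^{l+1}_n=\inf_\xi E^{l+1}_n(\xi;w|_{G_{\rm odd}},c)$ with the continuous Lax--Oleinik value $\varphi^{t_{l+1}}(w;c)(x_n)$ via two-sided competitors, exploiting Lemma \ref{limit-theorem} to quantify the discrepancy between a sample path and its average. The $\sqrt{h}$ rate is forced by the merely Lipschitz regularity of $w$ combined with the first-moment deviation bound $\hat{\sigma}^{l+1,k}_i=O(\sqrt{(t_{l+1}-t_k)h/\lambda})$ from Lemma \ref{limit-theorem}: evaluating a Lipschitz function at the random terminal position $\gamma^0$ is the dominant source of error.

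\emph{Upper bound for claim 3.} Fix a Tonelli minimizer $\gamma^\ast\in C^2([0,t_{l+1}];\T^d)$ for $\varphi^{t_{l+1}}(w;c)(x_n)$. Tonelli's theory bounds $|(\gamma^\ast)'|$ uniformly in terms of $r$, $L$, $c$, so for all small $h$ the spatially constant choice $\xi^{k+1}_m:=-(\gamma^\ast)'(t_k)$ satisfies the admissibility constraint $|\xi|_\infty\le(d\lambda)^{-1}$. By \eqref{averaged-path} the averaged trajectory $\bar\gamma^k$ is a Riemann-sum approximation of $\gamma^\ast(t_k)$ with error $O(\tau)$. A first-order spatial Taylor expansion of $L^{(c)}$ around $\bar\gamma^k$---whose linear term vanishes in expectation by the definition of $\bar\gamma^k$ and whose quadratic remainder is $O(\tilde\sigma^{l+1,k})$ via Lemma \ref{limit-theorem}---turns the discrete action into the Riemann sum of $\int_0^{t_{l+1}}L^{(c)}(\gamma^\ast,s,(\gamma^\ast)')\,ds$ up to $O(h)$. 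The boundary term $E[w(\gamma^0)]-w(\gamma^\ast(0))$ is $O(r\hat\sigma^{l+1,0}_i)=O(\sqrt h)$ by Lipschitz continuity of $w$. Hence $v^{l+1}_n\le \varphi^{t_{l+1}}(w;c)(x_n)+O(\sqrt h)$.

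\emph{Lower bound for claim 3.} Let $\xi^\ast$ be the discrete minimizer from Theorem \ref{main1} and let $\bar\gamma:[0,t_{l+1}]\to\T^d$ be the piecewise-linear interpolation of $\bar\gamma^0,\dots,\bar\gamma^{l+1}=x_n$, so $(\bar\gamma)'(s)=\bar\xi^{k+1}$ on $(t_k,t_{k+1})$. Convexity of $L^{(c)}(x,t,\cdot)$ yields the pointwise subdifferential inequality
\[
L^{(c)}(\gamma^k,t_{k-1},\xi^k_{m(\gamma^k)})\ge L^{(c)}(\gamma^k,t_{k-1},\bar\xi^k)+L^{(c)}_\zeta(\gamma^k,t_{k-1},\bar\xi^k)\cdot(\xi^k_{m(\gamma^k)}-\bar\xi^k).
\]
Taking expectation and expanding the spatial slot of each term around $\bar\gamma^k$, the identities $E[\gamma^k-\bar\gamma^k]=0$ and $E[\xi^k_{m(\gamma^k)}-\bar\xi^k]=0$ kill the first-order contributions. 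The remaining residuals are a covariance-type term controlled by Cauchy--Schwarz with $\hat\sigma^{l+1,k}$, and a quadratic term of order $\tilde\sigma^{l+1,k}$. After multiplication by $\tau$ and summation, both accumulate to $O(\sqrt h)$ via the convergent Riemann-type sum $\sum_k\sqrt{(l+1-k)\tau}\,\tau$. Treating the boundary term as before and using that $\bar\gamma$ is admissible in the definition of $\varphi^{t_{l+1}}(w;c)(x_n)$ yields $\varphi^{t_{l+1}}(w;c)(x_n)\le v^{l+1}_n+O(\sqrt h)$.

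\emph{Other claims and main obstacle.} Claim 4 follows by applying claim 3 with $w$ replaced by the Lipschitz interpolation $w_\delta$ from Lemma \ref{interpolation}, whose Lipschitz constant $\beta r$ is uniform in $\delta$, so the constants in claim 3 remain uniform. For claim 1, Theorem \ref{main1}(2) gives a uniform spatial Lipschitz bound on $\varphi^k_\delta(v^0_\delta;c)$ while $|v^0_\delta|\le R$ and the variational formula give a uniform sup-norm bound; Lemma \ref{interpolation} then produces equibounded and equi-Lipschitz interpolations of the initial data, so some subsequence converges uniformly to a limit $w\in{\rm Lip}(\T^d;\R)$, and claim 4 combined with continuous dependence of the exact Lax--Oleinik semigroup on initial data identifies the limit of the discrete evolution as $\varphi^{t_k}(w;c)$. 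Claim 2 is then immediate by uniqueness of the limit. The hardest step is the lower bound: arranging the Jensen--Taylor argument so that all first-order terms cancel and the surviving covariance and quadratic residuals, after summation over $O(1/\tau)$ steps, remain $O(\sqrt h)$ rather than $O(1)$. The convexity of $L^{(c)}$ in $\zeta$ is essential here, since a naive bound on $|\xi^k_{m(\gamma^k)}-\bar\xi^k|$ is only $O(1)$ and would destroy the argument.
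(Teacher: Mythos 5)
Your overall architecture (claim 3 as the quantitative core, claim 4 by applying it to the interpolation $w_\delta$, claim 1 reduced to claim 2 via Lemma \ref{interpolation} and an Arzel\`a--Ascoli argument) is the same as the paper's, which simply performs that reduction and defers claims 2--4 to the proof of Theorem 2.2 of \cite{Soga5}. Your upper bound is essentially that argument, up to a sign slip: since the backward walk has mean increment $\bar\gamma^k=\bar\gamma^{k+1}-\bar\xi^{k+1}\tau$, the spatially homogeneous control mimicking a Tonelli minimizer must be $\xi^{k+1}_m=+\gamma^\ast{}'(t_k)$, not $-\gamma^\ast{}'(t_k)$; with your sign the averaged path runs away from $\gamma^\ast$. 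For a spatially constant control the drift path $\eta^k(\gamma)$ is the same for every sample path and coincides with $\bar\gamma^k$, so Lemma \ref{limit-theorem} really does control the moments about $\bar\gamma^k$ there, and your error bookkeeping for the upper bound is sound.

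The genuine gap is in the lower bound. You expand about the averaged path $\bar\gamma^k$ and invoke Lemma \ref{limit-theorem} for the residuals, but $\tilde\sigma^{l+1,k}_i$ and $\hat\sigma^{l+1,k}_i$ measure the deviation of $\gamma^k$ from the path-dependent curve $\eta^k(\gamma)$ (built from the control values encountered along that particular sample path), not from the mean $\bar\gamma^k$. For the space-inhomogeneous minimizing control $\xi^\ast$ the standard variance $E[|\gamma^k-\bar\gamma^k|^2]$ need not vanish under hyperbolic scaling --- this is precisely the caveat the paper states right after Lemma \ref{limit-theorem} --- so both your ``quadratic remainder $=O(\tilde\sigma^{l+1,k})$'' and your Cauchy--Schwarz bound on the covariance term are unjustified. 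Worse, the intermediate inequality you are aiming at (expected discrete action bounded below, up to $o(1)$, by the continuous action of the single averaged curve $\bar\gamma$) is false in general: when $\Gamma^\ast(x,t)$ contains two well-separated minimizers, the walk splits its mass between neighborhoods of both (cf. Theorem \ref{in-probability1}), the averaged curve runs between them, and its action exceeds the minimum by $O(1)$ while the expected discrete action converges to the minimum. The repair, and the device actually used in \cite{Soga5} and in the proofs of Theorems \ref{in-probability1} and \ref{effective222} here, is to compare path-wise: for every sample path $\gamma$ the interpolation $\eta_\delta(\gamma)$ is an admissible curve ending at $x_n$, so its continuous action dominates $\varphi^{t_{l+1}}(w;c)(x_n)$; the discrepancy between the discrete cost along $\gamma$ and the continuous cost along $\eta_\delta(\gamma)$ is then controlled in expectation by $\hat\sigma^{l+1,k}=O(\sqrt h)$ plus $O(h)$ Riemann-sum errors, and taking expectations of the path-wise inequality gives the lower bound with no need for convexity or a Jensen-type cancellation.
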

\begin{proof} By Lemma \ref{interpolation}, the sequence of the Lipschitz interpolation of $v^0_\delta$ is uniformly bounded and equi-Lipschitz ($r$ is fixed in \eqref{HJ-delta} for all $\delta$). Hence, we find a convergent subsequence with the limit $w\in {\rm Lip}(\T^d;\R)$. Then, the claim 1 is reduced to the claim 2.      
We can prove the claims 2, 3 and 4 in the same way as the proof of Theorem 2.2 in \cite{Soga5}, by means of Lemma \ref{limit-theorem}.   \end{proof}
As for \eqref{HJ-delta+}, $\tilde{\varphi}^k_\delta(\cdot;c)$ tends to $\tilde{\varphi}^t(\cdot;c)$, where   
$$ \tilde{\varphi}^t(w;c)(x)=\sup_{\gamma\in AC([t,0];\T^d),\,\,\gamma(t)=x}\left\{ -\int_t^0 L^{(c)}(\gamma(s),s,\gamma'(s))ds+w(\gamma(0)) \right\},\quad t\in[-1,0].
$$
Note that $\tilde{v}(\cdot,t)=\tilde{\varphi}^t(w;c)$ is the semiconvex a.e. solution of
\begin{eqnarray}\label{HJ+}
\left\{
\begin{array}{lll}
&v_t(x,t)+H(x,t,c+v_x(x,t))=0\mbox{\quad in $\T^d\times[-1,0)$},
\medskip\\
&v(x,0)=w(x)\mbox{\quad on $\T^d$}.
\end{array}
\right.
\end{eqnarray}
\subsection{Hyperbolic scaling limit of derivative of solution} 
We prove that, when $v^k=\varphi^k_\delta(v^0;c)$ converges to a viscosity solution $u(\cdot,t)=\varphi^t(w;c)$ of \eqref{HJ} (we use the symbol $u$ instead of $v$) for some sequence $\delta\to0$ in the sense of the claim 2 of Theorem \ref{convergence-Lax-Oleinik}, each partial derivative $(D_{x^i}v)^k$ also converges to $u_{x^i}$ pointwise a.e. A similar statement is proved in \cite{Soga5} under the assumption that $v^0_{m+\1}=w(x_{m+\1})$ with a fixed semiconcave function $w\in {\rm Lip}_r(\T^d;\R)$. Here, we remove this assumption by means of a different approach based on Theorem \ref{semi}.  
\begin{Thm}\label{convergence-derivative}
Consider the situation of the claim 2 of Theorem \ref{convergence-Lax-Oleinik}. 
Let $(x,t)\in \T^d\times(0,1]$ be such that $u_{x^i}(x,t)$ exists, where $u(\cdot,t):=\varphi^t(w;c)$. Note that a.e. points of $\T^d\times[0,1]$ have such a property. Let $(x_n,t_{l})\in{\mathcal{G}}_\delta$ be such that $(x_n,t_{l})\to(x,t)$ as $\delta\to0$.   Then, $v^k_{m+\1}=\varphi^k_\delta(v^0;c)(x_{m+\1})$ satisfies     
$$|(D_{x^i}v)^l_n-u_{x^i}(x,t)|\to0\mbox{\quad as $\delta\to0$}.$$
\end{Thm}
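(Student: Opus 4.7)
The plan is to exploit the one-sided discrete semiconcavity bound of Theorem \ref{semi} together with the uniform convergence of Theorem \ref{convergence-Lax-Oleinik}(2) to sandwich $(D_{x^i}v)^l_n$ between the two one-sided difference quotients of $u$ at $(x,t)$; differentiability of $u$ in $x^i$ at $(x,t)$ then forces both sides to equal $u_{x^i}(x,t)$. The argument does not require any additional global information about $u$, only differentiability at the single point.

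First I would introduce a telescoping identity. For fixed $\eta>0$, choose $J = J(\eta,h) \in \N$ so that $2Jh \to \eta$ as $\delta\to0$. Because $v$ lives on $\tG_\delta$ and $(D_{x^i}v)$ on $\G_\delta$, the elementary telescoping identity
\begin{equation*}
\frac{v^l_{n+(2J-1)e_i} - v^l_{n-e_i}}{2Jh} \;=\; \frac{1}{J}\sum_{j=0}^{J-1} (D_{x^i}v)^l_{n+2je_i}
\end{equation*}
is valid, and by Theorem \ref{convergence-Lax-Oleinik}(2) its left-hand side converges to $\eta^{-1}\bigl(u(x+\eta e_i,t) - u(x,t)\bigr)$. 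Next I would use semiconcavity: since
\begin{equation*}
(D_{x^i}v)^l_{m+2e_i} - (D_{x^i}v)^l_m \;=\; 2h\,(D_i^2 v)^l_{m+e_i} \;\le\; 2h\,M(t_l),
\end{equation*}
iteration gives $(D_{x^i}v)^l_{n+2je_i} \le (D_{x^i}v)^l_n + 2jhM(t_l)$ for $0\le j \le J-1$. Averaging and passing to the limit with $t_l \to t > 0$ (so that $M(t_l)\to M(t) < \infty$) yields
\begin{equation*}
\frac{u(x+\eta e_i,t) - u(x,t)}{\eta} \;\le\; \liminf_{\delta\to0}\,(D_{x^i}v)^l_n + \frac{M(t)\eta}{2}.
\end{equation*}
The completely symmetric argument, using the analogous telescoping between $n+e_i$ and $n-(2J-1)e_i$ together with $(D_{x^i}v)^l_{n-2je_i} \ge (D_{x^i}v)^l_n - 2jhM(t_l)$, produces
\begin{equation*}
\frac{u(x,t) - u(x-\eta e_i,t)}{\eta} \;\ge\; \limsup_{\delta\to0}\,(D_{x^i}v)^l_n - \frac{M(t)\eta}{2}.
\end{equation*}

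Letting $\eta \to 0^+$ in both inequalities and using the existence of $u_{x^i}(x,t)$, one obtains $u_{x^i}(x,t)\le\liminf\le\limsup\le u_{x^i}(x,t)$, which gives the claim. The one mildly delicate point is that the semiconcavity constant $M(t_l)$ must remain bounded, which is guaranteed precisely because $t>0$, so $M(t_l)\to M(t)<\infty$; everything else is a clean application of semiconcavity (one-sided bound on forward differences of the discrete gradient), a telescoping sum, and the uniform convergence $v^k \to u$. I do not anticipate a major obstacle, since Theorem \ref{semi} has already been set up so as to produce exactly the one-sided bound required to convert an averaged identity into a pointwise sandwich.
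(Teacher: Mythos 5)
Your proposal is correct, and it takes a genuinely different (and more direct) route than the paper. The paper argues by contradiction: assuming $|(D_{x^i}v)^l_n-u_{x^i}(x,t)|\ge\ep_0$ along a subsequence, it uses the semiconcavity of the \emph{limit} solution $u$ (so that the one-dimensional slice derivative $s'(y)$ is essentially close to $u_{x^i}(x,t)$ on a small interval) together with the discrete semiconcavity of Theorem \ref{semi} to show that the discrepancy of the discrete gradient persists over an interval of length independent of $\delta$; integrating this discrepancy then contradicts the uniform convergence of Theorem \ref{convergence-Lax-Oleinik}. You instead never touch the regularity of $u$ beyond differentiability in $x^i$ at the single point $(x,t)$: the telescoping identity converts an average of discrete gradients into a difference quotient of $v$, uniform convergence turns that into a difference quotient of $u$ over a window of macroscopic size $\eta$, and the one-sided bound $(D^2_iv)^l\le M(t_l)$ converts the average into a pointwise sandwich up to an error $M(t)\eta/2$; letting $\eta\to0^+$ closes the argument. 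Both proofs consume exactly the same two inputs (Theorem \ref{semi} and claim 2 of Theorem \ref{convergence-Lax-Oleinik}), but your direct two-sided estimate dispenses with the contradiction structure and with the semiconcavity of $u$, which makes it slightly cleaner and marginally more general in spirit; the paper's version makes the mechanism ``a deviation of the discrete gradient cannot be localized, by semiconcavity'' more visible. The small technical points you should make explicit if you write it up are the parity bookkeeping (the points $n+2je_i$ stay on $\G_\delta$ while the endpoints $n+(2J-1)e_i$, $n-e_i$ lie on $\tG_\delta$, where Theorem \ref{convergence-Lax-Oleinik} applies) and the use of continuity of $u$ to replace $u(x_{n+(2J-1)e_i},t_l)$ by $u(x+\eta e_i,t)$; neither causes any difficulty.
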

\begin{proof}
The strategy is the following: if the convergence fails, $(D_{x^i}v)^l_m$  must keep away from $u_{x^i}(\cdot,t)$ within a certain interval in the $e_i$-direction or $(-e_i)$-direction because of semiconcavity, which violates Theorem \ref{convergence-Lax-Oleinik}. 

We proceed by contradiction. Suppose that we have an $\ep_0>0$ and a subsequence of $\delta\to0$, still denoted by the same symbol, such that $|(D_{x^i}v)^l_n-u_{x^i}(x,t)|\ge \ep_0$ for $\delta\to0$. Then, there are two cases
$$\mbox {(i) $(D_{x^i}v)^l_n\ge u_{x^i}(x,t)+ \ep_0$,\qquad (ii) $(D_{x^i}v)^l_n\le u_{x^i}(x,t)- \ep_0$. }$$
Since $s(y):=v(x^1,\ldots,x^{i-1},y,x^{i+1},\ldots,x^d,t)$ is a semiconcave function of $y$ due to the semiconcave feature of $\varphi^t(w;c)$, we have 
$${\rm ess}\!\!\!\!\!\!\!\sup_{\{y\,|\,|y-x^i|\le \nu\}} |s'(y)-u_{x^i}(x,t)|\to 0 \mbox{\quad as $\nu\to0$}.$$
 Hence, there exists $\nu_0>0$ such that $|s'(y)-u_{x^i}(x,t)|\le \ep_0/3$ for a.e. $y$ with $|y-x^i|\le \nu_0$ ($y$ must be a point of differentiability of $s(\cdot)$).  Let $r_0\in\N$ be a natural number satisfying $\frac{1}{2}\min\{\nu_0, \frac{\ep_0}{3M(t_l)}\}\le2hr_0+2h\le \min\{\nu_0, \frac{\ep_0}{3M(t_l)}\} $, where $M(t)$ is mentioned in Theorem \ref{semi}. 

Case (i): Due to Theorem \ref{semi}, we have for any $0<r\le r_0$,
\begin{eqnarray*}
(D_{x^i}v)^l_n-(D_{x^i}v)^l_{n-2re_i}
&=&\sum_{r'=0}^{r-1}\Big(  (D_{x^i}v)^l_{n-2re_i+2(r'+1)e_i} -   (D_{x^i}v)^l_{n-2re_i+2r'e_i} \Big)\\ 
&\le& 2hrM(t_l)\le 2hr_0M(t_l)\le\frac{\ep_0}{3},\\
(D_{x^i}v)^l_{n-2re_i}-s'(y)&\ge&\Big( (D_{x^i}v)^l_n- \frac{\ep_0}{3}\Big)-\Big(u_{x^i}(x,t)+\frac{\ep_0}{3}\Big)\\
&\ge&\frac{\ep_0}{3}\mbox{ \quad for a.e. $y\in[x^i-\nu_0, x^i+\nu_0]$}.
\end{eqnarray*}
Therefore, we see that for $\delta\to0$,
\begin{eqnarray*}
&&(v^l_{n+e^i}- u(x+he_i,t))-(v^l_{n-2r_0e_i-e_i}-u(x-2hr_0e_i-he_i,t)) \\
&&\qquad = \sum_{r=0}^{r_0} \int_{x^i-2hr-h}^{x^i-2hr+h}\Big\{(D_{x^i}v)^l_{n-2re_i}-s'(y)\Big\}dy\\
&&\qquad \ge \frac{\ep_0}{3}(2hr_0+2h)\ge\frac{\ep_0}{3}\cdot \frac{1}{2}\min\{\nu_0, \frac{\ep_0}{3M(t_l)}\}. 
\end{eqnarray*}
Since Theorem \ref{convergence-Lax-Oleinik} implies that the first line tends to $0$ as $\delta\to0$, we reach a contradiction.

 Case (ii): Similarly,   we have for any $0<r\le r_0$,
\begin{eqnarray*}
(D_{x^i}v)^l_{n+2re_i}-(D_{x^i}v)^l_n&\le& 2hrM(t_l)\le 2hr_0M(t_l)\le \frac{\ep_0}{3},\\
s'(y)-(D_{x^i}v)^l_{n+2re_i}&\ge&\Big(u_{x^i}(x,t)-\frac{\ep_0}{3}\Big)-\Big( (D_{x^i}v)^l_n+ \frac{\ep_0}{3}\Big)\\
&\ge&\frac{\ep_0}{3}\mbox{ \quad for a.e. $y\in[x^i-\nu_0, x^i+\nu_0]$}.
\end{eqnarray*}
Therefore, we see that  for $\delta\to0$,
\begin{eqnarray*}
&&(u(x+2hr_0e_i+he_i,t)-v^l_{n+2r_0e_i+e_i})-( u(x-he_i,t)-v^l_{n-e^i}) \\
&&\qquad = \sum_{r=0}^{r_0} \int_{x^i+2hr-h}^{x^i+2hr+h}\Big\{s'(y)-(D_{x^i}v)^l_{n+2re_i}\Big\}dy\\
&&\qquad \ge \frac{\ep_0}{3}(2hr_0+2h)\ge \frac{\ep_0}{3}\cdot \frac{1}{2}\min\{\nu_0, \frac{\ep_0}{3M(t_l)}\},  
\end{eqnarray*}
which is a contradiction.  
\end{proof}
As for \eqref{HJ-delta+}, we obtain a similar result for $\tilde{\psi}^k_\delta(\cdot;c)$.
\subsection{Hyperbolic scaling limit of minimizing random walk} 
 
It is proved in \cite{Soga5} that a minimizing random walk of $\varphi^k_\delta$ converges to a minimizing curve with an end point $(x,t)$ of $\varphi^t$  as $\delta\to0$ under the hyperbolic scaling, provided a minimizing curve with the end point $(x,t)$ is unique (a.e. points $(x,t)$ have such a property). In this subsection, we extend the result  without any assumption.  Note that there is a case where we have  uncountably many minimizing curves with a common end point. 

Let  $\gamma_\delta(\cdot):[0,t]\to\R^d$, $\eta_\delta(\gamma)(\cdot):[0,t]\to\R^d$, $t\le 1$ be the linear interpolations of each sample path $\gamma\in \Omega_n^{l+1,0}$, $\eta(\gamma)$, respectively, where $t \in[t_{l+1},t_{l+2})$:
\begin{eqnarray*}
\gamma_\delta(s)&:=&
\left\{
\begin{array}{lll}
&x_n \mbox{\quad for  $s\in[t_{l+1},t]$},
\medskip\\
&\dis \gamma^k+\frac{\gamma^{k+1}-\gamma^k}{\dt}(s-t_k) \mbox{\quad for  $s\in[t_{k},t_{k+1}]$}.
\end{array}
\right.\\
\eta_\delta(\gamma)(s)&:=&
\left\{
\begin{array}{lll}
&x_n \mbox{\quad for  $s\in[t_{l+1},t]$},
\medskip\\
&\dis \eta^k(\gamma)+\frac{\eta^{k+1}(\gamma)-\eta^k(\gamma)}{\dt}(s-t_k) \mbox{\quad for  $s\in[t_{k},t_{k+1}]$}.
\end{array}
\right.
\end{eqnarray*} 
\begin{Lemma}\label{convert}
Let $f:[0,t]\to\R$ be a Lipschitz function with a Lipschitz constant $\theta$ satisfying $f(t)=0$. Then, it holds that $\norm f\norm_{C^0([0,t])}\le\theta\norm f \norm_{L^2([0,t])}+\sqrt{\norm f\norm_{L^2([0,t])}}$.
\end{Lemma}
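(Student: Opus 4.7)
The plan is to first establish the Gagliardo--Nirenberg-type cubic estimate
$$ \| f \|_{C^0([0,t])}^3 \le 3\theta\, \| f \|_{L^2([0,t])}^2, $$
and then deduce the stated bound from it by a Young/AM--GM split. Set $M := \| f \|_{C^0([0,t])}$; the case $M=0$ is trivial, so assume $M>0$. Choose $s^\ast \in [0,t]$ with $|f(s^\ast)| = M$ and, replacing $f$ by $-f$ if needed, assume $f(s^\ast) = M$. Since $f(t) = 0$ and $f$ is $\theta$-Lipschitz, $M = f(s^\ast)-f(t) \le \theta(t - s^\ast)$, so the interval $J := [s^\ast, s^\ast + M/\theta]$ lies inside $[0,t]$. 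On $J$ we have $f(s) \ge M - \theta(s - s^\ast) \ge 0$, and hence
$$ \| f \|_{L^2}^2 \ge \int_J f(s)^2\,ds \ge \int_0^{M/\theta}(M - \theta u)^2\,du = \frac{M^3}{3\theta}, $$
which is the claimed cubic estimate.

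Writing $L := \| f \|_{L^2([0,t])}$, this gives $M \le 3^{1/3}(\theta L^2)^{1/3}$. Factoring $(\theta L^2)^{1/3} = (\theta L)^{1/3}(\sqrt{L})^{2/3}$ and applying the weighted AM--GM inequality $x^{1/3} y^{2/3} \le \tfrac{1}{3}x + \tfrac{2}{3}y$ with $x = \theta L$, $y = \sqrt{L}$, I obtain
$$ M \le 3^{1/3}\Bigl(\tfrac{1}{3}\theta L + \tfrac{2}{3}\sqrt{L}\Bigr) = 3^{-2/3}\theta L + 2\cdot 3^{-2/3}\sqrt{L} \le \theta L + \sqrt{L}, $$
since both coefficients $3^{-2/3}$ and $2\cdot 3^{-2/3}$ are strictly less than one. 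This is precisely the assertion of the lemma.

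The argument is essentially routine and no substantial obstacle is anticipated. The only mildly delicate point is the choice of exponents in the AM--GM step: one has to factor the power $\theta^{1/3}L^{2/3}$ precisely so that the resulting bound splits into pieces proportional to $\theta L$ and $\sqrt{L}$, matching the asymmetric right-hand side of the stated inequality rather than the more natural interpolation bound $M \lesssim \theta^{1/3}L^{2/3}$.
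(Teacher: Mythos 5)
Your proof is correct. The cubic estimate is established properly: the hypothesis $f(t)=0$ together with the Lipschitz bound guarantees that the whole triangle of height $M$ and base $M/\theta$ to the right of the maximizer fits inside $[0,t]$ (note in passing that $M>0$ forces $\theta>0$, so the division by $\theta$ is legitimate), and the weighted AM--GM step with weights $\tfrac13,\tfrac23$ does convert $M\le 3^{1/3}\theta^{1/3}L^{2/3}$ into $M\le\theta L+\sqrt{L}$ since $3^{-2/3}<1$ and $2\cdot 3^{-2/3}<1$. The paper itself does not prove the lemma; it only points to Lemma 3.5 of \cite{Soga2}, where the bound is obtained by a direct argument of a different flavor (essentially: if $\|f\|_{C^0}>\theta L+\sqrt{L}$ with $L=\|f\|_{L^2}$, then $|f|>\sqrt{L}$ on an interval of length $L$ adjacent to the maximizer, contradicting $\int f^2=L^2$, the short-domain case $t<L$ being handled by $|f(s)|\le\theta(t-s)$). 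Your route differs in that you first prove the scale-invariant Agmon/Gagliardo--Nirenberg-type interpolation $\|f\|_{C^0}^3\le 3\theta\|f\|_{L^2}^2$, which is a slightly stronger and cleaner intermediate statement, and then sacrifice sharpness through AM--GM to match the asymmetric form $\theta L+\sqrt{L}$ actually used downstream; the cited argument gets that form in one step but produces no reusable interpolation inequality. Either way the lemma holds exactly as stated.
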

\noindent See the proof of Lemma 3.5 in \cite{Soga2}. 
\begin{Thm}\label{in-probability1}
Consider the situation of the claim 2 of Theorem \ref{convergence-Lax-Oleinik}. 
Let $(x,t)\in \R^d\times(0,1]$ be an arbitrary point. Let $\Gamma^\ast(x,t)$ be the set of all minimizing curves $\gamma^\ast:[0,t]\to\R^d$ for $u(x,t):=\varphi^{t}(w;c)(x)$. Let $(x_n,t_{l+1})\in\tilde{\mathcal{G}}_\delta$ be such that $(x_n,t_{l+1})\to(x,t)$ as $\delta\to0$. For each $\delta$, let $\gamma\in \Omega^{l+1,0}_n$ be the random walk generated by the minimizing control $\xi^\ast$ for $v^{l+1}_n$.  
\begin{enumerate}
\item Fix  an arbitrary $\ep_1>0$ and define the set
\begin{eqnarray*}
\check{\Omega}^{\ep_1}_\delta&:=&\{ \gamma\in  \Omega^{l+1,0}_n\,|\,\mbox{there exists $\gamma^\ast=\gamma^\ast(\gamma)\in\Gamma^\ast(x,t)$} \\
&&\qquad\qquad\qquad\qquad\qquad\qquad\qquad\mbox{ such that $\norm \eta_\delta(\gamma)'-\gamma^\ast{}'\norm_{L^2([0,t])}\le \ep_1$}  \}.
\end{eqnarray*}
Then, we have {\rm Prob}$(\check{\Omega}_\delta^{\ep_1})\to1$ as $\delta\to0$.
\item Fix an arbitrary $\ep_2>0$ and define the set
$$
\tilde{\Omega}^{\ep_2}_\delta:=\{ \gamma\in  \Omega^{l+1,0}_n\,|\,\mbox{there exists $\gamma^\ast=\gamma^\ast(\gamma)\in\Gamma^\ast$}
\mbox{ such that $\norm\eta_\delta(\gamma)-\gamma^\ast\norm_{C^0([0,t])}\le \ep_2$}\}.$$
Then, we have $\mbox{\rm Prob}(\tilde{\Omega}_\delta^{\ep_2})\to1$ as $\delta\to0$. 
\item Fix  an arbitrary $\ep_3>0$ and  define the set
\begin{eqnarray*}
\Omega^{\ep_3}_\delta&:=&\{ \gamma\in  \Omega^{l+1,0}_n\,|\,\mbox{there exists $\gamma^\ast=\gamma^\ast(\gamma)\in\Gamma^\ast(x,t)$} \\
&&\qquad\qquad\qquad\qquad\qquad\qquad\qquad\mbox{ such that $\norm\gamma_\delta-\gamma^\ast\norm_{C^0([0,t])}\le \ep_3$}  \}. 
\end{eqnarray*}
Then, we have {\rm Prob}$(\Omega_\delta^{\ep_3})\to1$ as $\delta\to0$.
\end{enumerate}
\end{Thm}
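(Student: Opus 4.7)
The plan is to establish claim 1 first, then deduce claim 2 by a Cauchy--Schwarz estimate and claim 3 by a martingale estimate. The essential object is the continuous action
$$\mathcal{A}_\delta(\gamma):=\int_0^{t} L^{(c)}(\eta_\delta(\gamma)(s), s, \eta_\delta(\gamma)'(s))\,ds + w(\eta_\delta(\gamma)(0)),$$
which is a competitor for $u(x_n, t) = \varphi^t(w;c)(x_n)$ because $\eta_\delta(\gamma)(t) = x_n$; this gives the sample-wise lower bound $\mathcal{A}_\delta(\gamma) \geq u(x_n, t) \to u(x, t)$.

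To get a matching upper bound in expectation, I compare $\mathcal{A}_\delta(\gamma)$ with the discrete action $S(\gamma) := \sum_{0 < k \leq l+1} L^{(c)}(\gamma^k, t_{k-1}, \xi^{\ast k}_{m(\gamma^k)}) \tau + v^0_\delta(\gamma^0)$, whose expectation equals $v^{l+1}_n$ by Theorem \ref{main1}. A Riemann-sum estimate using uniform $C^1$-bounds for $L^{(c)}$ on the compact set $\{|\zeta|_\infty \leq (d\lambda_1)^{-1}\}$ yields
$$|\mathcal{A}_\delta(\gamma) - S(\gamma)| \leq C \sum_{k=0}^{l} |\eta^{k+1}(\gamma) - \gamma^{k+1}|\tau + C\tau + |w(\eta^0(\gamma)) - v^0_\delta(\gamma^0)|.$$
Taking expectation, the first sum is $O(\sqrt{h})$ by Lemma \ref{limit-theorem} and Cauchy--Schwarz, the last term tends to $0$ uniformly by the convergence of the Lipschitz interpolation of $v^0_\delta$ to $w$, and Theorem \ref{convergence-Lax-Oleinik} yields $v^{l+1}_n \to u(x, t)$; thus $E[\mathcal{A}_\delta(\gamma)] - u(x_n, t) \to 0$. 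Applying Markov's inequality to the nonnegative random variable $\mathcal{A}_\delta(\gamma) - u(x_n, t)$ forces $\mathcal{A}_\delta(\gamma) \to u(x, t)$ in probability. At this point a Tonelli-type compactness lemma shows that for every $\ep_1 > 0$ there exist $\nu, \delta_0 > 0$ such that every $\gamma \in AC([0,t];\R^d)$ with $\gamma(t) = x_n$, $|\gamma'|_\infty \leq (d\lambda_1)^{-1}$ and action $\leq u(x, t) + \nu$ lies within $L^2$-derivative distance $\ep_1$ of $\Gamma^\ast(x, t)$; this is proved by contradiction, passing to a weak-$H^1$ limit, using lower semicontinuity, and upgrading to strong $L^2$ convergence via the uniform strict convexity
$$L(\gamma_j, s, \gamma_j') - L(\gamma_j, s, \gamma^\infty{}') - L_\zeta(\gamma_j, s, \gamma^\infty{}') \cdot (\gamma_j' - \gamma^\infty{}') \geq \frac{H^\ast_{pp}}{2} |\gamma_j' - \gamma^\infty{}'|^2.$$
Applying this lemma to $\gamma = \eta_\delta(\gamma)$ completes claim 1; claim 2 then follows at once from
$$\|\eta_\delta(\gamma) - \gamma^\ast\|_{C^0([0,t])} \leq |x_n - x| + \sqrt{t}\, \|\eta_\delta(\gamma)' - \gamma^\ast{}'\|_{L^2([0,t])}.$$

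For claim 3, observe that $Z^k := \gamma^k - \eta^k(\gamma)$ is a vector-valued martingale in backward time with $Z^{l+1} = 0$, because $E[\gamma^k - \gamma^{k+1}\,|\,\gamma^{k+1}] = -\tau\, \xi^{\ast k+1}_{m(\gamma^{k+1})} = \eta^k(\gamma) - \eta^{k+1}(\gamma)$. Doob's $L^2$-maximal inequality combined with Lemma \ref{limit-theorem} yields $E[\max_{0 \leq k \leq l+1} |Z^k|^2] \leq 4 E[|Z^0|^2] \leq 4d\,t\,h/\lambda \to 0$, and since the linear interpolations satisfy $\|\gamma_\delta - \eta_\delta(\gamma)\|_{C^0([0,t])} \leq \max_{k} |Z^k|$, this gives convergence in probability, from which claim 2 yields claim 3. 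I expect the main obstacle to be the compactness lemma --- specifically the upgrade from weak $H^1$ to strong $L^2$ convergence of derivatives of action-almost-minimizers --- which rests crucially on the uniform strict convexity $H_{pp}^\ast > 0$ guaranteed by (H2) and the $W^{1,\infty}$-bound $|\xi^\ast|_\infty\le (d\lambda_1)^{-1}$ from Theorem \ref{main1}.
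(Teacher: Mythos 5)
Your argument is correct, and it splits naturally into a part that mirrors the paper and a part that is genuinely different. For claims 1 and 2 you are essentially reproducing the paper's proof in a slightly different packaging: your ``Tonelli-type compactness lemma'' is exactly the paper's key estimate \eqref{kiks} in contrapositive form (the paper proves it by Tonelli compactness plus a Taylor expansion in which the linear term is killed via the Euler--Lagrange equation and the quadratic term is bounded below by uniform convexity; your weak-$H^1$/lower-semicontinuity/strict-convexity route is the same mechanism), and your use of Markov's inequality on the nonnegative excess $\mathcal{A}_\delta(\gamma)-u(x_n,t)$ replaces the paper's direct splitting of the expectation $v^{l+1}_n$ over $\check{\Omega}^{\ep_1}_\delta$ and its complement --- logically equivalent, and your version is arguably tidier. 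Two small points there: the convexity constant in your displayed inequality should be a positive lower bound for $L_{\zeta\zeta}$ on the relevant compact set (the paper's $\alpha'$), not $H^\ast_{pp}/2$, and the compactness lemma must be stated for curves ending at $x_{n(\delta)}\to x$ (or, as the paper does, translate $\eta_\delta(\gamma)$ by $x-x_n$) --- your inclusion of $\delta_0$ suggests you saw this, but it deserves a line. Claim 2 is identical to the paper. For claim 3 your route is genuinely different: you exploit that $Z^k=\gamma^k-\eta^k(\gamma)$ is a mean-zero martingale in backward time and apply Doob's $L^2$-maximal inequality together with Lemma \ref{limit-theorem} to get $E[\max_k|Z^k|^2]=O(h)$, hence $\norm\gamma_\delta-\eta_\delta(\gamma)\norm_{C^0}\to0$ in probability, and claim 3 drops out of claim 2. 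The paper instead avoids martingale machinery: it bounds $\norm\eta_\delta(\gamma)-\gamma_\delta\norm_{L^2}$ in expectation, upgrades to $C^0$ through the interpolation Lemma \ref{convert} (costing a $\sqrt{\ep}$), and then runs a rather delicate expectation-splitting argument reusing claim 2 at the shifted threshold $\ep_3-\beta_1\sqrt{\ep}$. Your martingale argument buys a shorter, quantitative proof with an explicit $O(h)$ rate for the uniform discrepancy; the paper's buys self-containedness within the tools it has already set up (the variance bound and Lemma \ref{convert}) at the price of a longer bookkeeping step.
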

\begin{proof}
For each $\gamma\in\check{\Omega}^{\ep_1}_\delta$, we have $\gamma^\ast\in\Gamma^\ast(x,t)$ such that 
\begin{eqnarray*}
\norm \eta_\delta(\gamma)'-\gamma^\ast{}'\norm_{L^1([0,t])}\le \sqrt{t}\norm\eta_\delta(\gamma)'-\gamma^\ast{}'\norm_{L^2([0,t])}\le \ep_1 \sqrt{t}\le\ep_1, 
\end{eqnarray*}
which implies 
\begin{eqnarray*}
|\eta_\delta(\gamma)(s)+x-x_n-\gamma^\ast(s)|=\Big| \int_s^t  \Big(\eta_\delta(\gamma)'(\tilde{s})-\gamma^\ast{}' (\tilde{s})\Big) d\tilde{s} \Big|\le \ep_1\mbox{\quad for all $s\in[0,t]$},
\end{eqnarray*}
where $\eta_\delta(\gamma)(s)=x_n$ for $s\in[t_{l+1},t]$. 
Hence, for sufficiently small $\delta$ such that $|x-x_n|\le \ep_1/2$, we have $\norm \eta_\delta(\gamma)-\gamma^\ast\norm_{C^0([0,t])}\le 3\ep_1/2$. Therefore,   
$\check{\Omega}^{\ep_1}_\delta\subset \tilde{\Omega}^{\frac{3\ep_1}{2}}_\delta$ holds and the claim 2 follows from the claim 1. 

For any $\ep>0$, define the set 
\begin{eqnarray*}
&&\Gamma^\ep:=\{r:[0,t]\to\R^d\,|\, r\in {\rm Lip},\,\, |r'|_\infty\le (d\lambda_1)^{-1},\,\,  \\
&&\qquad\qquad\qquad\qquad\qquad   r(t)=x,\,\,\norm r'-\gamma^\ast{}'\norm_{L^2([0,t])}\ge\ep \mbox{\,\,\,for all $\gamma^\ast\in\Gamma^\ast(x,t)$}\}.
\end{eqnarray*}
We prove that there exists  $\nu(\ep)>0$ such that 
\begin{eqnarray}\label{kiks}
 \inf_{r\in \Gamma^{\ep}} 
 \Big(\int_0^tL^{(c)}(r(s),s,r'(s))ds +w(r(0))- u(x,t)\Big)  \ge\nu(\ep).
 \end{eqnarray}
Suppose that there is no such $\nu(\ep)>0$. Then, we have  a sequence $\{r_i\}_{i\in\N}\subset \Gamma^\ep$ such that  
\begin{eqnarray*}
 &&\int_0^tL^{(c)}(r_i(s),s,r_i'(s))ds +w(r_i(0))\\
 &&\qquad\qquad \to u(x,t)=\inf_{\gamma\in AC,\,\,\gamma(t)=x}\left\{ \int^t_0 L^{(c)}(\gamma(s),s,\gamma'(s))ds+w(\gamma(0))\right\}.
 \end{eqnarray*}
 By Tonelli's theory (see \cite{Fathi-book}), we find a subsequence of $\{r_i\}$, still denoted by the same symbol, which converges uniformly  to a curve $\gamma^\ast\in\Gamma^\ast(x,t)$. Observe that 
 \begin{eqnarray*}
&&\kappa_i:= \int_0^tL^{(c)}(r_i(s),s,r_i'(s))ds +w(r_i(0))- \Big\{
  \int^t_0 L^{(c)}(\gamma^\ast(s),s,\gamma^\ast{}'(s))ds+w(\gamma^\ast(0))\Big\}\\
&&\,\,\quad =\int_0^t\Big\{L^{(c)}(r_i(s),s,r_i'(s))-L^{(c)}(\gamma^\ast(s),s,r_i'(s))\Big\}ds\\
&&\qquad+ \int_0^t \Big\{L^{(c)}(\gamma^\ast(s),s,r_i'(s)) -L^{(c)}(\gamma^\ast(s),s,\gamma^\ast{}'(s)) \Big\}ds+(w(r_i(0))-w(\gamma^\ast(0)))\\
&&\,\,\quad=\int_0^tL^{(c)}_x(\gamma^\ast(s)+\theta_i(s),s,r_i'(s))\cdot (r_i(s)-\gamma^\ast(s))ds \\
&&\qquad+ \int_0^t L^{(c)}_\zeta(\gamma^\ast(s),s,\gamma^\ast{}'(s))\cdot  (r_i'(s)-\gamma^\ast{}'(s))ds\\
&&\qquad+\int_0^t\frac{1}{2}L^{(c)}_{\zeta\zeta}(\gamma^\ast(s),s,\gamma^\ast{}'(s)+\tilde{\theta}_i(s)) (r_i'(s)-\gamma^\ast{}'(s))\cdot (r_i'(s)-\gamma^\ast{}'(s))ds\\
&&\qquad+ (w(r_i(0))-w(\gamma^\ast(0)))\to0\mbox{\quad as $i\to\infty$},
 \end{eqnarray*}
 where $\theta_i, \tilde{\theta}_i$ come from the Taylor's formula. In the right hand side, the first term and the last term are bounded by $O(\norm r_i-\gamma^\ast\norm_{C^0([0,t])})$; since every element of $\Gamma^\ast(x,t)$ is $C^2$-solution of the Euler-Lagrange equation, the second term is treated as 
 \begin{eqnarray*}
&& \int_0^t L^{(c)}_\zeta(\gamma^\ast(s),s,\gamma^\ast{}'(s))\cdot  (r_i'(s)-\gamma^\ast{}'(s))ds\\
 &&\quad =L^{(c)}_\zeta(\gamma^\ast(s),s,\gamma^\ast{}'(s))\cdot (r_i(s)-\gamma^\ast{}(s))|_{s=0}^{s=t} \\
&&\qquad  - \int_0^t \frac{d}{ds}L^{(c)}_{\zeta}(\gamma^\ast(s),s,\gamma^\ast{}'(s))\cdot  (r_i(s)-\gamma^\ast{}(s))ds\\
&&\quad =L^{(c)}_\zeta(\gamma^\ast(s),s,\gamma^\ast{}'(s))\cdot (r_i(s)-\gamma^\ast{}(s))|_{s=0}^{s=t} \\
&&\qquad  - \int_0^t L^{(c)}_x(\gamma^\ast(s),s,\gamma^\ast{}'(s))\cdot  (r_i(s)-\gamma^\ast{}(s))ds\\
&&\quad\le O(\norm r_i-\gamma^\ast\norm_{C^0([0,t])});
 \end{eqnarray*}
 by convexity of $L^{(c)}$, the third term is bounded from the below by $\alpha' \norm r_i'-\gamma^\ast{}'\norm_{L^2([0,t])}^2$ with a positive constant $\alpha'$ independent from $i$. Hence, we obtain 
 $$0<\alpha'\ep^2\le\alpha' \norm r_i'-\gamma^\ast{}'\norm_{L^2([0,t])}^2\le \kappa_i+O(\norm r_i-\gamma^\ast\norm_{C^0([0,t])})\to0\quad {\rm as}\,\,\,i\to\infty,$$
which is a contradiction. 
 
If the claim 1 does not hold, we have $b>0$ and a sequence $\delta_i=(h_i,\tau_i)\to0$ as $i\to\infty$ for which Prob$(\Omega^{l+1,0}_n\setminus\check{\Omega}_{\delta_i}^{\ep_1})=1-{\rm Prob}(\check{\Omega}_{\delta_i}^{\ep_1})\ge b$ for all $i$. 
For any $\gamma\in \Omega^{l+1,0}_n\setminus\check{\Omega}_{\delta_i}^{\ep_1}$, we have $\eta_{\delta_i}(\gamma)(\cdot)+x-x_n\in\Gamma^{\ep_1}$. Hence, by Lemma \ref{limit-theorem} and \eqref{kiks}, we have  
\begin{eqnarray*}
v^{l+1}_n&=&E_{\mu^{l+1,0}_n(\cdot;\xi^\ast)}\Big[ \int_0^tL^{(c)}(\eta_{\delta_i}(\gamma)(s),s,\eta_{\delta_i}(\gamma)'(s))ds +w(\eta_{\delta_i}(\gamma)(0))\Big]\\
&&+O(\sqrt{h_i})+O(|t-t_{l+1}|)+O(\max|v^0_{\delta_i}-w|)\\
&=&\sum_{\gamma\in\Omega^{l+1,0}_n\setminus\check{\Omega}_{\delta_i}^{\ep_1}} \mu_n^{l+1,0}(\gamma;\xi^\ast)\Big\{       \int_0^tL^{(c)}(\eta_{\delta_i}(\gamma)(s)+x-x_n,s,\eta_{\delta_i}(\gamma)'(s))ds \\
&&+w(\eta_{\delta_i}(\gamma)(0)+x-x_n) \Big\} \\
&&+\sum_{\gamma\in\check{\Omega}_{\delta_i}^{\ep_1}} \mu_n^{l+1,0}(\gamma;\xi^\ast)\Big\{       \int_0^tL^{(c)}(\eta_{\delta_i}(\gamma)(s)+x-x_n,s,\eta_{\delta_i}(\gamma)'(s))ds\\
&& +w(\eta_{\delta_i}(\gamma)(0)+x-x_n) \Big\}\\
&&+O(|x-x_n|_\infty)+O(\sqrt{h_i})+O(|t-t_{l+1}|)+O(\max|v^0_{\delta_i}-w|)\\
&\ge&\sum_{\gamma\in\Omega^{l+1,0}_n\setminus\check{\Omega}_{\delta_i}^{\ep_1}} \mu_n^{l+1,0}(\gamma) \Big(\nu(\ep)+u(x,t) \Big)
+\sum_{\gamma\in\check{\Omega}_{\delta_i}^{\ep_1}} \mu_n^{l+1,0}(\gamma) u(x,t)\\
&&+O(|x-x_n|_\infty)+O(\sqrt{h_i})+O(|t-t_{l+1}|)+O(\max|v^0_{\delta_i}-w|)\\
&\ge&b\nu(\ep)+ u(x,t) \\
&&+O(|x-x_n|_\infty)+O(\sqrt{h_i})+O(|t-t_{l+1}|)+O(\max|v^0_{\delta_i}-w|)\\
&>&u(x,t)+\frac{1}{2}b\nu(\ep) \mbox{\quad as $i\to\infty$}.
\end{eqnarray*}
Since we have  the convergence $v^{l+1}_n\to u(x,t)$ as $\delta\to0$, we reach a contradiction.

We prove the claim 3.  For any $\ep>0$, define the set   
\begin{eqnarray*}
B^{\ep}_\delta:=\{ \gamma\in  \Omega^{l+1,0}_n\,|\,\norm\eta_\delta(\gamma)-\gamma_\delta\norm_{L^2([0,t])}\le \ep  \}.
\end{eqnarray*}
Then, by Lemma \ref{limit-theorem}, we have 
\begin{eqnarray*}
\sum_{\gamma\in\Omega^{l+1,0}_n\setminus B^{\ep}_\delta}\mu^{l+1,0}_n(\gamma;\xi^\ast)\ep^2&\le& \sum_{\gamma\in\Omega^{l+1,0}_n}\mu^{l+1,0}_n(\gamma;\xi^\ast)\int_0^{t}|\eta_\delta(\gamma)(s)-\gamma_\delta(s)|^2ds\\
&&\!\!\!\!\!\!\!\!\!\!\!\!\!\!\!\!\!\!\!\!\!\!\!\!\!\!\!\!\!\!
=\,\,\sum_{\gamma\in\Omega^{l+1,0}_n}\mu^{l+1,0}_n(\gamma)\Big\{\sum_{0\le k\le l} |\eta^k(\gamma)-\gamma^k|^2\tau \Big\}+O(h)
=O(h).
\end{eqnarray*}
Hence, for any fixed $\ep_3\gg\ep>0$, we obtain 
\begin{eqnarray}\label{hhhii}
\mbox{Prob}(\Omega^{l+1,0}_n\setminus B^{\ep}_\delta)=1-\mbox{Prob}( B^{\ep}_\delta)\le\frac{O(h)}{\ep{}^2} \to0\mbox{\quad as $\delta\to0$}.
\end{eqnarray}
Below, $\beta_1,\beta_2,\ldots$ are some positive constants independent of $\delta$. By Lemma \ref{convert}, we have a constant $\beta_1>0$ such that for any $\gamma\in B^{\ep}_\delta$,
\begin{eqnarray*}
\norm\eta_\delta(\gamma)-\gamma_\delta\norm_{C^0([0,t])}\le \beta_1 \sqrt{\ep}.
\end{eqnarray*}
With $\sum_{\gamma\in\Omega^{l+1,0}_n}=\sum_{\gamma\in B^{\ep}_\delta}+\sum_{\gamma\in\Omega^{l+1,0}_n\setminus B^{\ep}_\delta}$, we have 
\begin{eqnarray}\label{isa}
\sum_{\gamma\in\Omega^{l+1,0}_n}\mu^{l+1,0}_n(\gamma)\norm \eta_\delta(\gamma)-\gamma_\delta\norm_{C^0([0,t])}\le \beta_1\sqrt{\ep}+\beta_2(1-\mbox{Prob}(B^{\ep}_\delta)).
\end{eqnarray}
For each $\gamma\in B^{\ep}_\delta\setminus \Omega_\delta^{\ep_3}$, it holds that 
$$\norm \eta_\delta(\gamma)-\gamma_\delta\norm_{C^0([0,t])}\le\beta_1\sqrt{\ep},\quad \norm \gamma_\delta-\gamma^\ast\norm_{C^0([0,t])}>\ep_3\mbox{\quad for all $\gamma^\ast\in\Gamma^\ast$},$$
which implies 
\begin{eqnarray*}
\norm \eta_\delta(\gamma) -\gamma^\ast\norm_{C^0([0,t])}&\ge &\norm \gamma_\delta -\gamma^\ast\norm_{C^0([0,t])}-\norm \eta_\delta(\gamma)-\gamma_\delta\norm_{C^0([0,t])}\\
&>&\ep_3- \beta_1\sqrt{\ep}\mbox{\quad for all $\gamma^\ast\in\Gamma^\ast$}.
\end{eqnarray*}
Therefore, we see that for $0<\ep\ll\ep_3$,
\begin{eqnarray}\label{SOSOS}
\mbox{$\gamma\in B^{\ep}_\delta\setminus \Omega_\delta^{\ep_3}$ \,\,$\Rightarrow$\,\, $\gamma\in \Omega^{l+1,0}_n\setminus\tilde{\Omega}_\delta^{\ep_3-\beta_1\sqrt{\ep}}$}.
\end{eqnarray}
For each $\gamma\in\Omega^{l+1,0}_n$, let $\gamma^\ast=\gamma^\ast(\gamma)$ denote the element of  $\Gamma^\ast$ nearest to $\gamma_\delta$ in $\norm \cdot\norm_{C^0}$ (if there are several, it is one of them). Then, by \eqref{isa} and \eqref{SOSOS}, we have 
\begin{eqnarray*}
\beta_1\sqrt{\ep}&\!\!\!\!+\!\!\!\!&\beta_2(1-\mbox{Prob}(B^{\ep}_\delta))\ge\sum_{\gamma\in\Omega^{l+1,0}_n}\mu^{l+1,0}_n(\gamma)\norm \gamma_\delta-\eta_\delta(\gamma)\norm_{C^0([0,t])}\\
&\ge& \sum_{\gamma\in B^{\ep}_\delta\setminus \Omega_\delta^{\ep_3}}\mu^{l+1,0}_n(\gamma)\norm \gamma_\delta-\eta_\delta(\gamma)\norm_{C^0([0,t])}\\
&\ge& \sum_{\gamma\in B^{\ep}_\delta\setminus \Omega_\delta^{\ep_3}}\mu^{l+1,0}_n(\gamma)\norm \gamma_\delta-\gamma^\ast(\gamma)\norm_{C^0([0,t])} \\
&&- \sum_{\gamma\in B^{\ep}_\delta\setminus \Omega_\delta^{\ep_3}}\mu^{l+1,0}_n(\gamma)\norm \gamma^\ast(\gamma)-\eta_\delta(\gamma)\norm_{C^0([0,t])}\\
&\ge&\mbox{Prob}(B^{\ep}_\delta\setminus \Omega_\delta^{\ep_3})\ep_3 -  \beta_3\mbox{Prob}(\Omega^{l+1,0}_n\setminus\tilde{\Omega}_\delta^{\ep-\beta_1\sqrt{\ep}}),
\end{eqnarray*}
which implies with $\mbox{Prob}(\Omega^{l+1,0}_n\setminus\tilde{\Omega}_\delta^{\ep_3-\beta_1\sqrt{\ep}})=1-\mbox{Prob}(\tilde{\Omega}_\delta^{\ep_3-\beta_1\sqrt{\ep}})$,
\begin{eqnarray*}
\mbox{Prob}(B^{\ep}_\delta)-\mbox{Prob}( \Omega_\delta^{\ep_3})&\le&\mbox{Prob}(B^{\ep}_\delta\setminus \Omega_\delta^{\ep_3})\\
&\le&\frac{\beta_1\sqrt{\ep}+\beta_2(1-\mbox{Prob}(B^{\ep}_\delta))  + \beta_3(1-\mbox{Prob}(\tilde{\Omega}_\delta^{\ep_3-\beta_1\sqrt{\ep}}))}{\ep_3}.
\end{eqnarray*}
Sending $\delta\to0$, we obtain with the claim 2 and \eqref{hhhii}, 
\begin{eqnarray*}
1-\frac{\beta_1\sqrt{\ep}}{\ep_3}\le\liminf_{\delta\to0}\mbox{Prob}( \Omega_\delta^{\ep_3})\le1.
\end{eqnarray*}
Since we may take $\ep>0$ arbitrarily small so that $\sqrt{\ep}/\ep_3>0$ can be arbitrarily small, we conclude 
$$\lim_{\delta\to0}\mbox{Prob}( \Omega_\delta^{\ep_3})=1.$$
\end{proof}
As for \eqref{HJ-delta+}, we obtain a similar result for maximizing random walks of $\tilde{\varphi}^k_\delta(\cdot;c)$.
\subsection{Time-global extension of Lax-Oleinik type solution map} 

We extend the solution map $\varphi^k_{\delta}(\cdot;c)$ of \eqref{HJ-delta} to $k\to\infty$ with a fixed $\delta=(h,\tau)$. This is a non-trivial issue, because the constant $\lambda_1$ found in Theorem \ref{main1} that guarantees the solvability of (\ref{HJ-delta}) within $[0,1]$ depends on the terminal time, i.e, an instant observation only shows that $v^0$ with  $|(D_xv^0)_m|_\infty\le r$ yields $v^{2K}$ such that $|(D_xv)^{2K}_m|_\infty\le u^\ast$ with $u^\ast>r$ in general.  Hence, we need an additional argument to prove $|(D_xv)^{2K}_m|_\infty\le r$ for some $r>0$ so that we can solve (\ref{HJ-delta}) within $[0,1]$ with initial data $v^{2K}$ with the same $\delta$. 

Our strategy is the following: we first observe a priori boundedness of minimizing curves of $u(\cdot,t)=\varphi^t(w:c)$ independently from $w\in{\rm Lip}(\T^d;\R)$, which implies a priori boundedness of $u_x(\cdot,1)$ independently from $w\in{\rm Lip}(\T^d;\R)$; Theorem \ref{semi} and \ref{convergence-Lax-Oleinik} implies a similar boundedness for  $v^{2K}=\varphi^{2K}_\delta(v^0;c)$.
\begin{Lemma}\label{bdd-of-minimizer}  
For each $t>0$, there exists a constant $\beta(t)>0$ independent of $c\in P$ and $w\in {\rm Lip}(\T^d;\R)$ such that every minimizing curve $\gamma^\ast$ for $u(\cdot,t)=\varphi^t(w;c)$ satisfies  $|\gamma^\ast{}'(s)|_\infty\le \beta(t)$ for all $s\in[0,t]$.
\end{Lemma}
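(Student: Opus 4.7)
The plan is to reduce the statement to the classical Tonelli velocity bound on action minimizers between fixed endpoints. The first observation is that any minimizer $\gamma^\ast$ for $u(x,t)=\varphi^t(w;c)(x)$ (provided by Tonelli's theory) has the property that its restriction to $[0,t]$ is an action-minimizer for $L^{(c)}$ between the fixed points $\gamma^\ast(0)$ and $x$: if some $\tilde\gamma$ with the same endpoints had strictly smaller action, then using $\tilde\gamma$ in the Lax--Oleinik formula would yield a strictly smaller total value, since $w(\tilde\gamma(0))=w(\gamma^\ast(0))$, contradicting the optimality of $\gamma^\ast$. Hence it suffices to prove a uniform velocity bound for Tonelli minimizers $\gamma\colon[0,t]\to\T^d$ between arbitrary endpoints, depending only on $t$, $L$ and $P$.

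\textbf{Action and mean-velocity bounds.} The second ingredient is an upper bound on the action of $\gamma^\ast$ obtained by comparison with a straight-line competitor. Lift $\gamma^\ast(0)$ and $x$ to $\R^d$ so that $|\tilde x-\gamma^\ast(0)|\le \sqrt{d}/2$, and set $\tilde\gamma(s):=\gamma^\ast(0)+(s/t)(\tilde x-\gamma^\ast(0))$; then $|\tilde\gamma'|\le \sqrt{d}/(2t)$, and by continuity of $L$ on the compact set $\T^d\times\T\times\{|\zeta|\le \sqrt{d}/(2t)\}$ together with the compactness of $P$, $\int_0^t L^{(c)}(\tilde\gamma,s,\tilde\gamma')\,ds\le K_1(t)$ with $K_1(t)$ depending only on $L$, $P$ and $t$. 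Applying superlinearity (L3) with $a:=1+\max_{c\in P}|c|$, one gets $L^{(c)}(x,s,\zeta)=L(x,s,\zeta)-c\cdot\zeta\ge|\zeta|+b_1(a)$, so integrating yields $\int_0^t|\gamma^\ast{}'(s)|\,ds\le K_1(t)-tb_1(a)=:K_2(t)$. By the mean value theorem there is some $s_0\in[0,t]$ with $|\gamma^\ast{}'(s_0)|\le K_2(t)/t=:R(t)$.

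\textbf{Propagation via Euler--Lagrange and (L4); main obstacle.} Finally, the pointwise bound at $s_0$ is propagated to the whole interval via the Euler--Lagrange equation and the growth condition (L4). Set $p(s):=L_\zeta(\gamma^\ast(s),s,\gamma^\ast{}'(s))$, so that $p'(s)=L_x(\gamma^\ast,s,\gamma^\ast{}')$, and by (L4), $|p'(s)|\le \sqrt{d}\,\alpha\,(1+|L(\gamma^\ast,s,\gamma^\ast{}')|)$. Writing $L=p\cdot H_p(\gamma^\ast,s,p)-H(\gamma^\ast,s,p)$ along the flow and using the continuity of $H$ and $H_p$ on $\T^d\times\T\times\R^d$, one bounds $1+|L|$ by a continuous function of $|p|$ alone, yielding an ODE inequality $|p'(s)|\le F(|p(s)|)$. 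A Gronwall-type estimate starting from $|p(s_0)|$, which is controlled by $R(t)$ via continuity of $L_\zeta$, gives $|p(s)|\le\Psi(t,R(t))$ on $[0,t]$, and the identity $\gamma^\ast{}'(s)=H_p(\gamma^\ast(s),s,p(s))$ together with continuity of $H_p$ delivers $|\gamma^\ast{}'(s)|_\infty\le\beta(t)$. I expect the last step to be the most delicate: although (L4) is already known to yield completeness of the Euler--Lagrange flow, here it must be upgraded to a quantitative Gronwall bound, which requires careful handling of the mutual coupling between $|L|$ and $|p|$ through the Legendre transform.
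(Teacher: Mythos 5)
Your first two steps are correct and essentially the same as the paper's: the reduction of $\gamma^\ast$ to a fixed-endpoint action minimizer (using the periodicity of $w$), the straight-line competitor giving an action bound depending only on $t$, $L$ and $P$, and the averaging-plus-superlinearity argument producing a time $s_0$ with $|\gamma^\ast{}'(s_0)|\le R(t)$ (the paper applies the mean value theorem to $L^{(c)}$ and then (L3) at a single time $s^\ast$, you integrate (L3) first and then average the speed; both are fine).

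The gap is in the final propagation step, exactly where you flagged the difficulty. From (L4) and the Euler--Lagrange equation you get $|p'(s)|\le\sqrt{d}\,\alpha\,(1+|L|)$, and via $L=p\cdot H_p-H$ you can indeed bound $1+|L|\le F(|p|)$ with $F(r):=1+\sup_{x\in\T^d,\,t\in\T,\,|p|\le r}\big(|p|\,|H_p(x,t,p)|+|H(x,t,p)|\big)$. But nothing in (L1)--(L4) forces $F$ to have linear growth, or even to satisfy the Osgood condition $\int^\infty dr/F(r)=\infty$, which is what a Gronwall/comparison argument needs to survive until time $t$. Already for $L(x,t,\zeta)=\big(1+\tfrac{1}{2}\sin(2\pi x^1)\big)|\zeta|^2/2$, which satisfies (L4), one has $F(r)\ge c\,r^2$, so the comparison ODE $y'=\sqrt{d}\,\alpha F(y)$ with $y(s_0)\le$ (bound from $R(t)$) blows up in finite time, of order $1/R(t)$ forward and backward from $s_0$, and there is no guarantee this covers $[0,t]$; hence the asserted bound $|p(s)|\le\Psi(t,R(t))$ on all of $[0,t]$ does not follow. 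In effect you are trying to re-prove, quantitatively, the completeness of the Euler--Lagrange flow, and that is not obtained from (L4) by a direct differential inequality on $|p|$. The paper sidesteps this with a soft argument: the Euler--Lagrange flow $\phi_L^{s,s_0}$ is complete (this is where (L4) enters, as a cited fact), continuous, independent of $c$, and periodic in $(x,t)$; since $(\gamma^\ast(s_0),\gamma^\ast{}'(s_0))$ lies in the fixed compact set $[0,1]^d\times[-\beta_2(t),\beta_2(t)]^d$, the velocity $\gamma^\ast{}'(s)$ stays in $\bigcup_{0\le s_0\le t}\bigcup_{0\le s\le t}\phi_{L2}^{s,s_0}\big([0,1]^d\times[-\beta_2(t),\beta_2(t)]^d\big)$, which is compact and independent of $c\in P$ and $w$. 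Replacing your Gronwall step by this compactness-of-the-flow argument closes the gap; as written, the last step fails.
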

\begin{proof}
This is proved in \cite{Fathi-book} for time-independent Lagrangians. For readers' convenience, we give a proof. 
Let $\gamma^\ast$ be a minimizing curve for $u(x,t)=\varphi^t(w;c)(x)$ regarded as $\gamma^\ast:[0,t]\to\R^d$ with $x\in[0,1)^d$. Set $y:=\gamma^\ast(0)+z$ with $z\in\Z^d$ such that $\gamma^\ast(0)+z\in[0,1)^d$.  We see that 
\begin{eqnarray*}
\int_0^t L^{(c)}(\gamma^\ast(s),s,\gamma^\ast{}'(s))ds\le\inf_{\gamma\in AC, \gamma(t)=x,\gamma(0)=y} \int_0^t L^{(c)}(\gamma(s),s,\gamma'(s))ds.
\end{eqnarray*} 
In fact, if not, there exists $\gamma\in AC([0,t];\R^d), \gamma(t)=x,\gamma(0)=y$ such that 
\begin{eqnarray*}
&&\int_0^t L^{(c)}(\gamma^\ast(s),s,\gamma^\ast{}'(s))ds>\int_0^t L^{(c)}(\gamma(s),s,\gamma'(s))ds,\\
&&\quad \Rightarrow 
\int_0^t L^{(c)}(\gamma^\ast(s),s,\gamma^\ast{}'(s))ds+w(\gamma^\ast(0))>\int_0^t L^{(c)}(\gamma(s),s,\gamma'(s))ds+w(\gamma(0)),
\end{eqnarray*} 
where we note that $w(\gamma^\ast(0))=w(y)=w(\gamma(0))$ due to the periodicity of $w$; this violates the assumption that $\gamma^\ast$ is a minimizing curve for $u(x,t)=\varphi^t(w;c)(x)$. 
Set $\gamma(s):=x+\frac{x-y}{t}(s-t)$. Since  $|x-y|_\infty\le1$, we have $|\gamma'(s)|_\infty\le t^{-1}$ for all $s\in[0,t]$.
Hence, we have 
\begin{eqnarray*}
\int_0^t L^{(c)}(\gamma^\ast(s),s,\gamma^\ast{}'(s))ds
&\le& \int_0^t L^{(c)}(\gamma(s),s,\gamma'(s))ds\\
&\le& \sup_{x\in\T^d,t\in\T,|\zeta|_\infty\le t^{-1},c\in P}|L^{(c)}(x,t,\zeta)|t=:\beta_1(t)t.
\end{eqnarray*}
Since $L^{(c)}(\gamma^\ast(s),s,\gamma^\ast{}'(s))$ is continuous with respect to $s$, we have $s^\ast\in[0,t]$ such that 
\begin{eqnarray*}
L^{(c)}(\gamma^\ast(s^\ast),s^\ast,\gamma^\ast{}'(s^\ast))t=\int_0^t L^{(c)}(\gamma^\ast(s),s,\gamma^\ast{}'(s))ds\le \beta_1(t)t.
\end{eqnarray*}
Due to (L3), there exists a constant $\beta_2(t)>0$ depending only on  $t$ and $\beta_1(t)$ such that $|\gamma^\ast{}'(s^\ast)|\le \beta_2(t)$.
Therefore, with the Euler-Lagrange flow (this is independent of $c$) $\phi_{L}^{s,s_0}=(\phi_{L1}^{s,s_0},\phi_{L2}^{s,s_0}):\R^d\times\R^d\to\R^d\times\R^d$ and the periodicity of $L$ in $(x,t)$, we see that 
\begin{eqnarray*}
\gamma^\ast{}'(s)\in \bigcup_{0\le s_0\le t}\bigcup_{0\le s\le t}\phi_{L2}^{s,s_0}([0,1]^d\times[-\beta_2(t),\beta_2(t)]^d)\mbox{\quad for all $s\in[0,t]$,}
\end{eqnarray*}
where  the right hand side is a compact set independent of $c\in P$ and  $w\in {\rm Lip}(\T^d;\R)$.
\end{proof}
\begin{Lemma}\label{bdd-v}
Let $\beta(1)$ be the number mentioned in Lemma \ref{bdd-of-minimizer} with $t=1$. Set 
$$\tilde{\beta}:=\sup_{x\in\T^d,t\in\T,|\zeta|_\infty\le\beta(1),j} |L_{x^j}(x,t,\zeta)|+\sup_{x\in\T^d,t\in\T, |\zeta|_\infty\le\beta(1),c\in P,j} |L^{(c)}_{\zeta^j}(x,t,\zeta)|.$$ 
Then,  $u(x,t):=\varphi^t(w;c)(x)$ satisfies for any $w\in {\rm Lip}(\T^d;\R)$ and any $x\in\T^d$, 
$$-\tilde{\beta}\le \liminf_{\ep\to0}\frac{u(x+\ep e_i,1)-u(x,1)}{\ep}\le \limsup_{\ep\to0}\frac{u(x+\ep e_i,1)-u(x,1)}{\ep}\le \tilde{\beta}
\quad (i=1,\ldots,d).$$
\end{Lemma}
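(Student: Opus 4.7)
The plan is a standard one-sided competitor comparison at time $t = 1$, where Lemma \ref{bdd-of-minimizer} supplies the key a priori velocity bound on minimizers, pinning all the Taylor expansions to a fixed compact set in the $\zeta$-variable independently of $w$.

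Fix $x \in \T^d$ and, by Tonelli's theorem, choose a minimizing curve $\gamma^\ast \in AC([0,1];\R^d)$ for $u(x,1) = \varphi^1(w;c)(x)$; Lemma \ref{bdd-of-minimizer} gives $|\gamma^\ast{}'(s)|_\infty \le \beta(1)$ on $[0,1]$. For small $|\ep|$ the shifted curve $\tilde\gamma(s) := \gamma^\ast(s) + s\ep e_i$ is admissible for $u(x+\ep e_i, 1)$ with $\tilde\gamma(0) = \gamma^\ast(0)$, so by the infimum definition of $\varphi^1(w;c)(x+\ep e_i)$,
\begin{eqnarray*}
u(x+\ep e_i,1) - u(x,1) \le \int_0^1 \big[L^{(c)}(\gamma^\ast + s\ep e_i, s, \gamma^\ast{}' + \ep e_i) - L^{(c)}(\gamma^\ast, s, \gamma^\ast{}')\big]\,ds.
\end{eqnarray*}
By the mean value theorem the integrand equals $\ep\{s L_{x^i} + L^{(c)}_{\zeta^i}\}$ evaluated at an intermediate point whose velocity lies in $\{|\zeta|_\infty \le \beta(1)+|\ep|\}$. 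Since $\int_0^1 s\,ds\cdot\sup|L_{x^i}| + \int_0^1 ds\cdot\sup|L^{(c)}_{\zeta^i}| = \tfrac{1}{2}\sup|L_{x^i}| + \sup|L^{(c)}_{\zeta^i}| \le \tilde\beta$, the integral is bounded in absolute value by $|\ep|(\tilde\beta + o(1))$ as $\ep \to 0$.

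Symmetrically, for each $\ep$ I pick a minimizing curve $\gamma^{\ast\ast}_\ep$ for $u(x+\ep e_i, 1)$, which by Lemma \ref{bdd-of-minimizer} again satisfies $|\gamma^{\ast\ast}_\ep{}'|_\infty \le \beta(1)$ uniformly in $\ep$, and use the competitor $\tilde\gamma(s) := \gamma^{\ast\ast}_\ep(s) - s\ep e_i$ for $u(x,1)$ to obtain the matching estimate $u(x,1) - u(x+\ep e_i,1) \le |\ep|(\tilde\beta + o(1))$ by the same Taylor argument. Combining gives $|u(x+\ep e_i,1) - u(x,1)| \le |\ep|(\tilde\beta + o(1))$, and dividing by $\ep$ (with the sign handled separately for $\ep > 0$ and $\ep < 0$) and taking $\liminf, \limsup$ as $\ep \to 0$ yields both claimed inequalities.

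The only mild technical care is in the Taylor error: the intermediate velocities $\gamma^\ast{}' + \theta\ep e_i$ and $\gamma^{\ast\ast}_\ep{}' - \theta\ep e_i$ live in $\{|\zeta|_\infty \le \beta(1)+|\ep|\}$, slightly larger than the set defining $\tilde\beta$. Uniform continuity of $L_{x^i}$ and $L^{(c)}_{\zeta^i}$ on a fixed compact enlargement absorbs the discrepancy into the $o(1)$ correction before letting $\ep \to 0$. I do not anticipate any essential obstacle beyond this.
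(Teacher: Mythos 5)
Your proof is correct, and it reaches the bound by a route that differs in its mechanics from the paper's. The paper fixes an intermediate time $\tau\in(0,1)$, writes the dynamic programming identity $u(x,1)=\int_\tau^1 L^{(c)}(\gamma^\ast,s,\gamma^\ast{}')\,ds+u(\gamma^\ast(\tau),\tau)$, compares with the \emph{rigidly} translated curve $\gamma^\ast+\ep e_i$ on $[\tau,1]$, and then invokes the classical fact $u_{x}(\gamma^\ast(\tau),\tau)=L^{(c)}_{\zeta}(\gamma^\ast(\tau),\tau,\gamma^\ast{}'(\tau))$ to convert the boundary term $u(\gamma^\ast(\tau)+\ep e_i,\tau)-u(\gamma^\ast(\tau),\tau)$ into the single term $L^{(c)}_{\zeta^i}$ at time $\tau$; since the translation does not change velocities, all evaluations stay exactly in $\{|\zeta|_\infty\le\beta(1)\}$. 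You instead tilt linearly in time, $\tilde\gamma(s)=\gamma^\ast(s)+s\ep e_i$, so the initial-data terms cancel identically at $s=0$ and no regularity of $u(\cdot,\tau)$ along the minimizer is needed; the price is the $O(\ep)$ perturbation of the velocity, which you correctly absorb via uniform continuity of $L_{x^i}$ and $L^{(c)}_{\zeta^i}$ on a fixed compact enlargement of $\{|\zeta|_\infty\le\beta(1)\}$ before letting $\ep\to0$ (your bound $\tfrac12\sup|L_{x^i}|+\sup|L^{(c)}_{\zeta^i}|\le\tilde\beta$ is in fact slightly sharper than needed). Your two-sided step, using a minimizer $\gamma^{\ast\ast}_\ep$ for $u(x+\ep e_i,1)$ with the reverse tilt and the uniformity in $\ep$ of Lemma \ref{bdd-of-minimizer}, mirrors the paper's treatment of the lower bound. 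In short: your argument is more self-contained (it uses only the variational definition, Tonelli existence, and the velocity bound), while the paper's avoids the compact-enlargement/uniform-continuity patch by keeping velocities untouched at the cost of citing the identity $u_x=L^{(c)}_\zeta$ along minimizers.
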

\begin{proof}
Let $\gamma^\ast:[0,1]\to\T^d$ be a minimizing curve for $u(x,t)$. It is well-known that we have for any $0<\tau<1$,
$$u(x,1)=\int_\tau^1L^{(c)}(\gamma^\ast(s),s,\gamma^\ast{}'(s))ds+u(\gamma^\ast(\tau),\tau)$$
and $u_{x}(\gamma^\ast(\tau),\tau)=L^{(c)}_\zeta(\gamma^\ast(\tau),\tau,\gamma^\ast{}'(\tau))$. By the variational structure and Lemma \ref{bdd-of-minimizer}, we obtain 
\begin{eqnarray*}
&&\limsup_{\ep\to0}\frac{u(x+\ep e_i,1)-u(x,1)}{\ep}\\
&&\qquad \le\limsup_{\ep\to0}\frac{1}{\ep}\Big\{\int^1_\tau L^{(c)}(\gamma^\ast(s)+\ep e_i,s,\gamma^\ast{}'(s))-L^{(c)}(\gamma^\ast(s),s,\gamma^\ast{}'(s))ds\\
&&\qquad\quad +u(\gamma^\ast(\tau)+\ep e_i,\tau)-u(\gamma^\ast(\tau),\tau)\Big\}\\
&&\qquad =\int^1_\tau L_{x^i}(\gamma^\ast(s),s,\gamma^\ast{}'(s))ds+L^{(c)}_{\zeta^i}(\gamma^\ast(\tau),\tau,\gamma^\ast{}'(\tau)) \le \tilde{\beta}.
\end{eqnarray*}
Let $\gamma^\ast_\ep:[0,1]\to\T^d$ be a minimizing curve for $u(x+\ep e_i,t)$. By the variational structure and Lemma \ref{bdd-of-minimizer}, we obtain 
\begin{eqnarray*}
&&\liminf_{\ep\to0}\frac{u(x+\ep e_i,1)-u(x,1)}{\ep}\\
&&\qquad \ge \liminf_{\ep\to0}\frac{1}{\ep}\Big\{\int^1_\tau L^{(c)}(\gamma^\ast_\ep(s),s,\gamma^\ast_\ep{}'(s))-L^{(c)}(\gamma^\ast_\ep(s)-\ep e_i,s,\gamma^\ast_\ep{}'(s))ds\\
&&\qquad\quad +u(\gamma^\ast_\ep(\tau),\tau)-u(\gamma^\ast_\ep(\tau)-\ep e_i,\tau)\Big\}\\
&&\qquad =\liminf_{\ep\to0}\Big\{-\int^1_\tau L_{x^i}(\gamma^\ast_\ep(s),s,\gamma^\ast_\ep{}'(s))ds- 
L^{(c)}_{\zeta^i}(\gamma_\ep^\ast(\tau),\tau,\gamma_\ep^\ast{}'(\tau))\Big\}
\ge-\tilde{\beta}. 
\end{eqnarray*}
\end{proof}
\begin{Thm}\label{bdd}
Set $h_0=(12\tilde{\beta}_0M(1))^{-2}$, where $\tilde{\beta}_0$ is mentioned in the claim 4 of Theorem \ref{convergence-Lax-Oleinik} and $M(1)$ in Theorem \ref{semi}. If $h\in(0,h_0)$ with $0\le \lambda_0\le \lambda:=\tau/h<\lambda_1$, the solution $v^{k}_{m+\1}$ of (\ref{HJ-delta}) satisfies with the constant $\tilde{\beta}$ in Lemma  \ref{bdd-v},
\begin{eqnarray*}
|(D_{x^i}v)^{2K}_m| \le \tilde{\beta}+1\mbox{ \quad for all $m$ ($i=1,\ldots,d$)},
\end{eqnarray*} 
where $\tilde{\beta}_0$, $M(1)$ and $\lambda_1$ depend on $r$ but $\tilde{\beta}$ does not. 
\end{Thm}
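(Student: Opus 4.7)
The plan is to argue by contradiction, combining the one-sided semiconcavity bound of Theorem \ref{semi} with the uniform $O(\sqrt{h})$ closeness to the continuous Lax--Oleinik flow from Theorem \ref{convergence-Lax-Oleinik}(4), in the same spirit as the proof of Theorem \ref{convergence-derivative}. First, let $w_\delta\in {\rm Lip}(\T^d;\R)$ be the Lipschitz interpolation of $v^0$ supplied by Lemma \ref{interpolation}, and set $u(x,1):=\varphi^1(w_\delta;c)(x)$. Theorem \ref{convergence-Lax-Oleinik}(4) yields
$$\sup_{m}|v^{2K}_{m+\1}-u(x_{m+\1},1)|\le \tilde{\beta}_0\sqrt{h},$$
while Lemma \ref{bdd-v}, upon integrating the pointwise two-sided Dini bounds along each coordinate line (permissible since $u(\cdot,1)$ is Lipschitz on $\T^d$ and hence absolutely continuous on each coordinate line), shows that $u(\cdot,1)$ is $\tilde{\beta}$-Lipschitz in the $e_i$-direction with a constant independent of $w_\delta$ and $c\in P$.

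Next I would suppose, toward a contradiction, that $(D_{x^i}v)^{2K}_n>\tilde{\beta}+1$ at some $n\in G_{\rm even}$ (the case $(D_{x^i}v)^{2K}_n<-\tilde{\beta}-1$ is symmetric, moving in the $+e_i$-direction instead). Applying Theorem \ref{semi} at the $\tG_\delta$-midpoint $n-e_i$ gives $(D_{x^i}v)^{2K}_n-(D_{x^i}v)^{2K}_{n-2e_i}\le 2hM(1)$; iterating produces
$$(D_{x^i}v)^{2K}_{n-2re_i}\ge (D_{x^i}v)^{2K}_n-2rhM(1)>\tilde{\beta}+1-2rhM(1)\quad(r\ge 0).$$
I would then choose $r_0:=\lfloor 1/(4hM(1))\rfloor$, which for $h<h_0$ is positive and satisfies $2r_0hM(1)\le 1/2$; hence $(D_{x^i}v)^{2K}_{n-2re_i}>\tilde{\beta}+\tfrac{1}{2}$ for every $0\le r\le r_0$.

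Telescoping the discrete derivative over this macroscopic interval yields
$$v^{2K}_{n+e_i}-v^{2K}_{n-(2r_0-1)e_i}=\sum_{r=0}^{r_0-1}2h\,(D_{x^i}v)^{2K}_{n-2re_i}>2r_0h\bigl(\tilde{\beta}+\tfrac{1}{2}\bigr),$$
whereas the closeness to $u$ and the coordinate Lipschitz bound give
$$v^{2K}_{n+e_i}-v^{2K}_{n-(2r_0-1)e_i}\le u(x_{n+e_i},1)-u(x_{n-(2r_0-1)e_i},1)+2\tilde{\beta}_0\sqrt{h}\le 2r_0h\tilde{\beta}+2\tilde{\beta}_0\sqrt{h}.$$
Combining forces $r_0\sqrt{h}<2\tilde{\beta}_0$; but $r_0\sqrt{h}\ge 1/(8\sqrt{h}M(1))$ for $h$ small, which exceeds $2\tilde{\beta}_0$ for every $h<h_0=(12\tilde{\beta}_0M(1))^{-2}$ (the constant $12$ absorbing the slack from the floor function and the $\sqrt{h}$-correction), yielding the desired contradiction.

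The main obstacle is the mismatch of scales: a pointwise comparison of $(D_{x^i}v)^{2K}_m$ with $u_{x^i}(\cdot,1)$ through the $O(\sqrt{h})$ sup-norm bound would give a discrete-derivative error of order $1/\sqrt{h}$, which diverges as $h\to 0$. The key device is to exploit the one-sided semiconcavity to propagate a hypothetical anomaly of magnitude $1$ at a single grid point into a whole macroscopic interval of length $\sim 1$ in the $e_i$-direction, where the integrated increment of $v^{2K}$ conflicts with the $\tilde{\beta}$-Lipschitz bound on $u(\cdot,1)$ modulo the $O(\sqrt{h})$ slack. Balancing the interval length $\sim 1/h$ against this slack pins down the threshold $h_0$.
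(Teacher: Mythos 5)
Your argument is correct and is essentially the paper's own proof: you propagate the hypothetical anomaly along the $e_i$-line via the semiconcavity bound of Theorem \ref{semi} over an interval of length comparable to $1/M(1)$, and then play the telescoped increment of $v^{2K}$ against the $\tilde{\beta}$-Lipschitz bound (in the $e_i$-direction) on $u(\cdot,1)=\varphi^1(w_\delta;c)$ from Lemma \ref{bdd-v}, using the $O(\sqrt{h})$ closeness of claim 4 of Theorem \ref{convergence-Lax-Oleinik}, exactly as the paper does. The only point to tidy is the final arithmetic: with $r_0=\lfloor 1/(4hM(1))\rfloor$ the crude bound $r_0\ge 1/(8hM(1))$ only yields $r_0\sqrt{h}>\tfrac{3}{2}\tilde{\beta}_0$ under $h<h_0$, so you should either keep the sharper bound $r_0\ge 1/(4hM(1))-1$ or choose $r_0$ with $\tfrac{1}{3M(1)}\le 2hr_0\le\tfrac{1}{2M(1)}$ as the paper does, which gives $r_0\sqrt{h}>2\tilde{\beta}_0$ and closes the contradiction for every $h<h_0=(12\tilde{\beta}_0M(1))^{-2}$.
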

\begin{proof}
We proceed by contradiction: Suppose that there exists $m$ such that $|(D_{x^i}v)^{2K}_m| > \tilde{\beta}+1$. We deal with the case $(D_{x^i}v)^{2K}_m > \tilde{\beta}+1$. Take $r_0\in\N$ such that $\frac{1}{3M(1)}\le2hr_0\le \frac{1}{2M(1)}$.  By Theorem \ref{semi}, we have for all $r=0,\ldots,r_0$,
\begin{eqnarray*}
&&(D_{x^i}v)^{2K}_m-(D_{x^i}v)^{2K}_{m-2re_i}\le M(1)\cdot2hr\le \frac{1}{2},\quad (D_{x^i}v)^{2K}_{m-2re_i}>\tilde{\beta}+\frac{1}{2}.\end{eqnarray*}
Let $w_\delta$ be the Lipschitz interpolation of $v^0$, whose Lipschitz constant is bounded by $\beta r$ due to Lemma \ref{interpolation}. Set $u(\cdot,t):=\varphi^t(w_\delta;c)$. Then, we have with the claim 4 of Theorem \ref{convergence-Lax-Oleinik} and Lemma \ref{bdd-v},
\begin{eqnarray*}
2\tilde{\beta}_0 \sqrt{h}&\ge&(v^{2K}_{m+e^i}- u(x_m+he_i,1))-(v^{2K}_{m-2r_0e_i-e_i}-u(x_m-2hr_0e_i-he_i,t)) \\
&=& \sum_{r=0}^{r_0} \int_{x_m^i-2hr-h}^{x_m^i-2hr+h}\Big\{(D_{x^i}v)^{2K}_{m-2re_i}-u_{x^i}(x_m^1,\ldots,x_m^{i-1},y,x_m^{i+1},\ldots,x_m^d)\Big\}dy\\
&\ge&(r_0+1)(\tilde{\beta}+\frac{1}{2}-\tilde{\beta})2h\ge\frac{1}{6M(1)}, 
\end{eqnarray*}
which is a contradiction for $h\in(0,h_0)$. The other case $(D_{x^i}v)^{2K}_m < -\tilde{\beta}-1$ is treated in the same way. 
\end{proof}
Theorem \ref{bdd} implies that the solution $v^k_{m+\1}$ of  (\ref{HJ-delta}) with $r\ge\tilde{\beta}+1$ satisfies
$$|(D_{x}v)^{2K}_m|_\infty \le \tilde{\beta}+1\le r.$$
Hence, we may solve (\ref{HJ-delta}) with initial data $v^{2K}$ up to $k=2K$, which means that we have $v^k_{m+\1}$  up to $k=4K$ with $|(D_{x}v)^{4K}_m|_\infty \le \tilde{\beta}+1\le r$ and $|H_p(x_m,t_k,c+(D_xv)^k_m)|_\infty\le (d\lambda_1)^{-1}$  for all $0\le k\le 4K$ and $m$. 
In this way, the solution $v^k_{m+\1}$ of (\ref{HJ-delta}) can be extended to $k\to\infty$ with 
\begin{eqnarray}\nonumber
&&|(D_{x}v)^{2K\cdot l}_m|_\infty \le \tilde{\beta}+1\le r\mbox{\quad for all  $l\in\N$ and $m$},\\ \label{llllii}
&&|H_p(x_m,t_k,c+(D_xv)^k_m)|_\infty\le (d\lambda_1)^{-1} \mbox{\quad for all $k\ge0$ and $m$}. 
\end{eqnarray}
Furthermore, the bound \eqref{llllii} and the proof of Theorem 2.1 of \cite{Soga5} imply that 
$$v^{l+1}_n=\inf_{\xi} E^{l+1}_n(\xi;v^0,c)\mbox{\quad for all $l\ge0$ and $n$}$$ 
with the unique minimizing control $\xi^\ast$ given as
$$\xi^\ast{}^{k+1}_m=H_p(x_m,t_k,c+(D_xv)^k_m),\,\,\,\,
|\xi^\ast{}^{k+1}_m|_\infty \le (d\lambda_1)^{-1}. $$ 
\begin{Thm}\label{extension}
Under the condition of Theorem \ref{bdd} with $r\ge\tilde{\beta}+1$, the solution map $\varphi^{k}_\delta(\cdot;c)$ of \eqref{HJ-delta}  is extended to all $k\in\N\cup\{0\}$, satisfying 
$|D_x\varphi^{2K\cdot l}_\delta(\cdot;c)|_\infty\le\tilde{\beta}+1$ for all $l\in\N$. 
Furthermore, every minimizing control of  $\varphi^{k}_\delta(\cdot;c)$ is bounded by $(d\lambda_1)^{-1}$.
\end{Thm}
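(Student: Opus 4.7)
The plan is to prove the theorem by induction on $l \in \mathbb{N}$, treating each time slab $[t_{2K(l-1)}, t_{2Kl}]$ as a fresh application of Theorem \ref{main1} followed by Theorem \ref{bdd}. The key observation, already highlighted in the paragraph preceding the theorem statement, is that the bound $\tilde{\beta}+1$ obtained at $t=1$ is independent of the initial data $v^0$ (it depends only on $P$ and structural constants of $L$), whereas the $r$ we start from in \eqref{HJ-delta} is chosen at least $\tilde{\beta}+1$. Hence the output of one slab is admissible as the input to the next.

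First, I would establish the base case $l=1$: for initial data $v^0$ with $|D_{x^i}v^0|\le r$, Theorem \ref{main1} produces the unique solution $v^k_{m+\1}$ of \eqref{HJ-delta} on $0\le k\le 2K$, with the minimizing control satisfying $|\xi^{\ast\,k+1}_m|_\infty \le (d\lambda_1)^{-1}$. Theorem \ref{bdd} then yields $|(D_{x^i}v)^{2K}_m|\le \tilde{\beta}+1\le r$. This delivers $\varphi^k_\delta(\cdot;c)$ for $0\le k\le 2K$ with the desired control bound and the terminal gradient bound.

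For the inductive step, suppose $\varphi^k_\delta(\cdot;c)$ has been defined for $0\le k\le 2K\cdot l$ with $|D_x\varphi^{2K\cdot l}_\delta(v^0;c)|_\infty\le \tilde{\beta}+1\le r$. Set $\tilde{v}^0 := \varphi^{2K\cdot l}_\delta(v^0;c)$ as new initial data. Since $\tilde{v}^0$ satisfies the Lipschitz bound $r$ required in \eqref{HJ-delta}, and since $H(x,t,\cdot)$ is $1$-periodic in $t$, the recursion on the slab $2K\cdot l \le k \le 2K(l+1)$ is identical in form to that on $0\le k\le 2K$ with initial time shifted by the integer $l$. Applying Theorem \ref{main1} produces $\tilde{v}^k$ on this slab with minimizing controls bounded by $(d\lambda_1)^{-1}$, and Theorem \ref{bdd} gives $|D_x\tilde{v}^{2K}|_\infty \le \tilde{\beta}+1\le r$. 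Defining $\varphi^{2K\cdot l + k}_\delta(v^0;c) := \tilde{v}^k$ for $0\le k\le 2K$ closes the induction, since the definitions match at $k=0$ by construction.

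The bound on the minimizing controls on all of $\{k\in\mathbb{N}\cup\{0\}\}$ is then immediate: on each slab we have $\xi^{\ast\,k+1}_m = H_p(x_m,t_k,c+(D_xv)^k_m)$, and by the CFL-type bound from Theorem \ref{main1} applied slab by slab, $|\xi^{\ast\,k+1}_m|_\infty\le (d\lambda_1)^{-1}$ uniformly in $k$. I do not anticipate a genuine obstacle here: the only delicate point is verifying that the $h_0$ and $\lambda_1$ used on each slab can be taken \emph{independent of $l$}, which is precisely guaranteed because $\tilde{\beta}$ (from Lemma \ref{bdd-v}), $M(1)$ (from Theorem \ref{semi}), and $\tilde{\beta}_0$ (from Theorem \ref{convergence-Lax-Oleinik}(4)) depend only on $P$, $L$, $d$, and the fixed $r$, but not on the particular (admissible) initial data fed into a slab.
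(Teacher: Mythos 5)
Your proposal is correct and follows essentially the same route as the paper: iterate Theorem \ref{bdd} period by period, using that the terminal gradient bound $\tilde{\beta}+1\le r$ is independent of the (admissible) initial data and that $H$ is $1$-periodic in $t$, and then read off the control bound from the CFL-type condition. The paper's only additional remark is that, thanks to the uniform bound $|H_p(x_m,t_k,c+(D_xv)^k_m)|_\infty\le(d\lambda_1)^{-1}$ for all $k\ge0$, the proof of Theorem 2.1 of \cite{Soga5} gives the full-horizon representation $v^{l+1}_n=\inf_\xi E^{l+1}_n(\xi;v^0,c)$ with $\xi^\ast{}^{k+1}_m=H_p(x_m,t_k,c+(D_xv)^k_m)$, which is what makes ``every minimizing control of $\varphi^k_\delta$'' (not merely the slab-wise controls) bounded by $(d\lambda_1)^{-1}$.
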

As for \eqref{HJ-delta+}, we obtain the extension of $\tilde{\varphi}^k_\delta(\cdot;c)$ to $k\to-\infty$ .
\setcounter{section}{2}
\setcounter{equation}{0}
\section{Weak KAM theory - non-autonomous case}

We formulate an analogue of weak KAM theory and Aubry-Mather theory for the problem of action minimizing random walks. We construct analogues of the weak KAM objects for each $\delta$ and investigate their hyperbolic scaling limit.   

Suppose that  $\delta=(h,\tau)$ and $r$  are taken so that the assertions in Section 2 hold. Introduce the following notation: 
\begin{eqnarray*}
&&W_\delta:=\{w:G_{\rm odd}\to\R\,|\,w(x_{m+\1\pm2Ne_i})=w(x_{m+\1})\mbox{ for $i=1,\ldots,d$},\,\,\,|(D_xw)|_\infty\le r    \},\\
&&X_\delta:=\{ w|_{x\in\pr G_{\rm odd}} \,|\, w\in W_\delta \},\\
&&\tilde{X}_\delta:=\{u:\pr G_{\rm even}\to\R^d \,|\,u=(D_xw),\,\,w\in W_\delta\},\\
&&\hat{X}_\delta:=\{(u(y_1),u(y_2), \ldots, u(y_a))\in\R^{da} \,|\,u\in \tilde{X}_\delta \},\\
&&\mbox{where the grid points of $\pr G_{\rm even}$ are re-labeled as $y_1,\ldots,y_a$ with $a:=\sharp \pr G_{\rm even}$},\\
&&\mathcal{P}_\delta:\tilde{X}_\delta\ni u\mapsto(u(y_1), \ldots, u(y_a))\in\hat{X}_\delta\quad(\mbox{clearly one to one and onto}).
\end{eqnarray*}
We may reduce the time-1 maps  of (\ref{HJ-delta}) as 
 $$\varphi_\delta(\cdot;c):=\varphi^{2K}_\delta(\cdot;c) :X_\delta\to X_\delta,\quad \psi_\delta(\cdot;c):=\psi^{2K}_\delta(\cdot;c) :\tilde{X}_\delta\to\tilde{X}_\delta.$$
 We may also reduce $\hat{\psi}_\delta(\cdot;c):=\mathcal{P}_\delta\circ \psi_\delta(\cdot;c)\circ\mathcal{P}_\delta^{-1}$ to be 
$$\hat{\psi}_\delta(\cdot;c):\hat{X}_\delta\to\hat{X}_\delta.$$
As for \eqref{HJ-delta+}, we set $\tilde{\varphi}_\delta(\cdot;c):=\tilde{\varphi}^{-2K}_\delta(\cdot;c):X_\delta\to X_\delta$, $\tilde{\psi}_\delta(\cdot;c):=\tilde{\psi}^{-2K}_\delta(\cdot;c):\tilde{X}_\delta\to\tilde{X}_\delta$.
\subsection{Weak KAM solution on grid}
We look for a fixed point of $\psi_\delta(\cdot;c)$, which yields an analogue of a weak KAM solution. We see that  $\hat{X}_\delta$ is a convex compact set. Since the images of $\varphi_\delta$ and $\psi_\delta$ are obtained through a finite number of the four arithmetic operations, they are continuous in the topology of $\sup_{\pr G_{\rm odd}}|\cdot|$ or  $\sup_{\pr G_{\rm even}}|\cdot|_\infty$. Hence, we see that $\hat{\psi}_\delta(\cdot;c)$ is continuous in the topology of $|\cdot|_\infty$ of $\R^{da}$. Brouwer's fixed-point theorem yields a fixed point of $\hat{\psi}_\delta(\cdot;c)$ in $\hat{X}_\delta$, from which we obtain $\bar{v}^0=\bar{v}^0(c)\in X_\delta$  for each $c\in P$ such that 
$$D_x\bar{v}^0=\psi_\delta(D_x\bar{v}^0;c)=D_x\varphi_\delta(\bar{v}^0;c).$$
Therefore, we have a constant $\bar{H}_\delta(c)\in\R$ such that 
\begin{eqnarray}\label{fixed-point}
\varphi_\delta(\bar{v}^0;c)+\bar{H}_\delta(c)=\bar{v}^0.
\end{eqnarray}
This leads to a version of weak KAM theorem: 
\begin{Thm}\label{weak-KAM-delta}
For each $c\in P$, there exists the unique number $\bar{H}_\delta(c)\in\R$ for which we have $\bar{v}^0\in X_\delta$ such that $\varphi_\delta(\bar{v}^0;c)+\bar{H}_\delta(c)=\bar{v}^0$. 
\end{Thm}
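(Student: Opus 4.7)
The plan is to split the claim into existence of a pair $(\bar{v}^0, \bar{H}_\delta(c))$ satisfying \eqref{fixed-point} and uniqueness of the constant $\bar{H}_\delta(c)$. The existence is essentially the content of the paragraph preceding the theorem: the set $\hat{X}_\delta \subset \R^{da}$ is convex and compact, and $\hat{\psi}_\delta(\cdot;c)$ is continuous because each output value of $\varphi_\delta$ is obtained from $v^0$ by finitely many evaluations of $H$ and rational operations through the recursion in \eqref{HJ-delta}. Brouwer's fixed point theorem then produces a fixed point, which corresponds to $\bar{v}^0 \in X_\delta$ with $D_x \bar{v}^0 = D_x \varphi_\delta(\bar{v}^0; c)$; since $G_{\rm odd}$ is connected via the $\pm 2 e_j$ neighbour relations appearing in the discrete gradient, any function on $\pr G_{\rm odd}$ with vanishing discrete gradient is constant, and applying this to $\bar{v}^0 - \varphi_\delta(\bar{v}^0; c)$ yields the scalar $\bar{H}_\delta(c)$.

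The uniqueness is the main content left to prove, and my plan is to derive a non-expansive bound for $\varphi_\delta(\cdot; c)$ in the sup norm, in the spirit of the classical weak KAM argument. Two elementary properties of the Lax-Oleinik representation \eqref{nikoinko} suffice, both stemming from the fact that $\mu^{l+1,0}_n(\cdot;\xi)$ is a probability measure. First, since $E^{l+1}_m(\xi; v^0, c)$ depends on $v^0$ affinely with nonnegative coefficients summing to $1$, adding a constant $C$ to $v^0$ adds $C$ to $E^{l+1}_m$ and hence to its infimum, giving $\varphi_\delta(v^0+C; c) = \varphi_\delta(v^0; c) + C$. Second, if $v^0_1 \le v^0_2$ pointwise, then $E^{l+1}_m(\xi; v^0_1, c) \le E^{l+1}_m(\xi; v^0_2, c)$ for every control $\xi$, so $\varphi_\delta(v^0_1; c) \le \varphi_\delta(v^0_2; c)$. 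Combining these via the sandwich $v^0_2 - \|v^0_1 - v^0_2\|_\infty \le v^0_1 \le v^0_2 + \|v^0_1 - v^0_2\|_\infty$ and iterating yields
\begin{equation*}
\|\varphi^n_\delta(v^0_1; c) - \varphi^n_\delta(v^0_2; c)\|_\infty \le \|v^0_1 - v^0_2\|_\infty \quad \mbox{for every $n \in \N$}.
\end{equation*}

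Applied to two hypothetical solutions $(\bar{v}^0_i, \bar{H}_i)$, $i = 1, 2$, iterating \eqref{fixed-point} with the aid of constant-commutation produces $\varphi^n_\delta(\bar{v}^0_i; c) = \bar{v}^0_i - n \bar{H}_i$, and the reverse triangle inequality combined with the non-expansive bound gives
\begin{equation*}
n|\bar{H}_1 - \bar{H}_2| - \|\bar{v}^0_1 - \bar{v}^0_2\|_\infty \le \|(\bar{v}^0_1 - \bar{v}^0_2) - n(\bar{H}_1 - \bar{H}_2)\|_\infty \le \|\bar{v}^0_1 - \bar{v}^0_2\|_\infty.
\end{equation*}
Since $\pr G_{\rm odd}$ is a finite set, $\|\bar{v}^0_1 - \bar{v}^0_2\|_\infty$ is finite, so dividing by $n$ and letting $n \to \infty$ forces $\bar{H}_1 = \bar{H}_2$. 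I do not foresee a serious obstacle: the only point requiring care is making sure the Brouwer fixed point corresponds to $\bar{v}^0 \in X_\delta$ (so Theorem \ref{extension} applies to every iterate $\varphi^n_\delta(\bar{v}^0;c)$), but this is built into the definition of $\hat{X}_\delta$.
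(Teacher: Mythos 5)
Your uniqueness half is correct and is, in substance, the paper's own argument: the paper obtains the key inequality by substituting the minimizing control of one solution into the variational representation of the other, which is exactly the proof of the monotonicity and constant-commutation properties you use, and both proofs then iterate the time-$1$ map, use boundedness of the fixed points, divide by the number of iterations and let it tend to infinity. Your packaging of this as sup-norm non-expansiveness of $\varphi_\delta(\cdot;c)$ is a clean equivalent formulation, not a genuinely different route, and your remark about Theorem \ref{extension} guaranteeing that all iterates are defined is the right point of care.

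The step I would not accept as written is your justification that the Brouwer fixed point of $\hat{\psi}_\delta(\cdot;c)$ produces a single scalar $\bar{H}_\delta(c)$. You assert that any function on $\pr G_{\rm odd}$ with vanishing discrete gradient is constant because $G_{\rm odd}$ is connected under the $\pm 2e_j$ neighbour relations. For $d\ge2$ this is false: the condition $(D_{x^j}w)_m=0$ at $x_m\in G_{\rm even}$ only equates values of $w$ at odd points differing by $2he_j$, and such moves preserve the coordinate parity vector $(m^1\bmod 2,\ldots,m^d\bmod 2)$; since the period $2N$ is even, $\pr G_{\rm odd}$ splits into $2^{d-1}$ components indexed by parity vectors of odd weight. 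For instance, when $d=2$ the function equal to $0$ at points with $m^1$ odd and to $1$ at points with $m^2$ odd belongs to $X_\delta$, has identically vanishing discrete gradient, and is not constant. Consequently, from $D_x\varphi_\delta(\bar{v}^0;c)=D_x\bar{v}^0$ you only obtain that $\bar{v}^0-\varphi_\delta(\bar{v}^0;c)$ is constant on each parity class, which is weaker than \eqref{fixed-point} unless those constants are shown to coincide. To be fair, the paper's proof of the theorem takes exactly the same step silently (it simply asserts the existence of the constant after the gradient fixed point), and your argument is complete for $d=1$; but the connectivity claim you offer as justification is wrong for $d\ge2$, so closing the existence half there needs an additional argument (e.g. proving the classwise constants agree, or arranging the fixed-point construction so that it yields \eqref{fixed-point} directly).
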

 \begin{proof}
Existence of a pair of $\bar{H}_\delta(c)$ and  $\bar{v}^0$  is shown in  \eqref{fixed-point}. We prove the uniqueness of $\bar{H}_\delta(c)$.  Suppose that for some $c$, we have $A\in\R$ and $v^0\in X_\delta$ such that $\varphi_\delta(v^0;c)+A=v^0$. Since we have $(\varphi_\delta)^l(\bar{v}^0;c)=\bar{v}^0-l\bar{H}_\delta(c)$ and $(\varphi_\delta)^l(v^0;c)=v^0-l A$ for any $l\in\N$,  
where $(\varphi_\delta)^l$ stands for $l$-iteration of $\varphi_\delta(\cdot;c)$, it holds that 
\begin{eqnarray*}
(\varphi_\delta)^l(\bar{v}^0;c)(x_{n+\1})&=&E_{\mu_{n+\1}^{2lK,0}(\cdot;\xi^\ast)}\Big[  \sum_{1\le k\le2lK}L^{(c)}(\gamma^k,t_{k-1},\xi^\ast{}^k_{m(\gamma^k)})\tau+\bar{v}^0_{m(\gamma^0)} \Big]\\
&=&\bar{v}^0_{n+\1}-l\bar{H}_\delta(c),\\
(\varphi_\delta)^l(v^0;c)(x_{n+\1})&=&E_{\mu_{n+\1}^{2lK,0}(\cdot;\tilde{\xi}^\ast)}\Big[  \sum_{1\le k\le2lK}L^{(c)}(\gamma^k,t_{k-1},\tilde{\xi}^\ast{}^k_{m(\gamma^k)})\tau+v^0_{m(\gamma^0)} \Big]\\
&=&v^0_{n+\1}-lA,
\end{eqnarray*} 
where $\xi^\ast,\tilde{\xi}^\ast$ are minimizing controls. Then, we have 
 \begin{eqnarray*}
v^0_{n+\1}-lA&\le& E_{\mu_{n+\1}^{2lK,0}(\cdot;\xi^\ast)}\Big[  \sum_{1\le k\le2lK}L^{(c)}(\gamma^k,t_{k-1},\xi^\ast{}^k_{m(\gamma^k)})\tau+v^0_{m(\gamma^0)} \Big], \\
\bar{H}_\delta(c)-A&\le& \frac{1}{l}E_{\mu_{n+\1}^{2lK,0}(\cdot;\xi^\ast)}[v^0_{m(\gamma^0)}-\bar{v}^0_{m(\gamma^0)}]-\frac{1}{l}(v^0_{n+\1}-\bar{v}^0_{n+\1}).
\end{eqnarray*}
Letting $l\to\infty$, we have  $\bar{H}_\delta(c)-A\le0$. Similar reasoning yields  $\bar{H}_\delta(c)-A\ge0$.
\end{proof}
As for \eqref{HJ-delta+}, we have a pair of the unique constant $\tilde{\bar{H}}_\delta(c)$ and a function $\tilde{\bar{v}}^0$ such that  $\tilde{\varphi}_\delta(\tilde{\bar{v}}^0;c)-\tilde{\bar{H}}_\delta(c)=\tilde{\bar{v}}^0$ (we set ``$-$'' to have the statements following \eqref{HJ-delta2+}).

In Section 4, we treat the autonomous case and  prove that $(\varphi_\delta)^l(v^0;c)+\bar{H}_\delta(c)$ converges to some $\bar{v}^0$ as $l\to\infty$ for each  $v^0\in X_\delta$, where $\bar{v}^0$ depends on $v^0$.

We discuss (up to constants) uniqueness  of a solution  $\bar{v}^0$ of  \eqref{fixed-point}. When $d=1$, it is unique up to constants \cite{Nishida-Soga}, \cite{Soga3}. The proof is based on the strict contraction property of the derivative of \eqref{HJ-delta}, i.e., if $d=1$, the derivatives $u^k_m:=(D_xv)^k_{m+1}$, $\tilde{u}^k_m:=(D_x\tilde{v})^k_{m+1}$ of arbitrary two solutions $v$, $\tilde{v}$ satisfy for each $l$,
\begin{eqnarray}\label{contraction}   
\sum_{\{m\,|\,(x_{m+1},t_{l+1}) \in\G_\delta\}}|\tilde{u}^{l+1}_{m+1}-u^{l+1}_{m+1}|\le \sum_{\{m\,|\,(x_{m},t_{l})\in \G_\delta\}} |\tilde{u}^{l}_{m}-u^{l}_{m}|,  
\end{eqnarray}
which becomes a strict inequality for some $l$. This implies that  $(\varphi_\delta)^l(v^0;c)+\bar{H}_\delta(c)l$ converges to some $\bar{v}^0$ satisfying \eqref{fixed-point} as $l\to\infty$ for each  $v^0\in X_\delta$, and that $\bar{v}^0$ satisfying \eqref{fixed-point} is unique up to constants, even in the non-autonomous case. 
One could expect a similar inequality for $d\ge2$, e.g.,  the derivatives $u^j{}^k_m:=(D_{x^j}v)^k_{m}$, $\tilde{u}^j{}^k_m:=(D_{x^j}\tilde{v})^k_{m}$ of arbitrary two solutions $v$, $\tilde{v}$ might satisfy  for each $l$,
\begin{eqnarray}\label{contraction2}   
\sum_{\{m\,|\,(x_{m+\1},t_{l+1}) \in\G_\delta\}} \sum_{j=1}^d   |\tilde{u}^j{}^{l+1}_{m+\1}-u^j{}^{l+1}_{m+\1}|\le \sum_{\{m\,|\,(x_{m},t_{l})\in \G_\delta\}}  \sum_{j=1}^d   |\tilde{u}^j{}^{l}_{m}-u^j{}^{l}_{m}|.
\end{eqnarray}
Surprisingly, application of the same reasoning for (\ref{contraction}) to a possible proof of \eqref{contraction2} fails. Therefore, (up to constants) uniqueness of $\bar{v}^0$ is not clear for $d\ge2$. We remark that no such contraction property is known for $d\ge2$ in the theory of  hyperbolic systems of conservation laws (the derivative $u:=v_x$ of a viscosity solution of \eqref{HJ} satisfies a $d$-system of conservation laws). It would be worth demonstrating why reasoning for \eqref{contraction} fails to yield   \eqref{contraction2}.   By  (\ref{HJ-delta}), we have 
\begin{eqnarray*}  
 v^{l+1}_{m+\1+e_j}&=&\frac{1}{2d}\sum_{i=1}^d (v^l_{m+\1+e_j+e_i}+v^l_{m+\1+e_j-e_i})-H(x_{m+\1+e_j},t_l,u^l_{m+\1+e_j})\tau,\\
 v^{l+1}_{m+\1-e_j}&=&\frac{1}{2d}\sum_{i=1}^d (v^l_{m+\1-e_j+e_i}+v^l_{m+\1-e_j-e_i})-H(x_{m+\1-e_j},t_l,u^l_{m+\1-e_j})\tau, 
\end{eqnarray*} 
which yields 
\begin{eqnarray*}
u^j{}^{l+1}_{m+\1}&=&\frac{1}{2d}\sum_{i=1}^d (u^j{}^l_{m+\1+e_i}+u^j{}^l_{m+\1-e_i})\\
&&-\frac{\lambda}{2}\Big(H(x_{m+\1+e_j},t_l,u^l_{m+\1+e_j})-H(x_{m+\1-e_j},t_l,u^l_{m+\1-e_j})\Big).
\end{eqnarray*}
Hence, we obtain 
\begin{eqnarray*}
\tilde{u}^j{}^{l+1}_{m+\1}-u^j{}^{l+1}_{m+\1}&=&\frac{1}{2d}\sum_{i=1}^d \{ (\tilde{u}^j{}^l_{m+\1+e_i}-u^j{}^l_{m+\1+e_i})+(\tilde{u}^j{}^l_{m+\1-e_i}-u^j{}^l_{m+\1-e_i}) \}\\
&&-\frac{\lambda}{2}\Big(H(x_{m+\1+e_j},t_l,\tilde{u}^l_{m+\1+e_j})-H(x_{m+\1+e_j},t_l,u^l_{m+\1+e_j})\\
&&-H(x_{m+\1-e_j},t_l,\tilde{u}^l_{m+\1-e_j})+H(x_{m+\1-e_j},t_l,u^l_{m+\1-e_j})\Big)\\
&=&\frac{1}{2d}\sum_{i=1}^d \{ (\tilde{u}^j{}^l_{m+\1+e_i}-u^j{}^l_{m+\1+e_i})+(\tilde{u}^j{}^l_{m+\1-e_i}-u^j{}^l_{m+\1-e_i}) \}\\
&&-\frac{\lambda}{2}\sum_{i=1}^d\{ \zeta^i{}^l_{m+\1+e_j} (\tilde{u}^i{}^l_{m+\1+e_j}-u^i{}^l_{m+\1+e_j})\\
&&-\zeta^i{}^l_{m+\1-e_j}(\tilde{u}^{il}_{m+\1-e_j}-u^{il}_{m+\1-e_j})\},
\end{eqnarray*} 
where $ \zeta^i{}^l_{m+\1\pm e_j}:=H_{p^i}(x_{m+\1\pm e_j},t_l,u^l_{m+\1\pm e_j}+\theta^l_{m+\1\pm e_j} (\tilde{u}^l_{m+\1\pm e_j}-u^l_{m+\1\pm e_j}))$ with $\theta^l_{m+\1\pm e_j}\in(0,1)$  are given by Taylor's formula. Let $\sigma^j{}^{l+1}_{m+\1}=\pm1$ denote the sign of  $\tilde{u}^j{}^{l+1}_{m+\1}-u^j{}^{l+1}_{m+\1}$. Then, we have  
\begin{eqnarray*}
\sum_{j=1}^d|\tilde{u}^j{}^{l+1}_{m+\1}-u^j{}^{l+1}_{m+\1}|&=&\sum_{i,j=1}^d \Big( \frac{1}{2d}\sigma^i{}^{l+1}_{m+\1} -\frac{\lambda}{2} \zeta^i{}^{l}_{m+\1+e_j}\sigma^j{}^{l+1}_{m+\1}\Big)\big(\tilde{u}^i{}^l_{m+\1+e_j}-u^i{}^l_{m+\1+e_j}\big)\\
&+&\sum_{i,j=1}^d \Big( \frac{1}{2d}\sigma^i{}^{l+1}_{m+\1} +\frac{\lambda}{2} \zeta^i{}^{l}_{m+\1-e_j}\sigma^j{}^{l+1}_{m+\1}\Big)\big(\tilde{u}^i{}^l_{m+\1-e_j}-u^i{}^l_{m+\1-e_j}\big).
\end{eqnarray*} 
Due to the periodic boundary condition, we have  
\begin{eqnarray*}
&&\sum_{\{m\,|\,(x_{m+\1},t_{l+1}) \in\G_\delta\}}\sum_{j=1}^d|\tilde{u}^j{}^{l+1}_{m+\1}-u^j{}^{l+1}_{m+\1}|\\
&&\qquad=\sum_{\{m\,|\,(x_{m},t_{l}) \in\G_\delta\}}\sum_{i,j=1}^d \Big( \frac{1}{2d}\sigma^i{}^{l+1}_{m-e_j} -\frac{\lambda}{2} \zeta^i{}^{l}_{m}\sigma^j{}^{l+1}_{m-e_j}\Big)\big(\tilde{u}^i{}^l_{m}-u^i{}^l_{m}\big)\\
&&\qquad\quad+\sum_{\{m\,|\,(x_{m},t_{l}) \in\G_\delta\}}\sum_{i,j=1}^d \Big( \frac{1}{2d}\sigma^i{}^{l+1}_{m+e_j} +\frac{\lambda}{2} \zeta^i{}^{l}_{m}\sigma^j{}^{l+1}_{m+e_j}\Big)\big(\tilde{u}^i{}^l_{m}-u^i{}^l_{m}\big)\\
&&\qquad=\sum_{\{m\,|\,(x_{m},t_{l}) \in\G_\delta\}}\sum_{i=1}^d |\tilde{u}^i{}^l_{m}-u^i{}^l_{m}| +R^l,
\end{eqnarray*} 
where 
\begin{eqnarray*}
R^l&=&\sum_{\{m\,|\,(x_{m},t_{l}) \in\G_\delta\}}\sum_{i,j=1}^d \kappa^{ij}_m|\tilde{u}^i{}^l_{m}-u^i{}^l_{m}|,\\
\kappa^{ij}_m&=&-\frac{1}{d}+\Big\{\Big( \frac{1}{2d}\sigma^i{}^{l+1}_{m-e_j} -\frac{\lambda}{2} \zeta^i{}^{l}_{m}\sigma^j{}^{l+1}_{m-e_j}\Big)+\Big( \frac{1}{2d}\sigma^i{}^{l+1}_{m+e_j} +\frac{\lambda}{2} \zeta^i{}^{l}_{m}\sigma^j{}^{l+1}_{m+e_j}\Big)\Big\}\sigma^i{}^l_{m}.
\end{eqnarray*}
Note that 
\begin{enumerate}
\item[(i)] If $\sigma^j{}^{l+1}_{m-e_j}+\sigma^j{}^{l+1}_{m+e_j}=\pm2$, we have $\kappa^{ij}_m=-1/d\pm(\sigma^i{}^{l+1}_{m-e_j}+\sigma^i{}^{l+1}_{m+e_j})/(2d)=0$, $-1/d$ or $-2/d$.
\item[(ii)] If $\sigma^j{}^{l+1}_{m-e_j}+\sigma^j{}^{l+1}_{m+e_j}=0$, we have $\kappa^{ij}_m=-1/d \pm \{(\sigma^i{}^{l+1}_{m-e_j}+\sigma^i{}^{l+1}_{m+e_j})/(2d)\pm \lambda\zeta^{il}_m\}$, which is not necessarily non-positive. When $d=1$, we have $\sigma^j{}^{l+1}_{m\pm e_j}=\sigma^i{}^{l+1}_{m\pm e_j}$ to obtain $\kappa^{ij}_m=-1/d\pm\lambda\zeta^{il}_m<0$ due to the CFL condition mentioned in Theorem \ref{main1}.
\end{enumerate}
Hence, the above argument cannot yield $R^l\le0$ for $d\ge2$ in general. 
%
\subsection{Property of effective Hamiltonian}
We call $\bar{H}_\delta(\cdot)$ the effective Hamiltonian of (\ref{HJ-delta}). Here are the properties of $\bar{H}_\delta(\cdot)$:
\begin{Thm}\label{effective}
\begin{enumerate}
\item The problem with a constant $A$ 
 \begin{eqnarray}\label{HJ-delta2}
 \left\{
\begin{array}{lll}
&v:\tG_\delta|_{k\ge0}\to\R,\medskip \\
&v^k_{m+\1\pm2Ne_i}=v^k_{m+\1}\quad (i=1,\ldots,d),\medskip\\
&v^0\in X_\delta,\medskip\\
&(D_tv)^{k+1}_m+H(x_m,t_k,c+(D_xv)^k_m)=A
\end{array}
\right.
\end{eqnarray}
admits a time-$1$-periodic solution (i.e., $v^{k+2K}=v^k$ for all $k\ge0$), if and only if $A=\bar{H}_\delta(c)$ and $v^0$ satisfies $\varphi_\delta(v^0;c)+\bar{H}_\delta(c)=v^0$. 
\item  Let $\bar{v}$ be a time-$1$-periodic solution of \eqref{HJ-delta2} with $A=\bar{H}_\delta(c)$. Then, we have 
$$ \bar{H}_\delta(c)=\sum_{ \{(m,k)\,|\,(x_m,t_k)\in\pr\G_\delta\}}H(x_m,t_k,c+(D_x\bar{v})_m^k)(2h)^d\tau.$$
\item Let $v^k_{m+\1}$ be the solution of \eqref{HJ-delta} with  arbitrary initial date $v^0\in X_\delta$. Then, we have 
$$\lim_{l \to \infty}\frac{v^{l+1}_{n}}{t_{l+1}}=-\bar{H}_\delta(c)\mbox{\quad for any $n$}.$$
\item $\bar{H}_\delta(\cdot):P\to\R$ is convex (hence, Lipschitz continuous).      
\end{enumerate}
\end{Thm}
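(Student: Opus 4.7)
The plan is to take the four claims in sequence, since each leans on the previous one; the delicate step will be claim 3, which needs a uniform (in $k$) comparison between an arbitrary discrete solution and a fixed time-$1$-periodic one. For claim 1, I would use the substitution $w^k_m := v^k_m - k\tau A$, which converts \eqref{HJ-delta2} into the original equation \eqref{HJ-delta} for $w$ with the same initial data. After $2K$ time steps this gives $v^{2K} = \varphi_\delta(v^0;c) + A$, so time-$1$-periodicity $v^{2K} = v^0$ is equivalent to $\varphi_\delta(v^0;c) + A = v^0$, and the uniqueness portion of Theorem \ref{weak-KAM-delta} forces $A = \bar{H}_\delta(c)$. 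The converse requires extending $\{v^k\}_{0\le k\le 2K}$ to all $k \ge 0$ via the scheme and checking full periodicity; this uses the $1$-periodicity of $H$ in $t$ (so the scheme at time $t_{k+2K}$ reduces to the scheme at $t_k$) together with the easy fact that adding a constant to the initial data of \eqref{HJ-delta} shifts the whole solution by the same constant.

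For claim 2, I would sum the identity $(D_t\bar v)^{k+1}_m + H(x_m, t_k, c + (D_x\bar v)^k_m) = \bar{H}_\delta(c)$ over all $(x_m, t_k) \in \pr\G_\delta$ and multiply by the mesh volume $(2h)^d\tau$. The spatial averaging inside $(D_t\bar v)^{k+1}_m$, combined with $\T^d$-periodicity of $\bar v$ and a change of summation index, transforms $\sum_{\pr\G_\delta}\frac{1}{2d}\sum_{\omega\in B}\bar v^k_{m+\omega}$ into $\sum_{(x_m,t_k)\in\pr\tG_\delta}\bar v^k_m$; by time-$1$-periodicity the resulting sum in $k$ telescopes to zero, leaving only the contribution of the constant $\bar{H}_\delta(c)$ and the $H$-term, which yields the asserted formula.

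Claim 3 is the main technical step. I would compare the solution $v^k$ starting from an arbitrary $v^0 \in X_\delta$ with the periodic solution $\bar v$ from Theorem \ref{weak-KAM-delta}, which by claim 1 satisfies $\bar v^{2Kl}_n = \bar v^0_n - l\bar{H}_\delta(c)$ for every $l \in \N$. Using the Lax-Oleinik representation \eqref{nikoinko} and testing the minimizing control for one of $v^0, \bar v^0$ in the action functional starting from the other, I obtain $|v^k_n - \bar v^k_n| \le \sup_{\pr G_{\rm odd}}|v^0 - \bar v^0|$ uniformly in $k$ and $n$. Dividing by $t_{2Kl} = l$ along the subsequence $k = 2Kl$ gives $v^{2Kl}_n/t_{2Kl} \to -\bar{H}_\delta(c)$; for a general index $l+1$ lying between $2Kj$ and $2K(j+1)$, the values $\bar v^k$ within a single period are uniformly bounded thanks to the derivative bound of Theorem \ref{extension}, so the ratio $v^{l+1}_n/t_{l+1}$ still converges to the same limit.

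For claim 4, I would exploit the variational representation once more: for a fixed $v^0 \in X_\delta$ and any admissible control $\xi$, the cost $E^{l+1}_n(\xi; v^0, c) = E_{\mu_n^{l+1,0}(\cdot;\xi)}\big[\sum L(\gamma^k,t_{k-1},\xi^k)\tau - c \cdot \sum \xi^k\tau + v^0_{m(\gamma^0)}\big]$ is affine in $c$. Hence $c \mapsto v^{l+1}_n(c) = \inf_\xi E^{l+1}_n(\xi; v^0, c)$ is concave on $P$ as an infimum of affine functions, and by claim 3 its pointwise limit $-\bar{H}_\delta(c) = \lim_{l\to\infty} v^{l+1}_n(c)/t_{l+1}$ is concave, so $\bar{H}_\delta$ is convex. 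Lipschitz continuity on the compact set $P$ follows directly from the uniform bound $|\xi^\ast|_\infty \le (d\lambda_1)^{-1}$ on minimizing controls (Theorem \ref{main1}): testing the minimizer for $c_2$ in the functional for $c_1$ gives $|v^{l+1}_n(c_1) - v^{l+1}_n(c_2)| \le (d\lambda_1)^{-1} t_{l+1}|c_1 - c_2|$, and passing to the limit in $l$ transfers this to a Lipschitz bound of $\bar{H}_\delta$ with the same constant.
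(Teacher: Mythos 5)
Your proposal is correct and follows essentially the same route as the paper: claim 1 via recognizing the time-$1$ map of \eqref{HJ-delta2} as $\varphi_\delta(\cdot;c)+A$ together with the uniqueness in Theorem \ref{weak-KAM-delta}, claim 2 by summing the scheme over $\pr\G_\delta$ and cancelling the time-difference terms by periodicity, claim 3 by comparing with the (shifted) periodic solution through the variational representation, and claim 4 by concavity of $c\mapsto v^{l+1}_n(c)$ coming from the affine dependence of the action on $c$. Your explicit Lipschitz estimate via the uniform bound on minimizing controls is a small refinement of the paper's ``convex, hence Lipschitz'' remark rather than a different method.
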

\begin{proof}
1. The time-1 map of \eqref{HJ-delta2},  denoted by $\check{\varphi}_\delta(\cdot;c)$, is given as $\check{\varphi}_\delta(\cdot;c)=\varphi_\delta(\cdot;c)+A$. Note that a solution of \eqref{HJ-delta2} is time-$1$-periodic, if and only if $v^0$ is a fixed point of $\check{\varphi}_\delta(\cdot;c)$. 
 If $A=\bar{H}_\delta(c)$ and $\varphi_\delta(v^0;c)+\bar{H}_\delta(c)=v^0$, we have $\check{\varphi}_\delta(v^0;c)=\varphi_\delta(v^0;c)+\bar{H}_\delta(c)=v^0$. 
If $\check{\varphi}_\delta(v^0;c)=v^0$, we have $\varphi(v^0;c)+A=v^0$;  Theorem \ref{weak-KAM-delta} implies that $A$ must be $\bar{H}_\delta(c)$.

2. Since $\bar{v}$ is time-$1$-periodic, we have 
\begin{eqnarray*}
 \bar{H}_\delta(c)&=&\sum_{\{(m,k)\,|\,(x_m,t_k)\in\pr\G_\delta\}}H(x_m,t_k,c+(D_x\bar{v})_m^k)(2h)^d\tau\\
&&+\sum_{\{(m,k)\,|\,(x_m,t_k)\in\pr\G_\delta\}}\Big(\bar{v}^{k+1}_m-\frac{1}{2d}\sum_{\omega\in B} \bar{v}^k_{m+\omega}\Big)(2h)^d,
\end{eqnarray*}
where  the second summation is $0$ due to periodicity of $\bar{v}$.

3. Let $\xi^\ast$ be the minimizing control of $v^n_{n+\1}$, i.e.,
$$v^{l+1}_n= E_{\mu_n^{l+1,0}(\cdot;\xi^\ast)}\Big[  \sum_{1\le k\le l+1}L^{(c)}(\gamma^k,t_{k-1},\xi^\ast{}^k_{m(\gamma^k)})\tau+v^0_{m(\gamma^0)} \Big].$$
Let $\bar{v}$ be a time-$1$-periodic solution of \eqref{HJ-delta2} with $A=\bar{H}_\delta(c)$. We have  the minimizing control $\hat{\xi}^\ast$ of $\bar{v}^{l+1}_n$, i.e., 
$$\bar{v}^{l+1}_n= E_{\mu_n^{l+1,0}(\cdot;\hat{\xi}^\ast)}\Big[  \sum_{1\le k\le l+1}L^{(c)}(\gamma^k,t_{k-1},\hat{\xi}^\ast{}^k_{m(\gamma^k)})\tau+\bar{v}^0_{m(\gamma^0)} \Big]+\bH_\delta(c)t_{l+1}.$$
Due to the variational property, we have 
\begin{eqnarray*}
v^{l+1}_n-\bar{v}^{l+1}_n&\le& E_{\mu_n^{l+1,0}(\cdot;\hat{\xi}^\ast)}\Big[  v^0_{m(\gamma^0)}-\bar{v}^0_{m(\gamma^0)} \Big]-\bH_\delta(c)t_{l+1},\\
v^{l+1}_n-\bar{v}^{l+1}_n&\ge& E_{\mu_n^{l+1,0}(\cdot;\xi^\ast)}\Big[  v^0_{m(\gamma^0)}-\bar{v}^0_{m(\gamma^0)} \Big]-\bH_\delta(c)t_{l+1}.
\end{eqnarray*}
We divide the inequalities by $t_{l+1}$ and sending $l\to\infty$. Due to boundedness of $v^0$, $\bar{v}^0$ and $\bar{v}$, we obtain the assertion. 

4. Consider the solution $v^k_{m+\1}=v^k_{m+\1}(c)$ of \eqref{HJ-delta} for each $c$ with common initial date $v^0\in X_\delta$. It is enough to show that the function $c\mapsto v^{l+1}_{n}(c)$ is concave for fixed $l,n$. In fact, if so, $c\mapsto v^{l+1}_{n}(c)/t_{l+1}$ is also concave for any $l$; due to the claim 3, $\lim_{l \to \infty}v^{l+1}_{n}(c)/t_{l+1}=-\bar{H}_\delta(c)$ is also concave.     
Let $\xi^\ast$ be the minimizing control for $v^{l+1}_n(c^\ast)$ with $c^\ast:=\theta c+(1-\theta)\tilde{c}$, $\theta\in[0,1]$, $c,\tilde{c}\in P$. Then, the variational property yields 
\begin{eqnarray*}
v^{l+1}_n(c^\ast)&-& \{ \theta v^{l+1}_n(c)+(1-\theta)v^{l+1}_n(\tilde{c})\}\\
 &\ge& \theta E_{\mu(\cdot;\xi^\ast)}\Big[    \sum_{1\le k\le l+1} -(c^\ast-c)\cdot \xi^\ast{}^k_{m(\gamma^k)}\tau\big]\\
 &&+(1-\theta) E_{\mu(\cdot;\xi^\ast)}\Big[    \sum_{1\le k\le l+1} -(c^\ast-\tilde{c})\cdot \xi^\ast{}^k_{m(\gamma^k)}\tau\Big]\\
 &=&0.
\end{eqnarray*}
\end{proof}
As for the pair of $\tilde{\bar{H}}_\delta(c)$ and $\tilde{\bar{v}}^0$, we have a similar result. In particular, it holds that the problem with a constant $\tilde{A}$ 
 \begin{eqnarray}\label{HJ-delta2+}
 \left\{
\begin{array}{lll}
&v:\tG_\delta|_{k\le0}\to\R,\medskip \\
&v^k_{m+\1\pm2Ne_i}=v^k_{m+\1}\quad (i=1,\ldots,d),\medskip\\
&v^0\in X_\delta,\medskip\\
&(\tilde{D}_tv)^{k-1}_m+H(x_m,t_{k},c+(D_xv)^k_m)=\tilde{A}
\end{array}
\right.
\end{eqnarray}
admits a time-$1$-periodic solution $\tilde{\bar{v}}_\delta(c)$, if and only if $\tilde{A}=\tilde{\bar{H}}_\delta(c)$ and $v^0$ satisfies $\tilde{\varphi}_\delta(\tilde{\bar{v}}^0;c)-\tilde{\bar{H}}_\delta(c)=\tilde{\bar{v}}^0$. We remark that $\tilde{\bar{H}}_\delta(c)=\bar{H}_\delta(c)$ would not be true in general. In fact, the reasoning to compare $\bar{H}_\delta(c)$ and $A$ demonstrated in the proof of Theorem \ref{weak-KAM-delta} does not work in the comparison of $\bar{H}_\delta(c)$ and $\tilde{\bar{H}}_\delta(c)$, because switching a minimizer and maximizer between the two variational problems for $\varphi^k_\delta$ and $\tilde{\varphi}^k_\delta$ does not make any sense. Nevertheless, as we will see later, $\bar{H}_\delta(c)$ and $\tilde{\bar{H}}_\delta(c)$ converge to the same quantity at the hyperbolic scaling limit. 
%
\subsection{Asymptotics of minimizing random walk and Mather measure}

From now on, we consider time-$1$-periodic solutions $\bar{v}$ of 
\begin{eqnarray}\label{HJ-delta-cell}
\quad \left\{
\begin{array}{lll}
\!\!\!\!&(D_tv)^{k+1}_m+H(x_m,t_k,c+(D_xv)^k_m)=\bar{H}_\delta(c),
\medskip\\
&v^k_{m+\1+2Ne_j}=v^k_{m+\1}\quad (j=1,\ldots,d), \medskip\\
\end{array}
\right.
\end{eqnarray}
where $\bar{v}$ is also denoted by $\bar{v}(c)$ when we specify the value of $c$. Note that the solution map of \eqref{HJ-delta-cell} is given as $\varphi^k_\delta(\cdot;c)+\bar{H}_\delta(c)t_{k}$.  If $\bar{v}(c)$ is a time-$1$-periodic solution of \eqref{HJ-delta-cell}, we may periodically extend $\bar{v}(c)$ to the whole $\tG_\delta$; controlled random walks and the representation formula stated in Theorem \ref{main1} and \ref{extension} are well-defined with an arbitrary (negative) terminal time, i.e., we have  for any $l,l'\in\Z$ with $l'\le l$ and $n$,
 $$\bar{v}(c)^{l+1}_n=\inf_\xi E_{\mu_n^{l+1,l'}(\cdot;\xi)}\Big[ \sum_{l'<k\le l+1}L^{(c)}(\gamma^k,t_{k-1},\xi^k_{m(\gamma^k)})\dt+\bar{v}(c)_{m(\gamma^{l'})}^{l'} \Big]+\bar{H}_\delta(c)(t_{l+1}-t_{l'}),$$ 
where its minimizing control is given as 
$$\mbox{$\xi^\ast{}^{k+1}_{m}=H_p(x_m,t_k,c+(D_x\bar{v})^k_m)$, $|\xi^\ast{}^{k+1}_{m}|_\infty\le (d\lambda_1)^{-1}$};$$ 
the minimizing control $\xi^\ast$ and minimizing random walk can be extended to $l'\to-\infty$ with   
$\mbox{$\xi^\ast{}^{k+1}_{m}=H_p(x_m,t_k,c+(D_x\bar{v})^k_m)$ on $\tG_\delta$}. $
Then, we are ready to consider the action minimizing problem 
$$\inf_{\xi}\mathcal{L}^l_-(\xi;\bar{v})$$
 stated in Introduction.  We will see that the asymptotic behavior of minimizing random walks is very  similar to that of calibrated curves in weak KAM theory. Furthermore, if we look at  the asymptotic behavior on $\pr \tG_\delta$, we find an object very similar to Mather measures.  Roughly speaking, a Mather measure shows ``recurrence rate'' of trajectories of Euler-Lagrange system  in standard autonomous weak KAM theory (see construction of a Mather measure by calibrated curves \cite{Fathi-book}). 
We will observe that, in our problem, the quantity corresponding to``recurrence rate''  is given by the long-time average of the (configuration-space-based) probability measure of a minimizing random walk projected on $\pr \tG_\delta$.     

The next theorem is reminiscent of  the backward rotation vector of calibrated curves. 
\begin{Thm}
Let $c\in P$ be such that $\bar{H}_\delta(c)$ is differentiable (a.e. points have such a property due to  convexity). Let $\bar{v}(c)$ be a time-$1$-periodic solution of \eqref{HJ-delta-cell}. Then, every minimizing random walk of $\bar{v}(c)$ has the backward rotation vector $\frac{\partial}{\partial c}\bar{H}_\delta(c)$, i.e., the average $\bar{\gamma}^k$, $l'\le k\le l+1$ of the minimizing random walk (see  \eqref{averaged-path}) for $\bar{v}^{l+1}_n$ satisfies  
$$\lim_{l'\to-\infty}\frac{\bar{\gamma}^{l'}}{t_{l'}}=\frac{\partial}{\partial c}\bar{H}_\delta(c),$$
where we note that the averaged path defined for $l'\le k\le l+1$ and that defined for $l''\le k\le l+1$ are identical for $\max\{l',l''\}\le k\le l+1$.  
\end{Thm}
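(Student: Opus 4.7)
The plan is a standard subgradient-type argument: I read $\partial_c\bar{H}_\delta(c)$ off the asymptotic displacement of the averaged minimizing path by comparing the variational problems for two nearby parameters $c,c'\in P$. Let $\xi^\ast$ be the minimizing control for $\bar{v}(c)^{l+1}_n$, so that the representation formula stated just above the theorem gives
$$\bar{v}(c)^{l+1}_n = E_{\mu(\cdot;\xi^\ast)}\Big[\sum_{l'<k\le l+1} L^{(c)}(\gamma^k,t_{k-1},\xi^{\ast k}_{m(\gamma^k)})\tau + \bar{v}(c)^{l'}_{m(\gamma^{l'})}\Big] + \bar{H}_\delta(c)(t_{l+1}-t_{l'}).$$
For any $c'\in P$, the control $\xi^\ast$ is admissible (though generically suboptimal) for the $c'$-problem, because the transition probabilities depend on $\xi^\ast$ alone and $|\xi^\ast|_\infty\le(d\lambda_1)^{-1}<(d\lambda)^{-1}$; hence
$$\bar{v}(c')^{l+1}_n \le E_{\mu(\cdot;\xi^\ast)}\Big[\sum_{l'<k\le l+1} L^{(c')}(\gamma^k,t_{k-1},\xi^{\ast k}_{m(\gamma^k)})\tau + \bar{v}(c')^{l'}_{m(\gamma^{l'})}\Big] + \bar{H}_\delta(c')(t_{l+1}-t_{l'}).$$
Subtracting these two and using $L^{(c')}-L^{(c)}=-(c'-c)\cdot\zeta$ together with \eqref{averaged-path}, which gives $\sum_{l'<k\le l+1}\bar{\xi}^{\ast k}\tau=x_n-\bar{\gamma}^{l'}$, I obtain
$$\bar{v}(c')^{l+1}_n-\bar{v}(c)^{l+1}_n \le -(c'-c)\cdot(x_n-\bar{\gamma}^{l'}) + E_{\mu(\cdot;\xi^\ast)}\!\left[\bar{v}(c')^{l'}_{m(\gamma^{l'})}-\bar{v}(c)^{l'}_{m(\gamma^{l'})}\right] + \left(\bar{H}_\delta(c')-\bar{H}_\delta(c)\right)(t_{l+1}-t_{l'}).$$

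Dividing by $t_{l+1}-t_{l'}>0$ and sending $l'\to-\infty$, the LHS has bounded numerator and denominator tending to $+\infty$, so it vanishes. Having fixed any representatives $\bar{v}(c),\bar{v}(c')$, time-$1$-periodicity makes the boundary term $E[\bar{v}(c')^{l'}_{m(\gamma^{l'})}-\bar{v}(c)^{l'}_{m(\gamma^{l'})}]$ uniformly bounded in $l'$ (it depends only on $l'\bmod 2K$ and on the finite set of reachable grid points), so its contribution also vanishes after division. The quotient $(x_n-\bar{\gamma}^{l'})/(t_{l+1}-t_{l'})$ is the arithmetic average of the uniformly bounded $\bar{\xi}^{\ast k}\in[-(d\lambda_1)^{-1},(d\lambda_1)^{-1}]^d$, hence lies in a fixed compact subset of $\R^d$; and since $t_{l+1}-t_{l'}\sim -t_{l'}$ with $x_n/(-t_{l'})\to 0$, it shares its full set of cluster points with $\bar{\gamma}^{l'}/t_{l'}$. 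Passing to the limit along any subsequence on which $\bar{\gamma}^{l'}/t_{l'}\to\rho$ thus yields
$$(c'-c)\cdot\rho \le \bar{H}_\delta(c')-\bar{H}_\delta(c)\quad \mbox{for every $c'\in P$,}$$
i.e.\ $\rho$ belongs to the subdifferential at $c$ of the convex function $\bar{H}_\delta$ (claim~4 of Theorem~\ref{effective}).

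Since $\bar{H}_\delta$ is differentiable at $c$ by hypothesis, this subdifferential reduces to the singleton $\{\partial_c\bar{H}_\delta(c)\}$, so every cluster point of $\bar{\gamma}^{l'}/t_{l'}$ equals $\partial_c\bar{H}_\delta(c)$ and the full sequence converges to it. The only technical point requiring care is the uniform $l'$-boundedness of the boundary term $E[\bar{v}(c')^{l'}-\bar{v}(c)^{l'}]$: this follows from time-$1$-periodicity once a pair of representatives is chosen, and is insensitive to the possible non-uniqueness of $\bar{v}(c),\bar{v}(c')$ for $d\ge 2$ discussed after Theorem~\ref{weak-KAM-delta}, since only one fixed pair is used throughout the argument.
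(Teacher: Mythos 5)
Your proposal is correct and follows essentially the same route as the paper's proof: use the minimizing control for the $c$-problem as a competitor in the $c'$-problem, subtract the two representation formulas, divide by $t_{l+1}-t_{l'}$, kill the boundary terms by the uniform boundedness of the time-$1$-periodic solutions, and identify the limit via convexity of $\bar{H}_\delta$. The only cosmetic difference is the last step, where you invoke the singleton subdifferential at a point of differentiability while the paper takes one-sided difference quotients $\tilde{c}=c\pm\ep e_i$ and squeezes $\limsup$ and $\liminf$; these are equivalent.
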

\begin{proof}
Let $\xi^\ast$ be the minimizing control for  $\bar{v}^{l+1}_n(c)$ up to $l'\le l$.    
Recall \eqref{averaged-path}: $\bar{\gamma}^k=\bar{\gamma}^{k+1}-\bar{\xi}^\ast{}^{k+1}\tau$.    
For $\bar{v}^{l+1}_n(\tilde{c})$ with $\tilde{c}\neq c$, we have 
\begin{eqnarray*}
\bar{v}^{l+1}_n(\tilde{c})&\le&E_{\mu^{l+1,l'}_n(\cdot;\xi^\ast)}\Big[ \sum_{l'<k\le l+1}\big\{L^{(\tilde{c})}(\gamma^k,t_{k-1},\xi^\ast{}^k_{m(\gamma^k)})\big\}\tau+\bar{v}^{l'}_{m(\gamma^{l'})}(\tilde{c}) \Big]\\
&&+\bar{H}_\delta(\tilde{c})(t_{l+1}-t_{l'}),\\ 
\bar{v}^{l+1}_n(\tilde{c})-\bar{v}^{l+1}_n(c)&\le& E_{\mu^{l+1,l'}_n(\cdot;\xi^\ast)}\Big[\sum_{l'<k\le l+1}-(\tilde{c}-c)\cdot\xi^\ast{}^k_{m(\gamma^k)}\tau+\bar{v}^{l'}_{m(\gamma^{l'})}(\tilde{c})-\bar{v}^{l'}_{m(\gamma^{l'})}(c) \Big]\\
&&+(\bar{H}_\delta(\tilde{c})-\bar{H}_\delta(c))(t_{l+1}-t_{l'}).
\end{eqnarray*}
Hence, we have
\begin{eqnarray*}
&&\frac{\bar{v}^{l+1}_n(\tilde{c})-\bar{v}^{l+1}_n(c)-E_{\mu^{l+1,l'}_n(\cdot;\xi^\ast)}\Big[\bar{v}^{l'}_{m(\gamma^{l'})}(\tilde{c})-\bar{v}^{l'}_{m(\gamma^{l'})}(c) \Big]}{t_{l+1}-t_{l'}}\\
&&\qquad\qquad\qquad\le -\frac{\tilde{c}-c}{t_{l+1}-t_{l'}}\cdot \sum_{l'<k\le l+1} \bar{\xi}^\ast{}^k \tau+\bar{H}_\delta(\tilde{c})-\bar{H}_\delta(c)\\
&&\qquad\qquad\qquad= -(\tilde{c}-c)\cdot \frac{x_n-\bar{\gamma}^{l'}}{t_{l+1}-t_{l'}}+\bar{H}_\delta(\tilde{c})-\bar{H}_\delta(c).
\end{eqnarray*}
Since $\bar{v}^k_{m+1}(c)$, $\bar{v}^k_{m+1}(\tilde{c})$ are uniformly bounded, we obtain 
\begin{eqnarray*}
0&\le&-(\tilde{c}-c)\cdot\Big(\liminf_{l'\to-\infty}\frac{\bar{\gamma}^{l'}}{t_{l'}}\Big)+\bar{H}_\delta(\tilde{c})-\bar{H}_\delta(c)\\
&\le&-(\tilde{c}-c)\cdot\Big(\limsup_{l'\to-\infty}\frac{\bar{\gamma}^{l'}}{t_{l'}}\Big)+\bar{H}_\delta(\tilde{c})-\bar{H}_\delta(c).
\end{eqnarray*}
For $\tilde{c}= c+\ep e_i$ and $\ep\to0+$, we have
$$e_i\cdot\Big(\limsup_{l'\to-\infty}\frac{\gamma^{l'}}{t_{l'}}\Big)\le \frac{\bar{H}_\delta(\tilde{c})-\bar{H}_\delta(c)}{\ep}\to e_i\cdot\frac{\partial}{\partial c}\bar{H}_\delta(c).$$
For $\tilde{c}= c+\ep e_i$ and $\ep\to0-$, we have
$$e_i\cdot\Big(\liminf_{l'\to-\infty}\frac{\gamma^{l'}}{t_{l'}}\Big)\ge \frac{\bar{H}_\delta(\tilde{c})-\bar{H}_\delta(c)}{\ep}\to e_i\cdot\frac{\partial}{\partial c}\bar{H}_\delta(c).$$
\end{proof}

We observe asymptotics of the probability measures of controlled random walks  on $\pr \tG_\delta$ and derive an analogue of Mather's minimizing problem as well as Mather measures.  
We call $\xi:\pr\tG_\delta \to[-(d\lambda)^{-1},(d\lambda)^{-1}]^d$, $\lambda:=\tau/h<\lambda_1$ an admissible control, where we do not distinguish $\xi$ and its $1$-periodic extension to $\tG_\delta$. Let $\mu^{0,-l}_{n+\1}(\cdot; \xi|_{G^{0,-l}_{n+\1}})$, re-denoted simply by $\mu^{0,-l}_{n+\1}(\cdot; \xi)$,  be the probability density of the backward random walk controlled by $\xi$, where we may take any $l\in\N$. For each admissible control $\xi$, define the linear functional 
\begin{eqnarray*}
&&\mathcal{F}^l_\delta(\cdot;\xi): C_c(\T^d\times\T\times\R^d;\R)\to \R,\\
&&\mathcal{F}^l_\delta(f;\xi):=E_{\mu^{0,-l}_{n+\1}(\cdot; \xi)}\Big[\frac{1}{t_{l}}\sum_{-l<k\le0}f(\gamma^k,t_{k},\xi^k_{m(\gamma^k)})\tau\Big],
\end{eqnarray*}
where $C_c(\T^d\times\T\times\R^d;\R)$ is the family of compactly supported continuous functions defined in  $\T^d\times\T\times\R^d$. 
The Riesz representation theorem yields the unique  probability measure $\mu^l_\delta(\xi)$ of $\T^d\times\T\times\R^d$ such that 
\begin{eqnarray*}
\mathcal{F}^l_\delta(f;\xi)=\int_{\T^d\times\T\times\R^d}f \,\,d \mu^{l}_\delta(\xi)\mbox{\quad for any $f\in C_c(\T^d\times\T\times\R^d;\R)$}.
\end{eqnarray*}
We see that,  for all $l\in\N$,  supp\,($\mu^l(\xi)_\delta$) is contained in the compact set 
$$K_\delta:=\{(x_{m+\1},t_{k},\xi^{k}_{m+\1})\,|\, (x_{m+\1},t_{k})\in \pr \tG_\delta\}\subset \T^d\times\T\times[-(d\lambda)^{-1},(d\lambda)^{-1}]^d.$$ 
Hence, we have a sequence $l\to\infty$ for which $\mu^l_\delta(\xi)$ converges weakly to a probability measure $\mu_\delta(\xi)$ of $\T^d\times\T\times\R^d$. 
Let $\mathcal{P}_\delta$ be the family of all such probability measures for all admissible controls.  

We demonstrate more concrete construction of each $\mu_\delta(\xi)$ and $\mathcal{P}_\delta$ by means of the configuration-space-based  distribution $p^k_{m+\1}(\xi)$ (mentioned in Subsection 2.2) of the random walk starting at $(x_{m+\1},0)$ and controlled by $\xi$.  
Re-define $p^k_{m+\1}(\xi)$ as $p(\cdot;\xi):\tG_\delta\to[0,1]$; $p(\cdot;\xi)=0$ outside $G^{0,-\infty}_{n+\1} $; otherwise  
$$p(x_{n+\1},0)(\xi):=1,\quad p(x_{m+\1},t_k;\xi):=\sum_{\{\gamma\in \Omega^{0,-l}_{n+\1}\,|\,\gamma^{k}=x_{m+\1}\}}\mu^{0,-l}_{n+\1}(\gamma; \xi)\mbox{\quad with $-l\le k$},$$
where we note that $p(x_{m+\1},t_k;\xi)$ is independent of  the choice of $l$. We have for each $k\le0$, 
\begin{eqnarray}\label{313131}
 \sum_{\{x\,|\,(x,t_k)\in \tG_\delta\} } p(x,t_k; \xi) =1.
 \end{eqnarray}
 Observe that 
\begin{eqnarray*}
\mathcal{F}^l_\delta(f;\xi)&=&\frac{1}{t_{l}}  \sum_{\gamma\in\Omega^{0,-l}_{n+\1}}  \mu^{0,-l}_{n+\1}(\gamma;\xi) \Big(\sum_{-l<k\le0}f(\gamma^k,t_{k},\xi^k_{m(\gamma^k)})\tau\Big)\\
&=&\frac{1}{t_{l}}\sum_{-l<k\le0}    \Big( \sum_{\gamma\in\Omega^{0,-l}_{n+\1}}  \mu^{0,-l}_{n+\1}(\gamma;\xi)    f(\gamma^k,t_{k},\xi^k_{m(\gamma^k)})\Big)\tau\\
&=&\frac{1}{t_{l}}\sum_{-l<k\le0}    \Big( \sum_{\{m\,|\,(x_{m+\1},t_k)\in \tG_\delta \}} p(x_{m+\1},t_k; \xi) f(x_{m+\1},t_{k},\xi^k_{m+\1})\Big)\tau.
\end{eqnarray*}
Define $p^l_\delta(\cdot;\xi):\pr\tG\to[0,1]$ as 
$$p^l_\delta(x_{m+\1},t_k;\xi):=\frac{1}{t_l}\sum_{\{(m',k')\,|\, \pr(x_{m'+\1},t_{k'})=(x_{m+\1},t_k), \,\,-l<k'\le0  \}} p(x_{m'+\1},t_{k'}; \xi)\tau,$$
where,  for $y\in\R^q$, $\pr y:=y+z$ with $z\in\Z^q$ such that $y+z\in[0,1)^q$.    We note that \eqref{313131} implies  
$$\sum_{(x,t)\in\pr\tG} p^l_\delta(x,t;\xi)=1\mbox{\quad for each $l\in\N$}.$$
Due to the periodicity of $f\in C_c(\T^d\times\T\times\R^d;\R)$ with respect to $(x,t)$, we have
\begin{eqnarray*}
\mathcal{F}^l_\delta(f;\xi)&=&\sum_{(x,t)\in\pr\tG} p^l_\delta(x,t;\xi)f(x,t,\xi^{k(t)}_{m(x)})
\end{eqnarray*}
Now, we see that the above $\mu^l_\delta(\xi)$ is represented as   
$$\mu^l_\delta(\xi)=  \sum_{(x,t)\in\pr\tG} p^l_\delta(x,t;\xi) \mathfrak{d}_{x,t,\xi^{k(t)}_{m(x)}},$$
where $\mathfrak{d}_{x,t,\zeta}$ is the Dirac measure of $\T^d\times\T\times\R^d$ supported by the point $(x,t,\zeta)$.  Since the function $p^l_\delta(\cdot;\xi)$ is determined by a finite number of values in $[0,1]$,  the sequence $\{p^l_\delta(\cdot;\xi)\}_{l\in\N}$ can be regarded as a sequence of $[0,1]^{da}$ with $a=\sharp (\pr\tG)$. Hence, we find a convergent subsequence with the limit $p_\delta(\cdot;\xi)$. Let $\mathcal{Q}_\delta(\xi)$ be the set of all limits of convergent subsequences of  $\{p^l_\delta(\cdot;\xi)\}_{l\in\N}$. Then, we see that 
$$\mathcal{P}_\delta=\bigcup_{\xi}\Big\{   
 \mu_\delta(\xi)=\sum_{(x,t)\in\pr\tG} p_\delta(x,t;\xi) \mathfrak{d}_{x,t,\xi^{k(t)}_{m(x)}}\,\Big|\,
 p_\delta(\cdot;\xi)\in\mathcal{Q}_\delta(\xi)
 \Big\},$$ 
where the union is taken over all admissible controls.

We investigate an a priori  constraint of $\mathcal{P}_\delta$, which corresponds to the holonomic constraint in Mather's minimizing problem \cite{Mane}.  Fix an arbitrary admissible control $\xi$. For each function $g:\pr\tG\to\R$, consider a continuous function $f:\T^d\times\T\times\R^d\to\R$ whose restriction to $\pr\tG\times\R^d$ is  
\begin{eqnarray}\label{holoholo} 
f(x_{m},t_{k+1}, \zeta):=(D_tg)^{k+1}_{m}+(D_x g)^{k}_{m}\cdot \zeta,\quad (x_{m},t_{k+1}, \zeta)\in \pr\tG\times\R^d.
\end{eqnarray}
\begin{Prop}\label{holonomic-delta}
Each $\mu_\delta(\xi)\in\mathcal{P}_\delta$ satisfies the constraint 
$$\int_{\T^d\times\T\times\R^d}f\,\,d\mu_\delta(\xi)=0$$
for any continuous function $f:\T^d\times\T\times\R^d\to\R$ whose restriction to $\pr\tG\times\R^d$ is of the form \eqref{holoholo}. 
\end{Prop}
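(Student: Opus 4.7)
The plan is to exhibit $f$ as a discrete ``total derivative along the random walk'' of $g$ (extended $1$-periodically from $\pr\tG_\delta$ to all of $\tG_\delta$) and then telescope. First, I would fix an admissible control $\xi$ and a point $(x_{n+\1},0)\in\tG_\delta$, and compute the one-step conditional expectation of $g$ along the backward random walk. Given $\gamma^k=x_m$, the transition density to $\gamma^{k-1}=x_m+\omega h$ is $\rho^k_m(\omega)=\frac{1}{2d}-\frac{\lambda}{2}(\omega\cdot\xi^k_m)$. Because $B=\{\pm e_i\}$, the cross sum collapses into the centered spatial difference:
\begin{equation*}
\sum_{\omega\in B}(\omega\cdot\xi^k_m)\,g(x_m+\omega h,t_{k-1})=2h\,\xi^k_m\cdot(D_xg)^{k-1}_m,
\end{equation*}
so, using $\lambda h=\tau$,
\begin{equation*}
E\bigl[g(\gamma^{k-1},t_{k-1})\,\big|\,\gamma^k=x_m\bigr]=\frac{1}{2d}\sum_{\omega\in B}g(x_m+\omega h,t_{k-1})-\tau\,\xi^k_m\cdot(D_xg)^{k-1}_m.
\end{equation*}
Subtracting from $g(x_m,t_k)$ and recognizing $g(x_m,t_k)-\frac{1}{2d}\sum_\omega g(x_m+\omega h,t_{k-1})=\tau(D_tg)^k_m$ produces the key identity
\begin{equation*}
g(\gamma^k,t_k)-E\bigl[g(\gamma^{k-1},t_{k-1})\,\big|\,\gamma^k\bigr]=\tau\,f\bigl(\gamma^k,t_k,\xi^k_{m(\gamma^k)}\bigr).
\end{equation*}

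Next I would take unconditional expectations under $\mu^{0,-l}_{n+\1}(\cdot;\xi)$ and telescope from $k=0$ down to $k=-l+1$, which gives
\begin{equation*}
\tau\sum_{-l<k\le 0}E\bigl[f(\gamma^k,t_k,\xi^k_{m(\gamma^k)})\bigr]=g(x_{n+\1},0)-E\bigl[g(\gamma^{-l},t_{-l})\bigr].
\end{equation*}
Dividing by $t_l=l\tau$ and unwinding the definition of $\mathcal{F}^l_\delta$ yields $\mathcal{F}^l_\delta(f;\xi)=(g(x_{n+\1},0)-E[g(\gamma^{-l},t_{-l})])/t_l$. Since $g$ is bounded on the finite set $\pr\tG_\delta$ and $1$-periodic, the right-hand side is $O(1/l)$; passing to the subsequence along which $\mu^l_\delta(\xi)\to\mu_\delta(\xi)$ (i.e. along which $p^l_\delta(\cdot;\xi)\to p_\delta(\cdot;\xi)\in\mathcal{Q}_\delta(\xi)$) therefore gives
\begin{equation*}
\int_{\T^d\times\T\times\R^d}f\,d\mu_\delta(\xi)=\lim_{l\to\infty}\mathcal{F}^l_\delta(f;\xi)=0,
\end{equation*}
which is the desired holonomic constraint.

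The only delicate point is the exact matching between the discrete derivatives appearing in the definition of $f$ and the one-step increment of the random walk: specifically, that $(D_tg)^k_m$ at $(x_m,t_k)\in\tG_\delta$ pairs with $(D_xg)^{k-1}_m$ at $(x_m,t_{k-1})\in\G_\delta$ under the parity convention of the two sub-grids, and that the centered (rather than a one-sided) spatial difference is precisely what the transition law $\rho^k_m$ produces. Once this book-keeping is in place the argument is a clean discrete analogue of the continuous identity $\int(\frac{d}{ds}g(\gamma(s),s))\,d\mu=0$ that underlies the holonomy constraint in Mather theory.
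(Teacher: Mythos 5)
Your proof is correct and is essentially the paper's argument: the one-step identity $g(\gamma^k,t_k)-E[g(\gamma^{k-1},t_{k-1})\mid\gamma^k]=\tau f(\gamma^k,t_k,\xi^k_{m(\gamma^k)})$ followed by the tower property and telescoping is exactly the paper's manipulation, which performs the same cancellation at the level of the configuration-space densities $p^k_{m+\1}(\xi)$ via \eqref{p-evolution} rather than on path space. The remaining steps (boundedness of the periodically extended $g$ giving $\mathcal{F}^l_\delta(f;\xi)=O(1/l)$, then passing to the subsequence defining $\mu_\delta(\xi)$, with the compact support of the measures handling the fact that $f$ is only cut off in $\zeta$) match the paper's conclusion as well.
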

\begin{proof}
We cut off $f$ with respect to $\zeta$ without changing the values for $\zeta\in[-(d\lambda)^{-1},(d\lambda)^{-1}]^d$ so that $f$ belongs to $C_c(\T^d\times\T\times\R^d;\R)$ and  periodically extend it to $\R^d\times\R\times\R^d$. Then, we have 
\begin{eqnarray*}
&&\mathcal{F}_\delta^l(f;\xi)=\frac{1}{t_{l}}\sum_{-l<k\le0}    \Big( \sum_{\{m\,|\,(x_{m+\1},t_k)\in \tG_\delta \}} p(x_{m+\1},t_k; \xi) f(x_{m+\1},t_{k},\xi^k_{m+\1})\Big)\tau\\
&&\quad= \frac{1}{t_{l}}\sum_{-l<k\le0}    \Big( \sum_{\{m\,|\,(x_{m+\1},t_k)\in G^{0,-l}_{n+\1} \}} p^k_{m+\1} (\xi)  f(x_{m+\1},t_k,\xi^k_{m+\1})\Big)\tau\\
&&\quad=\frac{1}{t_{l}}\sum_{-l<k\le0}    \Big[ \sum_{\{m\,|\,(x_{m+\1},t_k)\in G^{0,-l}_{n+\1} \}} p^k_{m+\1} (\xi) 
\Big\{ g^k_{m+\1}-\frac{1}{2d}\sum_{i=1}^d \big(g^{k-1}_{m+\1+e_i} +g^{k-1}_{m+\1-e_i} \big) \\
&&\qquad + \sum_{i=1}^d\big(  g^{k-1}_{m+\1+e_i} -g^{k-1}_{m+\1-e_i}     \big)\xi^i{}^k_{m+\1} \frac{\lambda}{2}
\Big\}\Big]\\
&&\quad=\frac{1}{t_{l}}\sum_{-l<k\le0}    \Big\{ \sum_{\{m\,|\,(x_{m+\1},t_k)\in G^{0,-l}_{n+\1} \}} \Big(p^k_{m+\1} (\xi) g^k_{m+\1} -\sum_{\omega\in B} p^k_{m+\1} (\xi) \rho^k_{m+\1}(\omega)g^{k-1}_{m+\1+\omega} 
\Big)\Big\}.
\end{eqnarray*}
It follows from  \eqref{p-evolution} that 
$$\sum_{\{m\,|\,(x_{m+\1},t_k)\in G^{0,-l}_{n+\1} \}} \sum_{\omega\in B} p^k_{m+\1} (\xi) \rho^k_{m+\1}(\omega)g^{k-1}_{m+\1+\omega} 
=\sum_{\{m\,|\,(x_{m},t_{k-1})\in G^{0,-l}_{n+\1} \}} p^{k-1}_{m}(\xi)g^{k-1}_{m}. 
$$
Hence, we obtain 
\begin{eqnarray*}
&&\mathcal{F}_\delta^l(f;\xi)=\frac{1}{t_{l}}\sum_{-l<k\le0}    \Big( \sum_{\{m\,|\,(x_{m+\1},t_k)\in G^{0,-l}_{n+\1} \}} p^k_{m+\1} (\xi) g^k_{m+\1}
- \sum_{\{m\,|\,(x_{m},t_{k-1})\in G^{0,-l}_{n+\1} \}} p^{k-1}_{m} (\xi) g^{k-1}_{m}
\Big)\\
&&\quad =\frac{1}{t_{l}}\Big(  g^0_{n+\1}-      \sum_{\{m\,|\,(x_{m+\1},t_{-l})\in G^{0,-l}_{n+\1} \}} p^{-l}_{m+\1} (\xi) g^{-l}_{m+\1}    \Big)\\
&&\quad =\int_{\T^d\times\R^d\times\T}f \,\,d \mu^{l}_\delta(\xi).
\end{eqnarray*}
Since $\mu_\delta(\xi)$ is the weak limit of a  subsequence of $\{\mu^l_\delta\}_{l\in\N}$, the assertion is proved. 
\end{proof}
The next theorem is an analogue of Mather's minimizing problem. 
\begin{Thm}\label{Mather-delta}
For each $c\in P$, we have
\begin{eqnarray}\label{33minimizing}
\inf_{\mu_\delta\in \mathcal{P}_\delta} \int_{\T^d\times\T\times\R^d} L^{(c)}(x,t-\tau,\zeta)\,d\mu_\delta=-\bar{H}_\delta(c), 
\end{eqnarray}
where there exists at least one minimizing probability measure $\mu_\delta^\ast\in \mathcal{P}_\delta$ that attains the infimum. 
\end{Thm}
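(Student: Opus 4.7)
The plan is to follow the classical Mather/duality structure: the periodic discrete Hamilton--Jacobi equation \eqref{HJ-delta-cell} gives, via Fenchel--Young, a pointwise inequality which, when integrated against any holonomic measure, yields $\int L^{(c)}(x,t-\tau,\zeta)\,d\mu_\delta\ge -\bar H_\delta(c)$; equality is achieved along the distribution generated by the minimizing control $\xi^\ast$ of $\bar v(c)$.

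More precisely, fix $c\in P$ and let $\bar v=\bar v(c)$ be a time-$1$-periodic solution of \eqref{HJ-delta-cell}. For every $(x_m,t_{k+1})\in\pr\tG_\delta$ and every $\zeta\in\R^d$, the Legendre identity $H(x,t,c+p)=\sup_\zeta\{p\cdot\zeta-L^{(c)}(x,t,\zeta)\}$ combined with the equation at $(x_m,t_{k+1})$ gives
\begin{equation*}
(D_t\bar v)^{k+1}_m+(D_x\bar v)^k_m\cdot\zeta-L^{(c)}(x_m,t_k,\zeta)\le \bar H_\delta(c),
\end{equation*}
with equality precisely when $\zeta=H_p(x_m,t_k,c+(D_x\bar v)^k_m)=\xi^\ast{}^{k+1}_m$. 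Setting $g:=\bar v$ in \eqref{holoholo} and extending continuously to a compactly supported function $f\in C_c(\T^d\times\T\times\R^d;\R)$, the left-hand side of the above inequality equals $f(x_m,t_{k+1},\zeta)-L^{(c)}(x_m,t_k,\zeta)$ on $\pr\tG_\delta\times\R^d$. Since every $\mu_\delta\in\mathcal{P}_\delta$ is supported in $K_\delta\subset\pr\tG_\delta\times[-(d\lambda)^{-1},(d\lambda)^{-1}]^d$, where the inequality holds, integration against $\mu_\delta$ yields
\begin{equation*}
\int f\,d\mu_\delta-\int L^{(c)}(x,t-\tau,\zeta)\,d\mu_\delta\le \bar H_\delta(c).
\end{equation*}
By Proposition \ref{holonomic-delta} the first integral is zero, so $\int L^{(c)}(x,t-\tau,\zeta)\,d\mu_\delta\ge -\bar H_\delta(c)$, giving the ``$\ge$'' direction of \eqref{33minimizing}.

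For attainment, take $\xi^\ast$ to be the minimizing control for $\bar v$, namely $\xi^\ast{}^{k+1}_m=H_p(x_m,t_k,c+(D_x\bar v)^k_m)$, which is admissible because $|\xi^\ast{}^{k+1}_m|_\infty\le(d\lambda_1)^{-1}<(d\lambda)^{-1}$ by Theorem \ref{extension}. The associated sequence $\{\mu^l_\delta(\xi^\ast)\}_l$ is supported in the compact set $K_\delta$, and by the construction preceding Proposition \ref{holonomic-delta} there is a subsequence whose weak-$\ast$ limit $\mu^\ast_\delta:=\mu_\delta(\xi^\ast)$ belongs to $\mathcal{P}_\delta$. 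Along $\xi^\ast$ the Fenchel--Young inequality becomes an equality on the support of each $\mu^l_\delta(\xi^\ast)$, so the same computation as above gives $\int L^{(c)}(x,t-\tau,\zeta)\,d\mu^l_\delta(\xi^\ast)=-\bar H_\delta(c)$ for every $l$; passing to the weak limit preserves this equality, which proves both the reverse inequality and the existence of a minimizer.

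The only subtlety—and the main thing to double-check—is that the holonomic identity of Proposition \ref{holonomic-delta} can be applied with the specific $f$ built from $g=\bar v$: one needs to truncate in $\zeta$ outside $[-(d\lambda)^{-1},(d\lambda)^{-1}]^d$ to land in $C_c(\T^d\times\T\times\R^d;\R)$ without altering the integrand on $K_\delta$, and to check periodicity of $f$ in $(x,t)$, which follows from the $1$-periodic extension of $\bar v$ on $\tG_\delta$. No genuine obstacle arises; everything else is an essentially algebraic consequence of the discrete HJ equation \eqref{HJ-delta-cell} and the construction of $\mathcal{P}_\delta$.
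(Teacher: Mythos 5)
Your proposal is correct, but it takes a genuinely different route from the paper's proof. For the lower bound you combine the pointwise Fenchel--Young inequality $(D_t\bar v)^{k+1}_m+(D_x\bar v)^k_m\cdot\zeta-L^{(c)}(x_m,t_k,\zeta)\le\bar H_\delta(c)$ with the constraint of Proposition \ref{holonomic-delta} applied to the function $f$ built from $g=\bar v$; the paper never invokes Proposition \ref{holonomic-delta} in this proof, but instead compares an arbitrary admissible control $\xi$ against the minimizing control $\xi^\ast$ directly through the Lax--Oleinik representation of $\bar v^0_{n+\1}$, i.e.\ the variational inequality $\frac{1}{t_l}\bar v^0_{n+\1}\le\mathcal{F}^l_\delta(L^{(c)}(\cdot,\cdot-\tau,\cdot)+\bar H_\delta(c);\xi)+\frac{1}{t_l}E\bigl[\bar v^{-l}_{m(\gamma^{-l})}\bigr]$, and lets $l\to\infty$ so the boundary terms vanish. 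Your duality argument is precisely the mechanism the paper reserves for Theorem \ref{Aubry-delta}; it buys a slightly stronger statement (the bound $\int L^{(c)}\,d\mu\ge-\bar H_\delta(c)$ holds for any probability measure supported in $K_\delta$ satisfying the constraint of Proposition \ref{holonomic-delta}, not only for limits of control-generated measures) and it exposes at once where equality can occur, which is the Aubry-set inclusion; the paper's control-comparison avoids extending $f$ off the grid and stays entirely within the representation formula. One small imprecision in your attainment step: for finite $l$ the identity $\int L^{(c)}(x,t-\tau,\zeta)\,d\mu^l_\delta(\xi^\ast)=-\bar H_\delta(c)$ is not exact, since $\int f\,d\mu^l_\delta(\xi^\ast)=\frac{1}{t_l}\bigl(\bar v^0_{n+\1}-E\bigl[\bar v^{-l}_{m(\gamma^{-l})}\bigr]\bigr)$ is only $O(1/t_l)$ rather than $0$; but the supports of the $\mu^l_\delta(\xi^\ast)$ lie in a fixed finite (hence closed) set on which Fenchel--Young is an equality, so the weak limit $\mu^\ast_\delta$ is supported there too, and applying Proposition \ref{holonomic-delta} to $\mu^\ast_\delta$ (equivalently, letting the $O(1/t_l)$ boundary term vanish, $\bar v$ being bounded) yields the attainment exactly as you conclude.
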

\begin{proof} 
Let $\bar{v}=\bar{v}(c)$ be a time-$1$-periodic solution of  \eqref{HJ-delta-cell} and consider the admissible control 
$$\xi^\ast:\pr \tG\ni(x_{m},t_{k+1})\mapsto H_p(x_{m},t_k,c+(D_x\bar{v})^k_{m})\in[-(d\lambda_1)^{-1},(d\lambda_1)^{-1}]^d$$
Then, for any $x_{n+\1}$, we have 
\begin{eqnarray*}
\int_{\T^d\times\T\times\R^d} L^{(c)}(x,t-\tau,\zeta)+\bar{H}_\delta(c) \,\,\,d\mu^{l}_\delta(\xi^\ast)&=&  \mathcal{F}_\delta^l(L^{(c)}(\cdot,\cdot-\tau,\cdot)+\bar{H}_\delta(c) ;\xi^\ast)\\
&=& \frac{1}{t_l}\Big(\bar{v}^0_{n+\1}-    
E_{\mu^{0,-l}_{n+\1}(\cdot; \xi^\ast)}\big[\bar{v}^{-l}_{m(\gamma^{-l})}\big]
\Big).
\end{eqnarray*} 
Since $\bar{v}$ is bounded, the weak limit $\mu^\ast_\delta(\xi^\ast)\in \mathcal{P}_\delta$ of a subsequence of $\{\mu^l_\delta(\xi^\ast)\}_{l\in\N}$ yields  
$$\int_{\T^d\times\T\times\R^d} L^{(c)}(x,t-\tau,\zeta)+\bar{H}_\delta(c) \,\,\,d\mu^\ast_\delta(\xi^\ast)=0.$$
\indent Let $\mu_\delta$ be an arbitrary element of $\mathcal{P}_\delta$, where $\mu_\delta$ is the weak limit of a subsequence of $\{\mu_\delta^{l}(\xi)\}_{l\in\N}$ with some admissible control $\xi$.  Due to variational structure of  \eqref{HJ-delta-cell}, we have  
\begin{eqnarray*}
\frac{1}{t_l}\bar{v}^0_{n+\1}&=&\mathcal{F}_\delta^l(L^{(c)}(\cdot,\cdot-\tau,\cdot)+\bar{H}_\delta(c);\xi^\ast )+ \frac{1}{t_l}E_{\mu^{0,-l}_{n+\1}(\cdot; \xi^\ast)}\big[\bar{v}^{-l}_{m(\gamma^{-l})}\big]\\
&\le& \mathcal{F}_\delta^l(L^{(c)}(\cdot,\cdot-\tau,\cdot)+\bar{H}_\delta(c) ;\xi)+ \frac{1}{t_l}E_{\mu^{0,-l}_{n+\1}(\cdot; \xi)}\big[\bar{v}^{-l}_{m(\gamma^{-l})}\big],
\end{eqnarray*} 
which leads to 
$$\int_{\T^d\times\T\times\R^d} L^{(c)}(x,t-\tau,\zeta)+\bar{H}_\delta(c) \,\,\,d\mu^\ast_\delta(\xi^\ast)\le \int_{\T^d\times\T\times\R^d} L^{(c)}(x,t-\tau,\zeta)+\bar{H}_\delta(c) \,\,\,d\mu_\delta(\xi).$$
\end{proof}
\subsection{Mather set and Aubry set}

We define an analogue of the Mather set $\mathcal{M}_\delta(c)$ for each $c$ as 
$$\mathcal{M}_\delta(c):=\bigcup_{\mu^\ast_\delta}\mbox{supp}(\mu^\ast_\delta)\subset \pr \tG_\delta\times[-(d\lambda)^{-1},(d\lambda)^{-1}]^d,$$
where the union is taken over all minimizing probability measures $\mu^\ast_\delta$ of \eqref{33minimizing}. We state properties of $\mathcal{M}_\delta(c)$. 
\begin{Thm}\label{Aubry-delta}
For each $c$, the support of any minimizing probability measure for \eqref{33minimizing} is contained in the set 
\begin{eqnarray*}
\mathcal{A}_\delta(c):=\bigcap_{\bar{v}(c)} \Big\{\big(x_{m},t_{k+1},H_p(x_{m},t_k,c+(D_x\bar{v})^{k}_{m})\big)\,|\, (x_{m},t_{k+1})\in \pr\tG_\delta    \Big\},
\end{eqnarray*}
where the intersection is taken over all time-$1$-periodic solutions of (\ref{HJ-delta-cell}); namely we have 
$$\mathcal{M}_\delta(c)\subset\mathcal{A}_\delta(c)\subset \pr \tG_\delta\times[-(d\lambda)^{-1},(d\lambda)^{-1}]^d.$$
\end{Thm}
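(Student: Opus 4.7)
The plan is to combine the Fenchel–Young inequality (expressing the Legendre duality between $H$ and $L$) with the holonomic constraint from Proposition \ref{holonomic-delta} and the value of the minimum from Theorem \ref{Mather-delta}. Fix a minimizing measure $\mu_\delta^\ast \in \mathcal{P}_\delta$ and an arbitrary time-$1$-periodic solution $\bar{v}(c)$ of \eqref{HJ-delta-cell}. The basic idea is that the Legendre transform gives, for every $(x_m, t_k, \zeta) \in \pr\tG_\delta \times \R^d$,
\begin{equation*}
H(x_m, t_k, c + (D_x\bar{v})^k_m) \ge (c + (D_x\bar{v})^k_m)\cdot \zeta - L(x_m, t_k, \zeta),
\end{equation*}
with equality if and only if $\zeta = H_p(x_m, t_k, c + (D_x\bar{v})^k_m)$. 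Rewriting using \eqref{HJ-delta-cell} to replace $H(x_m, t_k, c + (D_x\bar{v})^k_m)$ by $\bar{H}_\delta(c) - (D_t \bar{v})^{k+1}_m$, this becomes
\begin{equation*}
L^{(c)}(x_m, t_k, \zeta) + \bar{H}_\delta(c) \ge (D_t \bar{v})^{k+1}_m + (D_x \bar{v})^k_m \cdot \zeta,
\end{equation*}
with equality exactly when $\zeta = H_p(x_m, t_k, c + (D_x\bar{v})^k_m)$.

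The next step is to integrate both sides against $\mu_\delta^\ast$. Reparametrising the time variable via $t = t_{k+1}$, so that $t - \tau = t_k$, Theorem \ref{Mather-delta} together with minimality gives
\begin{equation*}
\int_{\T^d\times\T\times\R^d} \big(L^{(c)}(x, t-\tau, \zeta) + \bar{H}_\delta(c)\big)\, d\mu_\delta^\ast = 0.
\end{equation*}
On the other hand, the right-hand side of the pointwise inequality is exactly the function in \eqref{holoholo} with $g := \bar{v}$ (restricted to $\pr \tG_\delta \times \R^d$, then extended continuously/periodically to $\T^d\times\T\times\R^d$), so Proposition \ref{holonomic-delta} yields
\begin{equation*}
\int_{\T^d\times\T\times\R^d} \big((D_t \bar{v})^{k(t)}_{m(x)} + (D_x \bar{v})^{k(t)-1}_{m(x)} \cdot \zeta\big)\, d\mu_\delta^\ast = 0.
\end{equation*}
Thus the integrals of the two sides of the pointwise inequality agree, forcing equality to hold $\mu_\delta^\ast$-almost everywhere, and hence, by continuity of the integrand in $\zeta$ and finiteness of $\pr\tG_\delta$, everywhere on $\mathrm{supp}(\mu_\delta^\ast)$.

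By the equality case of the Fenchel–Young inequality, this means that for every $(x_m, t_{k+1}, \zeta) \in \mathrm{supp}(\mu_\delta^\ast)$,
\begin{equation*}
\zeta = H_p(x_m, t_k, c + (D_x\bar{v})^k_m),
\end{equation*}
so $\mathrm{supp}(\mu_\delta^\ast) \subset \{(x_m, t_{k+1}, H_p(x_m, t_k, c + (D_x\bar{v})^k_m)) \mid (x_m, t_{k+1}) \in \pr \tG_\delta\}$. Since $\bar{v}$ was an arbitrary time-$1$-periodic solution, intersecting over all such $\bar{v}$ gives $\mathrm{supp}(\mu_\delta^\ast) \subset \mathcal{A}_\delta(c)$, and taking the union over all minimizing measures yields $\mathcal{M}_\delta(c) \subset \mathcal{A}_\delta(c)$. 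The only delicate point is the passage from ``equality $\mu_\delta^\ast$-a.e.\ in $\T^d\times\T\times\R^d$'' to ``equality at every point of $\mathrm{supp}(\mu_\delta^\ast)$'', which is where continuity of $L^{(c)}$, $H_p$, and of the finite-difference operators $D_t\bar{v}$, $D_x\bar{v}$ on the discrete set $\pr\tG_\delta$ is used; I expect this to be the only step where one must be careful that the test function $f$ used in Proposition \ref{holonomic-delta} is a genuine continuous extension of \eqref{holoholo}.
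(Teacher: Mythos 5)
Your proposal is correct and follows essentially the same route as the paper: the Fenchel--Young (Legendre) inequality combined with the equation \eqref{HJ-delta-cell}, the holonomic constraint of Proposition \ref{holonomic-delta} applied with $g=\bar{v}$, and the value $-\bar{H}_\delta(c)$ from Theorem \ref{Mather-delta}. The only difference is presentational -- the paper argues by contradiction from a support point with $\zeta^\ast\neq H_p$, while you argue directly via equality $\mu_\delta^\ast$-a.e.; since each $\mu_\delta^\ast\in\mathcal{P}_\delta$ is a finite combination of Dirac masses on $\pr\tG_\delta\times\R^d$, your passage from a.e.\ equality to equality on the support is immediate.
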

\begin{proof} 
Suppose that there exists a minimizing probability measure $\mu_\delta^\ast$ whose support is not contained in  the set $\mathcal{A}_\delta(c)$. Then, we have a point $(x_{n},t_{l+1},\zeta^\ast)\in$\,supp$(\mu_\delta^\ast)$ such that $\zeta^\ast\neq H_p(x_{n},t_l,c+(D_x\bar{v})^{l}_{n})$ for some time-$1$-periodic solution $\bar{v}=\bar{v}(c)$. It holds that  
\begin{eqnarray*}
-\bar{H}_\delta(c)&=&-(D_t\bar{v})^{k+1}_{m}-H(x_{m},t_k,c+(D_x\bar{v})^{k}_{m})\\
&\le&-(D_t\bar{v})^{k+1}_{m}-(D_x\bar{v})^{k}_{m}\cdot\zeta+L^{(c)}(x_{m},t_k,\zeta)\mbox{\quad for any $\zeta\in\R^d$},
\end{eqnarray*}
which is a strict inequality for any $\zeta\neq H_p(x_{m},t_k,c+(D_x\bar{v})^{k}_{m})$ due to the Legendre transform. 
Consider a continuous function $f:\T^d\times\T\times\R^d\to\R$ whose restriction to $\pr\tG_\delta$ is  given as  $f(x_m,t_{k+1},\zeta)=-\{(D_t\bar{v})^{k+1}_{m}+(D_x\bar{v})^{k}_{m}\}\cdot\zeta+L^{(c)}(x_{m},t_{k+1}-\tau,\zeta)$. 
Then, Proposition \ref{holonomic-delta} implies that 
\begin{eqnarray*}
\int_{\T^d\times\T\times\R^d}f(x,t,\zeta)d\mu_\delta^\ast=\int_{\T^d\times\T\times\R^d}L^{(c)}(x,t-\tau,\zeta)\,\,d\mu_\delta^\ast>-\bar{H}_\delta(c).
\end{eqnarray*}
This is a contradiction.
\end{proof}
\noindent One can see the set $\mathcal{A}_\delta(c)$ as an analogue of the Aubry set. 
\begin{Cor}
Let $(x_{m+\1},t_k,\zeta)$ be an arbitrary point of $\mathcal{M}_\delta(c)$. Any time-$1$-periodic solution $\bar{v}=\bar{v}(c)$ of \eqref{HJ-delta-cell} satisfies 
$$\zeta=H_p(x_{m+\1},t_{k-1},c+(D_x\bar{v})^{k-1}_{m+\1})\Leftrightarrow c+(D_x\bar{v})^{k-1}_{m+\1}=L_\zeta(x_{m+\1},t_{k-1},\zeta).$$ 
There exists a bijective map: $\pi \mathcal{M}_\delta(c):=\{(x,t)\,|\, (x,t,\zeta)\in \mathcal{M}_\delta(c)\}\subset\pr\tG_\delta\to\mathcal{M}_\delta(c)$. If $\pi \mathcal{M}_\delta(c)=\pr\tG_\delta$, then a periodic solution $\bar{v}=\bar{v}(c)$ of \eqref{HJ-delta-cell} is unique up to constants. 
\end{Cor}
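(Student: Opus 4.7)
The plan is to deduce all three claims directly from Theorem~\ref{Aubry-delta} together with the strict convexity in (H2) and (L2). First I would establish the equation for $\zeta$: since $\mathcal{M}_\delta(c)\subset\mathcal{A}_\delta(c)$ and $\mathcal{A}_\delta(c)$ is, by definition, the intersection over \emph{every} time-$1$-periodic solution $\bar{v}$ of the graph of $(x_m,t_{k+1})\mapsto H_p(x_m,t_k,c+(D_x\bar{v})^k_m)$, the identity $\zeta=H_p(x_{m+\1},t_{k-1},c+(D_x\bar{v})^{k-1}_{m+\1})$ must hold for every such $\bar{v}$ whenever $(x_{m+\1},t_k,\zeta)\in\mathcal{M}_\delta(c)$. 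The equivalence with $c+(D_x\bar{v})^{k-1}_{m+\1}=L_\zeta(x_{m+\1},t_{k-1},\zeta)$ is Legendre duality: by (H2) and (L2), $p\mapsto H_p(x,t,p)$ and $\zeta\mapsto L_\zeta(x,t,\zeta)$ are $C^1$ mutual inverses.

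For the bijection, I would take $\pi:\mathcal{M}_\delta(c)\to\pi\mathcal{M}_\delta(c)$, $(x,t,\zeta)\mapsto(x,t)$, which is surjective by the definition of its image. Injectivity is immediate from the first claim: for any fixed periodic solution $\bar{v}$, if $(x,t,\zeta_1)$ and $(x,t,\zeta_2)$ both lie in $\mathcal{M}_\delta(c)$, then both $\zeta_i$ equal the single value $H_p(x,t-\tau,c+(D_x\bar{v})^{k(t)-1}_{m(x)})$ and hence coincide.

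Now suppose $\pi\mathcal{M}_\delta(c)=\pr\tG_\delta$ and let $\bar{v},\tilde{\bar{v}}$ be any two time-$1$-periodic solutions of \eqref{HJ-delta-cell}. Applying the second form of the first claim at every $(x,t)\in\pr\tG_\delta$ gives $(D_x\bar{v})^{k-1}_{m+\1}=(D_x\tilde{\bar{v}})^{k-1}_{m+\1}$ at every point of $\pr\G_\delta$. I would then promote this coincidence of discrete spatial gradients to the statement that $\bar{v}-\tilde{\bar{v}}$ is a single additive constant, in two steps. At fixed $k$, the time slice of $\tG_\delta$ is one parity class of the toric grid, which is connected under the steps $\pm 2e_j$; each such step changes $\bar{v}^k-\tilde{\bar{v}}^k$ by $\pm 2h\cdot 0=0$, so $C^k:=\bar{v}^k-\tilde{\bar{v}}^k$ is spatially constant. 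Subtracting the two instances of \eqref{HJ-delta-cell}, whose Hamiltonian terms cancel because $(D_x\bar{v})^k=(D_x\tilde{\bar{v}})^k$, one obtains $C^{k+1}=\frac{1}{2d}\sum_{\omega\in B}C^k=C^k$; hence $C^k$ is also independent of $k$ and $\bar{v}-\tilde{\bar{v}}$ is a single global constant.

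The only substantive step is the last one, where equality of discrete gradients on $\pr\G_\delta$ must be turned into equality of functions up to one global constant; this relies on both the torus-connectivity of each parity class of $G_h$ under the moves $\pm 2e_j$ and the additivity of \eqref{HJ-delta-cell} in $v$ once $D_xv$ is frozen. Everything else is an immediate unpacking of Theorem~\ref{Aubry-delta} and the Legendre involution.
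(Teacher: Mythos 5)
Your handling of the first two assertions is correct and is the intended unpacking of the preceding theorem: the inclusion $\mathcal{M}_\delta(c)\subset\mathcal{A}_\delta(c)$ forces $\zeta=H_p(x_{m+\1},t_{k-1},c+(D_x\bar{v})^{k-1}_{m+\1})$ for \emph{every} time-$1$-periodic solution $\bar{v}$, the Legendre duality coming from (H2)/(L2) gives the equivalent form $c+(D_x\bar{v})^{k-1}_{m+\1}=L_\zeta(x_{m+\1},t_{k-1},\zeta)$, and the resulting single-valuedness of $\zeta$ over each $(x,t)$ makes the projection invertible (it is worth noting that a periodic solution exists by Theorem \ref{weak-KAM-delta}, so the argument is non-vacuous).

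The proof of the last assertion has a genuine gap when $d\ge2$. A time slice of $\tG_\delta$ is one total-parity class of the grid, but it is \emph{not} connected under the moves $\pm2e_j$: such moves preserve the parity of every individual coordinate $m^i$ (mod $2N$, which is even), so each slice splits into $2^{d-1}$ cosets of the sublattice $2h\Z^d$; e.g.\ for $d=2$ the points $x_{(1,0)}$ and $x_{(0,1)}$ both lie in $G_{\rm odd}$ but cannot be joined by steps $\pm2e_j$. Consequently, equality of the centered differences $(D_x\bar{v})^k_m=(D_x\hat{v})^k_m$ on $\G_\delta$ only makes $C^k:=\hat{v}^k-\bar{v}^k$ constant on each coset, not on the whole slice, and your next step, $C^{k+1}=\frac{1}{2d}\sum_{\omega}C^k=C^k$, already presupposes the spatial constancy you are trying to establish. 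To close the gap you must exploit the time-$1$-periodicity of $C$ together with the full relation $C^{k+1}_m=\frac{1}{2d}\sum_{\omega\in B}C^k_{m+\omega}$, whose single steps $\pm e_j$ do mix the cosets. One way is the expanding-set maximum principle used in the proof of Proposition \ref{kkkkaaaaa}: $S^k:=\max_m C^k_m$ is non-increasing, time-periodicity makes it constant in $k$, and if the set $\{C^k<S\}$ were nonempty it would strictly grow under the evolution until it fills a slice, contradicting that the maximum $S$ is attained on every slice; hence $C^k\equiv S$ for all $k$, i.e.\ the two solutions differ by a single constant. Alternatively, since $C^k$ is constant on cosets, the evolution reduces to the simple random walk on the hypercube $\{0,1\}^d$ of coordinate parities; all eigenvalues of its $2K$-step transition matrix lie in $[0,1)$ except those of the characters $\chi_\emptyset$ and $\chi_{\{1,\dots,d\}}$, which are constant on each total-parity class, so $C^0=P^{2K}C^0$ again forces a single global constant. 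For $d=1$ your connectivity argument is correct as written.
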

\noindent Although $p^l_\delta(\cdot;\xi)$ with $|\xi|_\infty<(d\lambda)^{-1}$ is supported by whole $\pr\tG_\delta$ for any large $l$ due to the diffusive effect, it is not clear if $p_\delta(\cdot;\xi)$ is also supported by whole $\pr\tG_\delta$. Since minimizing random walks  are inhomogeneous, direct calculation with Stirling's approximation does not seem to be effective. Hence,  to check $\pi \mathcal{M}_\delta(c)=\pr\tG_\delta$ or not  is an open problem.  
\begin{Thm}
Let $\bar{v}=\bar{v}(c)$ and $\hat{v}=\hat{v}(c)$ be time-$1$-periodic solutions of \eqref{HJ-delta-cell}.  If $\bar{v}=\hat{v}$ on $\pi \mathcal{M}_\delta(c)$, then $\bar{v}=\hat{v}$ on whole $\pr \tG_\delta$. 
\end{Thm}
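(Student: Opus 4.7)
Set $w:=\hat v-\bar v$. Since both $\bar v$ and $\hat v$ are time-$1$-periodic solutions of \eqref{HJ-delta-cell}, $w$ descends to a well-defined function on $\pr\tG_\delta$ and $w^k_m$ depends only on $\pr(x_m,t_k)$. The plan is to prove $w\le 0$ on $\pr\tG_\delta$ by exploiting the minimizing control of $\bar v$; the reverse inequality $w\ge 0$ follows symmetrically from the minimizing control of $\hat v$, forcing $w\equiv 0$.

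Let $\xi^\ast{}^{k+1}_m:=H_p(x_m,t_k,c+(D_x\bar v)^k_m)$ be the global, time-$1$-periodic feedback control extending the minimizer for $\bar v^{l+1}_n$ (as described just after \eqref{HJ-delta-cell}). Fix any base point $(x_n,t_{l+1})\in\tG_\delta$ and any $L\in\N$. The variational representations
\begin{eqnarray*}
\bar v^{l+1}_n&=& E_{\mu^{l+1,l+1-L}_n(\cdot;\xi^\ast)}\Big[\textstyle\sum_{l+1-L<k\le l+1}L^{(c)}(\gamma^k,t_{k-1},\xi^{\ast k}_{m(\gamma^k)})\tau+\bar v^{l+1-L}_{m(\gamma^{l+1-L})}\Big]+\bar H_\delta(c)t_L,\\
\hat v^{l+1}_n&\le& E_{\mu^{l+1,l+1-L}_n(\cdot;\xi^\ast)}\Big[\textstyle\sum_{l+1-L<k\le l+1}L^{(c)}(\gamma^k,t_{k-1},\xi^{\ast k}_{m(\gamma^k)})\tau+\hat v^{l+1-L}_{m(\gamma^{l+1-L})}\Big]+\bar H_\delta(c)t_L
\end{eqnarray*}
(equality for $\bar v$ since $\xi^\ast$ is its minimizer; inequality for $\hat v$ since $\xi^\ast$ is merely admissible) subtract to give
\begin{eqnarray*}
w^{l+1}_n\le E_{\mu^{l+1,l+1-L}_n(\cdot;\xi^\ast)}\big[w^{l+1-L}_{m(\gamma^{l+1-L})}\big]\quad\text{for every }L\in\N.
\end{eqnarray*}
Ces\`aro-averaging this bound over $L=1,\dots,J$, rewriting via the configuration-space probabilities $p^k_{m+\1}(\xi^\ast)$ from \eqref{prob-on-X}, and regrouping by the projection $\pr(x_m,t_{l+1-L})\in\pr\tG_\delta$ (using $1$-periodicity of $w$) produces
\begin{eqnarray*}
w^{l+1}_n\le \sum_{(x,t)\in\pr\tG_\delta}q^J(x,t)\,w(x,t),
\end{eqnarray*}
where $q^J$ is a probability distribution on $\pr\tG_\delta$ built exactly as $p^J_\delta(\cdot;\xi^\ast)$ in Section 3.3, except anchored at $(x_n,t_{l+1})$ rather than at $(x_{n+\1},0)$.

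Since $q^J$ lives in the compact set $[0,1]^{\sharp\pr\tG_\delta}$, extract a convergent subsequence $q^{J_i}\to q^\infty$. The computation in the proof of Theorem \ref{Mather-delta} goes through verbatim for the shifted base point: boundedness of $\bar v$ gives
\begin{eqnarray*}
\int\big(L^{(c)}+\bar H_\delta(c)\big)\,d\mu^{J_i}
=\frac{1}{t_{J_i}}\Big(\bar v^{l+1}_n-E_{\mu^{l+1,l+1-J_i}_n(\cdot;\xi^\ast)}\big[\bar v^{l+1-J_i}_{m(\gamma^{l+1-J_i})}\big]\Big)\longrightarrow 0,
\end{eqnarray*}
where $\mu^{J_i}$ denotes the corresponding measure of form $\sum q^{J_i}\mathfrak d_{x,t,\xi^{\ast k(t)}_{m(x)}}$. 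Hence the limit measure $\mu^\infty\in\mathcal P_\delta$ attains the infimum $-\bar H_\delta(c)$ in \eqref{33minimizing}; by definition of $\mathcal M_\delta(c)$ as the union of supports of minimizing measures, $\mathrm{supp}(\mu^\infty)\subset\mathcal M_\delta(c)$, and thus $q^\infty$ is supported in $\pi\mathcal M_\delta(c)$. Passing to the limit in the averaged inequality and invoking $w\equiv 0$ on $\pi\mathcal M_\delta(c)$ yields $w^{l+1}_n\le 0$. Running the same argument with $\hat\xi^\ast{}^{k+1}_m:=H_p(x_m,t_k,c+(D_x\hat v)^k_m)$ in place of $\xi^\ast$ produces $w^{l+1}_n\ge 0$, so $w\equiv 0$ on $\pr\tG_\delta$.

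The only subtle point is the change of base point in the construction of $q^J$. This is harmless because $\xi^\ast$ is time-$1$-periodic (inherited from $\bar v$), so the transition probabilities are time-shift invariant modulo $1$, and every estimate in the proof of Theorem \ref{Mather-delta} that relies on boundedness of $\bar v$ and on the $1/t_J$ factor continues to hold for the shifted anchor.
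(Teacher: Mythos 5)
Your overall strategy is exactly the paper's: use the minimizing control $\xi^\ast$ of $\bar v$ as an admissible test control in the representation of $\hat v$, Ces\`aro-average the resulting inequality so that the right-hand side becomes an occupation distribution on $\pr\tG_\delta$, identify a subsequential limit as a minimizing measure for \eqref{33minimizing} (hence supported in $\mathcal M_\delta(c)$, where $w=0$), and then symmetrize with $\hat\xi^\ast$. The inequalities you derive and the computation showing $\int (L^{(c)}+\bar H_\delta(c))\,d\mu^{J_i}\to 0$ are the same as in the paper's proof and are correct.

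The one place where you deviate is the anchoring of the random walk at an arbitrary space--time point $(x_n,t_{l+1})$, and precisely there you have a gap which your closing remark does not close. The set $\mathcal P_\delta$ is defined through walks anchored at time $0$ (the measures $\mu^{0,-l}_{n+\1}(\cdot;\xi)$), and $\mathcal M_\delta(c)$ is by definition the union of supports of minimizing measures \emph{belonging to} $\mathcal P_\delta$. Your $q^\infty$ comes from a walk anchored at a general time $t_{l+1}$, so the assertion ``$\mu^\infty\in\mathcal P_\delta$'' is not a matter of the estimates carrying over (which is all your final paragraph addresses); it is a membership claim that needs an argument, and without it you cannot conclude $\mathrm{supp}(\mu^\infty)\subset\mathcal M_\delta(c)$ and hence cannot invoke the hypothesis $w=0$ on $\pi\mathcal M_\delta(c)$. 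The gap is repairable: conditioning the walk on its position at the last time level congruent to $0$ below $t_{l+1}$, and noting that the finite initial segment contributes $O(1/t_J)$, writes $\mu^\infty$ as a finite convex combination $\sum_y P_0(y)\,\mu_y$ with each $\mu_y\in\mathcal P_\delta$; since each $\int(L^{(c)}+\bar H_\delta(c))\,d\mu_y\ge 0$ by Theorem \ref{Mather-delta} and the combination integrates to $0$, every $\mu_y$ with $P_0(y)>0$ is itself minimizing, so $\mathrm{supp}(\mu^\infty)\subset\mathcal M_\delta(c)$ after all. Alternatively, you can avoid the issue entirely as the paper does: anchor only at points $(x_{n+\1},0)$, conclude $\bar v^0=\hat v^0$ on $\pr G_{\rm odd}$, and then propagate equality to all of $\pr\tG_\delta$ because \eqref{HJ-delta-cell} is an explicit recursion determining $v^{k+1}$ from $v^k$ (with the common constant $\bar H_\delta(c)$, unique by Theorem \ref{weak-KAM-delta}) and both solutions are time-$1$-periodic.
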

\begin{proof}
It is enough to prove $\bar{v}^0_{n+\1}=\hat{v}^0_{n+\1}$ on whole  $\pr G_{\rm odd}$. Let $x_{n+\1}$ be any point of $\pr G_{\rm odd}$. Set $\xi^\ast:\pr \tG_\delta\ni(x_{m},t_{k+1})\mapsto H_p(x_{m},t_k,c+(D_x\bar{v})^k_{m})\in[-(d\lambda_1)^{-1},(d\lambda_1)^{-1}]^d$. Then, for any $k<0$, we have  
\begin{eqnarray*}
\bar{v}^0_{n+\1}&=&t_l\mathcal{F}_\delta^k(L^{(c)}(\cdot,\cdot-\tau,\cdot)+\bar{H}_\delta(c);\xi^\ast)+E_{\mu^{0,k}_{n+\1}(\cdot;\xi^\ast)}[\bar{v}^{k}_{m(\gamma^{k})}]\\
&=&t_l\mathcal{F}_\delta^k(L^{(c)}(\cdot,\cdot-\tau,\cdot)+\bar{H}_\delta(c);\xi^\ast)+\sum_{\{m\,|(x_{m+\1},t_k)\in\tG_\delta\}}p(x_{m+1},t_k;\xi^\ast) \bar{v}^{k}_{m(\gamma^{k})},\\
\hat{v}^0_{n+\1}&\le&t_l\mathcal{F}_\delta^k(L^{(c)}(\cdot,\cdot-\tau,\cdot)+\bar{H}_\delta(c);\xi^\ast)+E_{\mu^{0,-k}_{n+\1}(\cdot;\xi^\ast)}[\hat{v}^{k}_{m(\gamma^{k})}]\\
&=&t_l\mathcal{F}_\delta^k(L^{(c)}(\cdot,\cdot-\tau,\cdot)+\bar{H}_\delta(c);\xi^\ast)+\sum_{\{m\,|(x_{m+\1},t_k)\in\tG_\delta\}}p(x_{m+1},t_k;\xi^\ast) \hat{v}^{k}_{m(\gamma^{k})}.
\end{eqnarray*}
Hence, we see that 
\begin{eqnarray*}
&&\hat{v}^0_{n+\1}-\bar{v}^0_{n+\1}=\sum_{\{m\,|(x_{m+\1},t_0)\in\tG_\delta\}}p(x_{m+1},t_0;\xi^\ast) (\hat{v}^{0}_{m(\gamma^{0})}-\bar{v}^{0}_{m(\gamma^{0})}),\\
&&\hat{v}^0_{n+\1}-\bar{v}^0_{n+\1}\le \sum_{\{m\,|(x_{m+\1},t_k)\in\tG_\delta\}}p(x_{m+1},t_k;\xi^\ast) (\hat{v}^{k}_{m(\gamma^{k})}-\bar{v}^{k}_{m(\gamma^{k})})  \mbox{\quad for $-l<k\le -1$}.
\end{eqnarray*}
Summation of all the (in)equalities yields 
\begin{eqnarray*}
(\hat{v}^0_{n+\1}-\bar{v}^0_{n+\1})l
\le \sum_{-l<k\le0}\sum_{\{m\,|(x_{m+\1},t_k)\in\tG_\delta\}}p(x_{m+1},t_k;\xi^\ast) (\hat{v}^{k}_{m(\gamma^{k})}-\bar{v}^{k}_{m(\gamma^{k})}),  
\end{eqnarray*}
which leads to
\begin{eqnarray*}
\hat{v}^0_{n+\1}-\bar{v}^0_{n+\1}
\le \sum_{\{(z,t)\in\pr\tG_\delta\}}p^l_\delta(x,t;\xi^\ast) (\hat{v}^{k(t)}_{m(x)}-\bar{v}^{k(t)}_{m(x)}).  
\end{eqnarray*}
Since $p^l_\delta(\cdot;\xi^\ast)$ converges to  $p^\ast_\delta(\cdot;\xi^\ast)$ as $l\to\infty$ (up to a subsequence), which yields a minimizing measure $\mu^\ast_\delta(\xi^\ast)\in \mathcal{P}_\delta$, we obtain $\hat{v}^0_{n+\1}-\bar{v}^0_{n+\1}\le0$. The same argument with 
$\hat{\xi}^\ast:\pr \tG_\delta\ni(x_{m},t_{k+1})\mapsto H_p(x_{m},t_k,c+(D_x\hat{v})^k_{m})\in[-(d\lambda_1)^{-1},(d\lambda_1)^{-1}]^d$ proves $\hat{v}^0_{n+\1}-\bar{v}^0_{n+\1} \ge0$.
\end{proof}
As for \eqref{HJ-delta+} and $\mathcal{L}^l_+$, we have  similar results. 
\subsection{Hyperbolic scaling limit}

We observe the hyperbolic scaling limit of what we obtained in Subsection 3.1-3.4, where we mean by ``$\delta=(h,\tau)\to0$'' that $\delta\to0$ with the condition $0<\lambda_0\le \lambda=\tau/h\le\lambda_1$  with the constant $\lambda_1$ found in Section 2. 

First, we state convergence of the effective Hamiltonian $\bar{H}_\delta$ to that of the exact problem 
\begin{eqnarray}\label{4cell}
v_t+H(x,t,c+v_x)=\bar{H}(c)\quad{\rm in}\,\,\,\T^d\times\T.
\end{eqnarray}
Hereinafter $b_1,b_2,\ldots$ are positive constants independent of $\delta$ and $c$.
\begin{Thm}\label{effective222}
There exists a constant $b_1>0$ for which we have 
$$\sup_{c\in P}|\bar{H}_\delta(c)-\bar{H}(c)|\le b_1\sqrt{h}\quad \mbox{for $\delta\to0$}.$$
\end{Thm}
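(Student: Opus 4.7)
The strategy is to convert the one-step error estimate from Theorem \ref{convergence-Lax-Oleinik} into a long-time-average bound, using a time-$1$-periodic exact viscosity solution as a seed and exploiting non-expansivity of $\varphi_\delta(\cdot;c)$ in the sup norm.

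\emph{Step 1: Seed from exact weak KAM theory.} By Fathi's weak KAM theorem (time-periodic version), for each $c\in P$ there exists a time-$1$-periodic viscosity solution $\bar v(c)\in C(\T^d\times\T)$ of $v_t+H(x,t,c+v_x)=\bar H(c)$; equivalently $\varphi^1(w;c)=w-\bar H(c)$ with $w:=\bar v(c)(\cdot,0)$. Lemma \ref{bdd-v} (applied at $t=1$) furnishes $|w_x|_\infty\le\tilde\beta$ uniformly in $c\in P$, so after enlarging $r$ if needed (keeping $r\ge\tilde\beta+1$) the grid restriction $v^0(c):=w|_{G_{\rm odd}}$ belongs to $X_\delta$ for every $c\in P$.

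\emph{Step 2: One-step discretization error.} Apply Theorem \ref{convergence-Lax-Oleinik}, claim 3, with the fixed Lipschitz datum $w=\bar v(c)(\cdot,0)$. Since $\varphi^1(w;c)(x_{m+\1})=w(x_{m+\1})-\bar H(c)=v^0(c)(x_{m+\1})-\bar H(c)$,
\begin{equation*}
\sup_{m}\bigl|\varphi_\delta(v^0(c);c)(x_{m+\1})-v^0(c)(x_{m+\1})+\bar H(c)\bigr|\le \beta_0\sqrt h,
\end{equation*}
with $\beta_0>0$ independent of $\delta$, $w$ and $c$.

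\emph{Step 3: Iteration.} From the variational representation in Theorem \ref{main1} one reads off two properties of $\varphi_\delta(\cdot;c)$: it commutes with additive constants, and $\|\varphi_\delta(v_1;c)-\varphi_\delta(v_2;c)\|_\infty\le\|v_1-v_2\|_\infty$. Writing $U_L:=v^0(c)-L\bar H(c)$, one has $\varphi_\delta(U_L;c)=\varphi_\delta(v^0(c);c)-L\bar H(c)$, so induction on $L$ combining Step 2 with non-expansivity yields
\begin{equation*}
\sup_m\bigl|(\varphi_\delta)^L(v^0(c);c)(x_{m+\1})-v^0(c)(x_{m+\1})+L\bar H(c)\bigr|\le L\beta_0\sqrt h,\qquad L\in\N.
\end{equation*}

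\emph{Step 4: Long-time average.} Divide the inequality in Step 3 by $L$ and let $L\to\infty$ with $\delta$ fixed. Theorem \ref{effective}, claim 3, gives $(\varphi_\delta)^L(v^0(c);c)(x_{m+\1})/L\to -\bar H_\delta(c)$, while $v^0(c)(x_{m+\1})/L\to 0$; the remainder is bounded by $\beta_0\sqrt h$ for every $L$. Therefore $|\bar H_\delta(c)-\bar H(c)|\le \beta_0\sqrt h$, and since $\beta_0$ is uniform in $c\in P$, taking $b_1:=\beta_0$ yields the assertion.

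The only delicate point is the uniform Lipschitz control of the family $\{\bar v(c)\}_{c\in P}$ used in Step 1, but this is precisely the content of Lemma \ref{bdd-v} together with the compactness of $P$; the remaining ingredients (non-expansivity, commutation with constants, and the ergodic-type formula of Theorem \ref{effective}, claim 3) are already at our disposal.
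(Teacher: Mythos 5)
Your argument is correct, but it is genuinely different from the paper's proof. The paper compares a discrete time-$1$-periodic solution $\bar v_\delta$ of \eqref{HJ-delta-cell} with an exact periodic viscosity solution $\bar w$ of \eqref{4cell} directly over a single period: it evaluates at the maximum (resp.\ minimum) point of the difference of the interpolated solutions, transplants a minimizing curve of the exact problem into a space-homogeneous control for the random walk (resp.\ interpolates the discrete minimizing control into an admissible curve), and uses Lemma \ref{limit-theorem} to get the two one-sided $O(\sqrt h)$ bounds. You instead never touch the discrete periodic solution: you feed the exact periodic weak KAM solution $w=\bar v(c)(\cdot,0)$ (whose existence and uniform-in-$c$ Lipschitz bound the paper's proof also needs, the latter via Lemma \ref{bdd-v}) into the discrete solution map, invoke the already-packaged rate of Theorem \ref{convergence-Lax-Oleinik}, claim 3 (whose constant is stated to be independent of $w$ and $c$, which is essential for your uniformity in $c$), propagate the one-period error by non-expansivity and commutation with constants --- both immediate from the variational representation of Theorem \ref{main1} and legitimate to iterate thanks to Theorems \ref{bdd}--\ref{extension} --- and then kill the linearly accumulating error by averaging, using the ergodic formula of Theorem \ref{effective}, claim 3. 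What your route buys is brevity and no re-derivation of stochastic estimates; what the paper's route buys is a self-contained two-sided comparison in one period that simultaneously exhibits how discrete and continuous minimizers are exchanged, a mechanism reused elsewhere. Two small points you should make explicit: the identity $\varphi^1(w;c)=w-\bar H(c)$ uses uniqueness of viscosity solutions of the initial value problem (so that the periodic solution, after subtracting $\bar H(c)t$, coincides with the Lax--Oleinik evolution of its own initial trace), and the enlargement of $r$ must be done once and for all at the outset (possible because $\tilde\beta$ does not depend on $r$), so that $w\in{\rm Lip}_r(\T^d)$, its restriction lies in $X_\delta$, and all constants of Section 2 are fixed accordingly.
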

\begin{proof}
Let $\bar{v}^k_{m+\1}$ be a time-$1$-periodic solution of \eqref{HJ-delta-cell} and let $\bar{w}$ be a viscosity solution of \eqref{4cell}.  
Let  $u_\delta(\cdot):\T^d\to\R$ be the Lipschitz interpolation of $\bar{v}^0_{m+\1}=\bar{v}^{2K}_{m+\1}$, where one can find a Lipschitz constant of $u_\delta$  independently from $\delta$. Let $x\in\T^d$ be the point at which $\max_{x\in\T^d} (u_\delta(x)-\bar{w}(x,1))$ is attained. 
Let $x_{n+\1}\in G_{\rm odd}$ be such that $|x_{n+\1}-x|_\infty\le h$. We have $|u_\delta(x)-u_\delta(x_{n+\1})|\le b_2 h$.
For a minimizing curve $\gamma^\ast:[0,1]\to\R^d$ for $\bar{w}(x_{n+\1},1)$, define the control $\xi^{k+1}_{m}:=\gamma^\ast{}'(t_{k+1})$ (a space-homogeneous control).  Then, the random walk generated by $\xi$ satisfies 
$$|\eta^k(\gamma)-\gamma^\ast{}(t_k)|_\infty\le b_3 h\mbox{ for all $\gamma\in\Omega^{2K,0}_{n+\1}$}.$$  
Hence, we have with Lemma \ref{limit-theorem},
\begin{eqnarray*}
&&u_\delta(x_{n+\1})=\bar{v}^{2K}_{n+\1}\\
&&\le E_{\mu^{2K,0}_{n+\1}(\cdot;\xi)}\Big[\sum_{0<k\le 2K}L^{(c)}(\gamma^k,t_{k-1},\xi^k_{m(\gamma^k)})\tau +u_\delta(\gamma^0)\Big]+\bar{H}_\delta(c )\\
&&\le E_{\mu^{2K,0}_{n+\1}(\cdot;\xi)}\Big[\sum_{0<k\le 2K}L^{(c)}(\eta^k(\gamma),t_{k-1},\xi^k_{m(\gamma^k)})\tau +u_\delta(\eta^0(\gamma))\Big]+\bar{H}_\delta(c )+b_4\sqrt{h}\\
&&\le E_{\mu^{2K,0}_{n+\1}(\cdot;\xi)}\Big[\sum_{0<k\le 2K}L^{(c)}(\gamma^\ast(t_k),t_{k-1},\gamma^\ast{}'(t_k))\dt +u_\delta(\gamma^\ast(0))\Big]+\bar{H}_\delta(c )+b_5\sqrt{h}\\
&&\le \int^1_0L^{(c)}(\gamma^\ast(s),s,\gamma^\ast{}'(s))ds+u_\delta(\gamma^\ast(0))+\bar{H}_\delta(c )+b_6\sqrt{h}.
\end{eqnarray*}
Since we have 
$$\bar{w}(x_{n+\1},1)=\int^1_0L^{(c)}(\gamma^\ast(s),s,\gamma^\ast{}'(s))ds+\bar{w}(\gamma^\ast(0),0)+\bar{H}(c),$$
with $ \bar{w}(\gamma^\ast(0),0)=\bar{w}(\gamma^\ast(0),1)$, we obtain 
\begin{eqnarray*}
u_\delta(x)-\bar{w}(x,1)&\le&  
u_\delta(x_{n+1})-\bar{w}(x_{n+1},1)+b_{7}h\\
&\le&u_\delta(\gamma^\ast(0))-\bar{w}(\gamma^\ast(0),1)+\bar{H}_\delta(c)-\bar{H}(c)+b_6\sqrt{h}+b_{7}h.
\end{eqnarray*}
By the choice of $x$, we see that 
\begin{eqnarray*}
\bar{H}_\delta(c)-\bar{H}(c)&\ge& \{u_\delta(x)-\bar{w}(x,1)\}-
\{u_\delta(\gamma^\ast(0))-\bar{w}(\gamma^\ast(0),1)\}-b_8\sqrt{h}\\
&\ge&-b_8\sqrt{h}.
\end{eqnarray*}
\indent We prove the converse inequality. 
We switch the above $x$ to the one attaining $\min_{x\in\T^d} (u_\delta(x)-\bar{w}(x,1))$. 
Let  $\xi^\ast$ be the minimizing control for $u_\delta(x_{n+\1})=\bar{v}^{2K}_{n+\1}$. Then, we have
\begin{eqnarray*}
u_\delta(x_{n+\1})&=&E_{\mu^{2K,0}_{n+\1}(\cdot;\xi^\ast)}\Big[\sum_{0<k\le 2K}L^{(c)}(\gamma^k,t_{k-1},\xi^\ast{}^k_{m(\gamma^k)})\dt +u_\delta(\gamma^0)\Big]+\bar{H}_\delta(c)\\
&\ge& E_{\mu^{2K,0}_{n+\1}(\cdot;\xi^\ast)}\Big[\sum_{0<k\le 2K}L^{(c)}(\eta^k(\gamma),t_{k-1},\xi^\ast{}^k_{m(\gamma^k)})\dt +u_\delta(\eta^0(\gamma))\Big]+\bar{H}_\delta(c )-b_9\sqrt{\dx}.
\end{eqnarray*}
Let $\eta_\delta(\gamma)$ be the linear interpolation of $\eta^k(\gamma)$, where $\eta_\delta(\gamma)'(t)=\xi^\ast{}^k_{m(\gamma^k)}$ for $t\in(t_{k-1},t_k)$. 
Then, for each $\gamma$, we have
\begin{eqnarray*}
\bar{w}(x_{n+\1},1)&\le&\int_0^1L^{(c)}(\eta_\delta(\gamma)(s),s,\eta_\delta(\gamma)'(s))ds+\bar{w}(\eta_\delta(\gamma)(0),0)+\bar{H}(c )\\
&\le&\sum_{0<k\le 2K}L^{(c)}(\eta^k(\gamma),t_{k-1},\xi^\ast{}^k_{m(\gamma^k)})\tau +\bar{w}(\eta^0(\gamma),0)+\bar{H}(c )+b_9h.
\end{eqnarray*}
Therefore, we see that 
\begin{eqnarray*}
u_\delta(x)-\bar{w}(x,1)&\ge& 
u_\delta(x_{n+1})-\bar{w}(x_{n+1},1)-b_{10}h\\
&\ge& E_{\mu(\cdot;\xi^\ast)}\Big[u_\delta(\eta^0(\gamma))-\bar{w}(\eta^0(\gamma),1)\Big]+\bar{H}_\delta(c )-\bar{H}(c )-b_{11}\sqrt{h}.
\end{eqnarray*}
Due to the choice of $x$, we conclude $\bar{H}_\delta(c )-\bar{H}(c )\le b_{11}\sqrt{h}$.
\end{proof}
As for  $\tilde{\bar{H}}_\delta(c)$, we have the same convergence result.
\begin{Thm}\label{43434343}
Let $\{\bar{v}_\delta(c)\}_\delta$ be a uniformly bounded sequence of time-$1$-periodic solution of \eqref{HJ-delta-cell} with $\delta\to0$. Then, there exists a subsequence of $\{\bar{v}_\delta(c)\}_\delta$, still denoted by the same symbol, such that  $\{\bar{v}_\delta(c)\}_\delta$ tends to a viscosity solution $\bar{v}$ of \eqref{4cell} as $\delta\to0$ in the sense that 
\begin{eqnarray}\label{maxmax}
\max_{\{(m,k+1)|\,(x_m,t_{k+1})\in\pr \tG_\delta\}}|\bar{v}_\delta(c)^{k+1}_m-\bar{v}(x_m,t_{k+1})|\to0\mbox{\quad as $\delta\to0$}.
\end{eqnarray}
\end{Thm}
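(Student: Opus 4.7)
The plan is to extract a limit by compactness and then identify it as a viscosity solution of \eqref{4cell} by passing to the limit in the Lax--Oleinik relation that characterises time-$1$-periodic solutions. The starting observation is that, since each $\bar v_\delta(c)$ is a time-$1$-periodic solution, Theorem \ref{main1} and Theorem \ref{extension} give the uniform bound $|(D_x\bar v_\delta)^k_{m}|_\infty\le u^\ast$ on all of $\tG_\delta$, while Theorem \ref{semi} (applied to the extension $k\to\infty$, where $M(t_k)\to M^\ast_+$) gives the uniform semiconcavity bound $(D^2_j\bar v_\delta)^k_{m+\1}\le M^\ast_+$ once $\delta$ is small enough. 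By Lemma \ref{interpolation} applied on each time slice, one obtains a spatial Lipschitz interpolant $W^k_\delta:\R^d\to\R$ of $\bar v_\delta(\cdot,t_k)$ whose Lipschitz constant is bounded independently of $\delta$ and $k$.

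Next I would establish temporal equi-Lipschitz continuity. From \eqref{HJ-delta-cell} and the uniform $L^\infty$ bound $|\bar H_\delta(c)|\le C$ (which follows from Theorem \ref{effective222}), together with the uniform bound on $H(x_m,t_k,c+(D_x\bar v_\delta)^k_m)$ ensured by $|(D_x\bar v_\delta)^k_m|_\infty\le u^\ast$, one gets
\begin{equation*}
\Big|\bar v_\delta{}^{k+1}_{m}-\tfrac{1}{2d}\sum_{\omega\in B}\bar v_\delta{}^{k}_{m+\omega}\Big|\le C\tau.
\end{equation*}
Combined with the spatial Lipschitz bound $|\bar v_\delta{}^k_{m+\omega}-\bar v_\delta{}^k_{m}|\le u^\ast h\le (u^\ast/\lambda_0)\tau$, this yields $|\bar v_\delta{}^{k+1}_{m}-\bar v_\delta{}^{k}_{m+\omega'}|\le C'\tau$ for every $\omega'\in B$, and iterating gives $|\bar v_\delta(y,t)-\bar v_\delta(y',t')|\le C''(|y-y'|+|t-t'|)$ at grid points. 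Interpolating linearly in $t$ between the spatial interpolants $W^k_\delta$ then produces a function $\tilde v_\delta\in C(\R^d\times\R;\R)$ which agrees with $\bar v_\delta$ on $\tG_\delta$, is $1$-periodic in $t$ and in each $x^i$, is uniformly bounded, and is equi-Lipschitz on $\T^d\times\T$. Arzel\`a--Ascoli, after a diagonal extraction, produces a subsequence (still denoted $\delta$) along which $\tilde v_\delta\to \bar v$ uniformly on $\T^d\times\T$, with $\bar v$ Lipschitz and $1$-periodic in $t$.

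It remains to show that $\bar v$ is a viscosity solution of \eqref{4cell}. By periodicity and Theorem \ref{effective} (claim 1), for every $k\ge 0$,
\begin{equation*}
\varphi_\delta^k(\bar v_\delta{}^0;c)=\bar v_\delta(\cdot,t_k)-\bar H_\delta(c)t_k\quad\text{on }\tG_\delta.
\end{equation*}
The spatial Lipschitz interpolant of $\bar v_\delta{}^0$ converges uniformly to $\bar v(\cdot,0)\in\mathrm{Lip}(\T^d;\R)$, so Theorem \ref{convergence-Lax-Oleinik} (claim 2) yields
\begin{equation*}
\sup_{0\le k\le 2K}\sup_{m}\bigl|\varphi_\delta^k(\bar v_\delta{}^0;c)(x_{m+\1})-\varphi^{t_k}(\bar v(\cdot,0);c)(x_{m+\1})\bigr|\to 0,
\end{equation*}
while Theorem \ref{effective222} gives $\bar H_\delta(c)t_k\to\bar H(c)t$ and the uniform convergence $\tilde v_\delta\to\bar v$ gives $\bar v_\delta(x_{m+\1},t_k)\to\bar v(x_{m+\1},t_k)$. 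Combining these and using joint Lipschitz continuity in $(x,t)$ of $\varphi^t(\bar v(\cdot,0);c)(x)$ and of $\bar v$ to pass from grid points to arbitrary $(x,t)\in\T^d\times[0,1]$, we obtain
\begin{equation*}
\bar v(x,t)=\varphi^t(\bar v(\cdot,0);c)(x)-\bar H(c)\,t,\qquad (x,t)\in\T^d\times[0,1].
\end{equation*}
Since the right-hand side plus $\bar H(c)t$ is the viscosity solution of the Cauchy problem \eqref{HJ} with initial datum $\bar v(\cdot,0)$, $\bar v$ is a viscosity solution of \eqref{4cell} on $\T^d\times(0,1)$, and by $1$-periodicity on all of $\T^d\times\T$. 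Finally, \eqref{maxmax} follows because $\tilde v_\delta$ coincides with $\bar v_\delta(c)^{k+1}_m$ at grid points and $\tilde v_\delta\to\bar v$ uniformly.

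The main obstacle is the temporal equi-Lipschitz estimate in the second step: because $\tG_\delta$ alternates between $G_{\rm odd}$ and $G_{\rm even}$ time slices, the index $m$ in $\bar v_\delta{}^{k+1}_m$ does not refer to a grid point present in the slice at $t_k$, so one cannot compare $\bar v_\delta{}^{k+1}_m$ directly with $\bar v_\delta{}^{k}_m$ and has to route the comparison through the discrete averaging in the equation, using the uniform spatial Lipschitz bound together with $h\le\tau/\lambda_0$ to absorb the $O(h)$ spatial mismatch into an $O(\tau)$ temporal increment.
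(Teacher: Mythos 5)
Your argument is correct in substance, and its second half is essentially the paper's proof, but you reach it by a longer route. The paper performs no space--time compactness at all: it takes the Lipschitz interpolation of the single initial slice $\bar v_\delta(c)^0$ (uniformly bounded and equi-Lipschitz since $\bar v_\delta^0\in X_\delta$), extracts a uniformly convergent subsequence with limit $w$, defines $\bar v$ outright as the viscosity solution of $v_t+H(x,t,c+v_x)=\bar H(c)$ with $v(\cdot,0)=w$, and then obtains \eqref{maxmax} directly by the reasoning of Theorem \ref{convergence-Lax-Oleinik} combined with $\bar H_\delta(c)\to\bar H(c)$ from Theorem \ref{effective222}; time-$1$-periodicity of $\bar v$ then follows from that of $\bar v_\delta$. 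Your additional first stage --- the temporal equi-Lipschitz estimate routed through the discrete averaging (correctly handling the alternation of $G_{\rm odd}$ and $G_{\rm even}$ slices) and the space--time Arzel\`a--Ascoli extraction --- is sound, but it is redundant: once the initial slices converge, the convergence of the solution maps already forces uniform convergence at all grid times, which is exactly what the paper exploits. What your version buys is that the limit $\bar v$ exists before it is identified, making \eqref{maxmax} immediate; what the paper's version buys is brevity, with no need for the time-regularity estimate or the semiconcavity bound you invoke (and in fact never use).

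One slip to fix in the identification step: passing to the limit in your relation $\varphi^k_\delta(\bar v_\delta^0;c)=\bar v_\delta(\cdot,t_k)-\bar H_\delta(c)t_k$ gives $\varphi^t(\bar v(\cdot,0);c)(x)=\bar v(x,t)-\bar H(c)\,t$, i.e. $\bar v(x,t)=\varphi^t(\bar v(\cdot,0);c)(x)+\bar H(c)\,t$; with the minus sign you wrote, $\bar v$ would solve $v_t+H(x,t,c+v_x)=-\bar H(c)$ rather than \eqref{4cell}. The correction is immediate from your own displayed relation and does not affect the rest of the argument.
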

\begin{proof}
Let $w_\delta:\T^d\to\R$ be the Lipschitz interpolation of $\bar{v}_\delta(c)^0$. Then, $\{w_\delta\}_\delta$ is uniformly bounded and equi-Lipschitz to have convergent subsequence with the limit $w$. Let $\bar{v}$ be the viscosity solution of 
$$v_t+H(x,t,c+v_x)=\bar{H}(c)\quad{\rm in}\,\,\,\T^d\times(0,1],\quad v(\cdot,0)=w(\cdot)\quad{\rm on}\,\,\,\T^d.$$
Since $\bar{H}_\delta(c)\to\bar{H}(c)$ as $\delta\to0$, we see that \eqref{maxmax} follows from the same reasoning as the proof of Theorem \ref{convergence-Lax-Oleinik}, where the time periodicity of $\bar{v}_\delta$ implies that  
 $\bar{v}$ is time-$1$-periodic.
\end{proof}
As for  a sequence of time-$1$-periodic solutions of \eqref{HJ-delta2+}, it tends to a semiconvex a.e. solution of \eqref{4cell} as $\delta\to0$ (up to a subsequence).

The same reasonings as the proofs of Theorem \ref{convergence-derivative} and \ref{in-probability1} yield the following theorems:  
\begin{Thm}
Consider the subsequence $\{\bar{v}_\delta(c)\}_\delta$ and $\bar{v}$ in  Theorem \ref{43434343}. 
Let $(x,t)\in \T^d\times(0,1]$ be such that $\bar{v}_{x^i}(x,t)$ exists. Note that a.e. points of $\T^d\times[0,1]$ have such a property. Let $(x_n,t_{l})\in{\mathcal{G}}_\delta$ be such that $(x_n,t_{l})\to(x,t)$ as $\delta\to0$.   Then, we have      
$$|(D_{x^i}\bar{v}_\delta(c))^l_n-\bar{v}_{x^i}(x,t)|\to0\mbox{\quad as $\delta\to0$}.$$
\end{Thm}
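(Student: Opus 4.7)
The plan is to adapt the proof of Theorem \ref{convergence-derivative} to the periodic setting. The strategy is by contradiction: if $(D_{x^i}\bar{v}_\delta(c))^l_n$ stays bounded away from $\bar{v}_{x^i}(x,t)$, then the semiconcavity of both the discrete solution (in the directions $\pm e_j$) and the continuum viscosity solution (as a genuine viscosity solution of \eqref{4cell}) forces $D_{x^i}\bar{v}_\delta$ to stay away from $\bar{v}_{x^i}$ on a whole macroscopic interval in the $e_i$-direction. Integrating this discrepancy violates the uniform convergence in Theorem \ref{43434343}.

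First I would establish a uniform semiconcavity bound for $\bar{v}_\delta(c)$: since $\bar{v}_\delta(c)$ is time-$1$-periodic and extends to $k\to\infty$ with $|(D_x\bar{v}_\delta)^k_m|_\infty \le \tilde{\beta}+1$ (by Theorem \ref{extension}), the last assertion of Theorem \ref{semi} gives $M^k_\delta \le M(t_k)$ for every $k>0$. Taking $k$ along a periodic translate $k+2Kj$ with $j\to\infty$ and using $M(t)\to M^\ast_+$, we conclude $M^k_\delta \le M^\ast_+$ for all $k\in\Z$. On the continuum side, $\bar{v}(\cdot,t)$ is a viscosity solution of \eqref{4cell} (on $\T^d\times(0,1]$ by Theorem \ref{43434343}), hence semiconcave in $x$ with a constant independent of $t$ away from $t=0$.

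Next, suppose by contradiction that $|(D_{x^i}\bar{v}_\delta(c))^l_n - \bar{v}_{x^i}(x,t)|\ge \varepsilon_0$ along some subsequence $\delta\to 0$; consider the case $(D_{x^i}\bar{v}_\delta)^l_n \ge \bar{v}_{x^i}(x,t) + \varepsilon_0$. The semiconcavity of the slice $y\mapsto \bar{v}(x^1,\ldots,y,\ldots,x^d,t)$ gives some $\nu_0>0$ such that $|s'(y)-\bar{v}_{x^i}(x,t)|\le \varepsilon_0/3$ for a.e.\ $y\in[x^i-\nu_0,x^i+\nu_0]$. Pick $r_0\in\N$ with $\tfrac{1}{2}\min\{\nu_0,\varepsilon_0/(3M^\ast_+)\}\le 2hr_0+2h\le \min\{\nu_0,\varepsilon_0/(3M^\ast_+)\}$. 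The uniform discrete semiconcavity then gives $(D_{x^i}\bar{v}_\delta)^l_{n-2re_i} \ge (D_{x^i}\bar{v}_\delta)^l_n - 2hr_0 M^\ast_+ \ge \bar{v}_{x^i}(x,t)+\tfrac{2\varepsilon_0}{3}$ for $0\le r\le r_0$, so $(D_{x^i}\bar{v}_\delta)^l_{n-2re_i}-s'(y)\ge \varepsilon_0/3$ on the relevant strip. Summing and telescoping as in Theorem \ref{convergence-derivative} yields
\[
(\bar{v}_\delta^l{}_{n+e_i}-\bar{v}(x_n+he_i,t_l))-(\bar{v}_\delta^l{}_{n-2r_0e_i-e_i}-\bar{v}(x_n-2hr_0e_i-he_i,t_l))\ge \tfrac{\varepsilon_0}{6}\min\{\nu_0,\tfrac{\varepsilon_0}{3M^\ast_+}\},
\]
whose left-hand side tends to $0$ by Theorem \ref{43434343}: a contradiction. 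The case $(D_{x^i}\bar{v}_\delta)^l_n \le \bar{v}_{x^i}(x,t)-\varepsilon_0$ is symmetric.

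The main obstacle is the first step, namely upgrading the time-dependent semiconcavity bound $M(t_k)$ of Theorem \ref{semi} to the uniform bound $M^\ast_+$ in the periodic regime; everything else is a near-verbatim transcription of the argument for Theorem \ref{convergence-derivative}, with Theorem \ref{43434343} replacing Theorem \ref{convergence-Lax-Oleinik} in the final passage to the limit.
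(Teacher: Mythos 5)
Your proposal is correct and follows essentially the paper's own route: the paper proves this theorem by exactly the argument of Theorem \ref{convergence-derivative} (discrete semiconcavity from Theorem \ref{semi} plus semiconcavity of the limit, integrated against the uniform convergence of Theorem \ref{43434343} to reach a contradiction). Your preliminary upgrade of $M(t_k)$ to the uniform bound $M^\ast_+$ via time-periodicity is valid but not actually needed, since $t_l\to t>0$ already makes $M(t_l)$ bounded uniformly in $\delta$, so the original argument transfers verbatim.
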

\begin{Thm}
Consider the subsequence $\{\bar{v}_\delta(c)\}_\delta$ and $\bar{v}$ in  Theorem \ref{43434343}. 
Let $(x,t)\in \R^d\times(0,1]$, $\tilde{t}\ge0$ be  arbitrary. Let $\Gamma^\ast(x,t)$ be the set of all minimizing curves $\gamma^\ast:[-\tilde{t},t]\to\R^d$ for $\bar{v}(x,t)$. 
Let $(x_n,t_{l+1})\in\tilde{\mathcal{G}}_\delta$, $\tilde{l}$ be such that $(x_n,t_{l+1})\to(x,t)$, $t_{-\tilde{l}}\to-\tilde{t}-0$ as $\delta\to0$. For each $\delta$, let $\gamma\in \Omega^{l+1,-\tilde{l}}_n$ be the random walk generated by the minimizing control $\xi^\ast$ for $v^{l+1}_n$.  
\begin{enumerate}
\item Fix  an arbitrary $\ep_1>0$ and define the set
\begin{eqnarray*}
\check{\Omega}^{\ep_1}_\delta&:=&\{ \gamma\in  \Omega^{l+1,-\tilde{l}}_n\,|\,\mbox{there exists $\gamma^\ast=\gamma^\ast(\gamma)\in\Gamma^\ast(x,t)$} \\
&&\qquad\qquad\qquad\qquad\qquad\mbox{ such that $\norm \eta_\delta(\gamma)'-\gamma^\ast{}'\norm_{L^2([-\tilde{t},t])}\le \ep_1$}  \}.
\end{eqnarray*}
Then, we have Prob$(\check{\Omega}_\delta^{\ep_1})\to1$ as $\delta\to0$.
\item Fix an arbitrary $\ep_2>0$ and define the set
\begin{eqnarray*}
\tilde{\Omega}^{\ep_2}_\delta&:=&\{ \gamma\in  \Omega^{l+1,-\tilde{l}}_n\,|\,\mbox{there exists $\gamma^\ast=\gamma^\ast(\gamma)\in\Gamma^\ast$}\\
&&\qquad\qquad\qquad\qquad\qquad\mbox{ such that $\norm\eta_\delta(\gamma)-\gamma^\ast\norm_{C^0([-\tilde{t},t])}\le \ep_2$}\}.
\end{eqnarray*}
Then, we have $\mbox{Prob}(\tilde{\Omega}_\delta^{\ep_2})\to1$ as $\delta\to0$. 
\item Fix  an arbitrary $\ep_3>0$ and  define the set
\begin{eqnarray*}
\Omega^{\ep_3}_\delta&:=&\{ \gamma\in  \Omega^{l+1,-\tilde{l}}_n\,|\,\mbox{there exists $\gamma^\ast=\gamma^\ast(\gamma)\in\Gamma^\ast(x,t)$} \\
&&\qquad\qquad\qquad\qquad\qquad\,\,\,\,\,\,\,\,\mbox{ such that $\norm\gamma_\delta-\gamma^\ast\norm_{C^0([-\tilde{t},t])}\le \ep_3$}\}. 
\end{eqnarray*}
Then, we have Prob$(\Omega_\delta^{\ep_3})\to1$ as $\delta\to0$.
\end{enumerate}
\end{Thm}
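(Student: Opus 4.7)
The plan is to mirror the proof of Theorem \ref{in-probability1}, now on the two-sided interval $[-\tilde t,t]$. Because $\bar v_\delta(c)$ extends $1$-periodically to all of $\tG_\delta$ and the remark after \eqref{HJ-delta-cell} provides the Lax-Oleinik representation over any backward window, we have
$$\bar v_\delta(c)^{l+1}_n = E_{\mu^{l+1,-\tilde l}_n(\cdot;\xi^\ast)}\Big[\sum_{-\tilde l<k\le l+1}L^{(c)}(\gamma^k,t_{k-1},\xi^{\ast\,k}_{m(\gamma^k)})\tau + \bar v_\delta(c)^{-\tilde l}_{m(\gamma^{-\tilde l})}\Big] + \bar H_\delta(c)(t_{l+1}-t_{-\tilde l})$$
with $|\xi^\ast|_\infty\le(d\lambda_1)^{-1}$ uniformly in $\delta$. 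Combined with $\bar v_\delta(c)^{l+1}_n\to\bar v(x,t)$ and uniform convergence $\bar v_\delta(c)^{-\tilde l}\to \bar v(\cdot,-\tilde t)$ on $\pr\tG_\delta$ from Theorem \ref{43434343}, and $\bar H_\delta(c)\to\bar H(c)$ from Theorem \ref{effective222}, this will play the role of the Lax-Oleinik identity used in Theorem \ref{in-probability1}; the initial datum $w$ there is replaced by $\bar v(\cdot,-\tilde t)$, which is Lipschitz on $\T^d$ uniformly in $\tilde t$ by the uniform gradient bounds of Theorem \ref{extension}.

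Claims (2) and (3) then reduce to (1) verbatim as in Theorem \ref{in-probability1}. For (2): if $\norm\eta_\delta(\gamma)'-\gamma^\ast{}'\norm_{L^2([-\tilde t,t])}\le\ep_1$, then Cauchy-Schwarz gives $\norm\eta_\delta(\gamma)'-\gamma^\ast{}'\norm_{L^1}\le\sqrt{t+\tilde t}\,\ep_1$, and integration from the right endpoint (where $\eta_\delta(\gamma)(t)=x_n\to x=\gamma^\ast(t)$) yields $\norm\eta_\delta(\gamma)-\gamma^\ast\norm_{C^0}\le C\ep_1$ for $\delta$ small; hence $\check\Omega^{\ep_1}_\delta\subset\tilde\Omega^{C\ep_1}_\delta$ for a constant $C=C(t+\tilde t)$. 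For (3): Lemma \ref{limit-theorem} gives $E_{\mu^{l+1,-\tilde l}_n}[\norm\eta_\delta(\gamma)-\gamma_\delta\norm_{L^2}^2]=O(h)$, so by Markov's inequality Prob$(B^\ep_\delta)\to 1$ for the $L^2$-good set, and Lemma \ref{convert} upgrades this to $C^0$ closeness; combining with (2) through the same bookkeeping used in the proof of Theorem \ref{in-probability1} yields Prob$(\Omega^{\ep_3}_\delta)\to 1$.

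The heart of the argument is claim (1), which rests on the uniform energy gap
$$\inf_{r\in\Gamma^\ep}\Big\{\int_{-\tilde t}^t L^{(c)}(r(s),s,r'(s))\,ds + \bar v(r(-\tilde t),-\tilde t) + \bar H(c)(t+\tilde t) - \bar v(x,t)\Big\}\ge\nu(\ep)>0,$$
where $\Gamma^\ep$ collects admissible Lipschitz curves $r:[-\tilde t,t]\to\R^d$ with $|r'|_\infty\le (d\lambda_1)^{-1}$, $r(t)=x$ and $\norm r'-\gamma^\ast{}'\norm_{L^2}\ge\ep$ for every $\gamma^\ast\in\Gamma^\ast(x,t)$. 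If this gap failed, Tonelli's compactness would yield a minimizing sequence $r_i\to\gamma^\ast\in\Gamma^\ast(x,t)$ uniformly; Taylor expansion of $L^{(c)}$ around $\gamma^\ast$, integration by parts exploiting that the minimizer $\gamma^\ast$ is a $C^2$ Euler-Lagrange solution on $(-\tilde t,t)$, Lipschitz regularity of $\bar v(\cdot,-\tilde t)$ for the boundary term, and strict convexity $L^{(c)}_{\zeta\zeta}\ge\alpha' I$ would force $\norm r_i'-\gamma^\ast{}'\norm_{L^2}\to 0$, contradicting $r_i\in\Gamma^\ep$. With $\nu(\ep_1)>0$ in hand, if Prob$(\Omega^{l+1,-\tilde l}_n\setminus\check\Omega^{\ep_1}_\delta)\ge b>0$ held along a subsequence $\delta\to 0$, one would split the expectation in the Lax-Oleinik representation into $\check\Omega^{\ep_1}_\delta$ and its complement, use Lemma \ref{limit-theorem} to replace discrete actions by continuous actions along $\eta_\delta(\gamma)+(x-x_n)$ with error $O(\sqrt h)$, and absorb the boundary error via Theorem \ref{43434343}; the resulting lower bound $\bar v_\delta(c)^{l+1}_n\ge\bar v(x,t)+\tfrac12 b\nu(\ep_1)+o(1)$ contradicts $\bar v_\delta(c)^{l+1}_n\to\bar v(x,t)$. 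The main technical point, compared with Theorem \ref{in-probability1}, is thus the gap lemma with the time-periodic boundary datum $\bar v(\cdot,-\tilde t)$ in place of a fixed initial datum $w$; once the Lipschitz regularity of $\bar v(\cdot,-\tilde t)$ is in hand, all remaining estimates are insensitive to the length $t+\tilde t$ of the time window.
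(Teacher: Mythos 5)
Your proposal is correct and follows essentially the same route as the paper, which proves this result by repeating the argument of Theorem \ref{in-probability1} with the Lax--Oleinik representation of the periodically extended solution $\bar{v}_\delta(c)$ over the window $[t_{-\tilde{l}},t_{l+1}]$, the convergences of Theorems \ref{effective222} and \ref{43434343} replacing the fixed initial datum, and the same gap lemma plus Lemmas \ref{limit-theorem} and \ref{convert} for claims 2 and 3. No substantive differences from the paper's intended proof.
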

\begin{Thm}
Let $\{\mu^\ast_\delta\}_\delta$ with $\delta\to0$ be a sequence of minimizing measures for \eqref{33minimizing}. Then, there exists a subsequence of  $\{\mu^\ast_\delta\}_\delta$, still denoted by the same symbol and an exact  Mather measure $\mu^\ast$ for $L^{(c)}$ such that $\mu^\ast_\delta$ converges weakly to $\mu^\ast$. 
\end{Thm}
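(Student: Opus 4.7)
First, since the sequence is constrained by $\lambda = \tau/h \ge \lambda_0$, each $\mu^\ast_\delta$ is supported in the common compact set
\[
K := \T^d \times \T \times [-(d\lambda_0)^{-1}, (d\lambda_0)^{-1}]^d.
\]
The family $\{\mu^\ast_\delta\}_\delta$ is therefore tight on $\T^d \times \T \times \R^d$, and by Prokhorov's theorem a subsequence (still denoted $\mu^\ast_\delta$) converges weakly to a probability measure $\mu^\ast$, which is again supported in $K$.

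Next, I will check that $\mu^\ast$ is a \emph{closed} (holonomic) measure in the sense of Ma\~n\'e, i.e.\ that $\int (g_t + g_x \cdot \zeta) \, d\mu^\ast = 0$ for every $g \in C^\infty(\T^d \times \T; \R)$. Given such a $g$, a Taylor expansion using the cancellation $\sum_{\omega \in B} \omega = 0$ gives
\[
(D_t g)^{k+1}_m = g_t(x_m, t_{k+1}) + O(h), \qquad (D_x g)^k_m = g_x(x_m, t_k) + O(h^2),
\]
uniformly on $\pr\tG_\delta$, where the first error comes from the diffusive term $\frac{h^2}{2d\tau}\Delta g = O(h)$ under the hyperbolic scaling. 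Thus the discrete test function from \eqref{holoholo} converges uniformly on $K$ to $(x,t,\zeta) \mapsto g_t(x,t) + g_x(x,t) \cdot \zeta$. Proposition \ref{holonomic-delta} states that this discrete test function integrates to zero against $\mu^\ast_\delta$; combining uniform convergence of the integrands with weak convergence of the measures on the compact $K$ yields $\int (g_t + g_x \cdot \zeta)\, d\mu^\ast = 0$.

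Third, I evaluate $\int L^{(c)} \, d\mu^\ast$. By Theorem \ref{Mather-delta}, $\int L^{(c)}(x, t-\tau, \zeta) \, d\mu^\ast_\delta = -\bar{H}_\delta(c)$. Since $L^{(c)}$ is continuous on the compact set $K$, the time shift $L^{(c)}(x,t-\tau,\zeta) \to L^{(c)}(x,t,\zeta)$ is uniform, so passing to the limit and invoking $\bar H_\delta(c) \to \bar H(c)$ from Theorem \ref{effective222} gives
\[
\int_{\T^d\times\T\times\R^d} L^{(c)}(x,t,\zeta) \, d\mu^\ast = -\bar{H}(c).
\]
This is exactly the minimal value $-\bar{H}(c) = \inf \int L^{(c)} d\mu$ of the Mather--Ma\~n\'e variational problem in the continuous setting, where the infimum is over closed probability measures on $\T^d \times \T \times \R^d$. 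By Ma\~n\'e's theorem, any closed probability measure attaining this infimum is automatically invariant under the Euler--Lagrange flow, hence a Mather measure for $L^{(c)}$; applied to $\mu^\ast$ this gives the claim.

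The hard part of the argument is the second step. The discrete time derivative $D_t$ mixes a spatial average with a temporal difference, so its consistency with $\partial_t$ for smooth test functions is not immediate and requires the hyperbolic scaling $\tau/h \le \lambda_1$ to absorb the $O(h^2/\tau)$ diffusive remainder into an $O(h)$ term that vanishes uniformly on $\pr\tG_\delta$. Once this consistency is established, the remaining pieces (tightness, weak limit of the cost, and the Ma\~n\'e identification of minimizers over closed measures with Mather measures) are standard soft compactness arguments.
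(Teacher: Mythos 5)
Your proposal is correct and follows essentially the same route as the paper's proof: compact common support gives a weakly convergent subsequence, Proposition \ref{holonomic-delta} passes to the holonomic (closed-measure) constraint in the limit, Theorems \ref{Mather-delta} and \ref{effective222} give $\int L^{(c)}\,d\mu^\ast=-\bar{H}(c)$, and the Ma\~n\'e--Iturriaga characterization of minimizing closed measures identifies $\mu^\ast$ as a Mather measure. The only difference is that you spell out the consistency estimate $(D_tg)^{k+1}_m=g_t+O(h)$, $(D_xg)^k_m=g_x+O(h^2)$ under the hyperbolic scaling, a step the paper leaves implicit (noting it explicitly only in the autonomous case), so no gap remains.
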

\begin{proof}
Since $\{\mu^\ast_\delta\}_\delta$ is supported by a compact set independently from $\delta$, we find a weakly  convergent subsequence with the limit $\mu^\ast$.  Proposition \ref{holonomic-delta} implies the holonomic constraint of $\mu^\ast$. Theorem \ref{Mather-delta} and Theorem \ref{effective222} imply that $\mu^\ast$ satisfies 
$$\int_{\T^d\times\T\times\R^d}L^{(c)}(x,t,\zeta)d\mu^\ast=-\bar{H}(c),$$
which shows that $\mu^\ast$ is a Mather measure for $L^{(c)}$ \cite{Itu}, \cite{Mane}. 
\end{proof} 
\setcounter{section}{3}
\setcounter{equation}{0}
\section{Weak KAM theory - autonomous case}
We investigate the case with a time-independent Lagrangian $L(x,\zeta)$/Hamiltonian $H(x,p)$. The presentation of Section 3 becomes simpler in the autonomous case. We state several things which differ from Section 3. 

\subsection{Weak KAM solution on grid}
Since the configuration grid switches between $G_{\rm even}$ and $G_{\rm odd}$ depending on the time index, the discrete Hamilton-Jacobi equation always needs at least two unit-time-evolutions, i.e., with $\phi_\delta:=\varphi_\delta^2$, we have 
$$\mbox{$\varphi_{\delta}:=\varphi_\delta^{2K}=\phi_\delta\circ\phi_\delta\circ\cdots\circ\phi_\delta=(\phi_\delta)^K$ ($K$-iteration of $\phi_\delta$). }$$
We say that {\it $v$ is a stationary solution of  \eqref{HJ-delta-cell}, if it satisfies $v(\cdot,t_{2k})=v(\cdot,0)$, $v(\cdot,t_{2k+1})=v(\cdot,t_1)$ for all $k$} (one cannot have $v^0=v(\cdot,t_k)=v(\cdot,t_{k+1})$). 
Note that $v^0\in X_\delta$ yields a stationary solution, if and only if $v^0$ admits $\phi_\delta v^0+2\tau\bar{H}_\delta(c)= v^0$. 
The reasoning in Subsection 3.1 yields a pair of $\bar{H}_\delta(c)\in\R$ and $\bar{v}^0\in X_\delta$ such that 
$\varphi_\delta(\bar{v}^0;c)+\bar{H}_\delta(c)=\bar{v}^0.$
\begin{Prop}\label{kkkkaaaaa}
If $\bar{v}^0\in X_\delta$ satisfies $\varphi_\delta(\bar{v}^0;c)+\bar{H}_\delta(c)=\bar{v}^0$, it satisfies also $ \phi_\delta\bar{v}^0+2\tau\bar{H}_\delta(c)= \bar{v}^0$, i.e., $\bar{v}^k:= \varphi^k_\delta(\bar{v}^0;c)+t_k\bar{H}_\delta(c)$ is a stationary solution of \eqref{HJ-delta-cell}. 
\end{Prop}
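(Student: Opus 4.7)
The idea is to reinterpret everything in terms of the single map $F(v) := \phi_\delta v + 2\tau \bar{H}_\delta(c)$. Because $\phi_\delta$ commutes with addition of a scalar (the Lax-Oleinik representation gives $\phi_\delta(v+C) = \phi_\delta v + C$) and $\varphi_\delta = (\phi_\delta)^K$, the hypothesis $\varphi_\delta(\bar{v}^0;c) + \bar{H}_\delta(c) = \bar{v}^0$ is exactly $F^K(\bar{v}^0) = \bar{v}^0$, while the desired conclusion is $F(\bar{v}^0) = \bar{v}^0$. One must therefore upgrade a $K$-periodicity statement for the $F$-orbit of $\bar{v}^0$ to a genuine fixed-point statement.

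First, I would observe that $w := F(\bar{v}^0) = \phi_\delta \bar{v}^0 + 2\tau \bar{H}_\delta(c)$ is itself a time-$1$ fixed point: since any two iterates of $\phi_\delta$ commute,
\[
F^K(w) = F^K(F(\bar{v}^0)) = F(F^K(\bar{v}^0)) = F(\bar{v}^0) = w,
\]
i.e.\ $\varphi_\delta(w;c) + \bar{H}_\delta(c) = w$. Iterating the same computation, the entire cycle $\{\bar{v}^0, w, F^2(\bar{v}^0), \ldots, F^{K-1}(\bar{v}^0)\}$ lies inside the set of time-$1$ fixed points. Next, I would analyze the sequence $a_l := \phi_\delta^l \bar{v}^0 + 2l\tau \bar{H}_\delta(c) = F^l(\bar{v}^0)$: by the $K$-periodicity just observed, $a_{jK} = \bar{v}^0$ and $a_{jK+1} = w$ for every $j \geq 0$.

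To conclude, I would invoke the asymptotic convergence $F^l(v^0) \to $ some limit as $l \to \infty$, valid for every $v^0 \in X_\delta$ in the autonomous case; this is the autonomous analogue of the convergence result referenced in Section~3.1 for $d=1$, and it is the version of ``convergence to the weak KAM solution'' established in the remainder of Section~4. Applied to $v^0 = \bar{v}^0$, convergence of the full sequence $\{a_l\}$ forces the two constant subsequences to share a common limit, so $\bar{v}^0 = w$, which is exactly $\phi_\delta \bar{v}^0 + 2\tau \bar{H}_\delta(c) = \bar{v}^0$.

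The main obstacle is this last step. Mere $K$-periodicity does not exclude nontrivial $F$-cycles, and uniqueness of the effective Hamiltonian (Theorem~\ref{weak-KAM-delta}) does not promote to uniqueness of $\bar{v}^0$ up to constants when $d \geq 2$. The substantive content is thus the autonomous convergence of $\{a_l\}$ (or an equivalent monotonicity/comparison argument exploiting the time-homogeneity of $H$); once that input is available, Steps~1 and~2 are purely formal, resting only on the commutativity of $\phi_\delta$ with scalars and with its own powers.
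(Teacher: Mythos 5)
Your reduction is fine as far as it goes: writing $F(v)=\phi_\delta v+2\tau\bar{H}_\delta(c)$, noting that $\phi_\delta$ commutes with adding constants and with its own powers, and observing that the hypothesis is $F^K(\bar{v}^0)=\bar{v}^0$ while the conclusion is $F(\bar{v}^0)=\bar{v}^0$, correctly isolates the real issue: one must rule out a nontrivial $F$-cycle of length dividing $K$. But the step you invoke to do this is not available to you. The full-sequence convergence of $F^l(v^0)$ for every $v^0\in X_\delta$ is exactly the theorem of Subsection 4.2, and in the paper that theorem is proved \emph{using} Proposition \ref{kkkkaaaaa}: its proof begins by taking a fixed point $\bar{w}^0$ of $\bar{\phi}_\delta$ (which only Proposition \ref{kkkkaaaaa} provides) to get the two-sided bound \eqref{huki}, and it again appeals to ``the same reasoning as the proof of Proposition \ref{kkkkaaaaa}'' at its end. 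So your argument is circular within this paper. Nor can you substitute the Section 3.1 convergence input: that rests on the $\ell^1$-contraction \eqref{contraction}, which the paper explicitly shows does not extend to $d\ge2$ (the discussion around \eqref{contraction2}). Since you offer no independent ``monotonicity/comparison argument'' beyond naming it, the substantive content of the proposition is missing from your proof.

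For comparison, the paper closes this gap directly, without any convergence theorem: it sets $Z^k_{m+\1}:=\bar{v}^{k+2}_{m+\1}-\bar{v}^k_{m+\1}$, derives from \eqref{HJ-delta-cell}, Taylor's formula and (H2) the recursion
\begin{equation*}
Z^{k+1}_m=\sum_{i=1}^d\Big\{\Big(\tfrac{1}{2d}-\tfrac{\lambda}{2}H_{p_i}\Big)Z^k_{m+e_i}+\Big(\tfrac{1}{2d}+\tfrac{\lambda}{2}H_{p_i}\Big)Z^k_{m-e_i}\Big\}-\tfrac{\tau}{2}\kappa^{k+1}_m\big|(D_x\bar{v})^{k+2}_m-(D_x\bar{v})^k_m\big|^2,
\end{equation*}
with positive weights by the CFL condition and $\kappa^{k+1}_m>\kappa>0$. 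Hence $S^k:=\max_m Z^k_{m+\1}$ is nonincreasing, while $2K$-periodicity of $\bar{v}$ forces $S^k\equiv S^0$; a propagation (strong maximum principle) argument on the set where $Z^k<S^0$ then shows this set is empty and that $(D_x\bar{v})^{k+2}_m=(D_x\bar{v})^k_m$ for all $m,k$, so $\bar{v}^2=\bar{v}^0+\beta$, and $\bar{v}^{2K}=\bar{v}^0$ gives $\beta=0$. Some argument of this discrete maximum-principle type (or another genuinely dimension-free comparison mechanism) is what your proposal still needs to supply.
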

\begin{proof}
For $\bar{v}^k:= \varphi^k_\delta(\bar{v}^0;c)+t_k\bar{H}_\delta(c)$ ($\bar{v}^k$ solves \eqref{HJ-delta-cell}), set  $Z^{k}_{m+\1}:=\bar{v}^{k+2}_{m+\1}-\bar{v}^{k}_{m+\1}$ and $S^k:=\max_{m}Z^{k}_{m+\1}$. It follows from \eqref{HJ-delta-cell}, Taylor's formula and (H2) that 
\begin{eqnarray*}
Z^{k+1}_m&=&\sum_{i=1}^d
\Big\{ \Big(\frac{1}{2d}-\frac{\lambda}{2}H_{p_i}(x_m,(D_x\bar{v})^k_m)\Big)Z^k_{m+e_i}+\Big(\frac{1}{2d}+\frac{\lambda}{2}H_{p_i}(x_m,(D_x\bar{v})^k_m)\Big)Z^k_{m-e_i}\Big\}\\
&&-\frac{\tau}{2}\kappa^{k+1}_m| (D_x\bar{v})^{k+2}_m-(D_x\bar{v})^k_m |^2, 
\end{eqnarray*}
where $\kappa^{k+1}_m>\kappa>0$ for all $k,m$. The CFL-type condition stated in Theorem \ref{main1} implies that $(\frac{1}{2d}\pm\frac{\lambda}{2}H_{p_i}(x_m,(D_x\bar{v})^k_m))>0$. Hence, we have $S^{k+1}\le S^k$ for all $k$.   
On the other hand,  since $\bar{v}^{2K}=\varphi_\delta(\bar{v}^0;c)+\bar{H}_\delta(c)=\bar{v}^0$, we have  $Z^{k+2K}_{m+\1}=Z^{k}_{m+\1}$ and $S^{k+2K}=S^k$ for all $k\ge0$. Therefore, it holds that $S^0=S^{2K}\le S^k\le S^0$ for all $0\le k\le 2K$, i.e., $S^k=S^0$ for all $k\ge0$. 

 Let $m$ be such that $S^{k+1}=Z^{k+1}_m$($=S^0$). Then, the recurrence equation of $Z^k_{m+\1}$ implies that we must have $Z^k_{m+\omega}=S^0$ for all $\omega\in B$, because otherwise $S^{k+1}<S^0$.  Let $\Lambda^k:=\{ x_{m+\1}\in \pr h\Z^d \,|\,(x_{m+1},t_k)\in \pr\tG,\,\,  Z^k_{m+\1}<S^0   \}$. 
If $x_{m+\1}\in\Lambda^k$, we must have $x_{m+\1+\omega}\in \Lambda^{k+1}$ for all $\omega\in B$. 
Hence, if $\Lambda^0\neq\emptyset$, it holds that $\sharp \Lambda^k<\sharp \Lambda^{k+1}$ for all $k\ge0$ until we have $\Lambda^{k+1}=\pr G_{\rm even}$ or $\pr G_{\rm odd}$, which means that $Z^k_{m+\1}<S^0$ for all $m$ at some $k$.   Therefore, we must have $\Lambda^0=\emptyset$. Furthermore,  the equality $S^k=S^0$ for all $k\ge0$ requires $(D_x\bar{v})^{k+2}_m=(D_x\bar{v})^{k}_m$ for all $m,k$. 
This implies that  there exists a constant $\beta\in\R$ such that $\bar{v}^0+\beta=\bar{v}^2$.  
Since $\bar{v}^2+\beta=\phi_\delta(\bar{v}^0+\beta)+2\tau \bar{H}_\delta(c)=\phi_\delta\bar{v}^2+2\tau \bar{H}_\delta(c)=\bar{v}^4$,  $\bar{v}^{4}+\beta=\bar{v}^{6}, \ldots,\bar{v}^{2K-2}+\beta=\bar{v}^{2K}$, we have $\bar{v}^0+K\beta=\bar{v}^{2K}=\bar{v}^0$. Thus, we conclude that $\beta=0$ and $\bar{v}^0=\bar{v}^2=\phi_\delta\bar{v}^0+2\tau\bar{H}_0(c)$.
\end{proof}
 
\subsection{Long-time behavior of Lax-Oleinik type solution map}
In autonomous weak KMA theory,   Fathi  \cite{Fathi98-2} elegantly makes clear the long time behavior of the Lax-Oleinik semigroup: $\mathcal{T}^t_-u+t\bar{H}(c)$ converges to a weak KAM solution (the limit depends on $u$) as $t\to\infty$ for all $u\in C^0(\T^d;\R)$. In other words, the viscosity solution $v$ of 
$$v_t+H(x,c+v_x)=\bar{H}(c),\,\,\,v(x,0)=u(x),\,\,\,\,x\in\T^d,\,\,\,t>0$$
converges to a stationary solution as $t\to\infty$. 
We remark that this is not true in a time-dependent case in general  \cite{Fathi-Mather}. We prove a similar result for $\varphi_\delta^k(\cdot;c)+t_k\bar{H}_\delta(c)$ as $k\to\infty$.
\begin{Thm}
For each $v^0\in X_\delta$, the solution $v^k=\varphi_\delta^k(v^0;c)+t_k\bar{H}_\delta(c)$ of \eqref{HJ-delta-cell} converges to some stationary solution of \eqref{HJ-delta-cell}   as $k\to\infty$, i.e., there exist $\bar{v}^0$ and $\bar{v}^1$ depending on $v^0$ such that $\bar{v}^0=\phi_\delta\bar{v}^0+2\tau\bar{H}_\delta(c)$, $\bar{v}^1=\varphi_\delta^1(\bar{v}^0;c)+\tau\bar{H}_\delta(c)$ for which we have 
$$\max_{x}|v^{2k}-\bar{v}^0|\to0,\,\,\,\max_{x}|v^{2k+1}-\bar{v}^1|\to0\mbox{\quad as $k\to\infty$}.$$
\end{Thm}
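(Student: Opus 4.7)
The strategy is to adapt Fathi's \cite{Fathi98-2} long-time convergence theorem for the autonomous Lax--Oleinik semigroup to our discrete setting, combining non-expansiveness of $\phi_\delta$ with the strict convexity of $H$ and finite-dimensional compactness.

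The first step is non-expansiveness. For any two solutions $v^k,\tilde v^k$ of \eqref{HJ-delta-cell}, using the minimizing control of $v^{k+1}_n$ as a test control for $\tilde v^{k+1}_n$ in the variational representation of Theorem \ref{extension} (and symmetrically) gives
\[
\min_m(v^k-\tilde v^k)\;\le\; v^{k+1}_n-\tilde v^{k+1}_n\;\le\;\max_m(v^k-\tilde v^k),
\]
so $\max_m(v^k-\tilde v^k)$ is non-increasing, $\min_m(v^k-\tilde v^k)$ is non-decreasing, and $\mathrm{osc}(v^k-\tilde v^k)$ is non-increasing. Fixing a stationary solution $\tilde v$ from Theorem \ref{weak-KAM-delta} and Proposition \ref{kkkkaaaaa} (so $\tilde v^{k+2}=\tilde v^k$) and setting $W^k:=v^k-\tilde v^k$, this bounds the orbit in sup-norm; combined with the gradient bound of Theorem \ref{extension}, the sequence $\{W^{2k}\}_k$ is precompact in a finite-dimensional subset of $X_\delta$.

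The next step extracts quadratic dissipation from the strict convexity of $H$. Taylor-expanding $H$ around $c+(D_x\tilde v)^k_m$ in the recurrence $W^{k+1}_m=\frac{1}{2d}\sum_\omega W^k_{m+\omega}-\tau[H(x_m,c+(D_xv)^k_m)-H(x_m,c+(D_x\tilde v)^k_m)]$ and absorbing the drift into the averaging via the CFL condition of Theorem \ref{main1} produces
\[
W^{k+1}_m\;\le\;\sum_{i=1}^d\bigl[\alpha_i^+W^k_{m+e_i}+\alpha_i^-W^k_{m-e_i}\bigr]\;-\;\frac{\tau H^\ast_{pp}}{2}\bigl|(D_xW)^k_m\bigr|^2,
\]
with $\alpha_i^\pm=\frac{1}{2d}\mp\frac{\lambda}{2}H_{p^i}(x_m,c+(D_x\tilde v)^k_m)>0$ and $\sum_i(\alpha_i^++\alpha_i^-)=1$, together with a symmetric lower bound. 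Evaluating at an argmax $m^\ast_{k+1}$ of $W^{k+1}$ and argmin $m^{\ast\ast}_{k+1}$ gives $\mathrm{osc}(W^{k+1})\le\mathrm{osc}(W^k)-\frac{\tau H^\ast_{pp}}{2}\bigl(|(D_xW)^k_{m^\ast_{k+1}}|^2+|(D_xW)^k_{m^{\ast\ast}_{k+1}}|^2\bigr)$; telescoping in $k$ forces $|(D_xW)^k_{m^\ast_{k+1}}|,|(D_xW)^k_{m^{\ast\ast}_{k+1}}|\to 0$ as $k\to\infty$.

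The final rigidity step: by a diagonal extraction I pass to a subsequence $k_j$ along which $W^{2k_j+n}\to W^\infty_n$ uniformly for every $n\ge 0$ and the argmax/argmin locations stabilize to points $m^\ast_{n+1},m^{\ast\ast}_{n+1}$. Since $\max W^k\searrow M^\infty$ and $\min W^k\nearrow m^\infty$, every $W^\infty_n$ has the same max $M^\infty$ and min $m^\infty$; taking the limit in the recurrence at $m^\ast_{n+1}$ (the quadratic term now vanishes) turns the inequality into the equality $M^\infty=\sum_i[\alpha_i^+W^\infty_n(m^\ast_{n+1}+e_i)+\alpha_i^-W^\infty_n(m^\ast_{n+1}-e_i)]$, and strict positivity of the $\alpha_i^\pm$ forces every $2d$-neighbor of $m^\ast_{n+1}$ into $S_n:=\{m:W^\infty_n(m)=M^\infty\}$. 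Hence $N(S_{n+1})\subseteq S_n$ for the one-step-neighborhood operator $N$, so $S_0\supseteq N^n(S_n)$; connectedness of the parity grid (diameter $O(dN)$) makes $N^n(S_n)$ exhaust the whole parity grid of $W^\infty_0$ for $n$ large, so $W^\infty\equiv M^\infty=m^\infty$ and $\mathrm{osc}^\infty=0$. Step 1 then promotes this to full convergence $v^{2k}\to\tilde v^0+M^\infty=:\bar v^0$ in sup-norm; $\bar v^0$ is stationary by invariance of \eqref{HJ-delta-cell} under additive constants, and continuity of $\varphi^1_\delta(\cdot;c)$ yields $v^{2k+1}=\varphi^1_\delta(v^{2k};c)+\tau\bar H_\delta(c)\to\bar v^1:=\varphi^1_\delta(\bar v^0;c)+\tau\bar H_\delta(c)$. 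The hard point is this rigidity argument, which requires the simultaneous diagonal extraction (stabilizing both $W^{2k_j+n}$ and the argmax/argmin locations for every fixed $n$), the strict positivity of every CFL weight $\alpha_i^\pm$ from Theorem \ref{main1}, and the connectedness of the discrete grid.
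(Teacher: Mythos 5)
Your proof is correct in substance, but it takes a genuinely different route from the paper. The paper compares the orbit with its own time-$2$-shift: it sandwiches $v^{2l}$ between a stationary solution $\bar w^0$ and $\bar w^0+A$ by order preservation, shows $S^k:=\max_m(v^{k+2}-v^k)$ is non-increasing hence convergent, extracts a subsequential limit orbit along $k=2K\cdot a_l$, applies the rigidity of Proposition \ref{kkkkaaaaa} to that limit (the constant drift $\beta$ being killed by boundedness) to see it is stationary, and only then upgrades subsequential to full convergence via non-expansiveness of $\bar\varphi^k_\delta$. You instead compare with a fixed stationary solution $\tilde v$ and run the same discrete strong-maximum-principle mechanism (strictly positive CFL weights, nonnegative convex quadratic remainder, connectivity of the parity grid) on $W^k=v^k-\tilde v^k$: monotone $\max W^k$, $\min W^k$, quantified oscillation dissipation, and rigidity of the limit profiles force $\operatorname{osc}W^k\to0$, after which the monotone convergence of $\max$ and $\min$ alone gives full convergence, with no separate $\beta=0$ step and no final non-expansiveness argument. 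Two remarks. First, a small repair: your inclusion $N(S_{n+1})\subseteq S_n$ is justified only at the stabilized argmax $m^\ast_{n+1}$, and the vanishing of the quadratic term via the telescoped oscillation decay is likewise only available at argmax/argmin locations; you need the statement at \emph{every} $m\in S_{n+1}$ to iterate the neighborhood map, but this follows from the same squeeze $M^\infty\le\sum_i\bigl(\alpha_i^+W^\infty_n(m+e_i)+\alpha_i^-W^\infty_n(m-e_i)\bigr)-\frac{\tau H^\ast_{pp}}{2}|(D_xW^\infty_n)(m)|^2\le M^\infty$, which forces both equality in the convex combination and vanishing of the quadratic term without any appeal to the telescoping. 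Second, be aware that your argument proves strictly more than the theorem: since the limit is always a constant shift of the pre-chosen $\tilde v$, applying it to an initial datum which is itself stationary yields that any two stationary solutions differ by a constant; the paper explicitly leaves (up to constants) uniqueness open for $d\ge2$ (its remark concerns the failure of the $L^1$-contraction route in Section 3.1). I do not see a flaw in your maximum-principle comparison of two solutions—the weights are strictly positive under the CFL condition and iterated neighborhoods exhaust each parity class of the torus grid—so you should either present this uniqueness consequence explicitly and check it carefully, or, if you prefer to match the paper's weaker claim, note that your route commits you to it.
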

\begin{proof}
Fix $v^0\in X_\delta$ arbitrarily. 
By Proposition \ref{kkkkaaaaa}, we know that there exists $\bar{w}^0\in X_\delta$ such that $\phi_\delta\bar{w}^0+2\tau\bar{H}_\delta(c)=\bar{w}^0$. If $\bar{w}^0\le v^0$ does not hold, we add a constant to $\bar{w}^0$ and rewrite it as $\bar{w}^0$, to have $\bar{w}^0\le v^0$ and $\phi_\delta\bar{w}^0+2\tau\bar{H}_\delta(c)=\bar{w}^0$. We find a constant $A\in\R$ such that 
$$ \bar{w}^0\le v^0 \le \bar{w}^0+A.$$
For simpler presentation, set $\bar{\varphi}^k_\delta\,\,\cdot:=\varphi_\delta^k(\cdot;c)+t_k\bar{H}_\delta(c)$, $\bar{\phi}_\delta\,\,\cdot:=\phi_\delta \cdot +2\tau\bar{H}_\delta(c)$. 
Then, $\bar{w}^0$ satisfies $\bar{\varphi}^{2K}_\delta\bar{w}^0=\bar{w}^0$ and $\bar{\phi}_\delta\bar{w}^0=\bar{w}^0$. Since $\bar{\varphi}^k_\delta$ and $\bar{\phi}_\delta$  preserve the order,   we see that 
\begin{eqnarray}\label{huki}
\bar{w}^0\le v^{2l}=\bar{\varphi}_\delta^{2l}v^0=(\bar{\phi}_\delta)^lv^0 \le \bar{w}^0+A \mbox{\quad for all $l\in\N$}.
\end{eqnarray}
\indent We look at the increment within the small time $2\tau$. Set $Z^{k}_{m+\1}:=v^{k+2}_{m+\1}-v^{k}_{m+\1}$ and $S^k:=\max_{m}Z^{k}_{m+\1}$. As we already saw in the proof of Proposition \ref{kkkkaaaaa}, it holds that  
\begin{eqnarray*}
Z^{k+1}_m&=&\sum_{i=1}^d
\Big\{ \Big(\frac{1}{2d}-\frac{\lambda}{2}H_{p_i}(x_m,(D_xv)^k_m)\Big)Z^k_{m+e_i}+\Big(\frac{1}{2d}+\frac{\lambda}{2}H_{p_i}(x_m,(D_xv)^k_m)\Big)Z^k_{m-e_i}\Big\}\\
&&-\frac{\tau}{2}\kappa^{k+1}_m| (D_xv)^{k+2}_m-(D_xv)^k_m |^2, 
\end{eqnarray*}
where $\kappa^{k+1}_m>\kappa>0$ for all $k,m$ and  $S^{k+1}\le S^k$ for all $k$.   Since $|S^k|$ is uniformly bounded, we have $S^\ast\in\R$ such that $S^k\to S^\ast$ as $k\to\infty$. 

Since $\{v^{2K\cdot l}\}_{l\in\N}$ is bounded (in $\R^{\sharp G_{\rm odd} }$) due to \eqref{huki}, we have a convergent subsequence $\{v^{2K\cdot a_l}\}_{l\in N}$ such that $v^{2K\cdot a_l}\to \bar{v}^0\in X_\delta$ as $l\to\infty$. Note that 
\begin{eqnarray}\label{35355353}
  \bar{w}^0\le \bar{v}^0 \le \bar{w}^0+A.
\end{eqnarray}
Since $v^{2K\cdot a_l+k}=\bar{\varphi}^k_\delta v^{2K\cdot a_l}$ and 
$\bar{\varphi}^k_\delta$ is continuous, we have 
$v^{2K\cdot a_l+k}=\bar{\varphi}^k_\delta v^{2K\cdot a_l}\to \bar{v}^k:=\bar{\varphi}^k_\delta \bar{v}^0$ as $l\to\infty$ for all $k\in\N$. 
Furthermore, for $\tilde{Z}^{k}_{m+\1}:=\bar{v}^{k+2}_{m+\1}-\bar{v}^{k}_{m+\1}$ and $\tilde{S}^k:=\max_{m}\tilde{Z}^{k}_{m+\1}$, we have $\tilde{S}^k=S^\ast$ for all $k\ge0$.  
%
Hence, due to the same reasoning as the proof of Proposition \ref{kkkkaaaaa}, we must have $(D_x\bar{v})^{k+2}_m=(D_x\bar{v})^{k}_m$ for all $m,k$, which imply that there exists some constant $\beta\in\R$ such that 
$$\bar{\phi}_\delta \bar{v}^0+\beta=\bar{v}^0, \,\,\,\,\,\,
(\bar{\phi}_\delta)^k \bar{v}^0+2k\tau\beta=\bar{v}^0\mbox{ for all $k\ge0$}.$$ 
Since \eqref{35355353} implies that $(\bar{\phi}_\delta)^k \bar{v}^0=\bar{v}^0-2k\tau\beta$ is bounded for all $k\ge0$, we must have $\beta=0$. 

Now, we know that $\bar{v}^k=\bar{\varphi}_\delta^k\bar{v}^0$ is a stationary solution of  \eqref{HJ-delta-cell} and $\{v^{2K\cdot a_l+k}\}_{l\in\N}$ converges to $\bar{v}^k$ as $l\to\infty$. Note that $\bar{\varphi}_\delta^k$ is non-expanding, i.e., $\max_x|\bar{\varphi}_\delta^kw^0-\bar{\varphi}_\delta^k\tilde{w}^0 |\le \max_x| w^0- \tilde{w}^0|$ for all $k\ge0$ and  $w^0,\tilde{w}^0\in X_\delta$. For any $\ep>0$, we have $l=l(\ep)\in\N$ such that $\max_x|v^{2K\cdot a_l}-\bar{v}^0|<\ep$ and $\max_x|v^{2K\cdot a_l+1}-\bar{v}^1|<\ep$. If $k\ge K\cdot  a_{l}$ with $l=l(\ep)$, we have 
\begin{eqnarray*}
\max_x|\bar{\varphi}_\delta^{2k}v^0-\bar{v}^0|&=&
\max_x|\bar{\varphi}_\delta^{2k-2K\cdot a_l}\circ \bar{\varphi}_\delta^{2K\cdot a_l}v^0  
- \bar{\varphi}_\delta^{2k-2K\cdot a_l}\bar{v}^0|\\
&\le& \max_x|\bar{\varphi}_\delta^{2K\cdot a_l}v^0-\bar{v}^0|<\ep,  \\
\max_x|\bar{\varphi}_\delta^{2k+1}v^0-\bar{v}^1|&=&
\max_x|\bar{\varphi}_\delta^{2k-2K\cdot a_l}\circ \bar{\varphi}_\delta^{2K\cdot a_l+1}v^0  
- \bar{\varphi}_\delta^{2k-2K\cdot a_l}\bar{v}^1|\\
&\le& \max_x|\bar{\varphi}_\delta^{2K\cdot a_l+1}v^0-\bar{v}^1|<\ep.  
\end{eqnarray*}
\end{proof}
%

\subsection{Mather measure}
We re-formulate the  analogue of Mather's minimizing problem in the autonomous case.  
We call  $\xi: \pr h\Z^d=\pr(G_{\rm even}\cup G_{\rm odd})\to [-(d\lambda)^{-1},(d\lambda)^{-1}]^d$ an admissible control, where we identify $\xi$ as the function $\xi:\pr\tG \to[-(d\lambda)^{-1},(d\lambda)^{-1}]^d$ such that $\xi^{2k}_{m+\1}=\xi(x_{m+\1})$ and $\xi^{2k+1}_{m}=\xi(x_m)$ for all $k$ ($\xi$ also stands for its $1$-periodic extension to $\tG$). Define the functional for each $l\in\N$ as 
\begin{eqnarray*}
&&\mathcal{F}^l_\delta(\cdot;\xi): C_c(\T^d\times\R^d;\R)\to \R,\\
&&\mathcal{F}^l_\delta(f;\xi):=E_{\mu^{0,-l}_{n+\1}(\cdot; \xi)}\Big[\frac{1}{t_{l}}\sum_{-l<k\le0}f(\gamma^k,\xi^k_{m(\gamma^k)})\tau\Big].
\end{eqnarray*}
The Riesz representation theorem yields the unique  probability measure $\mu^l_\delta(\xi)$ of $\T^d\times\R^d$ such that 
\begin{eqnarray*}
\mathcal{F}^l_\delta(f;\xi)=\int_{\T^d\times\R^d}f \,\,d \mu^{l}_\delta(\xi)\mbox{\quad for any $f\in C_c(\T^d\times\R^d;\R)$}.
\end{eqnarray*}
We give the representation of $\mu^l_\delta(\xi)$ through $p(\cdot;\xi):\tG_\delta\to[0,1]$. 
Define $p^l_\delta(\cdot;\xi):\pr h\Z^d\to[0,1]$ as 
\begin{eqnarray*}
p^l_\delta(x;\xi):=\left\{
\begin{array}{lll}
&\dis \frac{1}{t_l}\sum_{\{(m,k)\,|\, \pr x_{m+\1}=x,\,\,k={\rm even}, \,\,-l<k\le0  \}} p(x_{m+\1},t_{k}; \xi)\tau\mbox{\quad for $x\in\pr G_{\rm odd}$},\\
&\dis \frac{1}{t_l}\sum_{\{(m,k)\,|\, \pr x_{m}=x,\,\,k+1={\rm odd}, \,\,-l<k+1\le0  \}} p(x_{m},t_{k+1}; \xi)\tau\mbox{\quad for $x\in\pr G_{\rm even}$}
\end{array}
\right.
\end{eqnarray*}
where we note that 
$$\sum_{x\in\pr h\Z^d} p^l_\delta(x;\xi)=1\mbox{\quad for each $l\in\N$}.$$
Due to the periodicity of $f\in C_c(\T^d\times\R^d;\R)$ and $\xi$, we have
\begin{eqnarray*}
\mathcal{F}^l_\delta(f;\xi)&=&\sum_{x\in\pr h\Z^d} p^l_\delta(x;\xi)f(x,\xi(x)),\\
\mu^l_\delta(\xi)&=&  \sum_{x\in\pr h\Z^d} p^l_\delta(x;\xi) \mathfrak{d}_{x,\xi(x)},
\end{eqnarray*}
where $\mathfrak{d}_{x,\zeta}$ is the Dirac measure of $\T^d\times\R^d$ supported by the point $(x,\zeta)$.  
Let $\mathcal{Q}_\delta(\xi)$ be the set of all limits of convergent subsequences of  $\{p^l_\delta(\cdot;\xi)\}_{l\in\N}$ to have  
$$\mathcal{P}_\delta=\bigcup_{\xi}\Big\{   
 \mu_\delta(\xi)=\sum_{x\in\pr h\Z^d} p_\delta(x;\xi) \mathfrak{d}_{x,\xi(x)}\,\Big|\,
 p_\delta(\cdot;\xi)\in\mathcal{Q}_\delta(\xi)
 \Big\},$$ 
where the union is taken over all admissible controls.

We re-state the holonomic-like  constraint of $\mathcal{P}_\delta$ in the autonomous case. For each function $g:\pr h\Z^d \to\R$, consider a continuous function $f:\T^d\times\R^d\to\R$ whose restriction to $\pr h\Z^d\times\R^d$ is  
\begin{eqnarray}\label{holoholo2} 
f(x_{m}, \zeta):=\frac{1}{\tau}\Big( g(x_m)-\frac{1}{2d}\sum_{\omega\in B}g(x_{m+\omega})  \Big)  + \sum_{i=1}^d\frac{g(x_{m+e_i})-g(x_{m-e_i})}{2h}\zeta^i.
\end{eqnarray}
\begin{Prop}\label{holonomic-delta2}
Each $\mu_\delta(\xi)\in\mathcal{P}_\delta$ satisfies the constraint 
$$\int_{\T^d\times\R^d}f\,\,d\mu_\delta(\xi)=0$$
for any continuous function $f:\T^d\times\R^d\to\R$ whose restriction to $\pr h\Z^d\times\R^d$ is of the form \eqref{holoholo2}. 
\end{Prop}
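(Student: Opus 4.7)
The plan is to adapt the proof of Proposition \ref{holonomic-delta} almost verbatim, exploiting that in the autonomous setting $g$ and $\xi$ are time-independent and $f$ depends only on $(x,\zeta)$. First I cut off $f$ with respect to $\zeta$ without changing its values on $[-(d\lambda)^{-1},(d\lambda)^{-1}]^d$ and extend it $1$-periodically in $x$, so $f\in C_c(\T^d\times\R^d;\R)$. Rewriting the functional via the configuration-space distribution of Subsection 2.1 and recalling $\xi^k_m=\xi(x_m)$, I get
\begin{eqnarray*}
\mathcal{F}^l_\delta(f;\xi)=\frac{1}{t_l}\sum_{-l<k\le 0}\Big(\sum_{\{m\,|\,(x_m,t_k)\in\tG_\delta\}} p(x_m,t_k;\xi)\,f(x_m,\xi^k_m)\Big)\tau.
\end{eqnarray*}

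The key algebraic identity is the following: substituting \eqref{holoholo2} with $\zeta=\xi^k_m$ and using $\lambda=\tau/h$ together with $\rho^k_m(\omega)=\frac{1}{2d}-\frac{\lambda}{2}(\omega\cdot\xi^k_m)$, a direct collection of terms yields
\begin{eqnarray*}
\tau\,f(x_m,\xi^k_m)=g(x_m)-\sum_{\omega\in B}\rho^k_m(\omega)\,g(x_{m+\omega}),
\end{eqnarray*}
i.e.\ the discrete backward generator applied to the time-independent observable $g$. Multiplying by $p(x_m,t_k;\xi)$, summing over $m$, and invoking the evolution equation \eqref{p-evolution} (after the change of variables $m\mapsto m+\omega$, using the $1$-periodicity of $g$ to make the shift harmless) gives the Kolmogorov-type duality
\begin{eqnarray*}
\sum_{m} p(x_m,t_k;\xi)\sum_{\omega\in B}\rho^k_m(\omega)\,g(x_{m+\omega})=\sum_{m} p(x_m,t_{k-1};\xi)\,g(x_m).
\end{eqnarray*}
Consequently each time slice contributes $\sum_{m} p(x_m,t_k;\xi)g(x_m)-\sum_{m} p(x_m,t_{k-1};\xi)g(x_m)$ to $t_l\mathcal{F}^l_\delta(f;\xi)$, and telescoping over $-l<k\le 0$, together with $p(x_{n+\1},0;\xi)=1$ and $p(\cdot,0;\xi)=0$ elsewhere, collapses the whole sum to
\begin{eqnarray*}
t_l\,\mathcal{F}^l_\delta(f;\xi)=g(x_{n+\1})-\sum_{\{m\,|\,(x_m,t_{-l})\in\tG_\delta\}}p(x_m,t_{-l};\xi)\,g(x_m).
\end{eqnarray*}

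Since $g$ is bounded on the finite quotient grid and $\sum_{m}p(x_m,t_{-l};\xi)=1$, the right-hand side is $O(1)$ uniformly in $l$, so $\mathcal{F}^l_\delta(f;\xi)=\int f\,d\mu^l_\delta(\xi)\to 0$ as $l\to\infty$; passing to the convergent subsequence defining $\mu_\delta(\xi)\in\mathcal{P}_\delta$ gives $\int f\,d\mu_\delta(\xi)=0$. The only genuine point of care will be the spatial index shift $m\mapsto m+\omega$ in the Kolmogorov duality step, which is cleanly absorbed once one uses the $1$-periodic extension of $g$ built into the hypothesis and the convention that $p(\cdot,t_k;\xi)$ vanishes outside the reachable set $G^{0,-\infty}_{n+\1}$. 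No new estimate is required beyond this index bookkeeping; the content is precisely the autonomous reduction of the computation carried out in Proposition \ref{holonomic-delta}.
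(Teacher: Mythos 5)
Your proof is correct and is in substance the same as the paper's: the paper simply identifies the time-independent $g$ with a function on $\pr\tG_\delta$ constant in the time index, observes that \eqref{holoholo2} then takes the form \eqref{holoholo}, and invokes the proof of Proposition \ref{holonomic-delta}, which consists exactly of the cut-off, the identity $\tau f(x_m,\xi^k_m)=g(x_m)-\sum_{\omega\in B}\rho^k_m(\omega)g(x_{m+\omega})$, the duality step via \eqref{p-evolution}, and the telescoping you carry out. Re-executing that computation directly in the autonomous variables, as you do, is only a cosmetic difference.
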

\begin{proof}
We identify each $g:\pr h\Z^d \to\R$ with the function $g:\pr \tG\to\R$ such that $g^{2k}_{x_{m+\1}}=g(x_{m+1})$ and $g^{2k+1}_{x_{m}}=g(x_{m})$ for all $k$, where $g$ is also $1$-periodically extended to $\tG$. Then, a function $f$ with \eqref{holoholo2} can be seen as the function $f:\T^d\times\T\times\R^d\to\R$ whose restriction to $\pr\tG\times\R^d$ is  
\begin{eqnarray*} 
f(x_{m},t_{k+1}, \zeta):=(D_tg)^{k+1}_{m}+(D_x g)^{k}_{m}\cdot \zeta,\quad (x_{m},t_{k+1}, \zeta)\in \pr\tG\times\R^d.
\end{eqnarray*}
The rest is the same as the proof of Proposition \ref{holonomic-delta}. 
\end{proof}
\noindent If $g$ is the restriction of $g\in C^1(\T^d;\R)$ to $\pr h\Z^d$, we have $\frac{1}{\tau}( g(x_m)-\frac{1}{2d}\sum_{\omega\in B}g(x_{m+\omega})) \to 0$ as $\delta\to0$ with the hyperbolic scaling. Hence,  Proposition \ref{holonomic-delta2} implies the exact holonomic condition at  the hyperbolic scaling limit. 

The analogue of Mather's minimizing problem is the following: 
\begin{Thm}\label{Mather-delta2}
For each $c$, we have
\begin{eqnarray}\label{33minimizing2}
\inf_{\mu_\delta\in \mathcal{P}_\delta} \int_{\T^d\times\R^d} L^{(c)}(x,\zeta)\,d\mu_\delta=-\bar{H}_\delta(c), 
\end{eqnarray}
where there exists at least one minimizing probability measure $\mu_\delta^\ast\in \mathcal{P}_\delta$ that attains the infimum. 
\end{Thm}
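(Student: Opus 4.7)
\textbf{Proof proposal for Theorem \ref{Mather-delta2}.}

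My plan is to adapt the argument of Theorem \ref{Mather-delta} to the autonomous setting, with the essential new point being that the minimizing control must be realized as a purely spatial function on $\pr h\Z^d$ (rather than depending on time). The existence of a stationary solution provided by Proposition \ref{kkkkaaaaa} will be exactly what makes this possible.

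First I would fix a stationary solution $\bar v$ of \eqref{HJ-delta-cell}, which exists by Proposition \ref{kkkkaaaaa} combined with the fixed-point construction of Subsection 3.1. Writing $\bar v^0$ on $G_{\rm odd}$ and $\bar v^1$ on $G_{\rm even}$, the discrete gradients $(D_x\bar v)^k_m$ at $(x_m,t_k)\in\G_\delta$ depend only on the parity of $k$: at even times they are computed from $\bar v^0$ and yield a function $U_{\rm even}(x_m)$ on $G_{\rm even}$, while at odd times they are computed from $\bar v^1$ and yield $U_{\rm odd}(x_m)$ on $G_{\rm odd}$. I would then \emph{define} the admissible control $\xi^\ast:\pr h\Z^d\to[-(d\lambda_1)^{-1},(d\lambda_1)^{-1}]^d$ by
\[
\xi^\ast(x):=H_p(x,c+U_{\rm odd}(x))\ \text{for } x\in\pr G_{\rm odd},\quad
\xi^\ast(x):=H_p(x,c+U_{\rm even}(x))\ \text{for } x\in\pr G_{\rm even},
\]
and verify (using the parity bookkeeping for the extension $\xi^{2k}_{m+\1}=\xi(x_{m+\1})$, $\xi^{2k+1}_m=\xi(x_m)$) that this reproduces exactly the $(k+1)$-indexed minimizing control $H_p(x_m,c+(D_x\bar v)^k_m)$ of Theorem \ref{main1} and \ref{extension} for $\bar v$.

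Next, mimicking the autonomous analogue of the identity used in the proof of Theorem \ref{Mather-delta}, the stationarity $\bar v^{l+1}_n=\bar v^{(l+1)\,\mathrm{mod}\,2K}_n$ together with the Lax--Oleinik representation gives
\[
\bar v^0_{n+\1}=E_{\mu^{0,-l}_{n+\1}(\cdot;\xi^\ast)}\Big[\sum_{-l<k\le 0}L^{(c)}(\gamma^k,\xi^{\ast\,k}_{m(\gamma^k)})\tau+\bar v^{-l}_{m(\gamma^{-l})}\Big]+\bar H_\delta(c)\,t_l,
\]
so that $\mathcal F^l_\delta(L^{(c)}+\bar H_\delta(c);\xi^\ast)=\frac{1}{t_l}\bigl(\bar v^0_{n+\1}-E[\bar v^{-l}_{m(\gamma^{-l})}]\bigr)\to 0$ as $l\to\infty$. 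Extracting a weakly convergent subsequence of $\{\mu^l_\delta(\xi^\ast)\}_l$ with limit $\mu^\ast_\delta(\xi^\ast)\in\mathcal{P}_\delta$, boundedness of $\bar v$ yields
$\int_{\T^d\times\R^d}(L^{(c)}+\bar H_\delta(c))\,d\mu^\ast_\delta(\xi^\ast)=0$, giving the upper bound for the infimum and a candidate minimizer.

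For the matching lower bound, I would take any admissible $\xi$ and any weak limit $\mu_\delta(\xi)\in\mathcal{P}_\delta$ of a subsequence of $\{\mu^l_\delta(\xi)\}_l$. The variational inequality for the stationary solution gives
\[
\bar v^0_{n+\1}\le E_{\mu^{0,-l}_{n+\1}(\cdot;\xi)}\Big[\sum_{-l<k\le 0}L^{(c)}(\gamma^k,\xi^k_{m(\gamma^k)})\tau+\bar v^{-l}_{m(\gamma^{-l})}\Big]+\bar H_\delta(c)\,t_l,
\]
so $\mathcal F^l_\delta(L^{(c)}+\bar H_\delta(c);\xi)\ge \frac{1}{t_l}(\bar v^0_{n+\1}-E[\bar v^{-l}_{m(\gamma^{-l})}])\to 0$, and passing to the weak limit yields $\int(L^{(c)}+\bar H_\delta(c))\,d\mu_\delta(\xi)\ge 0$. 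Combined with the previous step this proves both the value $-\bar H_\delta(c)$ of the infimum and the existence of the minimizer $\mu^\ast_\delta(\xi^\ast)$. The main conceptual obstacle, as I see it, is not analytic but book-keeping: one must be genuinely careful to check that the gradients of a stationary solution assemble into a \emph{time-independent} function on $\pr h\Z^d$ so that $\xi^\ast$ is an admissible control in the autonomous sense of Subsection 4.3; once this is done, the two variational inequalities reduce the theorem to exactly the scheme of Theorem \ref{Mather-delta}.
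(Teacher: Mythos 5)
Your proposal is correct and follows essentially the same route the paper intends: the paper states Theorem \ref{Mather-delta2} without a separate proof, implicitly reusing the argument of Theorem \ref{Mather-delta}, and your two variational inequalities for a periodic solution are exactly that argument. Your one genuinely necessary addition—checking via Proposition \ref{kkkkaaaaa} that the gradients of a stationary solution assemble into a time-independent function on $\pr h\Z^d$, so that $\xi^\ast(x)=H_p(x,c+(D_x\bar v)(x))$ is admissible in the autonomous sense of Subsection 4.3—is precisely the point the paper relies on (compare the definition of $\mathcal{A}_\delta(c)$ in the autonomous Theorem \ref{Aubry-delta}), so there is no gap.
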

We define an analogue of the Mather set $\mathcal{M}_\delta(c)$ for each $c$ as 
$$\mathcal{M}_\delta(c):=\bigcup_{\mu^\ast_\delta}\mbox{supp}(\mu^\ast_\delta)\subset \pr h\Z^d\times[-(d\lambda)^{-1},(d\lambda)^{-1}]^d,$$
where the union is taken over all minimizing probability measures $\mu^\ast_\delta$ of \eqref{33minimizing2}. 
\begin{Thm}\label{Aubry-delta}
Let $\bar{v}^0\in X_\delta$ be such that $\varphi_\delta(\bar{v}^0;c)+\bar{H}_\delta(c)=\bar{v}^0$ and 
$\bar{v}^1:=\varphi^1_\delta(\bar{v}^0;c)+\tau\bar{H}_\delta(c)$.  Define $\bar{v}=\bar{v}(c):h\Z^d\to\R$ as  $\bar{v}(x_{m+\1})=\bar{v}^0_{m+\1}$ on $\pr G_{\rm odd}$ and $\bar{v}(x_{m})=\bar{v}^1_{m}$ on $\pr G_{\rm even}$. 
For each $c$, the support of any minimizing probability measure for \eqref{33minimizing2} is contained in the set 
\begin{eqnarray*}
\mathcal{A}_\delta(c):=\bigcap_{\bar{v}(c)} \Big\{\big(x_{m},H_p(x_{m},c+(D_x\bar{v})_{m})\big)\,|\, x_{m}\in \pr h\Z^d     \Big\},
\end{eqnarray*}
where the intersection is taken over all such $\bar{v}(c)$. 
\end{Thm}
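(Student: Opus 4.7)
The plan is to mirror the proof of Theorem~\ref{Aubry-delta} in the non-autonomous case, substituting the holonomic-type constraint of Proposition~\ref{holonomic-delta} by its autonomous analogue Proposition~\ref{holonomic-delta2}. The argument is by contradiction: assume there is a minimizing measure $\mu^\ast_\delta$ of \eqref{33minimizing2} with a point $(x_n,\zeta^\ast)\in\supp\mu^\ast_\delta$ such that $\zeta^\ast\neq H_p(x_n,c+(D_x\bar{v})_n)$ for some stationary solution $\bar{v}=\bar{v}(c)$ built as described. By the Fenchel--Young inequality applied at each $x_m\in\pr h\Z^d$,
$$L^{(c)}(x_m,\zeta)+H(x_m,c+(D_x\bar{v})_m)\ge (D_x\bar{v})_m\cdot\zeta\quad\text{for all }\zeta\in\R^d,$$
with equality iff $\zeta=H_p(x_m,c+(D_x\bar{v})_m)$.

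Next, I would exploit the fact that $\bar{v}$ comes from a stationary solution of \eqref{HJ-delta-cell}: on $\pr h\Z^d$ one has
$$H(x_m,c+(D_x\bar{v})_m)=\bar{H}_\delta(c)-\frac{1}{\tau}\Bigl(\bar{v}(x_m)-\frac{1}{2d}\sum_{\omega\in B}\bar{v}(x_m+\omega)\Bigr),$$
because the stationary solution has $v^{k+1}_m=\bar{v}(x_m)$ and $v^k_{m+\omega}=\bar{v}(x_{m+\omega})$. Substituting this back into the Fenchel--Young inequality and rearranging shows that, on $\pr h\Z^d\times\R^d$, the continuous function $f:\T^d\times\R^d\to\R$ whose restriction to the grid is of the form \eqref{holoholo2} with $g=\bar{v}$ satisfies
$$f(x_m,\zeta)\le L^{(c)}(x_m,\zeta)+\bar{H}_\delta(c),$$
with strict inequality precisely at $(x_n,\zeta^\ast)$.

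Now apply Proposition~\ref{holonomic-delta2} to obtain $\int f\,d\mu^\ast_\delta=0$, and Theorem~\ref{Mather-delta2} to obtain $\int L^{(c)}(x,\zeta)\,d\mu^\ast_\delta+\bar{H}_\delta(c)=0$. Subtracting yields
$$\int_{\T^d\times\R^d}\bigl(L^{(c)}(x,\zeta)+\bar{H}_\delta(c)-f(x,\zeta)\bigr)\,d\mu^\ast_\delta=0.$$
Since $\mu^\ast_\delta$ is the atomic measure $\sum_{x\in\pr h\Z^d}p_\delta(x;\xi^\ast)\mathfrak{d}_{x,\xi^\ast(x)}$ with support inside $\pr h\Z^d\times\R^d$, the integrand is nonnegative on $\supp\mu^\ast_\delta$ and strictly positive at the atom $(x_n,\zeta^\ast)$, producing the desired contradiction.

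The only delicate book-keeping is to verify that the stationary structure of $\bar{v}$---with the two parity-classes $\bar{v}^0$ on $\pr G_{\rm odd}$ and $\bar{v}^1$ on $\pr G_{\rm even}$ glued into a single function on $\pr h\Z^d$---produces exactly the discrete time and space differences appearing in \eqref{holoholo2}; once one notes that, for the stationary solution, $(D_tv)^{k+1}_m=\tau^{-1}(\bar{v}(x_m)-\frac{1}{2d}\sum_\omega \bar{v}(x_m+\omega))$ and $(D_x v)^k_m=(D_x\bar{v})_m$ independently of $k$, everything lines up and no new estimate beyond Fenchel--Young is required.
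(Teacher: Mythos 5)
Your argument is correct and is essentially the paper's own approach: it reproduces the contradiction proof of the non-autonomous Aubry-set theorem (via the Legendre/Fenchel--Young inequality, the time-$1$-periodic discrete Hamilton--Jacobi equation, and the holonomic-type constraint), with Proposition~\ref{holonomic-delta2} and Theorem~\ref{Mather-delta2} replacing their Section~3 counterparts, and the atomicity of $\mu^\ast_\delta$ turning pointwise strictness at a support point into strictness of the integral. The only implicit ingredient worth naming is Proposition~\ref{kkkkaaaaa}, which guarantees that the pair $(\bar{v}^0,\bar{v}^1)$ really yields a stationary solution, so that the glued $\bar{v}$ satisfies the discrete equation with the differences appearing in \eqref{holoholo2} at every point of $\pr h\Z^d$.
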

\medskip\medskip\medskip

\noindent{\bf Acknowledgement.} The author  is supported by JSPS Grant-in-aid for Young Scientists \#18K13443.

\end{document}